\documentclass[10pt, reqno]{amsart}
\usepackage{mathrsfs}
\usepackage{amssymb,amsthm,amsmath}
\usepackage[numbers,sort&compress]{natbib}
\usepackage{amssymb,amsmath}
\usepackage{amsfonts}
\usepackage{mathrsfs}
\usepackage{latexsym}
\usepackage{amssymb}
\usepackage{amsthm}
\usepackage{color}
\usepackage{pdfsync}
\usepackage{indentfirst}
\usepackage{graphics}
\usepackage{subfigure}
\usepackage{epsfig}
\usepackage{float}
\usepackage{cases}
\date{today}
\usepackage{hyperref}
\hypersetup{
colorlinks=true,
linkcolor=blue,
anchorcolor=green,
citecolor=red}
\usepackage{amsmath}
\usepackage{amsthm}

\newcommand{\norm}[1]{\| #1 \|}
\newcommand{\norms}[1]{\left\| #1 \right\|}

\newcommand{\supp}{\mbox{\rm supp\,}}

\renewcommand{\upsilon}{\rho}%将字母$\upsilon$替换成\rho

\let\oldsection\section
\renewcommand\section{\setcounter{equation}{0}\oldsection}

\theoremstyle{plain} % 斜体内容 + 粗体标签
\newtheorem{theorem}{Theorem}[section]
\newtheorem{lemma}[theorem]{Lemma}
\newtheorem{corollary}[theorem]{Corollary}
\newtheorem{proposition}[theorem]{Proposition}

\theoremstyle{definition} % 正体内容 + 粗体标签
\newtheorem{definition}[theorem]{Definition}
\newtheorem{remark}[theorem]{Remark}

\def\ba{\begin{eqnarray}}
	\def\ea{\end{eqnarray}}

\newcommand{\beq}{\begin{equation}}
	\newcommand{\eeq}{\end{equation}}
\newcommand{\ben}{\begin{eqnarray}}
	\newcommand{\een}{\end{eqnarray}}
\newcommand{\beno}{\begin{eqnarray*}}
	\newcommand{\eeno}{\end{eqnarray*}}

\usepackage{color}

\title[{\scriptsize Existence and partial regularity for the 3d NSVFP equations}]{Existence and partial regularity of suitable weak solutions to the 3D Navier-Stokes-Vlasov-Fokker-Planck equations}
\author{Renjun Duan}
\address[Renjun Duan]{Department of Mathematics, Chinese University of Hong Kong, Shatin, NT, Hong Kong}
\email{rjduan@math.cuhk.edu.hk@math.cuhk.edu.hk}
\author{Fengqiang Shi}
\address[Fengqiang Shi]{School of Mathematical Sciences, Dalian University of Technology, Dalian, 116024,  China}
\email{2192593228@mail.dlut.edu.cn}
\author{Wendong~Wang}
\address[Wendong~Wang]{School of Mathematical Sciences, Dalian University of Technology, Dalian, 116024,  China}
\email{wendong@dlut.edu.cn}
\author{Jianbo Yu}
\address[Jianbo Yu]{School of Mathematical Sciences, Dalian University of Technology, Dalian, 116024,  China}
\email{yujb@mail.dlut.edu.cn}
\date{\today}  % 可选，设置日期为今天，或者自定义日期
\allowdisplaybreaks
\begin{document}
	\begin{abstract}
		In this paper, we investigate the incompressible Navier-Stokes equations coupled with the Vlasov-Fokker-Planck equation, which describes a
two-phase mixture of the viscous incompressible fluid with particles or bubbles
through a frictional force term. In the three-dimensional whole space, we construct a new class of suitable weak solutions to the Navier-Stokes-Vlasov-Fokker-Planck system satisfying energy estimates and three local or global energy inequalities of different forms. These obtained local energy inequalities play an important role in characterizing the measure of the singularity set of weak solutions. The main difficulties in deriving these inequalities lie in establishing the convergence of the density function $f$ in bounded or unbounded domains and dealing with the convergence of the non-local frictional force term. The strong convergence of both $f$ and $f \log f$ weighted by $|v|^k$ is proved by exploring some new a priori quantities of the velocity with the help of Tao's $L^p$ decomposition and the DiPerna-Lions compactness method. Moreover, as an immediate consequence of the existence result, we are able to describe the Hausdorff dimension of set of singular points of the fluid velocity $u$ and also establish the $\alpha$-H\"{o}lder continuity of $f$ at the regular points of $u$.
	\end{abstract}

    \subjclass[2020]{35Q83, 35Q84, 35Q30, 35B45}

%35Q83  	Vlasov equations
%35Q84  	Fokker-Planck equations
%35Q30  	Navier-Stokes equations 
%35B45  	A priori estimates in context of PDEs

\keywords{Navier-Stokes-Vlasov-Fokker-Planck, suitable weak solution, local energy inequality, global existence, singularity, H\"{o}lder continuity}

	\maketitle
    
	%\noindent
	%\textbf{Keywords:} Navier-Stokes-Vlasov-Fokker-Planck, suitable weak solution, local energy inequality, Hausdorff measure, H\"{o}lder continuity % 注意每个关键词之间用逗号隔开z
	
	\tableofcontents
	
	%\tableofcontents
	\parskip5pt
	\parindent=1.5em

	\section{Introduction}
We consider the following three dimensional incompressible Navier-Stokes-Vlasov-Fokker-Planck (NSVFP) equations
	\begin{eqnarray}\label{eq:NSVFK}
		\left\{
		\begin{array}{lll}
			%\label{001}
			\displaystyle \partial_t u + (u\cdot \nabla) u- \Delta u + \nabla P - \int_{\mathbb{R}^3_v}(v-u)fdv = 0,
            %~&(t,x)\in (0,T)\times\mathbb{R}^3_x
            \\[4mm]
			\displaystyle \partial_t f + (v\cdot \nabla_x) f+ \nabla_v \cdot \left(\left(u-v\right)f-\nabla_v f\right) = 0,
            %~&(t,x,v)\in (0,T)\times \mathbb{R}^6_{xv}
            \\[3mm]
			\nabla \cdot u=0,
		\end{array}
		\right.
	\end{eqnarray}
	where $u=u(t,x)$ with $(t,x)\in\mathbb{R}^+\times\mathbb{R}^3_x$ denotes the fluid velocity field, $f=f(t,x,v)\geq 0$ with $(t,x,v)\in\mathbb{R}^+\times\mathbb{R}^6_{xv}$ denotes the
density distribution function of particles in the phase space and the scalar function $P$ represents pressure.
The above kinetic-fluid model is widely used in the description of dynamics of the dispersed two-phase flows, bridging the macroscopic continuum mechanics and microscopic particle dynamics. Typical applications of two-phase flows can be found in \cite{{RM1952},{W1958},{WY2015}}. The Navier-Stokes equations with random stirring forces were formally analyzed by Forster-Nelson-Stephen \cite{FNS1977}. The long-time and long-distance properties of the
renormalized viscosity were studied by Enz \cite{E1978}. Moreover, there exist many variants of kinetic-fluid models depending on the different physical regimes, such as the compressibility of the fluid, viscosity of the fluid, species
of particles, interactions between the fluid and particles, and
so on. Below we recall some kinetic-fluid models related to system \eqref{eq:NSVFK} under consideration.

%Let us recall some developments briefly.

{\it The  nonlinear Fokker-Planck equation.} Constantin in \cite{constantinNonlinearFokkerPlanckNavierStokes2005} considered the nonlinear Fokker-Planck equation coupled with the Stokes system  and proved the existence of smooth solutions
for a wide class of such systems of type II. By  assuming that the added stress tensor is given and satisfies  suitable properties, Constantin-Fefferman-Titi-Zarnescu \cite{CFTZ2007}  proved the regularity of Navier-Stokes-Fokker-Planck system in 2D.  Global regularity  was first proved by Constantin-Masmoudi in \cite{CM2008}. Independently and
simultaneously, global regularity for a similar model was proved by Lin-Zhang-Zhang in \cite{LZZ2008}. In that work, the model is a version of the FENE model in which the physical gradient of
velocity is replaced by its anti-symmetric part, and the particles are restricted
to the unit disk by a potential that is infinite at the unit circle.  
 Constantin-Seregin in \cite{CS2010} gave a quantitative bounds in the two-dimensional torus domain $\mathbb{T}^2$. Choi-Jeong-Kang proved the global existence, uniqueness, and decay of solutions near equilibrium for the Vlasov-Riesz-Fokker-Planck system with small, regular initial data \cite{CJK2024}.  For more references, we refer to \cite{constantinNonlinearFokkerPlanckNavierStokes2005,{BS2008},{BS2010},{ccp2022}}  and the references therein.

{\it The incompressible Navier-Stokes equations coupled with linear
Fokker-Planck equations.}
The mathematical
analysis of incompressible kinetic-fluid models has received much attention recently.
%Global existence for small data for linear
%Fokker-Planck coupled with Navier-Stokes equations was obtained in [013]. For the case of a coupled linear Fokker-Planck and Stokes system, global regularity result for large data was obtained in [018]. Chae, Kang, and Lee [12] obtained the global existence of
%weak solutions in 3D and classical solutions for the Navier–Stokes–Vlasov–Fokker–Planck equations
%in 2D. 
Hamdache \cite{hamdacheGlobalExistenceLarge1998} analyzed the Vlasov-Stokes system in a bounded domain and constructed weak solutions under specular reflection boundary conditions in 1998.  In 2007, Lin-Liu-Zhang established the global existence and uniqueness of classical solutions to the micro-macro model in \cite{linMicroMacroModelPolymeric2007}. Later, in 2007, Otto-Tzavaras \cite{ottoContinuityVelocityGradients2008} showed that for the Doi model describing suspensions of rod-like molecules in the dilute regime, discontinuities in the velocity gradient cannot form in finite time, even without a microscopic cut-off.  Goudon-He-Moussa-Zhang \cite{goudonNavierStokesVlasov2010} established the global existence of
classical solutions near the equilibrium for the incompressible Navier-Stokes-Vlasov-Fokker-Planck system; meanwhile, Carrillo-Duan-Moussa \cite{CDM2011}  studied the corresponding
inviscid case. Subsequently, in 2011, Chae-Kang-Lee \cite{chaeGlobalExistenceWeak2011} established the global existence of weak solutions in two and three dimensions, as well as the existence and uniqueness of global smooth solutions in two dimensions. In 2017, Boudin et al. \cite{boudinGlobalExistenceSolutions2017} demonstrated the global existence of weak solutions for the three-dimensional incompressible Vlasov-Navier-Stokes equations, where the coupling is through a drag force depending linearly on the relative velocity between the fluid and the particles. Su-Yao \cite{SY2020} considered the hydrodynamic limit for the inhomogeneous
incompressible Navier-Stokes-Vlasov-Fokker-Planck equations in a two or three dimensional bounded domain when the initial density is bounded away from zero.  More recently, in 2024, Nfor-Woukeng \cite{chuyehnforWellposednessStochasticNavier2024} proved the existence of a unique weak probabilistic solution for a stochastic Navier-Stokes-Vlasov-Fokker-Planck system, and Jin-Lin \cite{jinEnergyEstimatesHypocoercivity2024} established exponential decay of energy over time by using hypocoercivity arguments in a multi-phase Navier-Stokes-Vlasov-Fokker-Planck system. For more references, we refer to \cite{goudonNavierStokesVlasov2010} and the references therein.

{\it The compressible Navier-Stokes equations coupled with linear
Fokker-Planck equations.}
 Mellet-Vasseur \cite{melletGlobalWeakSolutions2007} proved the global existence of weak solutions to the compressible Vlasov-Fokker-Planck-Navier-Stokes equations with either absorption or reflection boundary conditions. In the same year, they also studied certain asymptotic regimes of the system \cite{melletBarotropicCompressibleNavierStokes2007}. In \cite{MV2008} 
they further studied the asymptotic analysis of the solutions.   In 2006, Baranger-Desvillettes \cite{barangerCouplingEulerVlasov2006} established the local existence of solutions to the compressible Vlasov-Euler equations. Later, in 2017, Li-Mu-Wang \cite{liStrongSolutionsCompressible2017} proved the global well-posedness of strong solutions to the compressible Navier-Stokes-Vlasov-Fokker-Planck equations in the three-dimensional whole space. In 2021, Choi-Jung \cite{choiAsymptoticAnalysisVlasov2021} obtained the global-in-time existence of weak solutions to the coupled kinetic-fluid system, as well as the existence and uniqueness of strong solutions to the limiting system in a bounded domain, equipped with kinematic boundary conditions for the Euler part and Dirichlet conditions for the Navier-Stokes part.
In \cite{liNavierStokesVlasov2022} Li-Liu-Yang proved the global existence of unique strong
solution and its exponential convergence rate to the equilibrium state  under
the Maxwell boundary condition for the incompressible case and specular reflection
boundary condition for the compressible case, respectively.  The existence, uniqueness, and regularity of global weak solution to the
initial value problem for general initial data ware established by Li-Shou \cite{LS2023} in one dimensional spatial periodic domain.
 More recently, in 2023, Chen-Li-Li-Zamponi \cite{chenGlobalWeakSolutions2023} established the global existence of weak solutions for the compressible Vlasov-Poisson-Fokker-Planck-Navier-Stokes equations in a three-dimensional bounded domain, under nonhomogeneous Dirichlet boundary conditions and allowing arbitrarily large initial and boundary data, provided the adiabatic exponent satisfies \(\gamma>\frac32\). More related results can be found in \cite{barrettExistenceGlobalWeak2008,barangerCouplingEulerVlasov2006,flandoliNavierStokesVlasov2021,maInitialBoundaryValue2022,tanGlobalSolutionsExponential2023,duanCauchyProblemVlasovFokkerPlanck2013,cj2020}, and so on.

{\it The incompressible Navier-Stokes equations.} When $f=0$,  the system (\ref{eq:NSVFK}) reduces to the  Navier-Stokes equations. It is well known that the Navier-Stokes equations describe the macroscopic motion of an incompressible viscous fluid:
	\begin{gather*}
		\partial_t u +\left(u\cdot\nabla\right)u-\nu \Delta u +\nabla P=0,\\
		\nabla \cdot u=0,
	\end{gather*}
	where \(u\left(t,x\right)\) is the velocity field, \(\nu\) is the kinematic viscosity, and \(P\left(t,x\right)\) is the pressure. While the Navier-Stokes equations effectively describe continuum fluid flows, they lack the resolution to capture the behavior of individual particles suspended in the medium.
	%\par On the other hand, the Vlasov-Fokker-Planck equation provides a kinetic description of particle distribution:
%	\begin{align*}
%		\partial_tf+v\cdot \nabla_xf+F\cdot\nabla_{v}f=D\Delta_{v}f,
%	\end{align*}
%	where \(f(t,x,v)\) represents the particle distribution function in phase space, \(F\) is the force field acting on the particles, and \(D\) accounts for stochastic effects such as collisions or thermal fluctuations. This formulation captures the detailed statistical properties of particle ensembles and their interactions with external fields.
%	
 The study of partial regularity and suitable weak solutions in PDEs is essential, because many nonlinear equations arising in physics and geometry, such as the Navier-Stokes equations or harmonic map flows, do not admit globally smooth solutions. Instead, their solutions may develop singularities, yet often retain regularity outside a small, lower-dimensional set. Weak solutions provide a rigorous framework to analyze such equations, while partial regularity theory quantifies where and how smoothness persists, offering critical insights into the structure of singularities and their physical relevance. This approach bridges theoretical PDE analysis with applied problems, ensuring meaningful interpretations even in the presence of irregular behavior. The concept of suitable weak solutions for the Navier-Stokes equations was initially introduced by Scheffer in \cite{schefferPartialRegularitySolutions1976,schefferHausdorffMeasureNavierStokes1977,schefferNavierStokesEquationsBounded1980}. Later, Caffarelli-Kohn-Nirenberg \cite{caffarelliPartialRegularitySuitable1982} established that the set \(S\) of possible interior singular points for a suitable weak solution has a parabolic Hausdorff dimension of zero. Compared to the Leray-Hopf weak solution proposed by Leray in \cite{lerayMouvementDunLiquide1934}, the suitable weak solution exhibits improved properties. For instance, if a local strong solution loses regularity and blows up, it can still be continued as a suitable weak solution (see Proposition 30.1 in \cite{lemarie-rieussetRecentDevelopmentsNavierStokes2002}). For simplified proofs and further advancements, we refer readers to the works of Lin \cite{linNewProofCaffarelliKohnNirenberg1998}, Ladyzhenskaya-Seregin \cite{ladyzhenskayaPartialRegularitySuitable1999}, Tian and Xin \cite{tianGradientEstimationNavierStokes1999}, Seregin \cite{sereginEstimatesSuitableWeak2006}, Gustafson-Kang-Tsai \cite{gustafsonInteriorRegularityCriteria2007}, Vasseur \cite{vasseurNewProofPartial2007}, and related references. Recently, Chen-Li-Wang-Wang proved global existence of suitable weak solutions to the 3D chemotaxis-Navier-Stokes equations in \cite{chenGlobalExistenceSuitable2025} and derive the Hausdorff measure for the singularity set in \cite{chenHausdorffMeasureSingularity2023}.

	\par Motivated  by \cite{caffarelliPartialRegularitySuitable1982} and \cite{chenHausdorffMeasureSingularity2023}, we study the partial regularity of  the system (\ref{eq:NSVFK}). In what follows we denote $L^2_{\sigma}(\mathbb{R}^3_x)=\{u\in L^2(\mathbb{R}^3_x) | \nabla \cdot u=0\}$ and $\dot{H}^2_{\sigma}(\mathbb{R}^3_x)=\{u\in \dot{H}^2(\mathbb{R}^3_x) | \nabla \cdot u=0\}$.

	\begin{definition}\label{def:weak}
		A pair \(\left(u,f\right)\) is called a suitable weak solution of the system \eqref{eq:NSVFK} with initial data \(\left(u_{in},f_{in}\right)\) satisfying condition \eqref{0} to be specified later, if all the following  properties hold:\\
	\begin{flushleft}
		$\begin{aligned}
	\text{(i).}\ &u \in L^{\infty}_{loc}\left([0,+\infty);L^2_{\sigma}(\mathbb{R}^3_x)\right) \cap L^2_{loc}\left([0,+\infty);\dot{H}^1_{\sigma}(\mathbb{R}^3_x)\right) , \\
		&f(t,x,v)\geq 0, f\in L^{\infty}_{loc}\left([0,+\infty);L^{\infty}\cap L^{1}\left(\mathbb{R}^3_x\times\mathbb{R}^3_v\right)\right) ,\\
		&\left(|x|^2+|v|^2\right)f \in L^{\infty}_{loc}\left([0,+\infty); L^{1}\left(\mathbb{R}^3_x\times\mathbb{R}^3_v\right)\right),\\
		&f\log f \in  L^{\infty}_{loc}\left([0,+\infty); L^{1}\left(\mathbb{R}^3_x\times\mathbb{R}^3_v\right)\right),\\
		& P\in L^\frac53_{loc}\left([0,+\infty);L^\frac53\left(\mathbb{R}^3_x\right)\right);
	\end{aligned}$
\end{flushleft}
\begin{flushleft}
	$\begin{aligned}
		\text{(ii).}\ &u\in L^{1}_{loc}\left([0,+\infty);L^p_{\sigma}(\mathbb{R}^3_x)\right)~\left(p\in \left[2, +\infty\right)\right),\\
		&|v|^\kappa f\in L^{\infty}_{loc}\left([0,+\infty); L^{1}\left(\mathbb{R}^3_x\times\mathbb{R}^3_v\right)\right)~\left(\kappa\in \left[3, +\infty\right)\right),\\
        &|v|^{\bar{\kappa}} f \log f\in  L^{\infty}_{loc}\left([0,+\infty); L^{\eta}\left(\mathbb{R}^3_x\times\mathbb{R}^3_v\right)\right)~\left(\forall~\bar{\kappa}\in \left[0, \kappa\right),\eta>1 ~\text{s.t.}~\eta\bar{\kappa}\leq\kappa;\kappa\in \left[3,+\infty\right)\right),\\
        &|v|^{\frac{\bar{\kappa}}{2}} f \log f\in  {L^1((0,T)\times\Omega_{x}\times\mathbb{R}^3_v)}~\left(\forall~\bar{\kappa}\in \left[0, \kappa \right);\kappa\in \left[3,+\infty \right)\right);
	\end{aligned}$
\end{flushleft}
	(iii). \(\left(u,f\right)\) solves \eqref{eq:NSVFK} in the sense of distributions;\\[2mm]
	(iv).  \(\left(u,f\right)\) satisfies the energy inequality
	\beno
    &&\frac{1}{2}\int_{\mathbb{R}^3_x}|u(t,\cdot)|^2dx+\int_{\mathbb{R}^6_{xv}}\left(\frac{1}{2}|v|^2f+f\log f\right)(t,\cdot)dxdv\nonumber\\
		&&+\int_{(0,t)\times \mathbb{R}_x^3}|\nabla u|^2dxds+\int_{(0,t)\times \mathbb{R}_{xv}^6}\frac{\left|(u-v)f-\nabla_{v}f\right|^2}{f}dxdvds\nonumber\\
        &\leq& \int_{\mathbb{R}^6_{xv}}\left(\frac{1}{2}|v|^2f_{in}+f_{in}\log f_{in}\right)dxdv+\frac{1}{2}\int_{\mathbb{R}^3_x}|u_{in}|^2dx;
	\eeno
	(v). \(\left(u,f\right)\) satisfies the first type of local energy inequality as
	\begin{align*}
		\frac{1}{2}&\int_{\Omega_x}\left(|u|^2\psi\right)(t,\cdot) dx+\int_{\Omega_x \times \Omega_v}\left(\left(\frac{1}{2}|v|^2f+f\log f\right)\phi\right)(t,\cdot) dxdv+\int_{(0,t)\times \Omega_x}|\nabla u|^2\psi dxds\nonumber\\
		&+\int_{(0,t)\times \Omega_x \times \Omega_v}\frac{|(u-v)f-\nabla_{v}f|^2}{f}\phi dxdvds\leq \frac{1}{2}\int_{(0,t)\times\Omega_{x}}|u|^2(\partial_t\psi+\Delta_x\psi) dxds\nonumber\\
		&+\frac{1}{2}\int_{(0,t)\times\Omega_{x}}|u|^2u\cdot \nabla_{x}\psi dxds
		+\int_{(0,t)\times \Omega_x \times \Omega_v}\left(\frac{|v|^2}{2}+\log f\right)f(\partial_t\phi+\Delta_v\phi)dxdvds\nonumber\\
		&+ \int_{(0,t) \times \Omega_x \times \Omega_v} \left( \frac{|v|^2}{2} + \log f \right) fv \cdot \left( \nabla_{x}\phi - \nabla_{v}\phi \right) dxdvds
		+\int_{(0,t)\times\Omega_{x}}(P-\bar{P})u\cdot\nabla_{x}\psi dxds\nonumber \\
		&-\int_{(0,t) \times \Omega_x \times \Omega_v}fv\cdot u\phi dxdvds +\int_{(0,t) \times \Omega_x \times \mathbb{R}^3_v}fv\cdot u\psi dxdvds\nonumber\\
		&+ \int_{(0,t) \times \Omega_x \times \Omega_v} \left( 2 + \frac{|v|^2}{2} +\log f \right) fu \cdot \nabla_{v}\phi dxdvds,
	\end{align*}
		where \(\phi, \psi \geq 0\) and they vanish in the parabolic boundary of \(\left(0,t\right)\times\Omega_{x}\times \Omega_{v}\) and \(\left(0,t\right)\times\Omega_{x}\), respectively. Here, \(\Omega_x\) and \(\Omega_v\) are two bounded domains in $\mathbb{R}^3$. Here and in the sequel, we denote \(\bar{P}:=\frac{1}{|\Omega_x|}\int_{\Omega_x}Pdx\).
        
	\noindent(vi). When \(\kappa >3\), \(\left(u,f\right)\) satisfies the second type of local energy inequality as
	\begin{align*}
		\frac{1}{2}&\int_{\Omega_x}\left(|u|^2\psi\right)(t,\cdot) dx+\int_{\Omega_x \times \mathbb{R}^3_v}\left(\left(\frac{1}{2}|v|^2f+f \log f\right)\psi\right)(t,\cdot) dxdv+\int_{(0,t)\times \Omega_x}|\nabla u|^2\psi dxds\nonumber\\
		&+\int_{(0,t)\times \Omega_x \times \mathbb{R}^3_v}\frac{|(u-v)f-\nabla_{v}f|^2}{f}\psi dxdvds\leq \frac{1}{2}\int_{(0,t)\times\Omega_{x}}|u|^2(\partial_t\psi+\Delta_x\psi) dxds\nonumber\\
		&+\frac{1}{2}\int_{(0,t)\times\Omega_{x}}|u|^2u\cdot \nabla_{x}\psi dx
		+\int_{(0,t)\times \Omega_x \times \mathbb{R}^3_v}\left(\frac{|v|^2}{2}+\log f\right)f\partial_t\psi dxdvds\nonumber\\
		&+ \int_{(0,t) \times \Omega_x \times \mathbb{R}^3_v} \left( \frac{|v|^2}{2} + \log f \right) fv \cdot \nabla_{x}\psi  dxdvds
		+\int_{(0,t)\times\Omega_{x}}(P-\bar{P})u\cdot\nabla_{x}\psi dxds.
	\end{align*}
	\end{definition}
	\par In the following, the moments of \(f(t,x,v)\) are denoted
	\begin{align*}
		m_{\alpha} f(t,x) &= \int_{\mathbb{R}_v^3}f(t,x,v)|v|^{\alpha}dv,\\
		M_{\alpha} f(t) &=\int_{\mathbb{R}_{xv}^6}f(t,x,v)|v|^{\alpha}dxdv,
	\end{align*}
	for any \(t\in [0,T]\) and \(\alpha \geq 0\). Note that
	\begin{align*}
		M_{\alpha}f(t)=\int_{\mathbb{R}_x^3}m_{\alpha}f(t,x)dx.
	\end{align*}
	
	\par Our first main theorem is stated as follows.
	
	\begin{theorem}\label{thm:weak}
	 Assume that the initial data \(\left(u_{in},f_{in}\right)\) satisfies
	\begin{equation}
		\left\{
		\begin{array}l
			\label{0}
			u_{in}\in L^2_{\sigma}(\mathbb{R}^3_x);\\
			f_{in}\in L^{\infty}\cap L^{1}(\mathbb{R}^6_{xv}),~ f_{in}\geq 0;\\
			\left(|x|^2+|v|^{\kappa}+|\log f_{in}|\right)f_{in} \in L^1(\mathbb{R}^6_{xv}),~\kappa\geq 3.
		\end{array}
		\right.
	\end{equation}
	Then, for the Cauchy problem on the system \eqref{eq:NSVFK} with $\left.(u,f)\right|_{t=0}=\left(u_{in},f_{in}\right)$, there exists a global suitable weak solution in the sense of Definition \ref{def:weak}.
	\end{theorem}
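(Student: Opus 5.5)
We will construct the solution by a regularization and compactness scheme. At the approximate level we mollify the convective velocity in the Navier--Stokes equation, replacing $(u\cdot\nabla)u$ by $(J_\epsilon u\cdot\nabla)u$ as in Leray's or Caffarelli--Kohn--Nirenberg's construction. In the Vlasov--Fokker--Planck equation we replace the drift $u$ by $J_\epsilon u$. In the friction term we truncate the velocity moments, for instance replacing $\int (v-u)f\,dv$ by $\int (v-u)f\chi_\epsilon(v)\,dv$ with a smooth cutoff, or alternatively by mollifying $f$ in $x$. Smooth data $(u_{in}^\epsilon,f_{in}^\epsilon)$ approximate $(u_{in},f_{in})$ in the norms of \eqref{0}.

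For fixed $\epsilon$ we obtain global smooth solutions by a fixed point argument. The map sends $u\mapsto f$, a linear kinetic Fokker--Planck equation with smooth drift. It then sends $f\mapsto \tilde u$, a linear Stokes-type problem with a bounded forcing. The argument is closed with the $\epsilon$-uniform estimates below.

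The uniform a priori bounds are the following.
\begin{itemize}
\item $L^1$ conservation of $f$, and the maximum principle $\|f\|_{L^\infty}\le e^{3t}\|f_{in}\|_{L^\infty}$, which uses $\nabla_v\cdot v=3$ and $\nabla\cdot J_\epsilon u=0$.
\item The entropy--energy identity, giving (iv). It comes from testing the $u$-equation with $u$ and the $f$-equation with $\tfrac12|v|^2+\log f$.
\item The moment bound $|x|^2f\in L^\infty L^1$, which follows from $\frac{d}{dt}M$ controlled by $|x||v|f$.
\item The bound on $|\log f|f$, using the standard splitting $f\log^- f\le C f(|x|^2+|v|^2)+Ce^{-(|x|^2+|v|^2)/2}$.
\end{itemize}
For the higher moments $M_\kappa f$ we compute
\[
\frac{d}{dt}M_\kappa f\le C M_{\kappa-2}f - \kappa M_\kappa f+\kappa\int |v|^{\kappa-1}|u|f .
\]
The last term is bounded by $\|u\|_{L^{p}}\|m_{\kappa-1}f\|_{L^{p'}}$, and the interpolation $\|m_\alpha f\|_{L^{(\kappa+3)/(\alpha+3)}}\lesssim \|f\|_\infty^{\cdot}M_\kappa^{(\alpha+3)/(\kappa+3)}$ lets us close the estimate via Gronwall. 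This closure needs $u\in L^1_tL^p_x$ for large $p$, which is item (ii). To obtain the latter, we first regard the friction term as a force in $L^\infty_tL^{q}_x$ for suitable $q$, using $m_1f,m_0f$ and the moment bound. We then split $u=u_1+u_2$, following Tao's $L^p$ decomposition: $u_1$ solves the forced heat/Stokes equation and is bounded in $L^1_tL^p$ by maximal regularity, while $u_2$ carries the energy part. A bootstrap that alternates between $M_\kappa$ and $\|u\|_{L^1L^p}$ then closes on each $[0,T]$. The $L^\eta$ bound on $|v|^{\bar\kappa}f\log f$ follows from $f\in L^\infty$, the bound $|f\log f|^\eta\lesssim f^{\eta-\delta}+f^{\eta+\delta}$, and $\eta\bar\kappa\le\kappa$.

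Passing to the limit, the velocity $u^\epsilon$ converges strongly in $L^2_{loc}$ and $L^3_{loc}$ by Aubin--Lions. For the kinetic part we use the DiPerna--Lions velocity averaging lemma, applied to the transport equation whose right side is bounded in $L^2_t H^{-1}_v$ owing to the dissipation $|(u-v)f-\nabla_vf|^2/f$. This gives strong $L^1_{loc}$ convergence of the averages $\int f^\epsilon\varphi\,dv$. To upgrade this to strong convergence of $f^\epsilon$ itself, we use the $\nabla_v\sqrt f$ bound coming from the dissipation, which gives compactness in $v$, together with averaging for regularity in $x$. Strong convergence of $|v|^k f^\epsilon$ and $|v|^kf^\epsilon\log f^\epsilon$ on $\Omega_x\times\mathbb{R}^3_v$ then follows by uniform integrability, since $\kappa$ moments beyond $k$ give tail control and $\log$ is handled by $L^\eta$, $\eta>1$. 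We expect this step to be the main obstacle. It is also what yields convergence of the non-local friction term $\int (v-u)f\,dv$ in $L^1_{loc}$, since $u^\epsilon m_0f^\epsilon\to u\,m_0f$ requires strong convergence of one factor with integrability of the other.

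Finally we derive the local inequalities (v) and (vi). For each $\epsilon$ they hold as identities, obtained by testing with $u\psi$ and with $(\tfrac12|v|^2+\log f)\phi$ or $(\tfrac12|v|^2+\log f)\psi$. For (vi) the test function is independent of $v$, so the $v$-integral over $\mathbb{R}^3_v$ requires the strong convergence of $|v|^3f\log f$ and $|v|^3 f$, and this is why $\kappa>3$ is needed. In these identities the friction contributions from the $u$-equation and the $f$-equation cancel exactly when $\phi=\psi$, and otherwise leave the stated cross terms. The pressure is recovered by $P=\mathcal{R}_i\mathcal{R}_j(u_iu_j)-(-\Delta)^{-1}\nabla\cdot\int(v-u)f\,dv$, which lies in $L^{5/3}$. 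The dissipative terms are weakly lower semicontinuous: $|\nabla u|^2$ is convex, and $|A|^2/f$ is jointly convex in $(A,f)$. The right-hand sides converge by the strong convergences above, which yields (v) and (vi).
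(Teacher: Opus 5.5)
Your overall architecture matches the paper's: regularize, prove $\epsilon$-uniform bounds (including a Tao-type $L^1_tL^p_x$ bound and high moments), use Aubin--Lions for $u$ and kinetic compactness for $f$, then pass to the limit in (iv)--(vi). However, your first regularization does not work. With drift $J_\epsilon u$ in the kinetic equation and fluid force $\int(v-u)\chi_\epsilon(v)f\,dv$, adding the two balances gives
\[
\frac{d}{dt}\Big(\tfrac12\|u\|_{L^2}^2+\int\big(\tfrac12|v|^2f+f\log f\big)\Big)+\|\nabla u\|_{L^2}^2+\int\frac{|(J_\epsilon u-v)f-\nabla_vf|^2}{f}
=\int u\cdot(v-u)\chi_\epsilon f-\int J_\epsilon u\cdot(v-J_\epsilon u)f .
\]
The right side is unsigned: it contains $\int|J_\epsilon u|^2f-\int|u|^2\chi_\epsilon f$, which is unsigned even when $\chi_\epsilon\equiv1$. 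Energy and dissipation do not control it globally in time, so this approximate system is not dissipative and you do not get (iv) uniformly in $\epsilon$.

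Your alternative does work with an even kernel, reading it as the force $\int(v-u)(J_\epsilon f)\,dv$. The $v$-terms cancel, and the remainder $\int\big(|J_\epsilon u|^2-J_\epsilon(|u|^2)\big)m_0f\,dx$ is nonpositive by Jensen. The paper instead pairs the drift $\theta_\varepsilon*u$ with the adjoint force $\theta_\varepsilon*\int(v-\theta_\varepsilon*u)f\,dv$, which cancels exactly. It puts the velocity cutoff into the kinetic drift as $v\gamma_\delta(v)$ with $v\cdot\nabla_v\gamma_\delta\le0$. The $L^p$ bounds then stay uniform, and the only remainder is $\int|v|^2f(1-\gamma_\delta)\le2\delta M_3f$. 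In every version the \emph{local} identities do not cancel exactly at fixed $\epsilon$, since $J_\epsilon(u\psi)\neq(J_\epsilon u)\psi$. The commutator terms (such as the paper's $M_9$ and $N_8$) must be shown to vanish in the limit.

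The $L^1_tL^p_x$ bound is where the real content sits, and saying that $u_2$ ``carries the energy part'' does not supply it. Energy plus Sobolev gives only $u\in L^2_tL^6_x$. The drift term in $\frac{d}{dt}M_\kappa f$ pairs $u$ with $m_{\kappa-1}f\in L^{(\kappa+3)/(\kappa+2)}_x$, so it needs $u\in L^1_tL^{\kappa+3}_x$, which is beyond $L^6$ once $\kappa>3$. What is needed is the bilinear bound
\[
\Big\|\int_0^te^{(t-s)\Delta}\mathbb{P}\nabla\cdot(J_\epsilon u\otimes u)\,ds\Big\|_{L^1_tL^\infty_x}\lesssim\|\nabla u\|_{L^2_{t,x}}^2 .
\]
It is proved by the Littlewood--Paley trichotomy and Bernstein's inequality (the paper's \eqref{25}) and interpolated with the elementary $L^1_tL^{3/2}_x$ bound.

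No bootstrap is needed either. $M_3f$ closes first using only $\|u\|_{L^6}\lesssim\|\nabla u\|_{L^2}$ and \eqref{28}. Then $\int vf\,dv\in L^\infty_tL^{3/2}_x$ and $u\,m_0f\in L^2_tL^{3/2}_x$ already put the forced part of $u$ in $L^1_tL^p_x$ for all $p<\infty$ by heat-kernel smoothing. Only after that is $M_\kappa f$ bounded.

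The rest of your plan is sound and differs from the paper mainly in the compactness of $f$. You combine velocity averaging with the uniform bound on $\nabla_vf=2\sqrt f\,\nabla_v\sqrt f$; the flux $(J_\epsilon u-v)f-\nabla_vf$ is bounded in $L^2$ because $|A|^2\le\|f\|_{L^\infty}|A|^2/f$. The paper instead applies the DiPerna--Lions compactness theorem for $\partial_t+v\cdot\nabla_x-\Delta_v$ with a source bounded and tight in $L^1$. That is why it must prove $L^1$ bounds and tightness in $x$ and $v$ for $\nabla_v\cdot[(\theta_\varepsilon*u-v\gamma_\delta)f]$, including a weighted $|x|^{3/20}$ estimate.

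Your joint-convexity argument for $|A|^2/f$ is also cleaner than the paper's expansion into $|u-v|^2f+4|\nabla_v\sqrt f|^2-6f$. That expansion forces the paper to pass to the limit in $\int|\theta_\varepsilon*u|^2f$ over all of $\mathbb{R}^3_x$, using $|x|^2f$ for the tails. One slip: (vi) needs strong convergence of $|v|^3f$ and $|v|f\log f$, not of $|v|^3f\log f$.
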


\begin{remark} Compared with the usual definition of weak solutions, these properties $(ii), (v), (vi)$ in Definition \ref{def:weak} are completely new. It seems difficult to derive the uniform bound on $|v|^mf$ for $m>3$ due to the whole space by Lemma \ref{lem:f}, although one can assume some regular initial data (see the details in Section 3.2), where the large $L^p$ norm in the space is necessary for the velocity.  With the help of Tao's Littlewood-Paley decomposition (cf.~\cite{taoLocalisationCompactnessProperties2013}) to the Navier-Stokes equations,   the a priori estimates of \(\norm{u}_{L^1_tL^p_x}\) are obtained, which implies the $L^\infty_tL^1_{xv}$ norm of $|v|^mf$ is uniformly bounded (see Lemma \ref{lem:ulp} and \ref{lem:mf} for mare details). Combining these estimates and DiPerna-Lions compactness method in \cite{dipernaFokkerPlanckBoltzmannEquation1988}, the strong convergence of $f$, $|v|^mf$ and $|v|^mf\log f$ are obtained (see Section 5.2, 5.3, 5.5), which plays an important role in the local energy inequalities of $ (v), (vi)$. In order to prove  $ (v), (vi)$, we also have used Lions' weak convergence method in \cite{lionsMathematicalTopicsFluid1998} (see Section 5.1 for details) and Aubin-Lions lemma for the strong convergence of the velocity (see Section 5.4).

	%The main innovations of this article are as follows:\\
%	(i) In terms of consistent estimation. We derive  And on this basis, we get a priori estimate of \(\norms{|v|^mf}_{L^{\infty}_tL^1_{xv}}\) if
%	\begin{align*}
%		 \norms{|v|^mf_{in}}_{L^{\infty}_tL^1_{xv}} \leq C
%	\end{align*}
%	holds true. \\
%	(ii) In terms of convergence. We get \(f^{\varepsilon,\delta}\) is compact in any \(L^{p}((0,T)\times\mathbb{R}^3_x\times\mathbb{R}^3_v)\) by Lion's methods, which \(p\in [1,+\infty)\). Moreover, by Lebesgue–Vitali theorem, we can derive \(f^{\varepsilon,\delta}\log f^{\varepsilon,\delta}\) is compact in \(L^{1}((0,T)\times\Omega_{x}\times\mathbb{R}^3_v)\).\\
%	(iii) In terms of regularity. We prove Hausdorff measure of \(u\), and \(f\) is H\"{o}lder continuous in \((z_0,v)\), which \(z_0\) is regular point of \(u\).
\end{remark}

	\par The proof of Theorem \ref{thm:weak} is based on the global well-posedness for the following regularized system:
		\begin{eqnarray}\label{eq:NSVFK0}
		\left\{
		\begin{array}{llll}
			%\label{eq:0}
			\displaystyle \partial_t u^{\varepsilon,\delta} + (\theta_{\varepsilon}\ast u^{\varepsilon,\delta})\cdot \nabla u^{\varepsilon,\delta}- \Delta u^{\varepsilon,\delta} + \nabla_{x} P^{\varepsilon,\delta} -
			\theta_{\varepsilon}\ast\left\{\int_{\mathbb{R}_v^3}(v-\theta_{\varepsilon} \ast u^{\varepsilon,\delta})f^{\varepsilon,\delta}dv\right\} = 0, \\
			\displaystyle \partial_t f^{\varepsilon,\delta} + (v\cdot \nabla_x) f^{\varepsilon,\delta}+ \nabla_v \cdot [(\theta_{\varepsilon} \ast u^{\varepsilon,\delta}-v\gamma_
			\delta(v))f^{\varepsilon,\delta}-\nabla_v f^{\varepsilon,\delta}] = 0, \\
			\displaystyle \nabla_{x}\cdot u^{\varepsilon,\delta} =0 ,\\
			\displaystyle f^{\varepsilon,\delta}(0,x,v)=f_{in}^\delta(x,v),\quad u^{\varepsilon,\delta}(0,x)=u_{in}^\delta(x).	
		\end{array}
		\right.
	\end{eqnarray}
	Here, for $\varepsilon >0$, we define $\theta_{\varepsilon} $ = \(\varepsilon^{-3}\theta(\frac{x}{\varepsilon})\) with \(\theta \in  C^{\infty}_c(\mathbb{R}_x^3)\), \(\theta\geq 0\), and \(\int_{ \mathbb{R}_x^3}\theta=1\). Moreover, for $\delta>0$ we introduce  $\gamma_\delta \in C^{\infty}(\mathbb{R}_v^{3})$ whose support is included in the ball $B(0,\frac{1}{\delta})$ such that $0\leq \gamma_\delta \leq 1,\gamma_\delta=1$
	on $B(0,\frac{1}{2\delta})$, \(v\cdot\nabla_{v}\gamma_{\delta}(v) \leq 0\), and \(|v\cdot\nabla_{v}\gamma_{\delta}(v)| \leq C\). 
    %and $\gamma_\delta(v)\rightarrow 1$ as $\delta$ goes to zero. 
    Besides,  assume that
    \begin{align}\label{udelta}
        u_{in}^{\delta}(x):=\theta_{\delta}\ast u_{in}, \quad \delta>0, 
    \end{align}
    where $\theta$ is as above,
    % which $\delta >0$ and define $\theta_{\delta} $ = \(\delta^{-3}\theta(\frac{x}{\delta})\) with \(\delta \in  C^{\infty}(\mathbb{R}^3)\), \(\theta\geq 0\), and \(\int_{ \mathbb{R}_x^3}\theta=1\).
    and 
    \begin{align}\label{fdelta}
          f_{in}^{\delta}(x,v):=\varrho_{\{|v|
	\leq
	\frac{1}{\delta}\}} f_{in},
    \end{align}
     where $\varrho_{\{|v|\leq\frac{1}{\delta}\}}=1$ for $|v|\leq\frac{1}{\delta}$ and $=0$ otherwise.
     % and \(f_{in}^{\delta}\) converges to \(f_{in}\) as \(\delta \rightarrow 0.\)
	
For \eqref{eq:NSVFK0}, we have the following global well-posedness result.

\begin{theorem}\label{thm:solution}
		Let  $f_{in}, u_{in}$ be assumed  as in Theorem  \ref{thm:weak}. Then there exists a global strong solution \(\left(u^{\varepsilon,\delta},f^{\varepsilon,\delta}\right)\) of the regularized problem \eqref{eq:NSVFK0} with the initial data \eqref{udelta} and \eqref{fdelta}, which satisfies\\
	(i)
	\begin{align*}
		0\leq f^{\varepsilon,\delta} &\in L^{\infty}\left((0,T);L^1(\mathbb{R}^6_{xv})\right)\cap L^{\infty}\left((0,T)\times\mathbb{R}^6_{xv}\right);\\
		u^{\varepsilon,\delta} &\in L^{\infty}\left((0,T);L^2(\mathbb{R}^3_x)\right) \cap L^{\infty}\left((0,T);H^1(\mathbb{R}^3_x)\right);\\
		|v|^2f^{\varepsilon,\delta} &\in L^{\infty}\left((0,T);L^1(\mathbb{R}^6_{xv})\right)
	\end{align*}
	and
	\begin{align*}
	    &\norm{u^{\varepsilon,\delta}}^{2}_{L^{\infty}(0,T;{L^{2}_{x}})}+\norms{|v|^{2}f^{\varepsilon,\delta}}_{L^{\infty}(0,T;{L^{1}_{xv}})}+\norm{\nabla u^{\varepsilon,\delta}}^{2}_{L^{2}(0,T;{L^{2}_{x}})}+\norm{\nabla u^{\varepsilon,\delta}}^{2}_{L^{\infty}(0,T;{L^{2}_{x}})}\nonumber \\
		\leq & C\left(\varepsilon,\delta,\norm{f_{in}}_{L^{\infty}_{xv}}, \norm{f_{in}}_{L^{1}_{xv}},\norms{|v|^{2}f_{in}^{\delta}}_{L^{1}_{xv}},\norm{u_{in}}_{L^2_x}^2\right)\left(T+1\right)^5e^{C(\varepsilon)(T+1)^2e^{2T}}.
	\end{align*}
	(ii) \(\norms{f^{\varepsilon,\delta}}_{L^{\infty}_tL^1_{xv}} \leq \norms{f_{in}}_{L^1_{xv}}\), \quad \(\norms{f^{\varepsilon,\delta}}_{L^{\infty}_{txv}} \leq e^{3T}\norms{f_{in}}_{L^{\infty}_{xv}}\).
	\end{theorem}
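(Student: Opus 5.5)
We construct the solution of \eqref{eq:NSVFK0} by a fixed-point/iteration scheme, then globalize it with a priori bounds that depend on $\varepsilon,\delta$. The first step is to freeze the velocity. Given $w\in L^\infty(0,T;H^1_\sigma)$, set $\bar w=\theta_\varepsilon\ast w$, which is smooth and bounded in $x$ with $\|\bar w\|_{L^\infty_{tx}}\le C(\varepsilon)\|w\|_{L^\infty_tL^2_x}$. We then solve the linear Vlasov--Fokker--Planck equation
\[
\partial_t f + v\cdot\nabla_x f + \nabla_v\cdot\bigl((\bar w - v\gamma_\delta(v))f - \nabla_v f\bigr)=0,\qquad f(0)=f^\delta_{in}.
\]
Since the drift $\bar w - v\gamma_\delta$ is smooth and bounded, this is a standard linear hypoelliptic problem. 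The key facts are:
\begin{itemize}
\item the maximum principle gives $f\ge0$;
\item integrating the divergence-form equation gives $\|f\|_{L^1}$ conservation;
\item rewriting the equation as $\partial_t f+v\cdot\nabla_xf+(\bar w-v\gamma_\delta)\cdot\nabla_vf-\Delta_vf = f\,\nabla_v\cdot(v\gamma_\delta)$ with $\nabla_v\cdot(v\gamma_\delta)=3\gamma_\delta+v\cdot\nabla_v\gamma_\delta\le 3$ (using $v\cdot\nabla_v\gamma_\delta\le0$), the comparison principle gives $\|f\|_{L^\infty}\le e^{3T}\|f_{in}\|_{L^\infty}$.
\end{itemize}
This already proves part (ii), uniformly in the iterate. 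For the $|v|^2$ moment, we multiply the equation by $|v|^2$ and integrate by parts. This gives $\frac{d}{dt}M_2f \le 2\int \bar w\cdot v f + 6M_0f$, and hence, by Cauchy--Schwarz, $M_2f(t)\le C(1+\|\bar w\|_{L^\infty}^2)(T+1)e^{T}$.

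Next we solve the fluid part. With $f$ as above, the force $F=\theta_\varepsilon\ast\int(v-\theta_\varepsilon\ast u)f\,dv$ is controlled because $m_0f$ and $m_1f$ are in $L^1_x$. Mollifying once more, $F$ becomes bounded in $L^\infty_tH^k_x$ with constants depending on $\varepsilon$, $M_0f$, $M_1f$ and $\|u\|_{L^2}$. The system for $u$ is a Leray-regularized Navier--Stokes system with transport field $\theta_\varepsilon\ast u$ and a damping term $-(\theta_\varepsilon\ast m_0f\,\theta_\varepsilon\ast u)$, which is linear in $u$. We obtain local existence by a contraction mapping in $C([0,T_*];H^1_\sigma)$, after applying the Leray projection to eliminate $P$. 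Coupling the two steps, the map $w\mapsto u$ is a contraction on a short time interval, using the Lipschitz dependence of $f$ on $\bar w$. That Lipschitz dependence follows from an $L^2$ (or weighted $L^1$) stability estimate for the linear kinetic equation, controlled by $\|\nabla_v f\|_{L^2}$ via the dissipation.

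For globalization we need a priori estimates. Testing the $u$ equation with $u$, we find that the transport term vanishes because $\nabla\cdot(\theta_\varepsilon\ast u)=0$. The coupling term $\int F\cdot u$ is bounded by $\|\theta_\varepsilon\ast m_1f\|_{L^2}\|u\|_{L^2}$, and we bound $\|\theta_\varepsilon\ast m_1 f\|_{L^2}\le C(\varepsilon)M_1f\le C(\varepsilon)(M_0f+M_2f)$. We combine this with the $M_2$ inequality $\frac{d}{dt}M_2f\le C(\varepsilon)\|u\|_{L^2}(M_0+M_2)+6M_0$. Gronwall on the sum $\|u\|_{L^2}^2+M_2f$ then yields a bound with exponential dependence, and a nonlinear Gronwall gives the stated form $(T+1)^5e^{C(\varepsilon)(T+1)^2e^{2T}}$. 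At this level we deliberately avoid the exact energy identity, since $\gamma_\delta$ breaks the cancellation. Finally we test with $-\Delta u$. The term $\int(\theta_\varepsilon\ast u\cdot\nabla u)\cdot\Delta u$ is controlled by $\|\theta_\varepsilon\ast u\|_{L^\infty}\|\nabla u\|_{L^2}\|\Delta u\|_{L^2}\le C(\varepsilon)\|u\|_{L^2}\|\nabla u\|\|\Delta u\|$, and the forcing by $C(\varepsilon)\|F\|_{L^2}$. Absorbing $\|\Delta u\|^2$ and applying Gronwall bounds $\|\nabla u\|_{L^\infty_tL^2}$ and $\|\nabla u\|_{L^2_tL^2}$. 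Since none of these quantities can blow up, the local solution extends to all of $[0,T]$.

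The main obstacle is the coupled $M_2f$--$\|u\|_{L^2}$ closure. Without the exact energy cancellation, the cutoff $\gamma_\delta$ and the double mollification destroy the structure, so the constants grow super-exponentially. To handle this I would first try bounding $\|\bar u\|_{L^\infty}\le C(\varepsilon)\|u\|_{L^2}$ and performing the two-stage Gronwall described above. The contraction step's Lipschitz estimate for $f$ in a moment-weighted space also needs care, but it is routine given the smooth bounded drift.
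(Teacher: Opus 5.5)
Your proposal has a genuine gap in the globalization step, at the point you yourself flag as the main obstacle. After discarding the damping contribution $-\int|\theta_\varepsilon\ast u|^2f$ and bounding the drift by $\|\theta_\varepsilon\ast u\|_{L^\infty}\le C(\varepsilon)\|u\|_{L^2}$, your two inequalities are
\[
\tfrac{d}{dt}\|u\|_{L^2}^2\le C(\varepsilon)\|u\|_{L^2}(M_0f+M_2f),\qquad
\tfrac{d}{dt}M_2f\le C(\varepsilon)\|u\|_{L^2}(M_0f+M_2f)+6M_0f .
\]
Both contain the product $\|u\|_{L^2}M_2f$. For $Y=1+\|u\|_{L^2}^2+M_2f$ you therefore only get $Y'\le C(\varepsilon)Y^{3/2}$. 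This Riccati-type inequality gives a bound only up to a time $T_*\sim (C(\varepsilon)\sqrt{Y(0)})^{-1}$, which is no better than a local existence time. The system is genuinely compatible with blow-up: $a'=Cb$, $b'=Cab$ gives $a'=\frac C2a^2+c$. So no Gronwall argument, linear or nonlinear, yields a bound on $[0,T]$ for arbitrary $T$, and the continuation argument cannot be run. Your $-\Delta u$ step needs $\sup_t\|u\|_{L^2}$ and $M_1f$ on all of $[0,T]$, so it inherits the same defect.

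The missing idea is exactly the energy cancellation you set aside. Your diagnosis that $\gamma_\delta$ and the double mollification destroy it is incorrect. Write $u,f$ for $u^{\varepsilon,\delta},f^{\varepsilon,\delta}$ and $\bar u=\theta_\varepsilon\ast u$.
\begin{itemize}
\item Testing the fluid equation with $u$ gives $\int\bar u\cdot(v-\bar u)f$. This holds for even $\theta$; otherwise an extra term bounded by $C(\varepsilon)\|u\|_{L^2}^2M_0f$ appears, which is still linear.
\item Testing the kinetic equation with $|v|^2/2$ gives $\int v\cdot(\bar u-v)f+\int(1-\gamma_\delta)|v|^2f+3\int f$.
\end{itemize}
Adding the two,
\[
\frac12\frac{d}{dt}\Bigl(\|u\|_{L^2}^2+\int|v|^2f\,dxdv\Bigr)+\|\nabla u\|_{L^2}^2+\int|\bar u-v|^2f\,dxdv=\int(1-\gamma_\delta)|v|^2f\,dxdv+3\int f\,dxdv .
\]
The damping term you discarded is precisely what completes the square. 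The cutoff leaves only the linear remainder $\int(1-\gamma_\delta)|v|^2f\le M_2f$. Gronwall then gives
\[
\|u\|_{L^\infty_tL^2}^2+\||v|^2f\|_{L^\infty_tL^1}+\|\nabla u\|_{L^2_tL^2}^2\le C(T+1)e^{2T},
\]
with $C$ depending only on the data and uniform in $\varepsilon,\delta$, which Section 3 also needs. This is the paper's route. After it, your $H^1$ estimate coincides with the paper's. The factor $e^{C(\varepsilon)(T+1)^2e^{2T}}$ comes from $\exp\bigl(C(\varepsilon)\int_0^T\|u\|_{L^2}^2ds\bigr)$ in that step, not from the $L^2$--$M_2$ closure.

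The rest is acceptable. Your local existence by freezing the velocity and iterating differs from the paper's single Duhamel fixed point for $(u,f,|v|^2f)$ built on the heat semigroup and the kernel $G$. It is a reasonable alternative, provided your stability estimate for $f$ carries a velocity weight strong enough to control $\int vf\,dv$. Your maximum-principle proof of (ii) via $\nabla_v\cdot(v\gamma_\delta)\le 3$ is equivalent to the paper's $L^p$ estimates with $p\to\infty$.
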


     Moreover, as an immediate consequence of Theorem \ref{thm:weak} together with  Lemma \ref{regular} and Lemma \ref{singular} in the appendix, we are able to describe the Hausdorff dimension of set of singular points of the fluid velocity $u$, and also establish the $\alpha$-H\"{o}lder continuity of $f$ at the regular points of $u$. The main results are stated as follows.
	
   \begin{theorem}\label{thm:regular}
   	Let all the conditions of  Theorem  \ref{thm:weak} be satisfied. It is further assumed that
	\begin{align}
		\label{9.2}
		|v|^{15+\varkappa}f_{in} \in L^1(\mathbb{R}^6_{xv})
	\end{align}
	with \(\varkappa >0\).  Then, there is a small \(\varepsilon_0 >0\) such that if $u$ satisfies
	\begin{align*}
		\limsup_{r\to 0+}\frac{1}{r}\iint_{Q(z_0,r)}|\nabla u|^2<\varepsilon_0,
	\end{align*}
     where $Q(z_0,r):=(t_0-r^2,t_0)\times B(x_0,r)$ for $z_0=(t_0,x_0)$, then  \(u\) is regular at \(z_0\) in the sense that there exists $r>0$ such that $u\in L^\infty (Q(z_0,r))$.
    Moreover, it holds
	\begin{align*}
		P^1(S_{u,sing}) = 0,
	\end{align*}
	where
	\begin{align*}
		S_{u,sing} = \{z \in (0, T]\times \Omega_x : u \text{ is not regular at } z\}.
	\end{align*}
   \end{theorem}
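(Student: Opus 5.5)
The plan is to treat the system locally as the Navier--Stokes equations with a forcing term. Write
\[
F:=\int_{\mathbb{R}^3_v}(v-u)f\,dv=m_1 f\,e-u\,m_0 f
\]
(componentwise $j_f-\rho_f u$), where $j_f=\int vf\,dv$ and $\rho_f=\int f\,dv$. Then run a Caffarelli--Kohn--Nirenberg type $\varepsilon$-regularity argument, using the local energy inequality (v)/(vi) from Theorem \ref{thm:weak}. This is presumably the content of Lemma \ref{regular} and Lemma \ref{singular} in the appendix.

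The first step is to obtain integrability of the force from the extra moment assumption \eqref{9.2}. By the moment propagation (Lemma \ref{lem:mf}, with $\kappa=15+\varkappa$), $|v|^{15+\varkappa}f\in L^\infty_tL^1_{xv}$. Combined with $f\in L^\infty$, the standard interpolation
\[
m_\alpha f\le C\|f\|_{L^\infty}^{\frac{k-\alpha}{k+3}}(m_k f)^{\frac{\alpha+3}{k+3}}
\]
gives $\rho_f\in L^\infty_tL^{(k+3)/3}_x$ and $j_f\in L^\infty_tL^{(k+3)/4}_x$. With $k>15$ this means $j_f\in L^\infty_tL^{q}_x$ with $q>9/2$ and $\rho_f\in L^\infty_tL^{q'}$ with $q'>6$. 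Together with $u\in L^\infty L^2\cap L^2\dot H^1$, the force $F$ then lies in a space $L^{p}_tL^{r}_x$ with $2/p+3/r<2$, which is scaling-subcritical for a forcing term. The exponent $15$ is presumably tuned exactly so that the term $\rho_f u$, or the term $fv\cdot u$ appearing in the local energy inequality, is controlled with a positive power of $r$.

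The second step is the $\varepsilon$-regularity criterion. I would define the scale-invariant quantities
\[
A(r)=\sup_t\frac1r\int_{B_r}|u|^2,\quad E(r)=\frac1r\iint_{Q_r}|\nabla u|^2,\quad C(r)=\frac1{r^2}\iint_{Q_r}|u|^3,\quad D(r)=\frac1{r^2}\iint_{Q_r}|P-\bar P|^{3/2}.
\]
Feeding the local energy inequality with $\psi$ a cutoff gives $A(r/2)+E(r/2)\lesssim C(r)^{2/3}+C(r)+C(r)^{1/3}D(r)^{2/3}+r^{\beta}$. The $r^\beta$ term, with $\beta>0$, comes from the $f$-terms, which are bounded via Step 1 together with the Hölder inequality.

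For the pressure, split $P$ locally into a harmonic part and Riesz-transform parts of $u\otimes u$ and of $\nabla^{-1}F$; this gives $D(\theta r)\lesssim \theta D(r)+\theta^{-2}C(r)+r^\beta$. Interpolation yields $C(\theta r)\lesssim \theta^{-3}(A+E)^{3/2}(r)$-type bounds of the form in Lin's proof. Iterating from the hypothesis $\limsup E(r)<\varepsilon_0$ then gives the decay $C(r)+D(r)\le C r^{\gamma}$ for small $r$. Campanato-type bounds, or the Lin/Ladyzhenskaya--Seregin criterion $C+D<\varepsilon_1$ upgraded with subcritical forcing, then give $u\in L^\infty(Q(z_0,r))$. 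Alternatively, if the appendix Lemma \ref{regular} already states an $\varepsilon$-regularity criterion for Navier--Stokes with subcritical force, I would just verify its hypotheses via Step 1.

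The third step is the dimension statement, which follows by the standard Vitali covering argument. If $z\in S_{u,sing}$, then $\limsup \frac1r\iint_{Q(z,r)}|\nabla u|^2\ge\varepsilon_0$. Cover by disjoint parabolic cylinders $Q(z_i,r_i)$ with $r_i<\delta$ and $\sum r_i\le\varepsilon_0^{-1}\iint_{\cup Q_i}|\nabla u|^2$. The union has Lebesgue measure $\lesssim\delta^2\sum r_i$, which is finite, so it tends to $0$; absolute continuity of $|\nabla u|^2\in L^1$ then gives $P^1(S_{u,sing})=0$ as $\delta\to0$.

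The main obstacle I expect is Step 2's treatment of the coupling terms. In the local energy inequality these are $\int fv\cdot u\psi$ and the $f\log f$ terms, and in the pressure they are the $\rho_f u$ part of the force. The difficulty is showing that they contribute a genuinely positive power $r^\beta$ rather than a critical quantity. I would first try to bound $\iint_{Q_r}|j_f||u|\le \|j_f\|_{L^\infty L^{q}}\|u\|_{L^3(Q_r)}|Q_r|^{\cdot}$ and check the exponent bookkeeping, which is where the threshold $15+\varkappa$ should appear.
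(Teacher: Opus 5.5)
Your proposal is correct and follows essentially the paper's argument. The paper propagates the $|v|^{15+\varkappa}$ moment and uses the interpolation \eqref{8} to place the drag force $\int(v-u)f\,dv$ in $L^{5/2+\varepsilon_1}_{t,x}$; as you identified, the $u\,m_0f$ part is exactly where the moment order must exceed $15$. It then cites Lemmas~\ref{regular} and~\ref{singular} directly rather than redoing the $\varepsilon$-regularity iteration and the covering argument. Your mixed-norm bound yields the isotropic space those lemmas require: take $u\in L^q_tL^b_x$ with $2/q+3/b=3/2$ and $q=\frac{10(k+3)}{3k+27}>5/2$, where $k=15+\varkappa$.
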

	%\begin{remark}
%
%\end{remark}
	
\begin{theorem}\label{thm:holder}
Let all the conditions of  Theorem  \ref{thm:weak} together with \eqref{9.2} be satisfied. Given \((0,T]\times\Omega_{x}\times\Omega_{v} \subset (0,T)\times\mathbb{R}^3_x\times\mathbb{R}^3_v\), let
	\begin{align*}
		S_{u,reg} = \{z \in (0, T]\times \Omega_x : u \text{ is regular at } z\},
	\end{align*}
    then for any $z_0\in S_{u,reg}$, the solution \(f(t,x,v)\) of \eqref{eq:NSVFK} is \(\alpha\)-H\"{o}lder continuous 
    %with respect to \((z_0, v)\) 
    in \(Q_{\text{int}}\) %and
    satisfying
	\begin{align*}
		\|f\|_{C^\alpha(Q_{\text{int}})} \leq C \left( \|f\|_{L^2(Q_{\text{ext}})} + \|f\|_{L^\infty(Q_{\text{ext}})} \right),
	\end{align*} 
    with constants \(\alpha = \alpha(\Lambda)\in (0,1)\) and \( C = C(\Lambda, Q_{\text{ext}}, Q_{\text{int}}) \), where  \( Q_{\text{ext}} := (t_0-r^2,t_0)\times B(x_0,r)\times  B(v,r)\) and \( Q_{\text{int}} := (t_0-r_1^2,t_0)\times B(x_0,r_1)\times  B(v,r_1) \) with \( r_1 < r_0 \).
\end{theorem}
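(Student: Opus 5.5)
The approach is to view the $f$-equation in \eqref{eq:NSVFK} as a linear kinetic Fokker--Planck equation with a drift,
\begin{align*}
\partial_t f + v\cdot\nabla_x f = \Delta_v f - \nabla_v\cdot\big((u-v)f\big) = \nabla_v\cdot(\nabla_v f) + b\cdot\nabla_v f + c f,
\end{align*}
where $b=v-u(t,x)$ and $c=3$. We then apply the De Giorgi--Nash--Moser theory for kinetic (hypoelliptic) equations with rough coefficients, in the form recorded as Lemma \ref{regular} in the appendix. That lemma is of Golse--Imbert--Mouhot--Vasseur type: it gives $\alpha$-H\"older continuity with $\alpha=\alpha(\Lambda)$, together with the estimate stated in Theorem \ref{thm:holder}, provided the lower-order coefficients are bounded by $\Lambda$ on $Q_{\text{ext}}$. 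Since the diffusion matrix is the identity, ellipticity is trivial. The whole task therefore reduces to verifying the boundedness of $b$ and $c$ on a cylinder around $(z_0,v)$, and checking that $f$ is a weak solution in the sense that lemma requires.

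The steps, in order, are as follows.
\begin{itemize}
\item[(1)] Fix $z_0\in S_{u,reg}$. By Theorem \ref{thm:regular} (this is where \eqref{9.2} is used), there is $r_0>0$ with $u\in L^\infty(Q(z_0,r_0))$. Set $\Lambda:=1+\|u\|_{L^\infty(Q(z_0,r_0))}+|v|+r_0$. Then for $r\le r_0$, on $Q_{\text{ext}}$ we have $|b|\le |v'|+|u|\le\Lambda$ and $c=3$.
\item[(2)] Check the required regularity of $f$. By Definition \ref{def:weak}(i), $f\in L^\infty_{loc}(L^\infty\cap L^1)$, so $f\in L^2\cap L^\infty(Q_{\text{ext}})$. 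The energy inequality (iv) controls the dissipation $\int |(u-v)f-\nabla_v f|^2/f$. Together with $f\le C$ and the boundedness of $u$ and $v$ on the cylinder, this gives $\nabla_v f=\sqrt f\,\big(\sqrt f(u-v)-\tfrac{(u-v)f-\nabla_v f}{\sqrt f}\big)\in L^2(Q_{\text{ext}})$. Hence $f\in L^2_{t,x}H^1_v$ locally, and by (iii) $f$ is a distributional solution. This is exactly the notion of weak solution needed for the kinetic De Giorgi theory.
\item[(3)] Apply Lemma \ref{regular} on $Q_{\text{ext}}\supset Q_{\text{int}}$, with $r_1<r\le r_0$. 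This yields $\|f\|_{C^\alpha(Q_{\text{int}})}\le C(\|f\|_{L^2(Q_{\text{ext}})}+\|f\|_{L^\infty(Q_{\text{ext}})})$. The constant depends only on $\Lambda$ and on the two cylinders, and the $L^\infty$ norm enters through the $L^2\to L^\infty$ step of De Giorgi in the presence of the drift.
\end{itemize}

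The main obstacle is step (2): justifying that the suitable weak solution constructed in Theorem \ref{thm:weak} has $\nabla_v f\in L^2_{loc}$ and satisfies the local energy (Caccioppoli) inequality for truncations $(f-k)_+$ that the De Giorgi iteration requires. We would obtain these from the strong convergence of $f^{\varepsilon,\delta}$ and the uniform dissipation bounds, passing to the limit in the truncated energy identity for the regularized system \eqref{eq:NSVFK0}. This is legitimate because $\theta_\varepsilon\ast u^{\varepsilon,\delta}\to u$ strongly and $u$ is bounded near $z_0$, and $v\gamma_\delta(v)=v$ on $B(v,r)$ once $\delta$ is small. If a direct limit is delicate, we would instead use lower semicontinuity of the convex dissipation functional to retain the inequality.
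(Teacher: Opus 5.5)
Your route is the paper's: at a regular point $u$ is bounded on a parabolic cylinder. The $f$-equation then becomes the linear kinetic Fokker--Planck equation $\partial_t f+v\cdot\nabla_x f-\Delta_v f=(v-u)\cdot\nabla_v f+3f$ with bounded drift, and a kinetic De Giorgi--Nash--Moser theorem gives the $C^\alpha$ bound.

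\textbf{The error is in step (3).} Lemma \ref{regular} in the appendix is not a kinetic result. It is Tsai's Theorem 6.3, the Caffarelli--Kohn--Nirenberg $\varepsilon$-regularity criterion for suitable weak solutions of Navier--Stokes with force in $L^q$, $q>\frac52$, and it says nothing about $f$. The paper instead invokes an external theorem: Theorem 1.2 of the 2024 Wang--Zhang reference, or equivalently Theorem 1.4 of Golse--Imbert--Mouhot--Vasseur (Harnack inequality for kinetic Fokker--Planck equations with rough coefficients). That estimate has the form $\|f\|_{C^\alpha(Q_{\text{int}})}\le C(\|f\|_{L^2(Q_{\text{ext}})}+\|S\|_{L^\infty(Q_{\text{ext}})})$ with a bounded source $S$. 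So the $L^\infty$ norm in the statement appears because $3f$ plays the role of $S$, not because of a drift-dependent $L^2\to L^\infty$ step.

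\textbf{Step (1).} Having $u\in L^\infty(Q(z_0,r_0))$ is simply the definition of $z_0\in S_{u,reg}$. Neither Theorem \ref{thm:regular} nor \eqref{9.2} is used there; in the paper \eqref{9.2} only serves to put the force $\int(v-u)f\,dv$ in $L^{5/2+\varepsilon_1}$ for the partial regularity theory. Your choice $\Lambda\ge 1+\|u\|_{L^\infty}+|v|+r_0$, bounding the full drift $v'-u$ on the velocity ball, is more careful than the paper, which only records a bound on $u$.

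\textbf{The ``main obstacle'' is not one.} The cited kinetic theorems are stated for weak solutions in the class $L^\infty_tL^2_{x,v}\cap L^2_{t,x}H^1_v$ with $(\partial_t+v\cdot\nabla_x)f\in L^2_{t,x}H^{-1}_v$ on $Q_{\text{ext}}$. They derive the Caccioppoli inequalities for $(f-k)_+$ internally from the linear equation. Membership in that class follows directly from your step (2): $(\partial_t+v\cdot\nabla_x)f=\nabla_v\cdot(\nabla_v f+(v-u)f)$, and both $\nabla_v f$ and $(v-u)f$ lie in $L^2(Q_{\text{ext}})$. The $L^2$ bound on $\nabla_v f$ can also be taken globally from Lemma \ref{lem:deltaf} plus lower semicontinuity. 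So no limiting argument in truncated energy identities for \eqref{eq:NSVFK0} is needed; the paper does not discuss this point at all.
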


\begin{remark}
To the best of our knowledge, this is the first result establishing the existence of \textit{suitable weak solutions} and quantifying the Hausdorff dimension of the singular set for the full 3D NSVFP system. The requirement of moments up to order $15+\kappa$ in \eqref{9.2} is technical but necessary for our analysis. It ensures that the frictional force term $\int (v-u)f dv$ belongs to $L^q_{t,x}$ with $q > 5/2$. This specific threshold allows us to treat the coupling term as a perturbation that does not deteriorate the parabolic Hausdorff dimension of the singular set (see Lemma \ref{singular} and Lemma \ref{regular}). 
% Relaxing this assumption would require new estimates on the dispersion of particles in the high-velocity tail.

\end{remark}

%\begin{remark}
%	The main innovations of this article are as follows:\\
%	(i) In terms of consistent estimation. We derive a priori estimate of \(\norm{u}_{L^1_tL^p_x}\) by using Tao's Lp decomposition to NS equations. And on this basis, we get a priori estimate of \(\norms{|v|^mf}_{L^{\infty}_tL^1_{xv}}\) if
%	\begin{align*}
%		 \norms{|v|^mf_{in}}_{L^{\infty}_tL^1_{xv}} \leq C
%	\end{align*}
%	holds true. \\
%	(ii) In terms of convergence. We get \(f^{\varepsilon,\delta}\) is compact in any \(L^{p}((0,T)\times\mathbb{R}^3_x\times\mathbb{R}^3_v)\) by Lion's methods, which \(p\in [1,+\infty)\). Moreover, by Lebesgue–Vitali theorem, we can derive \(f^{\varepsilon,\delta}\log f^{\varepsilon,\delta}\) is compact in \(L^{1}((0,T)\times\Omega_{x}\times\mathbb{R}^3_v)\).\\
%	(iii) In terms of regularity. We prove Hausdorff measure of \(u\), and \(f\) is H\"{o}lder continuous in \((z_0,v)\), which \(z_0\) is regular point of \(u\).
%\end{remark}

\par The rest of the paper is organized as follows. Section 2 is devoted to proving Theorem \ref{thm:solution} on the existence of global solutions for the regularized system. The proof of Theorem  \ref{thm:weak} is stated in Section 3-8. In Section 3 we show the uniform estimates independent of \(\varepsilon,\delta\) for the regularized system. In Section 4, we obtain the global energy inequality, local energy inequality in the first and second types. Next, we derive in Section 5 the convergence of \( u^{\varepsilon,\delta} \), \( f^{\varepsilon,\delta} \), and \( f^{\varepsilon,\delta} \log f^{\varepsilon,\delta} \), which are essential for the energy estimates. In Section 6, Section 7 and Section 8 the convergence of the three energy inequalities are proved, respectively. The proof of Theorem \ref{thm:regular} and Theorem \ref{thm:holder} is given briefly in Section 9. Some technical lemmas are added in the Appendix.

	\section{Global existence of strong solutions for the regularized system}

In this section, we  construct global solutions of the regularized system (\ref{eq:NSVFK0}), which is divided into two subsections.
	\subsection{Local existence of regularized solution} 
In this subsection, we first construct local solutions of the regularized system (\ref{eq:NSVFK0}) by the fundamental solutions of Stokes and Fokker-Planck equations and Banach fixed-point theorem.
	\subsubsection{The construction of a mapping}
	
	Let $T\in(0,1)$ and
	\begin{align*}
		R=2\left(\|u_{in}^{\delta}\|_{H^1_x}+\|f_{in}^{\delta}\|_{L^1_{xv}}+\|f_{in}^{\delta}\|_{L^\infty_{xv}}+
	\left\| |v|^{2} f_{in}^{\delta}\right\|_{L^1_{xv}}\right) <\infty.
	\end{align*}
	Then we introduce the following Banach space:
	$$ X:=L^{\infty}\left(0,T;W^{1,2}_\sigma(\mathbb{R}^{3})\times [{L^{1}(\mathbb{R}^{3}\times \mathbb{R}^{3}) \cap L^{\infty}(\mathbb{R}^{3}\times \mathbb{R}^{3})}]\times L^{1}(\mathbb{R}^{3}\times \mathbb{R}^{3})\right)$$
	along with its closed subset
	\begin{equation*}
		\Gamma:=\bigg\{(u^{\varepsilon,\delta},f^{\varepsilon,\delta},|v|^{2}f^{\varepsilon,\delta}) \in X \mid \norms{(u^{\varepsilon,\delta},f^{\varepsilon,\delta},|v|^{2}f^{\varepsilon,\delta})}_{\Gamma} \leq R \bigg\},
    \end{equation*}
where
    \begin{align*}
        \norms{(u^{\varepsilon,\delta},f^{\varepsilon,\delta},|v|^{2}f^{\varepsilon,\delta})}_{\Gamma} :=& \sup_{t\in(0,T)}\|u^{\varepsilon,\delta}\|_{L^{2}(\mathbb{R}^{3})}+\sup_{t\in(0,T)}\|\nabla u^{\varepsilon,\delta}\|_{L^{2}(\mathbb{R}^{3})}\\
		&+\sup_{t\in(0,T)}\|f^{\varepsilon,\delta}\|_{L^{1}_{xv}(\mathbb{R}^{3}\times \mathbb{R}^{3})}+\sup_{t\in(0,T)}\|f^{\varepsilon,\delta}\|_{L^{\infty}_{xv}(\mathbb{R}^{3}\times \mathbb{R}^{3})}\\
        &+\sup_{t\in(0,T)}\left\| |v|^{2}f^{\varepsilon,\delta}\right\|_{L^{1}_{xv}(\mathbb{R}^{3}\times \mathbb{R}^{3})} .
	\end{align*}
	For $(u^{\varepsilon,\delta},f^{\varepsilon,\delta},|v|^{2}f^{\varepsilon,\delta}) \in \Gamma$ and $t\in(0,T)$,  let
    \beno
	&&\Phi(u^{\varepsilon,\delta},f^{\varepsilon,\delta},|v|^{2}f^{\varepsilon,\delta})\\
	&& =\left(
	\begin{array}{llll}
		\Phi_{1}(u^{\varepsilon,\delta},f^{\varepsilon,\delta},|v|^{2}f^{\varepsilon,\delta})(t,x)\\ 
        \Phi_{2}(u^{\varepsilon,\delta},f^{\varepsilon,\delta},|v|^{2}f^{\varepsilon,\delta})(t,x,v)\\
		\Phi_{3}(u^{\varepsilon,\delta},f^{\varepsilon,\delta},|v|^{2}f^{\varepsilon,\delta})(t,x,v)
	\end{array}
	\right)\\
	&&:=\left(
	\begin{array}{llll}
		e^{t\triangle} u_{in}^{\delta}-\int^{t}_{0}e^{(t-\tau) \triangle} \mathbb{P}\bigg\{(\theta_{\varepsilon}\ast u^{\varepsilon,\delta})\cdot \nabla u^{\varepsilon,\delta}+
		\theta_{\varepsilon}\ast \left\{\int_{\mathbb{R}^3}(v-\theta_{\varepsilon} \ast  u^{\varepsilon,\delta})f^{\varepsilon,\delta}dv\right\}\bigg\}d\tau \\
		\int_{\mathbb{R}^{3}}\int_{\mathbb{R}^{3}}G(t,x,v;0,\xi,\upsilon)f_{in}^{\delta}(\xi,\upsilon)d\xi d\upsilon \\
		\hspace{1cm}-\int^{t}_{0}\int_{\mathbb{R}^{3}}\int_{\mathbb{R}^{3}}G(t,x,v;\tau,\xi,\upsilon)
		\nabla_{\upsilon} \cdot \left[\left(\theta_{\varepsilon} \ast u^{\varepsilon,\delta} -\upsilon\gamma_{\delta}(\upsilon)\right)f^{\varepsilon,\delta}(\xi,\upsilon)\right] d\xi d\upsilon d\tau\\
		\int_{\mathbb{R}^{3}}\int_{\mathbb{R}^{3}}G(t,x,v;0,\xi,\upsilon)h_{in}^{\delta}(\xi,\upsilon)d\xi d\upsilon \\
		\hspace{1cm}-\int^{t}_{0}\int_{\mathbb{R}^{3}}\int_{\mathbb{R}^{3}}G(t,x,v;\tau,\xi,\upsilon)
		\bigg\{ \nabla_{\upsilon} \cdot \left[\left(\theta_{\varepsilon} \ast u^{\varepsilon,\delta} -\upsilon\gamma_{\delta}(\upsilon)\right)h^{\varepsilon,\delta}(\xi,\upsilon)\right]\\
		\hspace{1.2cm} -2\upsilon \cdot\left[\left(\theta_{\varepsilon} \ast u^{\varepsilon,\delta} -\upsilon\gamma_{\delta}(\upsilon)\right)f^{\varepsilon,\delta}(\xi,\upsilon)\right]+6f^{\epsilon,\delta}+
		4\upsilon \cdot \nabla_{\upsilon}f^{\epsilon,\delta}\bigg\} d\xi d\upsilon d\tau
	\end{array}
	\right),
	\eeno
	where
	\beno
	\begin{aligned}
		G(t,x,v;\tau,\xi,\upsilon)&=G(t-\tau,\upsilon,v, x-\xi)\\ 
		&=\frac{3^\frac32}{(2\pi)^{3}(t-\tau)^6} exp \left\{- \frac{3\big|x-\xi-[\frac{t-\tau}{2}](\upsilon+v)\big|^2}{(t-\tau)^3}- \frac{|\upsilon-v|^2}{4(t-\tau)}\right\}
	\end{aligned}
	\eeno
	is the fundamental solution ( see, for example, \cite{carpioLongtimeBehaviourSolutions1998}) of
	\begin{equation}\label{d2.1} % 可以用 equation 环境
		\begin{aligned}
			\left\{
			\begin{array}{ll}
				\partial_t g - \triangle_{v} g + v \cdot \nabla_{x} g = 0, & \quad \text{in } \mathbb{R}^{+}\times  \mathbb{R}^{3}_x \times \mathbb{R}^{3}_v , \\
				g(0,x, v) = g_{0}(x, v), & \quad \text{in } \mathbb{R}^{3}_x \times \mathbb{R}^{3}_v.
			\end{array}
			\right.
		\end{aligned}
	\end{equation}
	The family of operators $(e^{t\triangle})_{t\geq 0}$ denotes the heat semigroup, and $\mathbb{P}$ represents the Leray projection on $L^2$. The terms $h^{\varepsilon,\delta}$ and
	$h_{in}^\delta$ correspond to \(|v|^{2}f^{\varepsilon,\delta}\) and \(|v|^{2}f_{in}^\delta\), respectively.
    
    Additionally, since $G$ is a fundamental solution in \eqref{d2.1}, we give some known properties in the following:
	
	%\begin{lemma}
	%(i)For some positive constants $m_{1}$,$m_{2}$,$M_{1}$,$M_{2}$
	%\[
	%\int_{\mathbb{R}^{3}} G(t,x,v;\tau,\xi,\upsilon) dv=\frac{M_{1}}{(t-\tau)^\frac92} \exp\left \{- \frac{|x-\xi-(t-\tau)\upsilon|^2}{m_{1}(t-\tau)^3}\right \},
	%\]

	%\end{lemma}

	\begin{lemma}[Lemma 1 in \cite{carpioLongtimeBehaviourSolutions1998}]\label{lem:G} 
		(i) For some positive constants \( m_{1}, m_{2}, M_{1}, M_{2} \), there hold
		\begin{equation*}
			\begin{array}{l}
				\int_{\mathbb{R}^{3}} G(t,x,v;\tau,\xi,\upsilon) \, dv = \frac{M_{1}}{(t-\tau)^\frac92} \exp\left\{ - \frac{|x-\xi-(t-\tau)\upsilon|^2}{m_{1}(t-\tau)^{3}} \right\},\\
				\int_{\mathbb{R}^{3}} G(t,x,v;\tau,\xi,\upsilon) \, dx = \frac{M_{2}}{(t-\tau)^\frac32} \exp\left\{ - \frac{|\upsilon-v|^2}{m_{2}(t-\tau)} \right\},\\
				\int_{\mathbb{R}^{6}} G(t,x,v;\tau,\xi,\upsilon) \, dxdv=\int_{\mathbb{R}^{6}} G(t,x,v;\tau,\xi,\upsilon) \, d\xi d\upsilon =1,
			\end{array}
		\end{equation*}
		with \(x,v,\xi,\upsilon \in \mathbb{R}^{3} \) and \(t>\tau \geq 0\).\\
		(ii) The following estimates hold for some positive constant $C$:
		\begin{equation*}
			\begin{array}{l}
				|\nabla_{v} G(t,x,v;\tau,\xi,\upsilon)| \leq  C \frac{G(t,x/2,v/2;\tau,\xi/2,\upsilon/2)}{(t-\tau)^\frac12},\\
				|\nabla_{\upsilon} G(t,x,v;\tau,\xi,\upsilon)| \leq  C \frac{G(t,x/2,v/2;\tau,\xi/2,\upsilon/2)}{(t-\tau)^\frac12}.
				% |\Delta_{v} G(t,x,v;\tau,\xi,\upsilon)| \leq  C \frac{G(t,x/2,v/2;\tau,\xi/2,\upsilon/2)}{(t-\tau)}.
			\end{array}
		\end{equation*}
	\end{lemma}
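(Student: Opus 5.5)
The plan is a direct computation with Gaussian integrals. Since $G$ depends only on $s:=t-\tau$, $y:=x-\xi$, $v$ and $\upsilon$, I will work with
\[
G=c_0\,s^{-6}\exp\Big\{-\frac{3|y-\frac s2(\upsilon+v)|^2}{s^3}-\frac{|\upsilon-v|^2}{4s}\Big\},\qquad c_0=\frac{3^{3/2}}{(2\pi)^3}.
\]
Every claim in the statement then reduces either to completing squares in a Gaussian exponent or to the elementary bound $\lambda e^{-a\lambda^2}\le C_a e^{-a\lambda^2/4}$.

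\emph{Integrals in part (i).}

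For the $x$-integral, I translate $x\mapsto y-\frac s2(\upsilon+v)$. Using $\int_{\mathbb{R}^3}e^{-3|z|^2/s^3}dz=(\pi s^3/3)^{3/2}$, I get
\[
\int G\,dx=c_0(\pi/3)^{3/2}s^{-3/2}e^{-|\upsilon-v|^2/(4s)}=(4\pi s)^{-3/2}e^{-|\upsilon-v|^2/(4s)}.
\]
This gives $M_2=(4\pi)^{-3/2}$ and $m_2=4$.

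For the $v$-integral, I put $v=\upsilon+w$ and $a:=y-s\upsilon$, so the exponent becomes $-3|a-\frac s2 w|^2/s^3-|w|^2/(4s)$. Expanding, the $|w|^2$-coefficient is $-\frac{3}{4s}-\frac1{4s}=-\frac1s$ and the linear term is $\frac{3}{s^2}a\cdot w$. Completing the square at $w=\frac{3a}{2s}$ leaves the residual $-\frac{3|a|^2}{4s^3}$. The remaining Gaussian in $w$ contributes $(\pi s)^{3/2}$. Hence
\[
\int G\,dv=M_1 s^{-9/2}\exp\Big\{-\frac{|x-\xi-s\upsilon|^2}{m_1 s^3}\Big\},\qquad m_1=\tfrac43 .
\]

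For the total integrals, integrating the $x$-formula over $v$ gives exactly $1$, since $(4\pi s)^{-3/2}e^{-|\cdot|^2/(4s)}$ is the heat kernel. For $d\xi\,d\upsilon$, the same $\xi$-translation gives the same heat kernel in $\upsilon-v$, because $G$ depends on $x,\xi$ only through $y$. Its $\upsilon$-integral is then $1$.

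\emph{Gradient bounds in part (ii).}

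Set
\[
A:=\frac{|y-\frac s2(\upsilon+v)|}{s^{3/2}},\qquad B:=\frac{|\upsilon-v|}{s^{1/2}} .
\]
Then
\[
\nabla_v G=G\Big(\frac{3(y-\frac s2(\upsilon+v))}{s^2}+\frac{\upsilon-v}{2s}\Big),
\]
so that $|\nabla_vG|\le C s^{-1/2}(A+B)\,c_0 s^{-6}e^{-3A^2-B^2/4}$. The computation for $\nabla_\upsilon G$ is identical, up to a sign on the second term.

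Next I use $(A+B)e^{-3A^2-B^2/4}\le C e^{-\frac34A^2-\frac1{16}B^2}$. The key observation is that the right-hand exponent is exactly the exponent of $G(t,x/2,v/2;\tau,\xi/2,\upsilon/2)$. Indeed, halving all spatial and velocity arguments halves both $y-\frac s2(\upsilon+v)$ and $\upsilon-v$, which divides both squared terms by $4$. The prefactor $c_0s^{-6}$ is unchanged, and this yields both estimates in (ii).

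The only step requiring care is the completion of the square in the $v$-integral, since it must produce the specific constant $m_1$. Beyond that there is no genuine obstacle, as all the formulas are explicit Gaussian identities.
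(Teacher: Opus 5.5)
Your computation is correct. The $x$-integral is the heat kernel $(4\pi s)^{-3/2}e^{-|\upsilon-v|^2/(4s)}$. Completing the square at $w=3a/(2s)$ leaves exactly $-3|a|^2/(4s^3)$. Your gradient formulas and the absorption bound $(A+B)e^{-3A^2-B^2/4}\le Ce^{-\frac34A^2-\frac1{16}B^2}$ both check out. Halving all arguments reproduces precisely that exponent with the same prefactor $c_0s^{-6}$.

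The paper gives no argument of its own for this lemma; it simply imports it from Carpio. Your route differs only in being self-contained, and that buys two things. First, you get explicit constants: $M_1=(3/(4\pi))^{3/2}$, $m_1=4/3$, $M_2=(4\pi)^{-3/2}$, $m_2=4$. Second, you confirm directly that the kernel as written in the paper, with prefactor $3^{3/2}(2\pi)^{-3}$, has unit mass in both $(x,v)$ and $(\xi,\upsilon)$; the citation leaves this normalization check implicit.

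One remark on how (ii) is used later. The comparison kernel $G(t,x/2,v/2;\tau,\xi/2,\upsilon/2)$ has total mass $2^6$ rather than $1$ when integrated in $(x,v)$ or $(\xi,\upsilon)$. This is harmless, since the factor is absorbed into the constant when the paper integrates these bounds.
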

	
	\begin{lemma} [Lemma 4.2 in \cite{carpioWellPosednessIntegrodifferential2016}]\label{lem:f}
		Let \( \beta>0 \) and \( f \) be a nonnegative function in $L^{\infty}\left(\left(0,T\right)\times \mathbb{R}^{3}\times \mathbb{R}^{3}\right)$, such that $m_{\beta}f(t,x) < +\infty$, for a.e. (t, x). The following estimate
		holds for any $\alpha < \beta$ (Lemma 1 in \cite{boudinGlobalExistenceSolutions2009}):
		\begin{align}
			\label{8}
			m_{\alpha}f(t,x) \leq \left(\frac{4}{3}\pi \|f(t,x,v)\|_{L^{\infty}((0,T)\times \mathbb{R}^{3}\times \mathbb{R}^{3})}+1\right) \left(m_{\beta}f(t,x)\right)^{\frac{\alpha+3}{\beta+3}}, \quad a.e.\ (t,x).
		\end{align}
		Moreover, the following inequalities hold:
		\begin{align}
			\label{27}
			&\left\| |v|^{\mu}f \right\|_{L^{1}_{xv}} \leq \| f \|_{L^{1}_{xv}}^{1-\frac{\mu}{\lambda}}\left\| |v|^{\lambda}f \right\|^{\frac{\mu}{\lambda}}_{L^{1}_{xv}},&\lambda > \mu >0,\\
			\label{28}
			&\left\| \int_{\mathbb{R}^{3}}  |v|^{\mu}fdv \right\|_{L_{x}^{\frac{3+\lambda}{3+\mu}}} \leq C_{\lambda,\mu} \| f \|^{\frac{\lambda-\mu}{3+\lambda}}_{L^{\infty}_{xv}}
			\left\| |v|^{\lambda}f \right\|^{\frac{3+\mu}{3+\lambda}}_{L^{1}_{xv}}, &\lambda > \mu >0.
		\end{align}
		%where N denotes the space dimension.
	\end{lemma}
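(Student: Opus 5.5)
The plan is to prove the three parts in the order \eqref{8}, \eqref{27}, \eqref{28}, each following from the previous one by a splitting argument in $v$ plus an optimization of the cutoff.

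\textbf{Step 1: the pointwise bound \eqref{8}.} Fix a.e.\ $(t,x)$ and a radius $R>0$, and split $m_\alpha f(t,x)$ into the regions $|v|\le R$ and $|v|>R$.
\begin{itemize}
\item On the ball $|v|\le R$, bound $f$ by its sup norm, giving
\begin{align*}
\int_{|v|\le R}|v|^\alpha f\,dv\le \|f\|_{L^\infty}\,\frac{4\pi}{\alpha+3}R^{\alpha+3}\le \frac43\pi\|f\|_{L^\infty}R^{\alpha+3},
\end{align*}
using $\alpha+3\ge 3$.
\item On the exterior $|v|>R$, use $|v|^\alpha\le R^{\alpha-\beta}|v|^\beta$, so this piece is at most $R^{\alpha-\beta}m_\beta f$.
\item Choose $R=(m_\beta f)^{1/(\beta+3)}$, which balances the two pieces. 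Both terms then equal a constant times $(m_\beta f)^{(\alpha+3)/(\beta+3)}$.
\end{itemize}
This gives exactly the constant $\frac43\pi\|f\|_\infty+1$. If $m_\beta f=0$, then $f=0$ a.e.\ in $v$ and there is nothing to prove.

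\textbf{Step 2: the interpolation \eqref{27}.} This is Hölder's inequality with respect to the finite measure $f\,dx\,dv$. Write $|v|^\mu f=f^{1-\mu/\lambda}\,(|v|^\lambda f)^{\mu/\lambda}$ and apply Hölder with exponents $\lambda/(\lambda-\mu)$ and $\lambda/\mu$.

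\textbf{Step 3: the moment estimate \eqref{28}.} Apply \eqref{8} with $(\alpha,\beta)=(\mu,\lambda)$, which gives pointwise in $x$
\begin{align*}
m_\mu f\le C\,(m_\lambda f)^{\frac{3+\mu}{3+\lambda}}.
\end{align*}
Raise this to the power $p=\frac{3+\lambda}{3+\mu}$ and integrate in $x$ to get
\begin{align*}
\|m_\mu f\|_{L^p_x}\le C\,\||v|^\lambda f\|_{L^1_{xv}}^{1/p}.
\end{align*}

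The expected obstacle is extracting the correct power $\|f\|_\infty^{(\lambda-\mu)/(3+\lambda)}$ rather than the additive constant $\|f\|_\infty+1$, which is not homogeneous. I would handle this by redoing the splitting of Step 1 without the normalization, i.e.\ keeping the two terms
\begin{align*}
\|f\|_\infty R^{\mu+3}+R^{\mu-\lambda}m_\lambda f,
\end{align*}
and optimizing with $R=(m_\lambda f/\|f\|_\infty)^{1/(\lambda+3)}$. This yields
\begin{align*}
m_\mu f\le C\,\|f\|_\infty^{\frac{\lambda-\mu}{\lambda+3}}(m_\lambda f)^{\frac{\mu+3}{\lambda+3}}.
\end{align*}
Taking the $L^p_x$ norm of both sides then gives \eqref{28} with $C_{\lambda,\mu}$ depending only on $\lambda$ and $\mu$.
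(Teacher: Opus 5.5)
Your proposal is correct. The paper gives no proof of its own; it cites Boudin et al.\ (Lemma 1) and Carpio et al.\ (Lemma 4.2), and your argument is the standard one from those sources: split at $|v|=R$ with $R=(m_\beta f)^{1/(\beta+3)}$ to get \eqref{8} (this choice is what produces the constant $\frac43\pi\|f\|_{L^\infty}+1$), use H\"older for \eqref{27}, and integrate the homogeneous pointwise bound in $x$ for \eqref{28}. Your re-optimization with $R=(m_\lambda f/\|f\|_{L^\infty})^{1/(\lambda+3)}$ is equivalent to applying \eqref{8} to $f/\|f\|_{L^\infty}$, and your use of $\alpha+3\ge 3$ matches the paper's convention that moments are taken only for $\alpha\ge 0$.
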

	
	\subsubsection{$\Phi$ is a mapping from $\Gamma$ to $\Gamma$} We make estimates on each part as follows.
    
	\par \textbf{Estimation of $\Phi_{1}$.}  By Lemma \ref{lem:semigroup}, we get
	\begin{align*}
		\|\Phi_{1}(t,\cdot)\|_
        {L_{x}^{2}} \leq & \| u_{in}^{\delta} \|_{L_{x}^{2}}+\int^{t}_{0} \left\| e^{(t-\tau) \triangle}\mathbb{P}\left(({\theta_{\varepsilon} \ast u^{\varepsilon,\delta})\cdot \nabla u^{\varepsilon,\delta}}\right)\right\|_{L_{x}^{2}} d\tau\\
		&+\int^{t}_{0} \left\| e^{(t-\tau) \triangle}\mathbb{P} \left\{\theta_{\varepsilon}\ast\int_{\mathbb{R}^{3}}vf^{\varepsilon,\delta}dv\right\} \right\|_{L_{x}^{2}} d\tau\\
		&+ \int^{t}_{0} \left\| e^{(t-\tau) \triangle}\mathbb{P} \left\{\theta_{\varepsilon}\ast\int_{\mathbb{R}^{3}}(\theta_{\varepsilon} \ast u^{\epsilon,\delta})f^{\varepsilon,\delta}dv\right\} \right\|_{L_{x}^{2}} d\tau\\
		=&\| u_{in}^{\delta} \|_{L_{x}^{2}}+\sum_{i=1}^{3}A_{1i}.
	\end{align*}
	For $A_{11}$, by using H\"{o}lder's inequality and  Lemma \ref{lem:semigroup} we have
%properties of semigroups (see Proposition A.16 of \cite{bedrossianMathematicalAnalysisIncompressible2022}) to get
	\begin{align*}
		A_{11} &\leq C\int_{0}^{t}(t-\tau)^{-\frac{3}{4}}d\tau \|(\theta_{\varepsilon}\ast u^{\varepsilon,\delta})\cdot \nabla u^{\varepsilon,\delta}\|_{L^{\infty}(0,T;L^{1}(\mathbb{R}^{3}_{x}))} \\
		&\leq CT^{\frac{1}{4}}\| u^{\varepsilon,\delta} \|_{L^{\infty}(0,T;L^{2}(\mathbb{R}^{3}_{x}))} \| \nabla u^{\varepsilon,\delta} \|_{L^{\infty}(0,T;L^{2}(\mathbb{R}^{3}_{x}))}.
	\end{align*}
	For $A_{12}$, it follows from Lemma \ref{lem:f} that
	\begin{align*}
		A_{12} &\leq C\int_{0}^{t}(t-\tau)^{-\frac{9}{20}}d\tau \big\|m_{1}|f^{\varepsilon,\delta}|\big\| _{L^{\infty}(0,T;L^{\frac{5}{4}}(\mathbb{R}^{3}_{x}))} \\
		&\leq CT^{\frac{11}{20}}\left(\frac{4}{3}\pi \|f^{\varepsilon,\delta}(t,x,v)\|_{L^{\infty}_{txv}} +1\right)\left\| |v|^{2}f^{\varepsilon,\delta} \right\|_{L^{\infty}(0,T;L^{1}(\mathbb{R}^{6}_{xv}))}^{\frac{4}{5}}.
		% &\leq CT^{\frac{11}{20}}\left(\frac{4}{3}\pi \|f^{\varepsilon,\delta}(t,x,v)\|_{L^{\infty}_{txv}} +1\right)\left(\|f^{\varepsilon,\delta} \|_{L^{\infty}(0,T;L^{1}(\mathbb{R}^{6}_{xv}))}+\|h^{\varepsilon,\delta} \|_{L^{\infty}(0,T;L^{1}(\mathbb{R}^{6}_{xv}))}\right)^{\frac{4}{5}}.
	\end{align*}
	For $A_{13}$, using Lemma \ref{lem:f} again we  derive
	\begin{align*}
		A_{13} &\leq C\int_{0}^{t}(t-\tau)^{-\frac{3}{4}}d\tau \left\||\theta_{\varepsilon} \ast u^{\varepsilon,\delta} | \int_{\mathbb{R}^{3}} |f^{\varepsilon,\delta}|dv\right\| _{L^{\infty}(0,T;L^{1}(\mathbb{R}^{3}_{x}))} \\
		&\leq CT^{\frac{1}{4}}\left(\frac{4}{3} \pi \|f^{\varepsilon,\delta}(t,x,v)\|_{L^{\infty}_{txv}} +1\right)\norms{|\theta_{\varepsilon} \ast u^{\varepsilon,\delta} | \left(\int_{\mathbb{R}^{3}} |v|^{2}|f^{\varepsilon,\delta}|dv\right)^{\frac{3}{5}}} _{L^{\infty}(0,T;L^{1}(\mathbb{R}^{3}_{x}))}\\
		&\leq CT^{\frac{1}{4}}\left(\frac{4}{3} \pi \|f^{\varepsilon,\delta}(t,x,v)\|_{L^{\infty}_{txv}} +1\right)\norm{\theta_{\varepsilon} \ast u^{\varepsilon,\delta} }_{L^{\infty}(0,T;L^{\frac{5}{2}}(\mathbb{R}^{3}_{x}))}\norms{|v|^2f^{\varepsilon,\delta}}_{L^{\infty}(0,T;L^{1}(\mathbb{R}^{6}_{xv}))}^{\frac{3}{5}}\\
		% &\leq CT^{\frac{1}{4}}\left(\frac{4}{3} \pi \|f^{\varepsilon,\delta}(t,x,v)\|_{L^{\infty}_{txv}} +1\right)\norms{|v|^2f^{\varepsilon,\delta}}_{L^{\infty}(0,T;L^{1}(\mathbb{R}^{6}_{xv}))}^{\frac{3}{5}}        \times \\
		% &\hspace{1cm}\left(\norm{\theta_{\varepsilon} \ast u^{\varepsilon,\delta}}_{L^{\infty}(0,T;L^{2}(\mathbb{R}^{3}_{x}))}+\norm{\nabla_{x}(\theta_{\varepsilon} \ast  u^{\varepsilon,\delta})}_{L^{\infty}(0,T;L^{2}(\mathbb{R}^{3}_{x}))}\right)\\
		&\leq CT^{\frac{1}{4}}\left(\frac{4}{3} \pi \|f^{\varepsilon,\delta}(t,x,v)\|_{L^{\infty}_{txv}} +1\right)\norms{|v|^2f^{\varepsilon,\delta}}_{L^{\infty}(0,T;L^{1}(\mathbb{R}^{6}_{xv}))}^{\frac{3}{5}}\times \\
		&\hspace{1cm}\left(\norm{u^{\varepsilon,\delta}}_{L^{\infty}(0,T;L^{2}(\mathbb{R}^{3}_{x}))}+\norm{\nabla u^{\varepsilon,\delta}}_{L^{\infty}(0,T;L^{2}(\mathbb{R}^{3}_{x}))}\right).
	\end{align*}
	Due to \(T\in (0,1)\), we derive
	\begin{align*}
		\|\Phi_{1}(t,\cdot)\|_{L_{x}^{2}} \leq&  \| u_{in}^{\delta} \|_{L_{x}^{2}}+C(R)T^{\frac{1}{4}}\quad \text{for all}~ t \in (0,T).
	\end{align*}

	By similar arguments as in $\|\Phi_{1}\|_{L_{x}^{2}}$, we get
	\begin{align*}
		\| \nabla_{x}\Phi_{1}(t,\cdot)\|_{L_{x}^{2}} \leq&  \| \nabla u_{in}^{\delta} \|_{L_{x}^{2}}+\int^{t}_{0} \left\| \nabla_{x}e^{(t-\tau) \triangle}\mathbb{P}\left(({\theta_{\varepsilon} \ast u^{\varepsilon,\delta})\cdot \nabla u^{\varepsilon,\delta}}\right)\right\|_{L_{x}^{2}} d\tau\\
		&+\int^{t}_{0} \left\| \nabla_{x}e^{(t-\tau) \triangle}\mathbb{P} \theta_{\varepsilon}\ast\left\{\int_{\mathbb{R}^{3}}v f^{\varepsilon,\delta}dv\right\} \right\|_{L_{x}^{2}} d\tau \\
		&+ \int^{t}_{0} \left\| \nabla_{x}e^{(t-\tau) \triangle}\mathbb{P} \theta_{\varepsilon}\ast\biggl\{\int_{\mathbb{R}^{3}} (\theta_{\varepsilon} \ast u^{\epsilon,\delta})f^{\varepsilon,\delta}dv\biggr\} \right\|_{L_{x}^{2}} d\tau\\
		\leq&\| \nabla u_{in}^{\delta} \|_{L_{x}^{2}}+\sum_{i=1}^{3}A_{2i}.
	\end{align*}
	For $A_{21}$,  by using H\"{o}lder's inequality and Lemma \ref{lem:semigroup} again, we have
	\begin{align*}
		A_{21} &\leq C\int_{0}^{t}(t-\tau)^{-\frac{1}{2}}d\tau \|(\theta_{\varepsilon}\ast u^{\varepsilon,\delta})\cdot \nabla u^{\varepsilon,\delta}\|_{L^{\infty}(0,T;L^{2}(\mathbb{R}^{3}_{x}))} \\
		&\leq C(\varepsilon)T^{\frac{1}{2}}\| u^{\varepsilon,\delta} \|_{L^{\infty}(0,T;L^{2}(\mathbb{R}^{3}_{x}))} \| \nabla u^{\varepsilon,\delta} \|_{L^{\infty}(0,T;L^{2}(\mathbb{R}^{3}_{x}))}.
	\end{align*}
	For $A_{22}$, by Lemma \ref{lem:f} it follows that
	\begin{align*}
		A_{22} &\leq C\int_{0}^{t}(t-\tau)^{-\frac{19}{20}}d\tau \big\|m_{1}|f^{\varepsilon,\delta}|\big\| _{L^{\infty}(0,T;L^{\frac{5}{4}}(\mathbb{R}^{3}_{x}))} \\
		&\leq CT^{\frac{1}{20}}\left(\frac{4}{3}\pi\|f^{\varepsilon,\delta}(t,x,v)\|_{L^{\infty}_{txv}} +1\right)\left\| |v|^{2}f^{\varepsilon,\delta} \right\|_{L^{\infty}(0,T;L^{1}(\mathbb{R}^{6}_{xv}))}^{\frac{4}{5}}.
		% &\leq CT^{\frac{1}{20}}\left(\frac{4}{3}\pi\|f^{\varepsilon,\delta}(t,x,v)\|_{L^{\infty}_{txv}} +1\right)\left(\|f^{\varepsilon,\delta} \|_{L^{\infty}(0,T;L^{1}(\mathbb{R}^{6}_{xv}))}+\|h^{\varepsilon,\delta} \|_{L^{\infty}(0,T;L^{1}(\mathbb{R}^{6}_{xv}))}\right)^{\frac{4}{5}}.
	\end{align*}
	For $A_{23}$, we  derive
	\begin{align*}
		A_{23} &\leq C\int_{0}^{t}(t-\tau)^{-\frac{13}{20}}d\tau \norms{\theta_{\varepsilon}\ast u^{\varepsilon,\delta} \int_{\mathbb{R}^{3}} |f^{\varepsilon,\delta}|dv} _{L^{\infty}(0,T;L^{\frac{5}{3}}(\mathbb{R}^{3}_{x}))} \\
		&\leq CT^{\frac{7}{20}}\left(\frac{4}{3} \pi \|f^{\varepsilon,\delta}(t,x,v)\|_{L^{\infty}_{txv}} +1\right)\norms{\theta_{\varepsilon}\ast u^{\varepsilon,\delta} \left(\int_{\mathbb{R}^{3}} |v|^{2}|f^{\varepsilon,\delta}|dv\right)^{\frac{3}{5}}} _{L^{\infty}(0,T;L^{\frac{5}{3}}(\mathbb{R}^{3}_{x}))}\\
		&\leq CT^{\frac{7}{20}}\left(\frac{4}{3} \pi \|f^{\varepsilon,\delta}(t,x,v)\|_{L^{\infty}_{txv}}+1\right)\norm{\theta_{\varepsilon}\ast u^{\varepsilon,\delta}}_{L^{\infty}((0,T)\times\mathbb{R}^{3}_{x}))}\norms{|v|^2f^{\varepsilon,\delta}}_{L^{\infty}(0,T;L^{1}(\mathbb{R}^{6}_{xv}))}^{\frac{3}{5}}\\
		&\leq C(\varepsilon)T^{\frac{7}{20}}\left(\frac{4}{3} \pi \|f^{\varepsilon,\delta}(t,x,v)\|_{L^{\infty}_{txv}}+1\right)\norm{ u^{\varepsilon,\delta}}_{L^{\infty}(0,T;L^2(\mathbb{R}^{3}_{x}))}\norms{|v|^2f^{\varepsilon,\delta}}_{L^{\infty}(0,T;L^{1}(\mathbb{R}^{6}_{xv}))}^{\frac{3}{5}}.
	\end{align*}
	By \eqref{udelta}, we obtain
	\begin{align*}
		\|\nabla_{x}\Phi_{1}(t,\cdot)\|_{L_{x}^{2}} \leq&  \| \nabla u_{in}^{\delta} \|_{L_{x}^{2}}+C(\varepsilon,R)T^{\frac{1}{20}}\quad \text{for all}~ t \in (0,T).
	\end{align*}
	
	\textbf{Estimation of $\Phi_{2}$.} By integration by parts for  $\Phi_{2}$  and  Lemma \ref{lem:G} we have
	\begin{align*}
		\Phi_{2}(t,\cdot)=&\int_{\mathbb{R}^{6}}G(t,x,v;0,\xi,\upsilon)f_{in}^{\delta}(\xi,\upsilon)d\xi d\upsilon+\int^{t}_{0}\int_{\mathbb{R}^{6}}\nabla_{\upsilon}G(t,x,v;\tau,\xi,\upsilon)\\
		&\cdot \left[\left(\theta_{\varepsilon} \ast u^{\varepsilon,\delta} -\upsilon\gamma_{\delta}(\upsilon)\right)f^{\varepsilon,\delta}(\xi,\upsilon)\right] d\xi d\upsilon d\tau\\
		\leq&\int_{\mathbb{R}^{6}}G(t,x,v;0,\xi,\upsilon)f_{in}^{\delta}(\xi,\upsilon)d\xi d\upsilon+C\int^{t}_{0}\int_{\mathbb{R}^{6}} (t-\tau)^{-\frac{1}{2}} \\
		&G(t,x/2,v/2;\tau,\xi/2,\upsilon/2) \left|\left(\theta_{\varepsilon} \ast u^{\varepsilon,\delta} -\upsilon\gamma_{\delta}(\upsilon)\right)f^{\varepsilon,\delta}(\xi,\upsilon)\right| d\xi d\upsilon d\tau,
	\end{align*}
	which implies that 
	\begin{align*}
		\norm{\Phi_{2}(t,\cdot)}_{L^{1}_{xv}} \leq& \norm{f_{in}^{\delta}}_{L^{1}_{xv}} + C \int^{t}_{0} (t-\tau)^{-\frac{1}{2}} \int_{\mathbb{R}^{6}} \int_{\mathbb{R}^{6}} \quad G(t, x/2, v/2; \tau, \xi/2, \upsilon/2) \\
		&\left| \left(\theta_{\varepsilon} \ast u^{\varepsilon,\delta}  - \upsilon\gamma_{\delta}(\upsilon)\right) f^{\varepsilon,\delta}(\xi, \upsilon)  \right| d\xi d\upsilon dx dv d\tau\\
		\leq & \norm{f_{in}^{\delta}}_{L^{1}_{xv}}+ C\int^{t}_{0} (t-\tau)^{-\frac{1}{2}} \norm{(\theta_{\varepsilon} \ast u^{\varepsilon,\delta}  - v\gamma_{\delta}(v)) f^{\varepsilon,\delta}}_{L^{1}_{xv}}d\tau \\
		\leq&  \norm{f_{in}^{\delta}}_{L^{1}_{xv}}+C(\varepsilon)T^{\frac{1}{2}}(\norm{f^{\varepsilon,\delta}}_{L^{\infty}(0,T;L^{1}(\mathbb{R}^{6}_{xv}))}+\norm{f^{\varepsilon,\delta}}_{L^{\infty}(0,T;L^{1}(\mathbb{R}^{6}_{xv}))}^{2}\\
		&+\norms{|v|^{2}f^{\varepsilon,\delta}}_{L^{\infty}(0,T;L^{1}(\mathbb{R}^{6}_{xv}))}+\norm{u^{\varepsilon,\delta}}_{L^{\infty}(0,T;L^{2}(\mathbb{R}^{3}_{x}))}^{2})\\
		\leq& \norm{f_{in}^{\delta}}_{L^{1}_{xv}}+C(\varepsilon,R)T^{\frac{1}{2}}\quad \text{for all}~ t \in (0,T),
	\end{align*}
and
	\begin{align*}
		\norm{\Phi_{2}(t,\cdot)}_{L^{\infty}_{xv}} \leq&\norm{f_{in}^{\delta}}_{L^{\infty}_{xv}}+C\int^{t}_{0} (t-\tau)^{-\frac{1}{2}} \norms{|\theta_{\varepsilon} \ast u^{\varepsilon,\delta}  - v\gamma_{\delta}(v)|  f^{\varepsilon,\delta}}_{L^{\infty}_{xv}}d\tau \\
		\leq& \norm{f_{in}}_{L^{\infty}_{xv}}+CT^{\frac{1}{2}}(\varepsilon)\norm{u^{\varepsilon,\delta}}_{L^{\infty}(0,T;L^{2}(\mathbb{R}^{3}_{x}))} \|f^{\varepsilon,\delta}(t,x,v)\|_{L^{\infty}((0,T)\times \mathbb{R}^{3}\times \mathbb{R}^{3})}\\ &+C(\delta)T^{\frac{1}{2}}\|f^{\varepsilon,\delta}(t,x,v)\|_{L^{\infty}((0,T)\times \mathbb{R}^{3}\times \mathbb{R}^{3})}\\
		\leq& \norm{f_{in}^{\delta}}_{L^{\infty}_{xv}}+C(\varepsilon,\delta,R)T^{\frac{1}{2}}\quad \text{for all}~ t \in (0,T).
	\end{align*}
	
	\textbf{Estimation of $\Phi_{3}$.} Note that
	\begin{align*}
		\norm{\Phi_{3}(t,\cdot)}_{L^{1}(\mathbb{R}^{6}_{xv})} \leq& \norms{\int_{\mathbb{R}^{6}}G(t,x,v;0,\xi,\upsilon)h_{in}^{\delta}(\xi,\upsilon)d\xi d\upsilon}_{L^{1}_{xv}}\\ &+\norms{\int^{t}_{0}\int_{\mathbb{R}^{6}}G(t,x,v;\tau,\xi,\upsilon)
			\nabla_{\upsilon} \cdot \left[\left(\theta_{\varepsilon} \ast u^{\varepsilon,\delta} -\upsilon\gamma_{\delta}(\upsilon)\right)h^{\varepsilon,\delta}(\xi,\upsilon)\right]d\xi d\upsilon d\tau}_{L^{1}_{xv}}\\
		&+2\norms{\int^{t}_{0}\int_{\mathbb{R}^{6}}G(t,x,v;\tau,\xi,\upsilon)\upsilon\cdot \left[\left(\theta_{\varepsilon} \ast u^{\varepsilon,\delta} -\upsilon\gamma_{\delta}(\upsilon)\right)f^{\varepsilon,\delta}(\xi,\upsilon)\right]d\xi d\upsilon d\tau}_{L^{1}_{xv}}\\
		&+6\norms{\int^{t}_{0}\int_{\mathbb{R}^{6}}G(t,x,v;\tau,\xi,\upsilon)f^{\epsilon,\delta}d\xi d\upsilon d\tau}_{L^{1}_{xv}}\\
		&+4\norms{\int^{t}_{0}\int_{\mathbb{R}^{6}}G(t,x,v;\tau,\xi,\upsilon)\upsilon\cdot \nabla_{\upsilon}f^{\epsilon,\delta} d\xi d\upsilon d\tau}_{L^{1}_{xv}}\\
		=&\sum^{5}_{i=1}{A_{3i}}.
	\end{align*}
	For $A_{31}$, by Lemma \ref{lem:G} we have
	\begin{align*}
		\norms{\int_{\mathbb{R}^{6}}G(t,x,v;0,\xi,\upsilon)h_{in}^{\delta}(\xi,\upsilon)d\xi d\upsilon}_{L^{1}_{xv}} \leq \norm{h_{in}^{\delta}}_{L^{1}_{xv}}.
	\end{align*}
	For $A_{32}$, by integration by parts and H\"{o}lder's inequality, we have
	\begin{align*}
		&\norms{\int^{t}_{0}\int_{\mathbb{R}^{6}}G(t,x,v;\tau,\xi,\upsilon)
			\nabla_{\upsilon} \cdot \left[\left(\theta_{\varepsilon} \ast u^{\varepsilon,\delta} -\upsilon\gamma_{\delta}(\upsilon)\right)h^{\varepsilon,\delta}(\xi,\upsilon)\right]d\xi d\upsilon d\tau}_{L^{1}_{xv}}\\
		&=\norms{\int^{t}_{0}\int_{\mathbb{R}^{6}}\nabla_{\upsilon}G(t,x,v;\tau,\xi,\upsilon)\cdot \left[\left(\theta_{\varepsilon} \ast u^{\varepsilon,\delta} -\upsilon\gamma_{\delta}(\upsilon)\right)h^{\varepsilon,\delta}(\xi,\upsilon)\right]d\xi d\upsilon d\tau}_{L^{1}_{xv}}\\
		&\leq C\bigg\|\int^{t}_{0}\int_{\mathbb{R}^{6}}(t-\tau)^{-\frac{1}{2}
		}G(t,x/2,v/2;\tau,\xi/2,\upsilon/2)\\
		&\hspace{5cm}\left|\left(\theta_{\varepsilon} \ast u^{\varepsilon,\delta} -\upsilon\gamma_{\delta}(\upsilon)\right)h^{\varepsilon,\delta}(\xi,\upsilon)\right|d\xi d\upsilon d\tau\bigg\|_{L^{1}_{xv}}\\
		&\leq CT^{\frac{1}{2}}\norms{\left(\theta_{\varepsilon} \ast u^{\varepsilon,\delta} -v\gamma_{\delta}(v)\right)h^{\varepsilon,\delta}(x,v)}_{L^{\infty}{(0,T;L^{1}(\mathbb{R}^6_{xv})})}\\
		&\leq CT^{\frac{1}{2}}\left[C(\varepsilon)\norm{u^{\varepsilon,\delta}}_{L^{\infty}(0,T;L^{2}(\mathbb{R}^{3}_{x}))} \norm{h^{\varepsilon,\delta}} _{L^{\infty}(0,T;L^{1}(\mathbb{R}^{6}_{xv}))}+C(\delta)\norm{h^{\varepsilon,\delta}}_{L^{\infty}(0,T;L^{1}(\mathbb{R}^{6}_{xv}))} \right].
	\end{align*}
	For $A_{33}$, by Lemma \ref{lem:G} and Lemma \ref{lem:f}, we have
	\begin{align*}
		2&\norms{\int^{t}_{0}\int_{\mathbb{R}^{6}}G(t,x,v;\tau,\xi,\upsilon)\upsilon\cdot \left[\left(\theta_{\varepsilon} \ast u^{\varepsilon,\delta} -\upsilon\gamma_{\delta}(\upsilon)\right)f^{\varepsilon,\delta}(\xi,\upsilon)\right]d\xi d\upsilon d\tau}_{L^{1}_{xv}}\\
		&\leq CT\norms{\left(\theta_{\varepsilon} \ast u^{\varepsilon,\delta} -v\gamma_{\delta}(v)\right)|v|f^{\varepsilon,\delta}}_{L^{\infty}(0,T;L^{1}(\mathbb{R}^{6}_{xv}))}\\
		&\leq CT\left(\norms{(\theta_{\varepsilon} \ast u^{\varepsilon,\delta} )|v|f^{\varepsilon,\delta}}_{L^{\infty}(0,T;L^{1}(\mathbb{R}^{6}_{xv}))}+\norm{h^{\varepsilon,\delta}}_{L^{\infty}(0,T;L^{1}(\mathbb{R}^{6}_{xv}))}\right)\\
		&\leq CT \norm{\theta_{\varepsilon}\ast u^{\varepsilon,\delta}}_{L^{\infty}((0,T)\times\mathbb{R}^{3}_{x})}\norms{|v|f^{\varepsilon,\delta}}_{L^{\infty}(0,T;L^{1}(\mathbb{R}^{6}_{xv}))}+CT\norm{h^{\varepsilon,\delta}}_{L^{\infty}(0,T;L^{1}(\mathbb{R}^{6}_{xv}))}\\
		&\leq C(\varepsilon)T\norm{u^{\varepsilon,\delta}}_{L^{\infty}(0,T;L^{2}(\mathbb{R}^{3}_{x}))}\left(\norm{h^{\varepsilon,\delta}}_{L^{\infty}(0,T;L^{1}(\mathbb{R}^{6}_{xv}))}+\norm{f^{\varepsilon,\delta}}_{L^{\infty}(0,T;L^{1}(\mathbb{R}^{6}_{xv}))}\right)\\
		&\hspace{7cm}+CT\norm{h^{\varepsilon,\delta}}_{L^{\infty}(0,T;L^{1}(\mathbb{R}^{6}_{xv}))} .
	\end{align*}
	For  $A_{34}$, we get
	\begin{align*}
		6&\norms{\int^{t}_{0}\int_{\mathbb{R}^{6}}G(t,x,v;\tau,\xi,\upsilon)f^{\epsilon,\delta}d\xi d\upsilon d\tau}_{L^{1}_{xv}} \\
		&\leq CT\norms{f^{\varepsilon,\delta}}_{L^{\infty}(0,T;L^{1}(\mathbb{R}^{6}_{xv}))}.
		% & \leq CT\left(\norm{h^{\varepsilon,\delta}}_{L^{\infty}(0,T;L^{1}(\mathbb{R}^{6}_{xv}))}+\norm{f^{\varepsilon,\delta}}_{L^{\infty}(0,T;L^{1}(\mathbb{R}^{6}_{xv}))}\right).
	\end{align*}
	For $A_{35}$, by the same methods as in $A_{31}$ to $A_{34}$ we derive
	\begin{align*}
		4&\norms{\int^{t}_{0}\int_{\mathbb{R}^{6}}G(t,x,v;\tau,\xi,\upsilon)\upsilon \cdot \nabla _{\upsilon}f^{\epsilon,\delta} d\xi d\upsilon d\tau}_{L^{1}_{xv}} \\
		&	= 4\norms{\int^{t}_{0}\int_{\mathbb{R}^{6}}\nabla_{\upsilon}G(t,x,v;\tau,\xi,\upsilon) \cdot\upsilon f^{\epsilon,\delta} d\xi d\upsilon d\tau}_{L^{1}_{xv}}\\
		&\hspace{0.5cm}+12\norms{\int^{t}_{0}\int_{\mathbb{R}^{6}}G(t,x,v;\tau,\xi,\upsilon) f^{\epsilon,\delta} d\xi d\upsilon d\tau}_{L^{1}_{xv}}\\
		&\leq CT^{\frac{1}{2}}\norms{|v|f^{\varepsilon,\delta}}_{L^{\infty}(0,T;L^{1}(\mathbb{R}^{6}_{xv}))}+CT\norms{f^{\varepsilon,\delta}}_{L^{\infty}(0,T;L^{1}(\mathbb{R}^{6}_{xv}))}\\
		&\leq CT^{\frac{1}{2}}\left(\norm{h^{\varepsilon,\delta}}_{L^{\infty}(0,T;L^{1}(\mathbb{R}^{6}_{xv}))}+\norm{f^{\varepsilon,\delta}}_{L^{\infty}(0,T;L^{1}(\mathbb{R}^{6}_{xv}))}\right).
	\end{align*}
	Consequently, we have
	\begin{align*}
		\norm{\Phi_{3}(t,\cdot)}_{L^{1}_{xv}(\mathbb{R}^{3}\times \mathbb{R}^{3})} \leq& \norm{h_{in}^{\delta}}_{L^{1}_{xv}(\mathbb{R}^{3}\times \mathbb{R}^{3})}+
		C(\varepsilon,\delta,R)T^{\frac{1}{2}}\quad \text{for all}~ t \in (0,T).		
	\end{align*}

	Combining  the estimates of $\Phi_{1}$, $\Phi_{2}$, and $\Phi_{3}$, we get that $\Phi$ is a mapping from \(\Gamma\) to \(\Gamma\).

	\subsubsection{$\Phi$ is a contraction mapping on \(\Gamma\)}
	\par 	Using the same way, we obtain the following estimates for ($n^{\varepsilon,\delta}_{1}$, $f^{\varepsilon, \delta}_{1}$, $h_{1}^{\varepsilon,\delta}$) , ($n^{\varepsilon,\delta}_{2}$, $f^{\varepsilon,\delta}_{2}$, $h_{2}^{\varepsilon,\delta}$) \( \in \Gamma\).
	
	\textbf{Estimation of $\Phi_{1}$.} For the $L^{2}$ norm of \(u^{\varepsilon,\delta}\), similar to the estimates of  \(A_{11}-A_{13}\), we have
	\begin{align*}
		&\|\Phi_{1}(u_{1}^{\varepsilon,\delta},f_{1}^{\varepsilon,\delta},h_{1}^{\varepsilon,\delta} ) - \Phi_{1}(u_{2}^{\varepsilon,\delta},f_{2}^{\varepsilon,\delta},h_{2}^{\varepsilon,\delta} ) \|_{L_{x}^{2}}=\\
		&\bigg\|-\int^{t}_{0}e^{(t-\tau) \triangle} \mathbb{P}\bigg\{(\theta_{\varepsilon}\ast u_{2}^{\varepsilon,\delta}-\theta_{\varepsilon}\ast u_{1}^{\varepsilon,\delta})\cdot \nabla u_1^{\varepsilon,\delta}+(\theta_{\varepsilon}\ast u_{2}^{\varepsilon,\delta})\cdot \nabla_{x} (u_{2}^{\varepsilon,\delta}-u_{1}^{\varepsilon,\delta})\bigg\}d\tau \\
		&+\int_{0}^{t} e^{(t-\tau) \triangle}\mathbb{P}\bigg\{\theta_{\varepsilon}\ast\int_{\mathbb{R}_{v}^3}v(f_{2}^{\varepsilon,\delta}-f_{1}^{\varepsilon,\delta})dv\bigg\}d\tau\\
        &+\int_{0}^{t} e^{(t-\tau) \triangle}\mathbb{P}\bigg\{\theta_{\varepsilon}\ast \int_{\mathbb{R}_{v}^3}(\theta_{\varepsilon}\ast u_{2}^{\varepsilon,\delta})(f_{1}^{\varepsilon,\delta}-f_{2}^{\varepsilon,\delta})dv\bigg\}d\tau\\
		&+\int_{0}^{t} e^{(t-\tau) \triangle}\mathbb{P}\bigg\{\theta_{\varepsilon}\ast\int_{\mathbb{R}_{v}^3}(\theta_{\varepsilon}\ast u_{1}^{\varepsilon,\delta}-\theta_{\varepsilon}\ast u_{2}^{\varepsilon,\delta})f_{1}^{\varepsilon,\delta}dv \bigg\}d\tau\bigg\|_{L^2_{x}}\\
		\leq& CT^{\frac{1}{4}}\{\norm{u_{2}^{\varepsilon,\delta}-u_{1}^{\varepsilon,\delta}}_{L^{\infty}(0,T;L^{2}(\mathbb{R}^{3}_{x}))}\norm{\nabla u_{1}^{\varepsilon,\delta}}_{L^{\infty}(0,T;L^{2}(\mathbb{R}^{3}_{x}))}\\
		&\hspace{3cm}+\norm{u_{2}^{\varepsilon,\delta}}_{L^{\infty}(0,T;L^{2}(\mathbb{R}^{3}_{x}))}\norm{\nabla u_{2}^{\varepsilon,\delta}-\nabla u_{1}^{\varepsilon,\delta}}_{L^{\infty}(0,T;L^{2}(\mathbb{R}^{3}_{x}))}\}\\
		&+CT^{\frac{11}{20}}\left(\frac{4}{3} \pi \norm{f_{2}^{\varepsilon,\delta}(t,x,v)-f_{1}^{\varepsilon,\delta}(t,x,v)}_{L^{\infty}((0,T)\times \mathbb{R}^{3}\times \mathbb{R}^{3})} +1\right)\left\| |v|^2f_{2}^{\varepsilon,\delta}-|v|^2f_{1}^{\varepsilon,\delta} \right\|_{L^{\infty}(0,T;L^{1}(\mathbb{R}^{6}_{xv}))}^{\frac{4}{5}}\\
        &+CT^{\frac{1}{4}}\left(\frac{4}{3} \pi\norm{{f_{1}^{\varepsilon,\delta}(t,x,v)}-f_{2}^{\varepsilon,\delta}(t,x,v)}_{L^{\infty}((0,T)\times \mathbb{R}^{3}\times \mathbb{R}^{3})}+1\right)\times \\
		& \hspace{3cm} \left(\norm{ u_{2}^{\varepsilon,\delta}}_{L^{\infty}(0,T;L^{2}(\mathbb{R}^{3}_{x}))}+\norm{\nabla u_{2}^{\varepsilon,\delta}}_{L^{\infty}(0,T;L^{2}(\mathbb{R}^{3}_{x}))}\right) \left\| |v|^2f_{1}^{\varepsilon,\delta}-|v|^2f_{2}^{\varepsilon,\delta} \right\|_{L^{\infty}(0,T;L^{1}(\mathbb{R}^{6}_{xv}))}^{\frac{3}{5}}\\
		&+CT^{\frac{1}{4}}\left(\frac{4}{3} \pi \norm{f_{1}^{\varepsilon,\delta}(t,x,v)}_{L^{\infty}((0,T)\times \mathbb{R}^{3}\times \mathbb{R}^{3})} +1\right) \left\| |v|^2f_{1}^{\varepsilon,\delta} \right\|_{L^{\infty}(0,T;L^{1}(\mathbb{R}^{6}_{xv}))}^{\frac{3}{5}}\times\\
		&\hspace{3cm}\Big(\norm{(u_{1}^{\varepsilon,\delta}-u_{2}^{\varepsilon,\delta})}_{L^{\infty}(0,T;L^{2}(\mathbb{R}^{3}_{x}))}+\norm{\nabla u_{1}^{\varepsilon,\delta}-\nabla u_{2}^{\varepsilon,\delta}}_{L^{\infty}(0,T;L^{2}(\mathbb{R}^{3}_{x}))}\Big) \\
		\leq& C(R)T^{\frac{1}{4}}\norm{(u_{1}^{\varepsilon,\delta},f_{1}^{\varepsilon,\delta},h_{1}^{\varepsilon,\delta})-(u_{2}^{\varepsilon,\delta},f_{2}^{\varepsilon,\delta},h_{2}^{\varepsilon,\delta})}_{\Gamma}.
	\end{align*}
	For the $L^{2}$ norm of $\nabla u^{\varepsilon,\delta}$, similar to \(A_{21}-A_{23}\), we have
	\begin{align*}
		\|\nabla_{x}\Phi_{1}&(u_{1}^{\varepsilon,\delta},f_{1}^{\varepsilon,\delta},h_{1}^{\varepsilon,\delta} ) - \nabla_{x}\Phi_{1}(u_{2}^{\varepsilon,\delta},f_{2}^{\varepsilon,\delta},h_{2}^{\varepsilon,\delta} ) \|_{L_{x}^{2}}=\\
		\bigg\|&-\int^{t}_{0}\nabla_{x} e^{(t-\tau) \triangle} \mathbb{P}\left\{(\theta_{\varepsilon}\ast u_{2}^{\varepsilon,\delta}-\theta_{\varepsilon}\ast u_{1}^{\varepsilon,\delta})\cdot \nabla u_{1}^{\varepsilon,\delta}\right\}\\
		&-\int^{t}_{0}\nabla_{x} e^{(t-\tau) \triangle} \mathbb{P}\left\{(\theta_{\varepsilon}\ast u_{2}^{\varepsilon,\delta})\cdot \nabla_{x}( u_{2}^{\varepsilon,\delta}-u_{1}^{\varepsilon,\delta})\right\}d\tau \\
		&+\int_{0}^{t} \nabla_{x} e^{(t-\tau) \triangle}\mathbb{P}\theta_{\varepsilon}\ast\left\{\int_{\mathbb{R}_{v}^3}v(f_{2}^{\varepsilon,\delta}-f_{1}^{\varepsilon,\delta})dv + (\theta_{\varepsilon}\ast u_{2}^{\varepsilon,\delta})\int_{\mathbb{R}_{v}^3}(f_{1}^{\varepsilon,\delta}-f_{2}^{\varepsilon,\delta})dv\right\}d\tau \\
		&+\int_{0}^{t} \nabla_{x} e^{(t-\tau) \triangle}\mathbb{P}\theta_{\varepsilon}\ast\left\{(\theta_{\varepsilon}\ast u_{1}^{\varepsilon,\delta}-\theta_{\varepsilon}\ast u_{2}^{\varepsilon,\delta})\int_{\mathbb{R}_{v}^3}f_{1}^{\varepsilon,\delta}dv\right\}d\tau\bigg\|_{L^2_{x}}\\
		\leq& C(\varepsilon)T^{\frac{1}{2}}\bigg\{\norm{u_{2}^{\varepsilon,\delta}-u_{1}^{\varepsilon,\delta}}_{L^{\infty}(0,T;L^{2}(\mathbb{R}^{3}_{x}))}\norm{\nabla u_{1}^{\varepsilon,\delta}}_{L^{\infty}(0,T;L^{2}(\mathbb{R}^{3}_{x}))}\\
		&\hspace{5cm}+\norm{u_{2}^{\varepsilon,\delta}}_{L^{\infty}(0,T;L^{2}(\mathbb{R}^{3}_{x}))}\norm{\nabla u_{2}^{\varepsilon,\delta}-\nabla u_{1}^{\varepsilon,\delta}}_{L^{\infty}(0,T;L^{2}(\mathbb{R}^{3}_{x}))}\bigg\}\\
		&+CT^{\frac{1}{20}}\left(\frac{4}{3} \pi \norm{f_{2}^{\varepsilon,\delta}(t,x,v)-f_{1}^{\varepsilon,\delta}(t,x,v)}_{L^{\infty}_{txv}} +1\right)\left\| |v|^2f_{2}^{\varepsilon,\delta}-|v|^2f_{1}^{\varepsilon,\delta} \right\|_{L^{\infty}(0,T;L^{1}(\mathbb{R}^{6}_{xv}))}^{\frac{4}{5}}\\
        &+C(\varepsilon)T^{\frac{7}{20}}\left(\frac{4}{3} \pi\norm{{f_{1}^{\varepsilon,\delta}(t,x,v)}-f_{2}^{\varepsilon,\delta}(t,x,v)}_{L^{\infty}_{txv}}+1\right) \\
		& \hspace{3cm} \times\norm{u_{2}^{\varepsilon,\delta}}_{L^{\infty}(0,T;L^{2}_x)}
		\left\| |v|^2f_{1}^{\varepsilon,\delta}-|v|^2f_{2}^{\varepsilon,\delta} \right\|_{L^{\infty}(0,T;L^{1}(\mathbb{R}^{6}_{xv}))}^{\frac{3}{5}}\\
		&+C(\varepsilon)T^{\frac{7}{20}}\left(\frac{4}{3} \pi \norm{f_{1}^{\varepsilon,\delta}(t,x,v)}_{L^{\infty}_{txv}} +1\right)\norm{u_{1}^{\varepsilon,\delta}- u_{2}^{\varepsilon,\delta}}_{L^{\infty}(0,T;L^{2}_x)}\left\| |v|^2f_{1}^{\varepsilon,\delta} \right\|_{L^{\infty}(0,T;L^{1}(\mathbb{R}^{6}_{xv}))}^{\frac{3}{5}}\\
		\leq& C(\varepsilon,R)T^{\frac{1}{20}}\norm{(u_{1}^{\varepsilon,\delta},f_{1}^{\varepsilon,\delta},h_{1}^{\varepsilon,\delta})-(u_{2}^{\varepsilon,\delta},f_{2}^{\varepsilon,\delta},h_{2}^{\varepsilon,\delta})}_{\Gamma}.
	\end{align*}
	\textbf{Estimation of $\Phi_{2}$.} For the $L^{1}$ norm of $ f^{\varepsilon,\delta}$, we have
	\begin{align*}
		\|\Phi_{2}&(u_{1}^{\varepsilon,\delta},f_{1}^{\varepsilon,\delta},h_{1}^{\varepsilon,\delta} ) - \Phi_{2}(u_{1}^{\varepsilon,\delta},f_{2}^{\varepsilon,\delta},h_{2}^{\varepsilon,\delta} ) \|_{L_{xv}^{1}}=\\
		&\bigg\|\int^{t}_{0}\int_{\mathbb{R}^{6}} (t-\tau)^{-\frac{1}{2}}
		G(t,x/2,v/2;\tau,\xi/2,\upsilon/2)\cdot \\
		& \hspace{2cm}\left[\left(\theta_{\varepsilon}\ast u_{1}^{\varepsilon,\delta}-\upsilon\gamma_{\delta}(\upsilon)\right)f_{1}^{\varepsilon,\delta}(\xi,\upsilon)-\left(\theta_{\varepsilon}\ast u_{2}^{\varepsilon,\delta}-\upsilon\gamma_{\delta}(\upsilon)\right)f_{2}^{\varepsilon,\delta}(\xi,\upsilon)\right] d\xi d\upsilon d\tau\bigg\|_{L^1_{xv}}\\
		\leq& C(\varepsilon)T^{\frac{1}{2}}\bigg(\norm{u_{1}^{\varepsilon,\delta}-u_{2 }^{\varepsilon,\delta}}_{L^{\infty}(0,T;L^{2}(\mathbb{R}^{3}_{x}))}\norm{f_{1}^{\varepsilon,\delta}} _{L^{\infty}(0,T;L^{1}_{xv}(\mathbb{R}^{3}\times \mathbb{R}^{3}))} \\ &+\norm{u_{2}^{\varepsilon,\delta}}_{L^{\infty}(0,T;L^{2}(\mathbb{R}^{3}_{x}))}\norm{f_{1}^{\varepsilon,\delta}-f_{2}^{\varepsilon,\delta}} _{L^{\infty}(0,T;L^{1}_{xv}(\mathbb{R}^{3}\times \mathbb{R}^{3}))} \bigg)\\
        &+CT^{\frac{1}{2}}\bigg(\norm{f_{2}^{\varepsilon,\delta}-f_{1}^{\varepsilon,\delta}} _{L^{\infty}(0,T;L^{1}_{xv}(\mathbb{R}^{3}\times \mathbb{R}^{3}))}+\norms{|v|^2f_{2}^{\varepsilon,\delta}-|v|^2f_{1}^{\varepsilon,\delta}} _{L^{\infty}(0,T;L^{1}_{xv}(\mathbb{R}^{3}\times \mathbb{R}^{3}))} \bigg)\\
		\leq& C(\varepsilon,R)T^{\frac{1}{2}}\norm{(u_{1}^{\varepsilon,\delta},f_{1}^{\varepsilon,\delta},h_{1}^{\varepsilon,\delta})-(u_{2}^{\varepsilon,\delta},f_{2}^{\varepsilon,\delta},h_{2}^{\varepsilon,\delta})}_{\Gamma}.
	\end{align*}		
	For the $L^{\infty}$ norm of $ f^{\varepsilon,\delta}$, we have
	\begin{align*}
		\|\Phi_{2}&(u_{1}^{\varepsilon,\delta},f_{1}^{\varepsilon,\delta},h_{1}^{\varepsilon,\delta} ) - \Phi_{2}(u_{2}^{\varepsilon,\delta},f_{2}^{\varepsilon,\delta},h_{2}^{\varepsilon,\delta} ) \|_{L_{xv}^{\infty}}=\\
		\bigg\|	&\int^{t}_{0}\int_{\mathbb{R}^{6}} (t-\tau)^{-\frac{1}{2}}
		G(t,x/2,v/2;\tau,\xi/2,\upsilon/2)\cdot \\
		& \hspace{2cm}\left[\left(\theta_{\varepsilon}\ast u_{1}^{\varepsilon,\delta}-\upsilon\gamma_{\delta}(\upsilon)\right)f_{1}^{\varepsilon,\delta}(\xi,\upsilon)-\left(\theta_{\varepsilon}\ast u_{2}^{\varepsilon,\delta}-\upsilon\gamma_{\delta}(\upsilon)\right)f_{2}^{\varepsilon,\delta}(\xi,\upsilon)\right] d\xi d\upsilon d\tau\bigg\|_{L^{\infty}_{xv}}\\
		\leq& C(\varepsilon)T^{\frac{1}{2}}\bigg(\norm{u_{1}^{\varepsilon,\delta}-u_{2 }^{\varepsilon,\delta}}_{L^{\infty}(0,T;L^{2}(\mathbb{R}^{3}_{x}))}\norm{f_{1}^{\varepsilon,\delta}} _{L^{\infty}((0,T)\times \mathbb{R}^{3}\times \mathbb{R}^{3})} \\ &+\norm{u_{2}^{\varepsilon,\delta}}_{L^{\infty}(0,T;L^{2}(\mathbb{R}^{3}_{x}))}\norm{f_{1}^{\varepsilon,\delta}-f_{2}^{\varepsilon,\delta}} _{L^{\infty}((0,T)\times \mathbb{R}^{3}\times \mathbb{R}^{3})} \bigg)+C(\delta)T^{\frac{1}{2}}\norm{f_{2}^{\varepsilon,\delta}-f_{1}^{\varepsilon,\delta}} _{L^{\infty}((0,T)\times \mathbb{R}^{3}\times \mathbb{R}^{3})}\\
		\leq& C(\varepsilon,\delta,R)T^{\frac{1}{2}}\norm{(u_{1}^{\varepsilon,\delta},f_{1}^{\varepsilon,\delta},h_{1}^{\varepsilon,\delta})-(u_{2}^{\varepsilon,\delta},f_{2}^{\varepsilon,\delta},h_{2}^{\varepsilon,\delta})}_{\Gamma}.
	\end{align*}
	
	\textbf{Estimation of $\Phi_{3}$.} For $L^{1}$ norm of $h^{\varepsilon,\delta}$, similar to \(A_{31}-A_{35}\), we have
	\begin{align*}
		&\|\Phi_{3}(u_{1}^{\varepsilon,\delta},f_{1}^{\varepsilon,\delta},h_{1}^{\varepsilon,\delta})
		- \Phi_{3}(u_{2}^{\varepsilon,\delta},f_{2}^{\varepsilon,\delta},h_{2}^{\varepsilon,\delta}) \|_{L_{xv}^{1}} \\
		&= \bigg\|
		-\int_{0}^{t} \int_{\mathbb{R}^{6}} G(t,x,v;\tau,\xi,\upsilon)
		\nabla_{\upsilon} \cdot
		\big[\big(\theta_{\varepsilon} \ast u_{2}^{\varepsilon,\delta} - \theta_{\varepsilon} \ast u_{1}^{\varepsilon,\delta}\big) h_{2}^{\varepsilon,\delta}(\xi,\upsilon)\big]
		\, d\xi \, d\upsilon \, d\tau \\
		&\quad - \int_{0}^{t} \int_{\mathbb{R}^{6}} G(t,x,v;\tau,\xi,\upsilon)
		\nabla_{\upsilon} \cdot
		\big[\theta_{\varepsilon} \ast u_{1}^{\varepsilon,\delta} \big(h^{\varepsilon,\delta}_{2} - h^{\varepsilon,\delta}_{1}\big)\big]
		\, d\xi \, d\upsilon \, d\tau \\
		&\quad +\int_{0}^{t} \int_{\mathbb{R}^{6}} G(t,x,v;\tau,\xi,\upsilon)
		\nabla_{\upsilon} \cdot
		\big[\upsilon \gamma_{\delta}(\upsilon) \big(h^{\varepsilon,\delta}_{1} - h^{\varepsilon,\delta}_{2}\big)\big]
		\, d\xi \, d\upsilon \, d\tau \\
		&\quad + \int_{0}^{t} \int_{\mathbb{R}^{6}} G(t,x,v;\tau,\xi,\upsilon)
		\bigg\{ 2\upsilon \cdot \Big[\big(\theta_{\varepsilon} \ast u_{1}^{\varepsilon,\delta} - \theta_{\varepsilon} \ast u_{2}^{\varepsilon,\delta}\big) f_{1}^{\varepsilon,\delta}
		- \theta_{\varepsilon} \ast u_{2}^{\varepsilon,\delta} \big(f_{2}^{\varepsilon,\delta} - f_{1}^{\varepsilon,\delta}\big)\Big]\bigg\} \\
		&\quad - \int_{0}^{t} \int_{\mathbb{R}^{6}} G(t,x,v;\tau,\xi,\upsilon)
		\bigg\{ 2\upsilon \cdot \big[\upsilon \gamma_{\delta}(\upsilon) \big(f_{2}^{\varepsilon,\delta} - f_{1}^{\varepsilon,\delta}\big)\big]\bigg\} \\
		&\quad - \int_{0}^{t} \int_{\mathbb{R}^{6}} G(t,x,v;\tau,\xi,\upsilon)
		\bigg\{ 6\big(f_{2}^{\varepsilon,\delta} - f_{1}^{\varepsilon,\delta}\big)
		+ 4\upsilon \cdot \big(\nabla_{\upsilon}f_{2}^{\varepsilon,\delta} - \nabla_{\upsilon}f_{1}^{\varepsilon,\delta}\big)\bigg\}
		\, d\xi \, d\upsilon \, d\tau\bigg\|_{L^1_{xv}}\\
		&\leq CT^{\frac{1}{2}}\Big(C(\varepsilon)\norm{u_2^{\varepsilon,\delta}-u_1^{\varepsilon,\delta}}_{L^{\infty}(0,T;L^{2}(\mathbb{R}^{3}_{x}))} \norm{h_2^{\varepsilon,\delta}} _{L^{\infty}(0,T;L^{1}(\mathbb{R}^{6}_{xv}))}\\
		&\hspace{1.5cm}+C(\varepsilon)\norm{u_1^{\varepsilon,\delta}}_{L^{\infty}(0,T;L^{2}(\mathbb{R}^{3}_{x}))} \norm{h_2^{\varepsilon,\delta}-h_1^{\varepsilon,\delta}} _{L^{\infty}(0,T;L^{1}(\mathbb{R}^{6}_{xv}))}+C(\delta)\norm{h_1^{\varepsilon,\delta}-h_2^{\varepsilon,\delta}}_{L^{\infty}(0,T;L^{1}(\mathbb{R}^{6}_{xv}))} \Big)\\
		&\quad+C(\varepsilon)T\norm{u_1^{\varepsilon,\delta}-u_2^{\varepsilon,\delta}}_{L^{\infty}(0,T;L^{2}(\mathbb{R}^{3}_{x}))}\left(\norm{h_2^{\varepsilon,\delta}}_{L^{\infty}(0,T;L^{1}(\mathbb{R}^{6}_{xv}))}+\norm{f_2^{\varepsilon,\delta}}_{L^{\infty}(0,T;L^{1}(\mathbb{R}^{6}_{xv}))}\right)\\
		&\quad+C(\varepsilon)T\norm{u_2^{\varepsilon,\delta}}_{L^{\infty}(0,T;L^{2}(\mathbb{R}^{3}_{x}))}\left(\norm{h_2^{\varepsilon,\delta}-h_1^{\varepsilon,\delta}}_{L^{\infty}(0,T;L^{1}(\mathbb{R}^{6}_{xv}))}+\norm{f_2^{\varepsilon,\delta}-f_1^{\varepsilon,\delta}}_{L^{\infty}(0,T;L^{1}(\mathbb{R}^{6}_{xv}))}\right)\\
		&\quad+CT^{\frac{1}{2}}\left(\norm{f_2^{\varepsilon,\delta}-f_1^{\varepsilon,\delta}}_{L^{\infty}(0,T;L^{1}(\mathbb{R}^{6}_{xv}))}+\norm{h_2^{\varepsilon,\delta}-h_1^{\varepsilon,\delta}}_{L^{\infty}(0,T;L^{1}(\mathbb{R}^{6}_{xv}))}\right)\\
		&\leq C(\varepsilon,\delta,R)T^{\frac{1}{2}}\norm{(u_{1}^{\varepsilon,\delta},f_{1}^{\varepsilon,\delta},h_{1}^{\varepsilon,\delta})-(u_{2}^{\varepsilon,\delta},f_{2}^{\varepsilon,\delta},h_{2}^{\varepsilon,\delta})}_{\Gamma}.
	\end{align*}
	\par  Finally, we  obtain
	\begin{gather*}
		\|\Phi(u_{1}^{\varepsilon,\delta},f_{1}^{\varepsilon,\delta},h_{1}^{\varepsilon,\delta} ) - \Phi(u_{2}^{\varepsilon,\delta},f_{2}^{\varepsilon,\delta},h_{2}^{\varepsilon,\delta} ) \|_{\Gamma}\leq C(\varepsilon,\delta,R)T^{\frac{1}{20}}\norm{(u_{1}^{\varepsilon,\delta},f_{1}^{\varepsilon,\delta},h_{1}^{\varepsilon,\delta})-(u_{2}^{\varepsilon,\delta},f_{2}^{\varepsilon,\delta},h_{2}^{\varepsilon,\delta})}_{\Gamma},
	\end{gather*}
	for all $t\in (0,T)$, which implies that  $\Phi$ acts as a contraction on $\Gamma$ by choosing a  sufficiently small $T$. Then, one can conclude the existence of $(u^{\varepsilon,\delta},f^{\varepsilon,\delta} ,h^{\varepsilon,\delta} )$ as the fixed point of $\Phi$ on $\Gamma$, namely $(u^{\varepsilon,\delta},f^{\varepsilon,\delta} ,h^{\varepsilon,\delta} ) = \Phi(u^{\varepsilon,\delta},f^{\varepsilon,\delta}, h^{\varepsilon,\delta} )$.
	
	\subsubsection{The positivity of $f^{\varepsilon,\delta}$}
	\par Firstly, set \( f^{\varepsilon,\delta}_{+} = \max(0, f^{\varepsilon,\delta}) \) and \( -f^{\varepsilon,\delta}_{-} = \min(0, f^{\varepsilon,\delta}) \). Then, \( f^{\varepsilon,\delta} = f^{\varepsilon,\delta}_{+} - f^{\varepsilon,\delta}_{-} \). Multiplying the second equation of \eqref{eq:NSVFK0} by \( f^{\varepsilon,\delta}_{-} \), we derive
	\begin{align*}
		\displaystyle \partial_t f^{\varepsilon,\delta}_{-}f^{\varepsilon,\delta}_{-} + (v\cdot \nabla_x) f^{\varepsilon,\delta}_{-}f^{\varepsilon,\delta}_{-}+ \nabla_v \cdot \left[\left(\theta_{\varepsilon} \ast u^{\varepsilon,\delta}-v\gamma_
		\delta(v)\right)f^{\varepsilon,\delta}_{-}-\nabla_v f^{\varepsilon,\delta}_{-}\right] f^{\varepsilon,\delta}_{-}= 0.
	\end{align*}
	Then, by integrating by parts on the above equation, we obtain
	\begin{align*}
		\displaystyle \frac{1}{2}\partial_t \int_{\mathbb{R}^{6}} (f^{\varepsilon,\delta}_{-})^{2}dxdv + \int_{\mathbb{R}^{6}}|\nabla_v f^{\varepsilon,\delta}_{-}|^{2}dxdv +\frac{1}{2}\int_{\mathbb{R}^{6}}\nabla_v (f^{\varepsilon,\delta}_{-})^{2}
		\cdot (v\gamma_
		\delta(v)-\theta_{\varepsilon} \ast u^{\varepsilon,\delta})dxdv= 0.
	\end{align*}
	From this equation and \(v\cdot\nabla_v \gamma_{\delta}(v)\leq 0\), we know that
	\begin{align*}
		&\int_{\mathbb{R}^{6}}\nabla_v (f^{\varepsilon,\delta}_{-})^{2}
		\cdot (v\gamma_
		\delta(v)-\theta_{\varepsilon} \ast u^{\varepsilon,\delta})dxdv\\
		&=-\int_{\mathbb{R}^{6}} (f^{\varepsilon,\delta}_{-})^{2} \nabla_{v}
		\cdot (v\gamma_\delta(v))dxdv\\
		&\geq -3\int_{\mathbb{R}^6} (f^{\varepsilon,\delta}_{-})^{2} dxdv.
	\end{align*}
	Hence we  get
	\begin{align*}
		\displaystyle \partial_t \int_{\mathbb{R}^{6}} (f^{\varepsilon,\delta}_{-})^{2}dxdv + 2\int_{\mathbb{R}^{6}}|\nabla_v f^{\varepsilon,\delta}_{-}|^{2}dxdv \leq 6\int_{\mathbb{R}^{6}}(f^{\varepsilon,\delta}_{-})^{2}dxdv.
	\end{align*}
	Using Gr\"{o}nwall's inequality (Lemma \ref{gronwall}) and \eqref{fdelta}, we can derive
	\begin{align}\label{f-}
		\int_{\mathbb{R}^{6}} (f^{\varepsilon,\delta}_{-})^{2}dxdv \leq e^{6T}\norm{f_{-}^{\delta}(\cdot,0)}^{2}_{L^{2}_{xv}}\leq e^{6T}\norm{f_{-}(\cdot,0)}^{2}_{L^{2}_{xv}}.
	\end{align}
	Note that \(f_{-}(\cdot,0) =0\) due to \(f_{in} \geqslant 0\), which means \(f^{\varepsilon,\delta}_{-}=0\). Thus \(f^{\varepsilon,\delta}=f^{\varepsilon,\delta}_{+}\), and we have \(f^{\varepsilon,\delta} \geqslant 0\).
	
	%So integrate by parts for the second euqation of (2.1), we can get
	%\begin{align*}
	%	\frac{1}{p}\partial_t &\int_{\mathbb{R}^{6}} (f^{\varepsilon,\delta})^{p}dxdv + \frac{4(p-1)}{p^2}\int_{\mathbb{R}^{6}}|\nabla_{v}(f^{\varepsilon,\delta})^{\frac{p}{2}}|^{2}dxdv \\
	%&\leq \max\{0,\int_{\mathbb{R}^{6}}(f^{\varepsilon,\delta})^{p-1}\nabla_v \cdot [(v-\theta_{\varepsilon} \ast u^{\varepsilon,\delta})f^{\varepsilon,\delta}]dxdv\}\\
	%	& \hspace{3cm} +\int_{\mathbb{R}^{6}}(f^{\varepsilon,\delta})^{p} (v-\theta_{\varepsilon} \ast u^{\varepsilon,\delta})\cdot \nabla_{v}\gamma_
	%\delta(v)dxdv.\\
	%	&= B_{11}+B_{12}.
	%\end{align*}
	%For \(B_{11}\), by integrate by parts, we have
	%\begin{align*}
	%B_{11} &\leq \max\{0, \int_{\mathbb{R}^{6}}\nabla_v(f^{\varepsilon,\delta})^{p-1} \cdot [(\theta_{\varepsilon} \ast u^{\varepsilon,\delta}-v)f^{\varepsilon,\delta}]dxdv\}\\
	%&\leq \max\{0, \frac{1}{p}\int_{\mathbb{R}^{6}}\nabla_v(f^{\varepsilon,\delta})^{p} \cdot [(\theta_{\varepsilon} \ast u^{\varepsilon,\delta}-v)]dxdv\}\\
	%&\leq \frac{3}{p}\int_{\mathbb{R}^{6}} (f^{\varepsilon,\delta})^{p}dxdv.
	%\end{align*}
	%For \(B_{12}\), by similar method with \(B_{11}\), we get
	%\begin{align*}
	%B_{12}&= \int_{\mathbb{R}^{6}}\nabla_{v}\cdot [(f^{\varepsilon,\delta})^{p} (v-\theta_{\varepsilon} \ast u^{\varepsilon,\delta})]\gamma_
	%\delta(v)dxdv\\
	%&\leq \max\{0,\int_{\mathbb{R}^{6}}\nabla_{v}\cdot [(f^{\varepsilon,\delta})^{p} (v-\theta_{\varepsilon} \ast u^{\varepsilon,\delta})]dxdv\}\\
	%&=0.
	%\end{align*}

	\subsection{Global existence of strong solutions}   $ $
	\par In order to prove the global existence of strong solutions of this regularized system \eqref{eq:NSVFK0}, it is enough to prove
	\begin{align*}
		\norm{f^{\varepsilon,\delta}(t,x,v)}_{L^{\infty}(0,T;L^{1}_{xv}(\mathbb{R}^{3}\times \mathbb{R}^{3}))} \leq 	\norm{f_{in}(t,x,v)}_{L^{1}_{xv}(\mathbb{R}^{3}\times \mathbb{R}^{3})},
	\end{align*}
	\begin{align*}
		\norm{f^{\varepsilon,\delta}(t,x,v)}_{L^{\infty}((0,T)\times\mathbb{R}^{3}\times \mathbb{R}^{3})} \leq 	e^{3T}\norm{f_{in}(t,x,v)}_{L^{\infty}_{xv}(\mathbb{R}^{3}\times \mathbb{R}^{3})},
	\end{align*}
	and
	\begin{equation}
		\label{eq:3}
	    \begin{aligned}
		&\norm{u^{\varepsilon,\delta}}^{2}_{L^{\infty}(0,T;{L^{2}_{x}})}+\norms{|v|^{2}f^{\varepsilon,\delta}}_{L^{\infty}(0,T;{L^{1}_{xv}})}\\
        &\hspace{2cm}+\norm{\nabla u^{\varepsilon,\delta}}^{2}_{L^{2}(0,T;{L^{2}_{x}})}+\norm{\nabla u^{\varepsilon,\delta}}^{2}_{L^{\infty}(0,T;{L^{2}_{x}})}\leq C\left(T+1\right)^5e^{C(\varepsilon)(T+1)^2e^{2T}}
	\end{aligned}
	\end{equation}
	for any \(T>0\), where \(C=C\left(\varepsilon, \delta, \norm{f_{in}}_{L^{\infty}_{xv}}, \norm{f_{in}}_{L^{1}_{xv}},\norms{|v|^{2}f_{in}}_{L^{1}_{xv}},\norm{u_{in}}^{2}_{L^{2}_{x}}\right)\).
	\subsubsection{The estimate of $f^{\varepsilon,\delta}$}$ $
    \par 	Claim that
    \begin{align}
    	\label{5}
    	\norm{f^{\varepsilon,\delta}(t,x,v)}_{L^{\infty}(0,T;L^{p}_{xv}(\mathbb{R}^{3}\times \mathbb{R}^{3}))} \leq 	e^{\frac{3(p-1)}{p}T}\norm{f_{in}(t,x,v)}_{L^{p}_{xv}(\mathbb{R}^{3}\times \mathbb{R}^{3})}
    \end{align}
    for all \(p \in [1,+\infty]\).
    
    \begin{proof}[Proof of \eqref{5}]
    % 	Recall the second equation of \eqref{eq:NSVFK0} as follows:
    % \begin{align*}
    % 	\partial_t f^{\varepsilon,\delta} + (v\cdot \nabla_x) f^{\varepsilon,\delta}-\Delta f^{\varepsilon,\delta} = \nabla_v \cdot \left[(v\gamma_
    % 	\delta(v)-\theta_{\varepsilon} \ast u^{\varepsilon,\delta})f^{\varepsilon,\delta}\right].
    % \end{align*}
    Multiplying  \((f^{\varepsilon,\delta})^{p-1}\) on both sides of~$\eqref{eq:NSVFK0}_2$ ~ and ~integrating, we  derive
    \begin{align*}
    	&\frac{1}{p}\partial_t\int_{\mathbb{R}^6} (f^{\varepsilon,\delta})^{p}dxdv + \frac{1}{p} \int_{\mathbb{R}^6}v\cdot \nabla_x (f^{\varepsilon,\delta})^{p}dxdv-\int_{\mathbb{R}^6}(f^{\varepsilon,\delta})^{p-1}\Delta f^{\varepsilon,\delta}dxdv \\
    	&= \int_{\mathbb{R}^6}(f^{\varepsilon,\delta})^{p-1}\nabla_v \cdot \left[(v\gamma_
    	\delta(v)-\theta_{\varepsilon} \ast u^{\varepsilon,\delta})f^{\varepsilon,\delta}\right]dxdv,
    \end{align*}
    where \(p\in [1,+\infty)\). By integration by parts and \(v\cdot \nabla_v \gamma\leq 0\), we have
    \begin{align*}
    	&\int_{\mathbb{R}^6}(f^{\varepsilon,\delta})^{p-1}\nabla_v \cdot \left[(v\gamma_
    	\delta(v)-\theta_{\varepsilon} \ast u^{\varepsilon,\delta})f^{\varepsilon,\delta}\right]dxdv\\
    	&=-\int_{\mathbb{R}^6}\nabla_v \left[(f^{\varepsilon,\delta})^{p-1}\right]\cdot \left[(v\gamma_
    	\delta(v)-\theta_{\varepsilon} \ast u^{\varepsilon,\delta})f^{\varepsilon,\delta}\right]dxdv\\
    	&= \frac{p-1}{p}\int_{\mathbb{R}^6}(f^{\varepsilon,\delta})^p \nabla_{v}\cdot (v\gamma_{\delta}(v))dxdv\\
    	&\leq \frac{3(p-1)}{p}\int_{\mathbb{R}^6}(f^{\varepsilon,\delta})^pdxdv.
    \end{align*}
    	Then we  derive
    \begin{align*}
    	\partial_t &\int_{\mathbb{R}^{6}} (f^{\varepsilon,\delta})^{p}dxdv + \frac{4(p-1)}{p}\int_{\mathbb{R}^{6}}|\nabla_{v}(f^{\varepsilon,\delta})^{\frac{p}{2}}|^{2}dxdv \leq 3(p-1)\int_{\mathbb{R}^{6}} (f^{\varepsilon,\delta})^{p}dxdv.
    \end{align*}
    By Gr\"{o}nwall's inequality (Lemma \ref{gronwall}) and \eqref{fdelta}, we get
    \begin{align}\label{flp}
    	\norm{f^{\varepsilon,\delta}}_{L^{\infty}_tL^{p}_{xv}((0,T)\times\mathbb{R}^{6})} \leq e^{\frac{3(p-1)}{p}T}\norm{f_{in}^\delta}_{L^{p}_{xv}(\mathbb{R}^{6})}\leq e^{\frac{3(p-1)}{p}T}\norm{f_{in}}_{L^{p}_{xv}(\mathbb{R}^{6})}
    \end{align}
    for arbitrary \(p \in [1,+\infty)\).
  Thus we  get
    \begin{align*}
    	\norm{f^{\varepsilon,\delta}}_{L^{\infty}_{xv}(\mathbb{R}^{6})} \leq e^{3T}\norm{f_{in}}_{L^{\infty}_{xv}(\mathbb{R}^{6})}.
    \end{align*}
    \end{proof}

    \subsubsection{The estimate of \(u^{\varepsilon,\delta}\) and \(|v|^2f^{\varepsilon,\delta}\)}
    
 Firstly, we multiply \(u^{\varepsilon,\delta}\) on the first equation of \eqref{eq:NSVFK0} and integrate to get
	\begin{align}
		\label{eq:1}
		\frac{1}{2}\partial_t \int_{\mathbb{R}^{3}}(u^{\varepsilon,\delta})^{2}dx + \norm{\nabla u^{\varepsilon,\delta}}^{2}_{L^{2}_{x}(\mathbb{R}^{3})}
		=\int_{\mathbb{R}^6}(\theta_{\varepsilon}\ast u^{\varepsilon,\delta})\cdot(v-\theta_{\varepsilon}\ast u^{\varepsilon,\delta})f^{\varepsilon,\delta}dvdx.
	\end{align}
	Then,  by multiplying \(\frac{|v|^{2}}{2}\) on the second equation of \eqref{eq:NSVFK0} and integrating, we get
	\begin{equation}
		\label{eq:2}
	    \begin{aligned}
		\frac{1}{2}\partial_t \int_{\mathbb{R}^{6}} |v|^{2}f^{\varepsilon,\delta}dxdv-\int_{\mathbb{R}^6}&v\cdot(\theta_{\varepsilon}\ast u^{\varepsilon,\delta}-v)f^{\varepsilon,\delta}dxdv\\
		&=\int_{\mathbb{R}^6}\left(1-\gamma_{\delta}(v)\right)|v|^2f^{\varepsilon,\delta}dxdv+3\int_{\mathbb{R}^6}f^{\varepsilon,\delta}dxdv.
	\end{aligned}
	\end{equation}
	Finally, combining \eqref{eq:1} and \eqref{eq:2}, we have
	\begin{align*}
		\frac{1}{2}\partial_t \int_{\mathbb{R}^{3}}&(u^{\varepsilon,\delta})^{2}dx+ \frac{1}{2}\partial_t \int_{\mathbb{R}^{6}} |v|^{2}f^{\varepsilon,\delta}dxdv + \norm{\nabla u^{\varepsilon,\delta}}^{2}_{L^{2}_{x}(\mathbb{R}^{3})}\\
		&+\int_{\mathbb{R}^6}|\theta_{\varepsilon}\ast u^{\varepsilon,\delta}-v|^2f^{\varepsilon,\delta}dxdv \leq 3\int_{\mathbb{R}^{6}}f^{\varepsilon,\delta}dxdv+\int_{\mathbb{R}^{6}}|v|^2f^{\varepsilon,\delta}dxdv.
	\end{align*}
	By Gr\"{o}nwall's inequality (Lemma \ref{gronwall}), \eqref{udelta} and \eqref{fdelta}, we get
    \begin{equation}
		\label{eq:4}
        \begin{aligned}
		\norm{u^{\varepsilon,\delta}}^{2}_{L^{\infty}(0,T;L^{2}_{x}(\mathbb{R}^{3}))}&+\norms{|v|^{2}f^{\varepsilon,\delta}}_{L^{\infty}(0,T;L^{1}_{xv}(\mathbb{R}^{6}))}+\norm{\nabla u^{\varepsilon,\delta}}^{2}_{L^{2}(0,T;{L^{2}_{x}(\mathbb{R}^{3})})}\\
		&\leq C\left(\norm{f_{in}}_{L^{1}_{xv}(\mathbb{R}^{6})},\norms{|v|^{2}f_{in}}_{L^{1}_{xv}(\mathbb{R}^{6})},\norm{u_{in}}^{2}_{L^{2}_{x}(\mathbb{R}^{3})} \right)(T+1)e^{2T}.
	\end{aligned}
    \end{equation}

	\subsubsection{The estimate of \(\nabla u^{\varepsilon,\delta}\)}$ $
	\par For the first equation of \eqref{eq:NSVFK0}, taking  \(\partial_{i}\) on both sides of the equation, we  get
	\begin{align*}
		\partial_{i}\partial_t u^{\varepsilon,\delta} + \partial_{i}\left[(\theta_{\varepsilon}\ast u^{\varepsilon,\delta})\cdot \nabla u^{\varepsilon,\delta}\right]&- \partial_{i}\Delta u^{\varepsilon,\delta} + \partial_{i}\nabla_{x} P^{\varepsilon,\delta} \\
		&-\partial_{i}\left\{\theta_{\varepsilon}\ast\left(\int_{\mathbb{R}^3}(v-\theta_{\varepsilon}\ast u^{\varepsilon,\delta})f^{\varepsilon,\delta}dv\right)\right\} = 0.
	\end{align*}
	Then  multiplying by \(\partial_{i}u^{\varepsilon,\delta}\) and taking the integration yields
	\begin{align*}
		\frac{1}{2}\partial_t\norm{&\nabla u^{\varepsilon,\delta}}^{2}_{L^{2}_{x}(\mathbb{R}^{3})}+\norm{\Delta u^{\varepsilon,\delta}}^{2}_{L^{2}_{x}(\mathbb{R}^{3})}\\
		& =\int_{\mathbb{R}^3}\partial_{i}u^{\varepsilon,\delta}\partial_{i}\left\{\theta_{\varepsilon}\ast\left(\int_{\mathbb{R}^3}(v-\theta_{\varepsilon}\ast u^{\varepsilon,\delta})f^{\varepsilon,\delta}dv\right)\right\}dx - \int_{\mathbb{R}^{3}}\partial_{i}u^{\varepsilon,\delta}\partial_{i}\left[(\theta_{\varepsilon}\ast u^{\varepsilon,\delta})\cdot \nabla u^{\varepsilon,\delta}\right]dx\\
		&=D_{11}+D_{12}.
	\end{align*}
	For \(D_{11}\), by Young's inequality we have
	\begin{align*}
		D_{11}&=\int_{\mathbb{R}^{3}}\partial_{i}^{2}u^{\varepsilon,\delta}\theta_{\varepsilon}\ast\left(\int_{\mathbb{R}^3}(\theta_{\varepsilon}\ast u^{\varepsilon,\delta}-v)f^{\varepsilon,\delta}dv\right)dx\\
		&\leq C(\varepsilon)\left(\|\theta_{\varepsilon}\ast u^{\varepsilon,\delta}\|_{L^{\infty}_{x}(\mathbb{R}^{3})}+1\right) \norm{\Delta u^{\varepsilon,\delta}}_{L^{2}_{x}(\mathbb{R}^{3})}\left(\norms{\int_{\mathbb{R}^3}vf^{\varepsilon,\delta}dv}_{L^{1}_{x}(\mathbb{R}^{3})}+\norms{\int_{\mathbb{R}^3}f^{\varepsilon,\delta}dv}_{L^{1}_{x}(\mathbb{R}^{3})}\right)\\
        &\leq \frac{1}{2}\norm{\Delta u^{\varepsilon,\delta}}_{L^{2}_{x}(\mathbb{R}^{3})}^2+C(\varepsilon)\left(\|\ u^{\varepsilon,\delta}\|_{L^{2}_{x}(\mathbb{R}^{3})}+1\right)^2 \left(\norms{vf^{\varepsilon,\delta}}_{L^{1}_{xv}(\mathbb{R}^{6})}+\norms{f^{\varepsilon,\delta}}_{L^{1}_{xv}(\mathbb{R}^{6})}\right)^2.
	\end{align*}
	For \(D_{12}\), we have
	\begin{align*}
		D_{12}& = \int_{\mathbb{R}^{3}}\Delta u^{\varepsilon,\delta}(\theta_{\varepsilon}\ast u^{\varepsilon,\delta})\cdot \nabla  u^{\varepsilon,\delta}dx\\
		&\leq \norm{\theta_{\varepsilon}\ast u^{\varepsilon,\delta}}_{L^{\infty}}\norm{\nabla u^{\varepsilon,\delta}}_{L^{2}}\norm{\Delta u^{\varepsilon,\delta}}_{L^{2}}\\
		&\leq \frac{1}{16}\norm{\Delta u^{\varepsilon,\delta}}_{L^{2}}^{2}+C(\varepsilon)\norm{u^{\varepsilon,\delta}}_{L^{2}}^{2}\norm{\nabla u^{\varepsilon,\delta}}_{L^{2}}^{2}.
	\end{align*}
	Collecting \(D_{11}\) and \(D_{12}\), we have
	\begin{align*}
		\partial_t\norm{\nabla u^{\varepsilon,\delta}}^{2}_{L^{2}_{x}(\mathbb{R}^{3})} \leq& C(\varepsilon)(\|\ u^{\varepsilon,\delta}\|_{L^{2}_{x}(\mathbb{R}^{3})}+1)^2 \left(\norms{vf^{\varepsilon,\delta}}_{L^{1}_{xv}(\mathbb{R}^{6})}+\norms{f^{\varepsilon,\delta}}_{L^{1}_{xv}(\mathbb{R}^{6})}\right)^2\\
		&+C(\varepsilon)\norm{u^{\varepsilon,\delta}}_{L^{2}}^{2}\norm{\nabla u^{\varepsilon,\delta}}_{L^{2}}^{2}.
	\end{align*}
	Using Gr\"{o}nwall's inequality,  \eqref{5} and \eqref{eq:4}, we have
	\begin{align*}
		&\norm{\nabla u^{\varepsilon,\delta}}^{2}_{L^{\infty}(0,T;{L^{2}_{x}(\mathbb{R}^{3})})} \\
		&\leq C\left(\varepsilon, \delta, \norm{f_{in}}_{L^{\infty}_{xv}}, \norm{f_{in}}_{L^{1}_{xv}}\norms{|v|^{2}f_{in}^{\delta}}_{L^{1}_{xv}},\norm{u_{in}}^{2}_{L^{2}_{x}}\right)(T+1)^5e^{C(\varepsilon)(T+1)^2e^{2T}}.
	\end{align*}
	To sum up the above estimates, for any \(t\in (0,T)\), we achieve \eqref{eq:3}.

 Next, we complete the proof  of  Theorem \ref{thm:solution}.
\begin{proof} 
Using the proof of \(\Phi\) as a contraction mapping (2.1.2  \(\&\) 2.1.3) and the bounded estimates of solutions in \eqref{eq:3}, Theorem \ref{thm:solution} is proved.
\end{proof}

	\section{Uniform estimates for the regularized system}

In this section, we prove some uniform estimates of solutions in Theorem \ref{thm:solution}.
    
	\subsection{\texorpdfstring
    {Uniform estimates of \(u^{\varepsilon,\delta}\) and \(f^{\varepsilon,\delta}\)}
    {Uniform estimates of u varepsilon delta and f varepsilon delta}} 
    \hfill
     \par It follows from \eqref{5} and \eqref{eq:4} that the following lemma holds:
	\begin{lemma}\label{pro:estimate}
	% 	 Let \((u_{in}, f_{in}, |v|^2f_{in})\) satisfy
	% \begin{equation}
	% 	\left\{
	% 	\begin{aligned}
	% 		\label{3}
	% 		&u_{in} \in L^{2}_{x}(\mathbb{R}^3);\\
	% 		&|v|^2f_{in} \in L^{1}_{xv}(\mathbb{R}^6), f_{in} \in L^{1}_{xv}(\mathbb{R}^6) \cap L^{\infty}_{xv}(\mathbb{R}^6);\\
	% 		&f_{in} \geqslant 0.
	% 	\end{aligned}
	% 	\right.
	% \end{equation}
    Under the assumptions stated in Theorem \ref{thm:solution}, let $(u^{\varepsilon,\delta}, f^{\varepsilon,\delta}, |v|^2 f^{\varepsilon,\delta})$ denote a solution to the system \eqref{eq:NSVFK0}. Then there exists a positive constant $C$, independent of the parameters $\varepsilon$ and $\delta$, such that the following uniform estimate holds:
	\begin{equation}\label{eq:esti}
	    \begin{aligned}
			\norm{f^{\varepsilon,\delta}(t,x,v)}&_{L^{\infty}(0,t;L^{1}_{xv}(\mathbb{R}^{3}\times \mathbb{R}^{3}))}+\norm{f^{\varepsilon,\delta}(t,x,v)}_{L^{\infty}((0,t)\times\mathbb{R}^{3}\times \mathbb{R}^{3})}+\norm{u^{\varepsilon,\delta}}^{2}_{L^{\infty}(0,t;{L^{2}_{x}(\mathbb{R}^{3})})}\\
			&+\norms{|v|^2f^{\varepsilon,\delta}}_{L^{\infty}(0,t;{L^{1}_{xv}(\mathbb{R}^{6})})}+\norm{\nabla u^{\varepsilon,\delta}}^{2}_{L^{2}(0,t;{L^{2}_{x}(\mathbb{R}^{3})})}\\
		&\leq C\left(\norm{f_{in}}_{L^{\infty}_{xv}( \mathbb{R}^{3}\times \mathbb{R}^{3})}, \norm{f_{in}}_{L^{1}_{xv}(\mathbb{R}^{6})},\norms{|v|^2f_{in}}_{L^{1}_{xv}(\mathbb{R}^{6})},\norm{u_{in}}^{2}_{L^{2}_{x}(\mathbb{R}^{3})} \right)(t+1)e^{3t}.
	\end{aligned}
	\end{equation}
	% and
	% \begin{align}
	% 	\label{eq:6}
	% 	\norm{&f(t,x,v)}_{L^{\infty}(0,t;L^{1}_{xv}(\mathbb{R}^{3}\times \mathbb{R}^{3}))}+\norm{f(t,x,v)}_{L^{\infty}_{txv}((0,t)\times\mathbb{R}^{3}\times \mathbb{R}^{3})}+\norm{u}^{2}_{L^{\infty}(0,t;L^{2}_{x}(\mathbb{R}^{3}))} \nonumber\\
	% 	&\hspace{1cm}+\norms{|v|^2f}_{L^{\infty}(0,t;{L^{1}_{xv}(\mathbb{R}^{6})})}+\norm{\nabla u}^{2}_{L^{2}(0,t;{L^{2}_{x}(\mathbb{R}^{3})})}\nonumber\\
	% 	&\leq C\left(\norm{f_{in}}_{L^{\infty}_{xv}( \mathbb{R}^{3}\times \mathbb{R}^{3})}, \norm{f_{in}}_{L^{1}_{xv}(\mathbb{R}^{6})},\norms{|v|^2f_{in}}_{L^{1}_{xv}(\mathbb{R}^{6})},\norm{u_{in}}^{2}_{L^{2}_{x}(\mathbb{R}^{3})} \right)(1+t)e^{5t}.		
	% \end{align}
	\end{lemma}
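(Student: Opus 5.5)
The estimate follows by revisiting the two $\varepsilon,\delta$-free computations of Section 2.2 and checking that no constant depending on $\varepsilon$ or $\delta$ enters. The $\nabla u^{\varepsilon,\delta}$ bound in $L^\infty_tL^2_x$, which carries $C(\varepsilon)$, is deliberately left out of \eqref{eq:esti}.

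\textbf{The bounds on $f^{\varepsilon,\delta}$.} We use \eqref{5}. For $p=1$ it gives $\|f^{\varepsilon,\delta}\|_{L^\infty_tL^1_{xv}}\le \|f_{in}^\delta\|_{L^1_{xv}}\le \|f_{in}\|_{L^1_{xv}}$. For $p=\infty$ it gives $\|f^{\varepsilon,\delta}\|_{L^\infty_{txv}}\le e^{3t}\|f_{in}\|_{L^\infty_{xv}}$. In both cases the second inequality holds because \eqref{fdelta} is a truncation, so $0\le f_{in}^\delta\le f_{in}$. The derivation of \eqref{5} only used $\nabla_v\cdot(v\gamma_\delta)\le 3$, which follows from $0\le\gamma_\delta\le1$ and $v\cdot\nabla_v\gamma_\delta\le0$. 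The convolved drift $\theta_\varepsilon\ast u^{\varepsilon,\delta}$ is divergence free in $v$ and drops out. Hence the constant is independent of $\varepsilon$ and $\delta$.

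\textbf{The energy identity.} We add \eqref{eq:1} and \eqref{eq:2}. Write $w=\theta_\varepsilon\ast u^{\varepsilon,\delta}$. The cross terms $\int w\cdot(v-w)f^{\varepsilon,\delta}$ and $\int v\cdot(w-v\gamma_\delta)f^{\varepsilon,\delta}$ combine into the dissipation $-\int|w-v|^2f^{\varepsilon,\delta}$. The remainder is the term $\int(1-\gamma_\delta)|v|^2f^{\varepsilon,\delta}$, which we bound by $\int|v|^2f^{\varepsilon,\delta}$, together with the diffusion contribution $3\int f^{\varepsilon,\delta}\le 3\|f_{in}\|_{L^1}$. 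The mollifier enters the momentum equation as $\theta_\varepsilon\ast\{\cdot\}$, so after testing against $u^{\varepsilon,\delta}$ it transfers onto $u^{\varepsilon,\delta}$ and produces exactly $w$, matching the kinetic side. This algebraic cancellation is the one point needing care. I expect it to be the main (though mild) obstacle: one must verify that the cutoff $\gamma_\delta$ creates only the sign-controlled error $(1-\gamma_\delta)|v|^2 f$. Integrating by parts with $v\cdot\nabla\gamma_\delta$ terms of the right sign confirms this.

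\textbf{Closing with Gr\"onwall.} Let $E(t)=\|u^{\varepsilon,\delta}\|_{L^2}^2+\||v|^2f^{\varepsilon,\delta}\|_{L^1}$. The previous step gives $E'+2\|\nabla u^{\varepsilon,\delta}\|_{L^2}^2\le 2E+6\|f_{in}\|_{L^1}$. Gr\"onwall's inequality (Lemma \ref{gronwall}) then yields $E(t)+\int_0^t\|\nabla u^{\varepsilon,\delta}\|^2\le C(E(0)+t)e^{2t}$. Next, $E(0)\le\|u_{in}\|_{L^2}^2+\||v|^2f_{in}\|_{L^1}$, because $\|\theta_\delta\ast u_{in}\|_{L^2}\le\|u_{in}\|_{L^2}$ and $f_{in}^\delta\le f_{in}$. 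Summing with the $f$ bounds and using $e^{2t}\le e^{3t}$ gives \eqref{eq:esti} with $C$ depending only on the listed norms.
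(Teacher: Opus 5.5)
Your argument is correct and is essentially the paper's: the lemma is obtained by combining the $L^p$ bound \eqref{5} (at $p=1$ and $p=\infty$) with the energy estimate \eqref{eq:4}, which comes from adding \eqref{eq:1} and \eqref{eq:2} and applying Gr\"onwall, exactly as you do. Two small remarks: moving $\theta_\varepsilon$ onto $u^{\varepsilon,\delta}$ in \eqref{eq:1} tacitly requires $\theta$ to be even, which the paper also assumes without saying so; and no $\nabla_v\gamma_\delta$ term arises in the second-moment computation, so the sign condition $v\cdot\nabla_v\gamma_\delta\le 0$ is needed only for \eqref{5}.
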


	\begin{lemma}\label{lem:deltaf}
		Under the assumptions stated in Theorem \ref{thm:solution}, there exists a constant \(C>0\) such that
	\begin{align}
		\label{eq:10}
		\norm{f^{\varepsilon,\delta}}^2_{L^{\infty}(0,t;L^2(\mathbb{R}^6_{xv}))}+2\norm{\nabla_{v}f^{\varepsilon,\delta}}^2_{L^{2}((0,t)\times \mathbb{R}^6_{xv})} \leq C\norm{f_{in}}^2_{L^2(\mathbb{R}^6_{xv})}(1+t)e^{3t}.
	\end{align}
	% and
	% \begin{align}
	% 	\label{eq:11}
	% 	\norm{f}^2_{L^{\infty}(0,t;L^2(\mathbb{R}^6_{xv}))}+2\norm{\nabla_{v}f}^2_{L^{2}((0,t)\times \mathbb{R}^6_{xv})} \leq C\norm{f_{in}}^2_{L^2(\mathbb{R}^6_{xv})}(1+t)e^{3t}.
	% \end{align}
        \end{lemma}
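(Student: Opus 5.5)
This is the $p=2$ case of the $L^p$ computation behind \eqref{5}, except that now we keep the dissipation term instead of discarding it. We multiply the second equation of \eqref{eq:NSVFK0} by $f^{\varepsilon,\delta}$ and integrate over $\mathbb{R}^6_{xv}$; the regularity from Theorem \ref{thm:solution} justifies this, or one can argue on a smooth approximation. The transport term $\int v\cdot\nabla_x f^{\varepsilon,\delta}\, f^{\varepsilon,\delta}\,dxdv=\frac12\int v\cdot\nabla_x (f^{\varepsilon,\delta})^2\,dxdv$ vanishes after integrating by parts in $x$. The diffusion term produces $\norm{\nabla_v f^{\varepsilon,\delta}}_{L^2_{xv}}^2$.

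For the drift term, integrate by parts in $v$:
\begin{align*}
\int_{\mathbb{R}^6} f^{\varepsilon,\delta}\,\nabla_v\cdot\big[(\theta_\varepsilon\ast u^{\varepsilon,\delta}-v\gamma_\delta(v))f^{\varepsilon,\delta}\big]\,dxdv
=\frac12\int_{\mathbb{R}^6}(f^{\varepsilon,\delta})^2\,\nabla_v\cdot\big(\theta_\varepsilon\ast u^{\varepsilon,\delta}-v\gamma_\delta(v)\big)\,dxdv .
\end{align*}
The field $\theta_\varepsilon\ast u^{\varepsilon,\delta}$ does not depend on $v$, so only $-\nabla_v\cdot(v\gamma_\delta)=-3\gamma_\delta-v\cdot\nabla_v\gamma_\delta$ survives. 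Using $0\le\gamma_\delta\le1$ and $|v\cdot\nabla_v\gamma_\delta|\le C$, the drift contribution is bounded in absolute value by $C\norm{f^{\varepsilon,\delta}}_{L^2_{xv}}^2$ with $C$ independent of $\varepsilon,\delta$. This gives
\begin{align*}
\frac{d}{dt}\norm{f^{\varepsilon,\delta}}_{L^2_{xv}}^2+2\norm{\nabla_v f^{\varepsilon,\delta}}_{L^2_{xv}}^2\le C\norm{f^{\varepsilon,\delta}}_{L^2_{xv}}^2 .
\end{align*}

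To conclude, we integrate in time from $0$ to $s\le t$ and use $\norm{f^{\varepsilon,\delta}(s)}_{L^2_{xv}}^2\le e^{3s}\norm{f_{in}}_{L^2_{xv}}^2$, which is \eqref{5} with $p=2$ together with $\norm{f^\delta_{in}}_{L^2}\le\norm{f_{in}}_{L^2}$ from \eqref{fdelta}. The right-hand side is then at most $Ct\,e^{3t}\norm{f_{in}}_{L^2}^2$, and adding the initial term gives $C(1+t)e^{3t}\norm{f_{in}}_{L^2}^2$. Taking the supremum over $s$ for the first term, and $s=t$ for the dissipation, yields \eqref{eq:10}. Alternatively, Gronwall (Lemma \ref{gronwall}) applied directly gives the same type of bound.

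The only delicate point is making sure that the sign and constant of the cutoff contribution from $\gamma_\delta$ are uniform in $\delta$, and that the $u$-dependent drift really drops out because $\theta_\varepsilon\ast u^{\varepsilon,\delta}$ is independent of $v$. Both follow from the stated properties of $\gamma_\delta$, so I expect no genuine obstacle; the bookkeeping of the exponent $e^{3t}$ is handled by reusing \eqref{5} rather than re-running Gronwall.
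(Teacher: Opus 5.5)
Your proposal is correct and follows the paper's proof. You multiply by $f^{\varepsilon,\delta}$, drop the transport term, and integrate the drift by parts in $v$ so that only $\nabla_v\cdot(v\gamma_\delta)$ survives; then you close in time. The only cosmetic difference is that the paper uses the sign condition $v\cdot\nabla_v\gamma_\delta\le 0$ to get the explicit constant $3$, so Gronwall gives $e^{3t}$ directly, whereas you bound that term in absolute value and recover $e^{3t}$ by reusing \eqref{5} with $p=2$.
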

        
	\begin{proof}
		%(i) Uniform estimate with respect to \(\varepsilon\).\\
	Multiplying \(f^{\varepsilon,\delta}\) on the second equation of \eqref{eq:NSVFK0} and integrating, we get
	\begin{align*}
		\frac{1}{2}\partial_t \int_{\mathbb{R}^{6}} (f^{\varepsilon,\delta})^{2}dxdv&+ \int_{\mathbb{R}^{6}} |\nabla_{v}f^{\varepsilon,\delta}|^{2}dxdv =\frac{1}{2}\int_{\mathbb{R}^{6}}\left(\theta_{\varepsilon} \ast u^{\varepsilon,\delta}-v\gamma_\delta(v)\right)\cdot \nabla_{v}(f^{\varepsilon,\delta})^2dxdv.
	\end{align*}
	For the right-hand side, using integration by parts and the fact that \(v \cdot \nabla_{v} \gamma_\delta \leq 0\), we deduce
	\begin{align*}
		\frac{1}{2}\int_{\mathbb{R}^{6}}&\left(\theta_{\varepsilon} \ast u^{\varepsilon,\delta}-v\gamma_\delta(v)\right)\cdot \nabla_{v}(f^{\varepsilon,\delta})^2dxdv\\
		&=\frac{1}{2}\int_{\mathbb{R}^{6}}(f^{\varepsilon,\delta})^2\nabla_{v}\cdot \left(v\gamma_\delta(v)\right) dxdv\\
		&\leq \frac{3}{2}\int_{\mathbb{R}^{6}}(f^{\varepsilon,\delta})^2dxdv,
	\end{align*}
	which implies
	\begin{align*}
		\partial_t \int_{\mathbb{R}^{6}} (f^{\varepsilon,\delta})^2 dxdv 
		+ 2 \int_{\mathbb{R}^{6}} |\nabla_{v}f^{\varepsilon,\delta}|^2 dxdv
		\leq 3 \int_{\mathbb{R}^{6}} (f^{\varepsilon,\delta})^2 dxdv.
	\end{align*}
		Applying Gr\"{o}nwall's inequality and \eqref{fdelta}, we get
	\begin{align*}
		\norm{f^{\varepsilon,\delta}}^2_{L^{\infty}(0,t;L^2(\mathbb{R}^6_{xv}))} 
		+ 2 \norm{\nabla_{v}f^{\varepsilon,\delta}}^2_{L^{2}(0,t;L^2(\mathbb{R}^6_{xv}))} 
		\leq C \norm{f_{in}}_{L^2(\mathbb{R}^6_{xv})}^2 (1+t) e^{3t}.
	\end{align*}
	% As for the proof of \eqref{eq:11}, it can be derived using arguments similar to those used for \eqref{eq:6}.
	\end{proof}
	
	\begin{lemma}\label{lem:3f}
   Under the initial conditions prescribed in Theorem \ref{thm:solution}, there exists a constant $C > 0$, 
whose value depends solely on the norms of the initial data, 
such that the following uniform estimate holds:
\begin{align}
    \label{10}
    \left\||v|^3 f^{\varepsilon,\delta}\right\|_{L^{\infty}(0,t; L^1_{xv}(\mathbb{R}^6))} 
\leq C (t + 1)^2 \exp\left[ C (t + 1)^{3} e^{4t} \right],
\end{align}
where
$
C = C\left( \|f_{\mathrm{in}}\|_{L^1_{xv}(\mathbb{R}^6)}, 
\|f_{\mathrm{in}}\|_{L^{\infty}_{xv}(\mathbb{R}^6)}, 
\left\||v|^3 f_{\mathrm{in}}\right\|_{L^1_{xv}(\mathbb{R}^6)}, 
\|u_{\mathrm{in}}\|_{L^2_x(\mathbb{R}^3)} \right).
$
		% Furthermore, the same estimate remains valid in the limit \(\varepsilon \to 0\), that is,
		% \begin{align}
		% 	\label{11}
		% 	\left\||v|^3f\right\|_{L^{\infty}(0,t;L^1_{xv}(\mathbb{R}^6))}
		% 	\leq C\left(t+1\right)^2 \exp\left[C(t+1)^\frac{14}{5}e^{6t}\right].
		% \end{align}
	\end{lemma}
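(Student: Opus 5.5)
We will prove \eqref{10} by a moment estimate: multiply the second equation of \eqref{eq:NSVFK0} by $|v|^3$ and integrate over $\mathbb{R}^6_{xv}$. The transport term $v\cdot\nabla_x f^{\varepsilon,\delta}$ integrates to zero. Integrating by parts in $v$, with $\nabla_v|v|^3=3|v|v$ and $\Delta_v|v|^3=12|v|$, gives
\begin{align*}
\frac{d}{dt}\int_{\mathbb{R}^6}|v|^3f^{\varepsilon,\delta}dxdv
=3\int_{\mathbb{R}^6}|v|\,v\cdot(\theta_\varepsilon\ast u^{\varepsilon,\delta})f^{\varepsilon,\delta}dxdv-3\int_{\mathbb{R}^6}|v|^3\gamma_\delta(v)f^{\varepsilon,\delta}dxdv+12\int_{\mathbb{R}^6}|v|f^{\varepsilon,\delta}dxdv .
\end{align*}
The damping term is nonpositive and can be dropped. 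This is the favorable sign we need, and since $\gamma_\delta$ may vanish, we do not rely on it for anything else. The last term is bounded by $12(\|f^{\varepsilon,\delta}\|_{L^1}+\||v|^2f^{\varepsilon,\delta}\|_{L^1})$, which is controlled by Lemma \ref{pro:estimate}.

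The main obstacle is the coupling term $3\int_{\mathbb{R}^3}(\theta_\varepsilon\ast u^{\varepsilon,\delta})\cdot m\,dx$, where $|m|\le m_2f^{\varepsilon,\delta}$. This must be bounded uniformly in $\varepsilon,\delta$ using only the energy-level control of $u^{\varepsilon,\delta}$. We use \eqref{8} with $\alpha=2$, $\beta=3$, which gives $m_2f\le C(\|f\|_{L^\infty}+1)(m_3f)^{5/6}$, so $m_2f\in L^{6/5}_x$ with norm $\lesssim (M_3f)^{5/6}$. This is the case $\lambda=3$, $\mu=2$ of \eqref{28}. Pairing with $\theta_\varepsilon\ast u^{\varepsilon,\delta}\in L^6_x$ and using Sobolev together with $\|\theta_\varepsilon\ast g\|_{L^6}\le\|g\|_{L^6}$ yields
\begin{align*}
\Big|3\int (\theta_\varepsilon\ast u^{\varepsilon,\delta})\cdot m\,dx\Big|\le C\|\nabla u^{\varepsilon,\delta}\|_{L^2_x}\big(e^{3t}\|f_{in}\|_{L^\infty}+1\big)\big(M_3f^{\varepsilon,\delta}\big)^{5/6}.
\end{align*}
Writing $y(t)=1+M_3f^{\varepsilon,\delta}(t)$, we then have $y'\le C e^{3t}\|\nabla u^{\varepsilon,\delta}\|_{L^2}\,y+C(t+1)e^{3t}$, using $y^{5/6}\le y$.

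We close with Grönwall (Lemma \ref{gronwall}). By Cauchy--Schwarz and Lemma \ref{pro:estimate},
\[
\int_0^t e^{3s}\|\nabla u^{\varepsilon,\delta}\|_{L^2}\,ds\le e^{3t}t^{1/2}\big(C(t+1)e^{3t}\big)^{1/2}.
\]
This gives
\[
y(t)\le\big(y(0)+C(t+1)^2e^{3t}\big)\exp\big[C(t+1)e^{9t/2}\big],
\]
with $y(0)\le 1+\||v|^3f_{in}\|_{L^1}$ by \eqref{fdelta}. The exact powers of $t$ and the exponent depend on how one groups these factors. If needed, we would instead use the sublinear power $5/6$ through a Bihari-type argument, or absorb the exponential factors, to match the stated form $C(t+1)^2\exp[C(t+1)^3e^{4t}]$. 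Any such bookkeeping is harmless, since all constants depend only on the listed initial norms and not on $\varepsilon,\delta$. Finally, we remark that finiteness of $M_3f^{\varepsilon,\delta}$ for the regularized solution, needed to justify the computation, can be obtained either from the truncated data $f^\delta_{in}$ (compact support in $v$) or by first testing with $|v|^3\wedge R$ and letting $R\to\infty$.
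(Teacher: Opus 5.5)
Your proposal follows the paper's proof essentially step for step: the same $|v|^3$-moment identity, the same pairing of $\theta_\varepsilon\ast u^{\varepsilon,\delta}\in L^6_x$ with $\|m_2f^{\varepsilon,\delta}\|_{L^{6/5}_x}\lesssim (M_3f^{\varepsilon,\delta})^{5/6}$, and then Gr\"onwall. The one difference is that the paper removes the power $5/6$ by Young's inequality, $\|\nabla u^{\varepsilon,\delta}\|_{L^2}\|f^{\varepsilon,\delta}\|_{L^\infty}^{1/6}M_3^{5/6}\lesssim\|\nabla u^{\varepsilon,\delta}\|_{L^2}^{6/5}M_3+\|f^{\varepsilon,\delta}\|_{L^\infty}$, whereas you use $y^{5/6}\le y$.

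One bookkeeping caveat concerns your exponent $C(t+1)e^{9t/2}$. It comes from keeping the full factor $\|f^{\varepsilon,\delta}\|_{L^\infty}\lesssim e^{3t}$ of \eqref{8}, and no choice of constants absorbs it into $C(t+1)^3e^{4t}$ for large $t$, so as written it does not give \eqref{10}. Any of the following fixes it and yields \eqref{10} as stated: keep instead the factor $\|f^{\varepsilon,\delta}\|_{L^\infty}^{1/6}\lesssim e^{t/2}$ from \eqref{28}, which gives exponent $C(t+1)e^{2t}$; use the paper's Young splitting, which gives exponent $C(t+1)e^{9t/5}$; or use your Bihari variant via Lemma \ref{gronwall2}.
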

	\begin{proof}
		Multiplying the second equation in \eqref{eq:NSVFK0} by \(|v|^3\) and integrating over \(\mathbb{R}^6_{xv}\), by Young's inequality and  \eqref{28} we obtain 
	\begin{align*}
		\partial_t &\int_{ \mathbb{R}_{xv}^6}|v|^3f^{\varepsilon,\delta}dxdv+3\int_{ \mathbb{R}_{xv}^6}|v|^3f^{\varepsilon,\delta}\gamma_{\delta}(v)dxdv \\
		&= \int_{ \mathbb{R}_{xv}^6}3|v|v\cdot (\theta_{\varepsilon} \ast u^{\varepsilon,\delta})f^{\varepsilon,\delta}dxdv+12\int_{ \mathbb{R}_{xv}^6}|v|f^{\varepsilon,\delta}dxdv\\
		&\leq C\norm{\theta_{\varepsilon} \ast u^{\varepsilon,\delta}}_{L^6_x{(\mathbb{R}^3)}}\norms{\int_{ \mathbb{R}_v^3}|v|^2f^{\varepsilon,\delta }dv}_{L^\frac65_x(\mathbb{R}^3)}+C\int_{ \mathbb{R}_{xv}^6}|v|^2f^{\varepsilon,\delta}dxdv+C\int_{ \mathbb{R}_{xv}^6}f^{\varepsilon,\delta}dxdv\\
		&\leq C\norm{ u^{\varepsilon,\delta}}_{L^6_x{(\mathbb{R}^3})}\norms{|v|^3f^{\varepsilon,\delta }}^{\frac56}_{L^1_{xv}{(\mathbb{R}^6)}}\norm{f^{\varepsilon,\delta }}^\frac{1}{6}_{L^\infty_{xv}{(\mathbb{R}^6)}}\\
		&\hspace{5cm}+C\int_{ \mathbb{R}_{xv}^6}|v|^2f^{\varepsilon,\delta}dxdv+C\int_{ \mathbb{R}_{xv}^6}f^{\varepsilon,\delta}dxdv\\
		&\leq C\norm{u^{\varepsilon,\delta}}^\frac65_{\dot{H}^1_x{(\mathbb{R}^3)}}\norms{|v|^3f^{\varepsilon,\delta }}_{L^1_{xv}{(\mathbb{R}^6)}}+C\norm{f^{\varepsilon,\delta }}_{L^\infty_{xv}{(\mathbb{R}^6)}}\\
		&\hspace{5cm}+C\int_{ \mathbb{R}_{xv}^6}|v|^2f^{\varepsilon,\delta}dxdv+C\int_{ \mathbb{R}_{xv}^6}f^{\varepsilon,\delta}dxdv.
	\end{align*}
	By Gr\"{o}nwall's inequality, \eqref{fdelta} and \eqref{eq:esti}, we obtain the following estimate:
	\begin{align*}
		&\int_{ \mathbb{R}_{xv}^6}|v|^3f^{\varepsilon,\delta}dxdv\nonumber\\
		&\leq e^{Ct^\frac25\norm{\nabla_{x}u^{\varepsilon,\delta}}^\frac65_{L^2((0,T)\times\mathbb{R}^3_x)}
        }\bigg[Ct\norms{|v|^2f^{\varepsilon,\delta}}_{L^{\infty}(0,T;{L^{1}_{xv}(\mathbb{R}^{6})})}\nonumber\\
		&\hspace{4cm}+\norms{|v|^3f_{in}}_{L^{1}_{xv}(\mathbb{R}^{6})}+Cte^{3t}\big(\norm{f_{in}}_{L^{1}_{xv}(\mathbb{R}^{6})}+\norm{f_{in}^{\delta}}_{L^{\infty}_{xv}(\mathbb{R}^6)}\big)\bigg]\nonumber\\
		&= C(t+1)^2e^{C(t+1)^3e^{4t}},
	\end{align*}
    where \(C=C\left(\norm{f_{in}}_{L^{1}_{xv}(\mathbb{R}^{6})},\norm{f_{in}}_{L^{\infty}_{xv}(\mathbb{R}^6)},\norms{|v|^3f_{in}}_{L^{1}_{xv}(\mathbb{R}^{6})},\norm{u_{in}}_{L^2_x(\mathbb{R}^3)}\right)\).
	% The estimate \eqref{11} can be derived analogously by following the same argument as used in the derivation of \eqref{eq:6}.
	\end{proof}

	\begin{proposition}\label{pro:xf}
		Under the assumptions stated in Theorem \ref{thm:solution},  the following moment estimate holds:
		\begin{equation}
			\label{17}
		    \begin{aligned}
			\left\||x|^2 f^{\varepsilon,\delta} \right\|&_{L^{\infty}(0,t; L^1_{xv}(\mathbb{R}^6))} \\
			&\leq C\Big( 
			\|f_{in}\|_{L^1_{xv}},\,
			\||v|^2 f_{in}\|_{L^1_{xv}},\,
			\|u_{in}\|_{L^2_{x}}^2,\,
			\||x|^2 f_{in}\|_{L^1_{xv}} 
			\Big)(t + 1)^2\, e^{4t}.
		\end{aligned}
		\end{equation}
		In addition, the following inequality is satisfied:
		\begin{equation}
			\label{18}
		    \begin{aligned}
			\int_{\mathbb{R}^3_x \times \mathbb{R}^3_v}& f^{\varepsilon,\delta} |\log f^{\varepsilon,\delta}| dxdv
			\leq \int_{\mathbb{R}^3_x \times \mathbb{R}^3_v} f^{\varepsilon,\delta} \log f^{\varepsilon,\delta} dxdv\\
			&+ C\Big( 
			\|f_{in}\|_{L^1_{xv}},\,
			\||v|^2 f_{in}\|_{L^1_{xv}},\,
			\|u_{in}\|_{L^2_{x}}^2,\,
			\||x|^2 f_{in}\|_{L^1_{xv}} 
			\Big)(t + 1)^2\, e^{4t}.
		\end{aligned}
		\end{equation}
		% Analogous estimates for the limiting solution \( f \) are obtained using the same arguments as in Proposition~\ref{pro:estimate}:
		% \begin{align}
		% 	\label{19}
		% 	\left\||x|^2 f \right\|&_{L^{\infty}(0,t; L^1_{xv}(\mathbb{R}^6))} \nonumber\\
		% 	&\leq C\Big( 
		% 	\|f_{in}\|_{L^1_{xv}},\,
		% 	\||v|^2 f_{in}\|_{L^1_{xv}},\,
		% 	\|u_{in}\|_{L^2_{x}}^2,\,
		% 	\||x|^2 f_{in}\|_{L^1_{xv}} 
		% 	\Big)(t^2 + t)\, e^{6t},
		% \end{align}
		% and
		% \begin{align}
		% 	\label{20}
		% 	\int_{\mathbb{R}^3_x \times \mathbb{R}^3_v} &f |\log f| dxdv
		% 	\leq \int_{\mathbb{R}^3_x \times \mathbb{R}^3_v} f \log f dxdv\nonumber\\
		% 	&\quad + C\Big( 
		% 	\|f_{in}\|_{L^1_{xv}},\,
		% 	\||v|^2 f_{in}\|_{L^1_{xv}},\,
		% 	\|u_{in}\|_{L^2_{x}}^2,\,
		% 	\||x|^2 f_{in}\|_{L^1_{xv}} 
		% 	\Big)(t^2 + t)\, e^{6t}.
		% \end{align}
	\end{proposition}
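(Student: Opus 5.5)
We prove \eqref{17} with the $|x|^2$-moment identity obtained from $\eqref{eq:NSVFK0}_2$, and then derive \eqref{18} by the classical Carleman-type splitting.

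\textbf{Step 1: the moment identity.} We multiply the second equation of \eqref{eq:NSVFK0} by $|x|^2$ and integrate over $\mathbb{R}^6_{xv}$. The $v$-divergence terms $\nabla_v\cdot[(\theta_\varepsilon\ast u^{\varepsilon,\delta}-v\gamma_\delta)f^{\varepsilon,\delta}-\nabla_vf^{\varepsilon,\delta}]$ integrate to zero, because $|x|^2$ does not depend on $v$. The transport term gives, after integration by parts in $x$,
\begin{align*}
\frac{d}{dt}\int_{\mathbb{R}^6}|x|^2f^{\varepsilon,\delta}dxdv=2\int_{\mathbb{R}^6}x\cdot v\,f^{\varepsilon,\delta}dxdv\le \int_{\mathbb{R}^6}|x|^2f^{\varepsilon,\delta}dxdv+\int_{\mathbb{R}^6}|v|^2f^{\varepsilon,\delta}dxdv .
\end{align*}
These manipulations can be justified rigorously with a cutoff $|x|^2\chi(x/R)$ and the limit $R\to\infty$, using the $L^1$ bounds from Theorem \ref{thm:solution}.

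\textbf{Step 2: Gr\"onwall.} By Lemma \ref{pro:estimate}, $\||v|^2f^{\varepsilon,\delta}\|_{L^\infty_tL^1_{xv}}\le C(t+1)e^{3t}$. Gr\"onwall's inequality (Lemma \ref{gronwall}) then yields
\[
\||x|^2f^{\varepsilon,\delta}(t)\|_{L^1}\le e^t\Big(\||x|^2f_{in}\|_{L^1}+tC(t+1)e^{3t}\Big)\le C(t+1)^2e^{4t}.
\]
Here we used $f^\delta_{in}\le f_{in}$, so the initial moments are controlled by those of $f_{in}$. This proves \eqref{17}.

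\textbf{Step 3: the entropy bound \eqref{18}.} We write $f|\log f|=f\log f-2f\log f\,\mathbf 1_{\{f<1\}}$, so it suffices to bound $\int f\log(1/f)\mathbf 1_{\{f<1\}}$. We split the region $\{f<1\}$ according to whether $f\ge e^{-(|x|^2+|v|^2)}$ or $f<e^{-(|x|^2+|v|^2)}$.

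On the first part, $\log(1/f)\le |x|^2+|v|^2$, so the contribution is at most $\int(|x|^2+|v|^2)f$. This is bounded by \eqref{17} and Lemma \ref{pro:estimate}.

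On the second part we use $s\log(1/s)\le C\sqrt s$ for $s\in(0,1)$. The contribution is then at most $C\int e^{-(|x|^2+|v|^2)/2}dxdv\le C$.

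Combining the two parts gives \eqref{18} with a constant of the form $C(t+1)^2e^{4t}$.

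\textbf{Main obstacle.} The main technical point is justifying the vanishing of the boundary terms at infinity in Step 1 for the regularized strong solutions. A cutoff argument with dominated convergence handles this, since $xvf^{\varepsilon,\delta}\in L^1$ by Cauchy--Schwarz. Everything else is routine.
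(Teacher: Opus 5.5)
Your proposal is correct and follows the paper's proof essentially verbatim: the $|x|^2$-multiplier with $2x\cdot v\le |x|^2+|v|^2$ and Gr\"onwall give \eqref{17}, and the splitting of $\{f<1\}$ at the threshold $e^{-\omega}$ with $s\log(1/s)\le C\sqrt{s}$ gives \eqref{18} (the paper takes $\omega=(|x|^2+|v|^2)/8$ instead of your $|x|^2+|v|^2$, which makes no difference). One small caveat on your justification remark: getting $x\cdot v\,f^{\varepsilon,\delta}\in L^1$ from Cauchy--Schwarz already presupposes $|x|^2 f^{\varepsilon,\delta}\in L^1$, which Theorem \ref{thm:solution} does not provide, so instead run Gr\"onwall on the truncated weight $w_R=|x|^2/(1+|x|^2/R^2)$, which satisfies $|\nabla_x w_R|\le 2\sqrt{w_R}$ and hence yields the same differential inequality uniformly in $R$, and then let $R\to\infty$ by monotone convergence.
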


\begin{proof}
	Multiplying the second equation in \eqref{eq:NSVFK0} by \(|x|^{2}\), and integrating over the spatial and velocity variables, it follows that
	\begin{align*}
		\partial_t \int_{ \mathbb{R}_v^3\times \mathbb{R}_x^3} |x|^{2}f^{\varepsilon,\delta}dxdv&= 2\int_{ \mathbb{R}_v^3\times \mathbb{R}_x^3}\left(v\cdot x\right)f^{\varepsilon,\delta}dxdv\\
		&\leq \int_{ \mathbb{R}_v^3\times \mathbb{R}_x^3}|v|^2f^{\varepsilon,\delta}dxdv+\int_{ \mathbb{R}_v^3\times \mathbb{R}_x^3}|x|^2f^{\varepsilon,\delta}dxdv.
	\end{align*}
	By Gr\"{o}nwall's inequality together with estimates \eqref{eq:esti} and \eqref{fdelta}, one obtains
	\begin{align*}
		\norm{x^2f^{\varepsilon,\delta}}&_{L^{\infty}\left(0,t;L^1_{xv}\left(\mathbb{R}^6\right)\right)}\\
        &\leq C( \norm{f_{in}}_{L^{1}_{xv}(\mathbb{R}^{6})},\norms{|v|^2f_{in}}_{L^{1}_{xv}(\mathbb{R}^{6})},\norm{u_{in}}^{2}_{L^{2}_{x}(\mathbb{R}^{3})},\norms{|x|^2f_{in}}_{L^{1}_{xv}(\mathbb{R}^{6})} )(t+1)^2e^{4t}.
	\end{align*}
	Next, observe that
	\begin{align*}
		f^{\varepsilon,\delta}|\log f^{\varepsilon,\delta}| &= f^{\varepsilon,\delta}\log f^{\varepsilon,\delta} -2f^{\varepsilon,\delta}\log f^{\varepsilon,\delta}\chi_{0\leq f^{\varepsilon,\delta}\leq 1}\\
		&= f^{\varepsilon,\delta}\log f^{\varepsilon,\delta} -2f^{\varepsilon,\delta}\log f^{\varepsilon,\delta}\chi_{e^{-\omega}\leq f^{\varepsilon,\delta}\leq 1}-2f^{\varepsilon,\delta}\log f^{\varepsilon,\delta}\chi_{e^{-\omega}\geq  f^{\varepsilon,\delta}}\\
		&\leq f^{\varepsilon,\delta}\log f^{\varepsilon,\delta}+2f^{\varepsilon,\delta}\omega+C\sqrt{f^{\varepsilon,\delta}}\chi_{e^{-\omega}\geq  f^{\varepsilon,\delta}}\\
		&\leq f^{\varepsilon,\delta}\log f^{\varepsilon,\delta}+2f^{\varepsilon,\delta}\omega+Ce^{-\frac{\omega}{2}}\quad \left(\omega\geq0\right).
	\end{align*}
	Setting \(\omega=\frac{x^2+v^2}{8}\), it follows that
	\begin{align*}
		\int_{ \mathbb{R}_v^3\times \mathbb{R}_x^3}f^{\varepsilon,\delta}|\log f^{\varepsilon,\delta}|dxdv &\leq \int_{ \mathbb{R}_v^3\times \mathbb{R}_x^3}f^{\varepsilon,\delta}\log f^{\varepsilon,\delta}dxdv +\frac{1}{4}\int_{ \mathbb{R}_v^3\times \mathbb{R}_x^3}\left(x^2+v^2\right)f^{\varepsilon,\delta}dxdv\\
		&\hspace{2cm}+2C\int_{ \mathbb{R}_v^3\times \mathbb{R}_x^3}e^{-\frac{x^2+v^2}{16}}dxdv.
	\end{align*}
	Finally, the inequality \eqref{18} follows from a combination of \eqref{eq:esti} and \eqref{17}. 
    % Then \eqref{19} and \eqref{20} can be derived by applying analogous arguments to those used in the proof of \eqref{eq:6}.
\end{proof}

	\begin{lemma}\label{lem:sqrtf}
		Under the assumptions stated in Theorem \ref{thm:solution}, there holds
	\begin{align}
		\label{21}
		&\norm{\nabla_{v}\sqrt{f^{\varepsilon,\delta}}}_{L^{2}((0,t)\times \mathbb{R}^{3}\times \mathbb{R}^{3})}\leq C(t+1)^3e^{9t},
	\end{align}
    where
    \begin{align*}
        C=C\left(\norm{f_{in}}_{L^{\infty}_{xv}}, \norm{f_{in}}_{L^{1}_{xv}},\norms{|v|^2f_{in}}_{L^{1}_{xv}},\norm{u_{in}}^{2}_{L^{2}_{x}},\norm{f_{in}\log f_{in}}_{L^{1}_{xv}},\norms{|x|^2f_{in}}_{L^{1}_{xv}} \right).
    \end{align*}
	% As in the proof of  Proposition \ref{pro:estimate} and Lemma \ref{lem:deltaf}, this becomes
	% \begin{align}
	% 	\label{22}
	% 	&\norm{\nabla_{v}\sqrt{f}}_{L^{2}((0,t)\times \mathbb{R}^{3}\times \mathbb{R}^{3})} \leq C(t+1)^3e^{13t},
	% \end{align}
 %    where
 %    \begin{align*}
 %        C=C\left(\norm{f_{in}}_{L^{\infty}_{xv}}, \norm{f_{in}}_{L^{1}_{xv}},\norms{|v|^2f_{in}}_{L^{1}_{xv}},\norm{u_{in}}^{2}_{L^{2}_{x}},\norm{f_{in}\log f_{in}}_{L^{1}_{xv}},\norms{|x|^2f_{in}}_{L^{1}_{xv}} \right).
 %    \end{align*}
	\end{lemma}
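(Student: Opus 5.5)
The estimate will come from the entropy identity for the regularized kinetic equation. We multiply $\eqref{eq:NSVFK0}_2$ by $1+\log f^{\varepsilon,\delta}$ and integrate over $\mathbb{R}^6_{xv}$. The transport term $v\cdot\nabla_x f^{\varepsilon,\delta}$ integrates to zero. The diffusion term produces the dissipation
\[
\int_{\mathbb{R}^6}\frac{|\nabla_v f^{\varepsilon,\delta}|^2}{f^{\varepsilon,\delta}}\,dxdv=4\int_{\mathbb{R}^6}|\nabla_v\sqrt{f^{\varepsilon,\delta}}|^2\,dxdv.
\]
For the drift term, integrate by parts in $v$ to get $\int (\theta_\varepsilon\ast u^{\varepsilon,\delta}-v\gamma_\delta)\cdot\nabla_v f^{\varepsilon,\delta}\,dxdv$. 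The part containing $\theta_\varepsilon\ast u^{\varepsilon,\delta}$ vanishes, since this field does not depend on $v$. The remaining part equals $\int f^{\varepsilon,\delta}\,\nabla_v\cdot(v\gamma_\delta)\,dxdv$. Because $v\cdot\nabla_v\gamma_\delta\le 0$ and $0\le\gamma_\delta\le1$, this is at most $3\|f^{\varepsilon,\delta}\|_{L^1_{xv}}$. This yields
\[
\frac{d}{dt}\int f^{\varepsilon,\delta}\log f^{\varepsilon,\delta}\,dxdv+4\int|\nabla_v\sqrt{f^{\varepsilon,\delta}}|^2\,dxdv\le 3\|f_{in}\|_{L^1_{xv}}.
\]
Here we use \eqref{5} with $p=1$. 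Mass conservation removes the contribution of the constant $1$ in the multiplier.

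Integrating in time bounds $4\|\nabla_v\sqrt{f^{\varepsilon,\delta}}\|^2_{L^2((0,t)\times\mathbb{R}^6)}$ by
\[
\int f_{in}^\delta\log f_{in}^\delta\,dxdv-\int f^{\varepsilon,\delta}(t)\log f^{\varepsilon,\delta}(t)\,dxdv+3t\|f_{in}\|_{L^1_{xv}}.
\]
The first term is at most $\|f_{in}\log f_{in}\|_{L^1_{xv}}$, because $f_{in}^\delta$ is a truncation of $f_{in}$. The negative entropy at time $t$ has no sign, so we control it with \eqref{18} of Proposition \ref{pro:xf}:
\[
-\int f^{\varepsilon,\delta}\log f^{\varepsilon,\delta}\le\int f^{\varepsilon,\delta}|\log f^{\varepsilon,\delta}|\le\int f^{\varepsilon,\delta}\log f^{\varepsilon,\delta}+C(t+1)^2e^{4t}.
\]
As written, this only moves the problem around. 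The useful form is the pointwise bound on the negative part from the proof of \eqref{18}:
\[
-f\log f\,\chi_{f\le1}\le f\omega+Ce^{-\omega/2},\qquad \omega=\frac{|x|^2+|v|^2}{8}.
\]
With it, $-\int f^{\varepsilon,\delta}\log f^{\varepsilon,\delta}$ is bounded by $\frac18\int(|x|^2+|v|^2)f^{\varepsilon,\delta}+C$. By \eqref{17} and \eqref{eq:esti} this is at most $C(t+1)^2e^{4t}$.

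Combining these gives a bound of the form $C(t+1)^2e^{4t}$ for the squared norm, and taking square roots gives the claim. The stated bound $(t+1)^3e^{9t}$ is much weaker and follows with room to spare. Alternatively, one can keep $|v|^2f^{\varepsilon,\delta}$ in the entropy and use the combined energy–entropy balance from \eqref{eq:1}–\eqref{eq:2}; that route would explain the larger exponents in the statement, but it is not needed.

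The main technical point is justifying the multiplication by $\log f^{\varepsilon,\delta}$ when $f^{\varepsilon,\delta}$ may vanish. We would handle this with $\log(f^{\varepsilon,\delta}+\eta)$ and let $\eta\to0$, using the regularity of the strong solution from Theorem \ref{thm:solution}, Fatou's lemma for the dissipation, and the moment bounds to justify the integrations by parts at infinity in $v$.
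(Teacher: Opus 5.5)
Your argument is correct, and it takes a genuinely different and shorter route than the paper. You use the pure Boltzmann entropy, multiplying $\eqref{eq:NSVFK0}_2$ by $1+\log f^{\varepsilon,\delta}$. The $\theta_\varepsilon\ast u^{\varepsilon,\delta}$ part of the drift then drops out identically. The confinement part contributes only $\int f^{\varepsilon,\delta}\,\nabla_v\cdot(v\gamma_\delta)\le 3\|f_{in}\|_{L^1_{xv}}$. So no fluid--particle coupling has to be estimated, and the only external input is the control of the negative part of the entropy through the moment bounds \eqref{17} and \eqref{eq:esti}.

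The paper instead multiplies by $1+\frac{|v|^2}{2}+\log f^{\varepsilon,\delta}$. This gives the free-energy inequality \eqref{vfglobal1} with the full Fokker--Planck dissipation $\int|(\theta_\varepsilon\ast u^{\varepsilon,\delta}-v)f^{\varepsilon,\delta}-\nabla_v f^{\varepsilon,\delta}|^2/f^{\varepsilon,\delta}$. The paper must then:
\begin{itemize}
\item bound the coupling term $S_1$ via \eqref{8}, \eqref{utp26} and the energy estimates;
\item handle the $\gamma_\delta$-corrections $S_2,S_3$;
\item expand the dissipation as $\int|\theta_\varepsilon\ast u^{\varepsilon,\delta}-v|^2f^{\varepsilon,\delta}+4\int|\nabla_v\sqrt{f^{\varepsilon,\delta}}|^2-6\int f^{\varepsilon,\delta}$ before invoking \eqref{18}.
\end{itemize}
That route produces the bound $C(t+1)^4e^{12t}$ in \eqref{nablavf}, hence the larger exponents in the statement, whereas yours gives $C(t+1)e^{2t}$ for the norm itself. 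What the paper's longer computation buys is the free-energy balance \eqref{vfglobal1} itself. It is reused directly for the global energy inequality in Lemma~\ref{lem:global} and, in localized form, for the local energy inequalities. For the present lemma alone, your computation suffices.

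One small correction: the chain you dismiss as only moving the problem around is not circular. Write $X=\int f^{\varepsilon,\delta}\log f^{\varepsilon,\delta}$. The inequality $-X\le\int f^{\varepsilon,\delta}|\log f^{\varepsilon,\delta}|\le X+C(t+1)^2e^{4t}$ reads $-X\le X+C(t+1)^2e^{4t}$, which already gives $-X\le\frac{C}{2}(t+1)^2e^{4t}$. So \eqref{18} can be applied directly, and the pointwise bound you then use is simply its proof.
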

	\begin{proof}
		First, multiplying \(1+\frac{|v|^{2}}{2}+\log f^{\varepsilon,\delta}\) on the second equation of \eqref{eq:NSVFK0} and integrating over $\mathbb{R}^6$ yields:
	\begin{align*}
		\int_{\mathbb{R}^6_{xv}}\partial_t f^{\varepsilon,\delta}&\left(1+\frac{|v|^{2}}{2}+\log f^{\varepsilon,\delta}\right)dxdv + \int_{ \mathbb{R}_{xv}^6}(v\cdot \nabla_x) f^{\varepsilon,\delta}\left(1+\frac{|v|^{2}}{2}+\log f^{\varepsilon,\delta}\right)dxdv\\
		&+ \int_{ \mathbb{R}_{xv}^6}\nabla_v \cdot \left[\left(\theta_{\varepsilon} \ast u^{\varepsilon,\delta}-v\gamma_
		\delta(v)\right)f^{\varepsilon,\delta}-\nabla_v f^{\varepsilon,\delta}\right]\left(1+\frac{|v|^{2}}{2}+\log f^{\varepsilon,\delta}\right)dxdv\\&\hspace{3cm} :=e_1+e_2+e_3= 0 ,
	\end{align*}
	where
	\begin{align*}
		e_1&=\partial_t\int_{\mathbb{R}^6_{xv}} f^{\varepsilon,\delta}\left(1+\frac{|v|^{2}}{2}+\log f^{\varepsilon,\delta}\right)dxdv-\int_{\mathbb{R}^6_{xv}} f^{\varepsilon,\delta}\partial_t\left(1+\frac{|v|^{2}}{2}+\log f^{\varepsilon,\delta}\right)dxdv\\
		&=\partial_t\int_{\mathbb{R}^6_{xv}} \left(\frac{|v|^{2}}{2}f^{\varepsilon,\delta}+f^{\varepsilon,\delta}\log f^{\varepsilon,\delta}\right)dxdv;\\
		e_2&=-\int_{\mathbb{R}^6_{xv}}f^{\varepsilon,\delta}\frac{1}{f^{\varepsilon,\delta}}\nabla_{x}f^{\varepsilon,\delta}\cdot vdxdv\\
		&=0;
	\end{align*}
	and
    \begin{align*}
        e_3=&-\int_{ \mathbb{R}_{xv}^6}\left[\left(\theta_{\varepsilon} \ast u^{\varepsilon,\delta}-v\gamma_
		\delta(v)\right)f^{\varepsilon,\delta}-\nabla_v f^{\varepsilon,\delta}\right]\left(v+\frac{\nabla_{v}f^{\varepsilon,\delta}}{f^{\varepsilon,\delta}}\right)dxdv\\
		=&\int_{ \mathbb{R}_{xv}^6}\frac{|(\theta_{\varepsilon}\ast u^{\varepsilon,\delta}-v)f^{\varepsilon,\delta}-\nabla_{v}f^{\varepsilon,\delta}|^2}{f^{\varepsilon,\delta}}dxdv-\int_{\mathbb{R}_{xv}^6}(\theta_{\varepsilon}\ast u^{\varepsilon,\delta})\cdot(\theta_{\varepsilon}\ast u^{\varepsilon,\delta}-v)f^{\varepsilon,\delta}dxdv\\
		&-\int_{\mathbb{R}_{xv}^6}|v|^2f^{\varepsilon,\delta}(1-\gamma_{\delta}(v))dxdv-\int_{\mathbb{R}_{xv}^6}v\cdot\nabla_{v}f^{\varepsilon,\delta}(1-\gamma_{\delta}(v))dxdv.
    \end{align*}
	% \begin{align*}
	% 	e_3=&-\int_{ \mathbb{R}_{xv}^6}\left[\left(\theta_{\varepsilon} \ast u^{\varepsilon,\delta}-v\gamma_
	% 	\delta(v)\right)f^{\varepsilon,\delta}-\nabla_v f^{\varepsilon,\delta}\right]\left(v+\frac{\nabla_{v}f^{\varepsilon,\delta}}{f^{\varepsilon,\delta}}\right)dxdv\\
	% 	=&\int_{ \mathbb{R}_{xv}^6}\frac{|(u^{\varepsilon,\delta}-v)f^{\varepsilon,\delta}-\nabla_{v}f^{\varepsilon,\delta}|^2}{f^{\varepsilon,\delta}}dxdv-\int_{\mathbb{R}_{xv}^6}u^{\varepsilon,\delta}\cdot(u^{\varepsilon,\delta}-v)f^{\varepsilon,\delta}dxdv\\
	% 	&+\int_{\mathbb{R}_{xv}^6}|v|^2f^{\varepsilon,\delta}(\gamma_{\delta}(v)-1)dxdv-\int_{\mathbb{R}_{xv}^6}v\cdot\nabla_{v}f^{\varepsilon,\delta}(1-\gamma_{\delta}(v))dxdv\\
 %        &-\int_{\mathbb{R}_{xv}^6}(\theta_{\varepsilon}\ast u^{\varepsilon,\delta}-u^{\varepsilon,\delta})\cdot vf dxdv.
	% \end{align*}
    To sum up, it becomes
    \begin{equation}\label{vfglobal1}
         \begin{aligned}
		&\int_{\mathbb{R}^6_{xv}} \left(\frac{|v|^{2}}{2}f^{\varepsilon,\delta}+f^{\varepsilon,\delta}\log f^{\varepsilon,\delta}\right)(t,\cdot)dxdv+\int_{\left(0,t\right)\times \mathbb{R}_v^3\times \mathbb{R}_x^3}\frac{|(\theta_\varepsilon \ast u^{\varepsilon,\delta}-v)f^{\varepsilon,\delta}-\nabla_{v}f^{\varepsilon,\delta}|^2}{f^{\varepsilon,\delta}}dxdvds \\
		&\leq \int_{\left(0,t\right)\times \mathbb{R}_v^3\times \mathbb{R}_x^3}(\theta_\varepsilon \ast u^{\varepsilon,\delta})\cdot(\theta_\varepsilon \ast u^{\varepsilon,\delta}-v)f^{\varepsilon,\delta}dxdvds+\int_{\left(0,t\right)\times \mathbb{R}_v^3\times \mathbb{R}_x^3}v\cdot\nabla_{v}f^{\varepsilon,\delta}(1-\gamma_{\delta}(v))dxdvds\\
		&\hspace{0.5cm}-\int_{\left(0,t\right)\times \mathbb{R}_v^3\times \mathbb{R}_x^3}|v|^2f^{\varepsilon,\delta}(\gamma_{\delta}(v)-1)dxdvds+\int_{ \mathbb{R}_v^3\times \mathbb{R}_x^3}\left(\frac{|v|^2}{2}f_{in}+f_{in}\log f_{in}\right)dxdv\\
		&:= S_1+S_2+S_3+S_4.
	\end{aligned}
    \end{equation}
	%which this process is identical to the proof of Lemma \ref{lem:global}.\\
	For \(S_1\), by \eqref{8}, it shows that
    \begin{align*}
        S_1 \leq& C\left(\frac{4}{3} \pi e^{3t} \|f_{in}(t,x,v)\|_{L^{\infty}_{txv}}+1 \right)\int_{(0,t) \times \mathbb{R}^3_x }\left|\theta_\varepsilon \ast u^{\varepsilon,\delta}\right|^2\left(\int_{  \mathbb{R}^3_v}f^{\varepsilon,\delta}|v|^\frac32dv\right)^\frac23dxds\\
        &+C\left(\frac{4}{3} \pi e^{3t} \|f_{in}(t,x,v)\|_{L^{\infty}_{txv}}+1 \right)\norms{|v|^2f^{\varepsilon,\delta}}_{L^{\infty}\left(0,t;L^{1}_{xv}(\mathbb{R}^{6})\right)}^\frac45 \norms{u^{\varepsilon,\delta}}_{L^1(0,t;L^5(\mathbb{R}^3_x)}.
    \end{align*}
    Due to the embedding inequality
	\begin{align}\label{utp26}
		\|u\|_{L_t^q L_x^p}^2 \leq C\left(\|u\|_{L_t^\infty L_x^2}^2 + \|\nabla u\|_{L_t^2 L_x^2}^2\right), \quad \text{for} \quad \frac{2}{q} + \frac{3}{p} = \frac{3}{2}, ~ 2 \leq p \leq 6,
	\end{align}
   using \eqref{eq:esti} and H\"{o}lder's inequality, we have
	\begin{align*}
		S_1 \leq& C\left(\frac{4}{3} \pi e^{3t} \|f_{in}(t,x,v)\|_{L^{\infty}_{txv}}+1 \right)\int_{(0,t) \times \mathbb{R}^3_x }\left|\theta_\varepsilon \ast u^{\varepsilon,\delta}\right|^2\left(\int_{  \mathbb{R}^3_v}f^{\varepsilon,\delta}|v|^\frac32dv\right)^\frac23dxds\\
		&+C(t+1)\left(\frac{4}{3} \pi e^{3t} \|f_{in}(t,x,v)\|_{L^{\infty}_{txv}}+1 \right)\norms{|v|^2f^{\varepsilon,\delta}}_{L^{\infty}\left(0,t;L^{1}_{xv}(\mathbb{R}^{6})\right)}^\frac45 \times\\
        &\left(\norm{u^{\varepsilon,\delta}}_{L^\infty(0,t;L^2(\mathbb{R}^3_x))}+\norm{\nabla u^{\varepsilon,\delta}}_{L^2(0,t;L^2(\mathbb{R}^3_x))}\right)\\
		\leq& C(t+1)\left(\frac{4}{3} \pi e^{3t} \|f_{in}(t,x,v)\|_{L^{\infty}_{txv}}+1 \right)\left(\norm{u^{\varepsilon,\delta}}^2_{L^\infty(0,t;L^2(\mathbb{R}^3_x))}+\norm{\nabla u^{\varepsilon,\delta}}_{L^2(0,t;L^2(\mathbb{R}^3_x))}^2+1\right)\times\\
		&\hspace{0.5cm}\left(\norm{f^{\varepsilon,\delta}}_{L^{\infty}\left(0,t;L^{1}_{xv}(\mathbb{R}^{6})\right)}^\frac23+\norms{|v|^2f^{\varepsilon,\delta}}_{L^{\infty}\left(0,t;L^{1}_{xv}(\mathbb{R}^{6})\right)}^\frac23+\norms{|v|^2f^{\varepsilon,\delta}}_{L^{\infty}\left(0,t;L^{1}_{xv}(\mathbb{R}^{6})\right)}^\frac45\right)\\
		\leq& C\left(\norm{f_{in}}_{L^{\infty}_{xv}( \mathbb{R}^{6})}, \norm{f_{in}}_{L^{1}_{xv}(\mathbb{R}^{6})},\norms{|v|^2f_{in}}_{L^{1}_{xv}(\mathbb{R}^{6})},\norm{u_{in}}^{2}_{L^{2}_{x}(\mathbb{R}^{3})} \right)(t+1)^4e^{12t}.
	\end{align*}
    Due to \( v\cdot \nabla_v \gamma\leq 0\), \(S_2\) yields:
	\begin{align*}
		S_2 =& -3\int_{\left(0,t\right)\times \mathbb{R}_v^3\times \mathbb{R}_x^3}f^{\varepsilon,\delta}(1-\gamma_{\delta}(v))dxdvds+\int_{\left(0,t\right)\times \mathbb{R}_v^3\times \mathbb{R}_x^3}v\cdot\nabla_{v}\gamma_{\delta}(v)f^{\varepsilon,\delta}dxdvds\leq 0.
	\end{align*}
	For \(S_3\), applying \eqref{eq:esti} gives:
	\begin{align*}
		S_3&\leq C(t)\norms{|v|^2f^{\varepsilon,\delta}}_{L^{\infty}\left(0,t;L^{1}_{xv}(\mathbb{R}^{6})\right)}\\
		&\leq C\left(\norm{f_{in}}_{L^{1}_{xv}(\mathbb{R}^{6})},\norms{|v|^2f_{in}}_{L^{1}_{xv}(\mathbb{R}^{6})},\norm{u_{in}}^{2}_{L^{2}_{x}(\mathbb{R}^{3})} \right)(t+1)^2e^{3t}.
	\end{align*}
	To sum up, we derive
	\begin{equation}\label{nablavf}
	    \begin{aligned}
		\int_{\mathbb{R}^6_{xv}}&\left(\frac{1}{2}|v|^2f^{\varepsilon,\delta}+f^{\varepsilon,\delta}\log f^{\varepsilon,\delta}\right)(t,\cdot)dxdv+\int_{(0,t)\times \mathbb{R}_{xv}^6}\frac{|(u^{\varepsilon,\delta}-v)f^{\varepsilon,\delta}-\nabla_{v}f^{\varepsilon,\delta}|^2}{f^{\varepsilon,\delta}}dxdvds\\
		&\leq C\left(\norm{f_{in}}_{L^{\infty}_{xv}}, \norm{f_{in}}_{L^{1}_{xv}},\norms{|v|^2f_{in}}_{L^{1}_{xv}},\norm{u_{in}}^{2}_{L^{2}_{x}}, \norm{f_{in}\log f_{in}}_{L^{1}_{xv}}\right)(t+1)^4e^{12t}.
	\end{aligned}
	\end{equation}
	Moreover, the following identity holds:
	\begin{align*}
		&\int_{(0,t)\times \mathbb{R}_{xv}^6}\frac{|(u^{\varepsilon,\delta}-v)f^{\varepsilon,\delta}-\nabla_{v}f^{\varepsilon,\delta}|^2}{f^{\varepsilon,\delta}}dxdvds\\
		&=\int_{(0,t)\times \mathbb{R}_{xv}^6}|u^{\varepsilon,\delta}-v|^2f^{\varepsilon,\delta}dxdvds+4\int_{(0,t)\times \mathbb{R}_{xv}^6}\left|\nabla_{v}\sqrt{f^{\varepsilon,\delta}}\right|^2dxdvds\\
		&\hspace{1cm}-4\int_{(0,t)\times \mathbb{R}_{xv}^6}(u^{\varepsilon,\delta}-v)\cdot\nabla_{v}\sqrt{f^{\varepsilon,\delta}}\sqrt{f^{\varepsilon,\delta}}dxdvds\\
		&=\int_{(0,t)\times \mathbb{R}_{xv}^6}|u^{\varepsilon,\delta}-v|^2f^{\varepsilon,\delta}dxdvds+4\int_{(0,t)\times \mathbb{R}_{xv}^6}\left|\nabla_{v}\sqrt{f^{\varepsilon,\delta}}\right|^2dxdvds\\
        &\hspace{1cm}-6\int_{(0,t)\times \mathbb{R}_{xv}^6}f^{\varepsilon,\delta}dxdvds.
	\end{align*}
    According to \eqref{nablavf}, it yields
    \begin{align*}
        4\int_{(0,t)\times \mathbb{R}_{xv}^6}&\left|\nabla_{v}\sqrt{f^{\varepsilon,\delta}}\right|^2dxdvds+\int_{\mathbb{R}^6_{xv}} f^{\varepsilon,\delta}\log f^{\varepsilon,\delta}(t,\cdot)dxdv \leq \\
        &C\left(\norm{f_{in}}_{L^{\infty}_{xv}}, \norm{f_{in}}_{L^{1}_{xv}},\norms{|v|^2f_{in}}_{L^{1}_{xv}},\norm{u_{in}}^{2}_{L^{2}_{x}}, \norm{f_{in}\log f_{in}}_{L^{1}_{xv}}\right)(t+1)^4e^{12t}.
    \end{align*}
	According to \eqref{18} in Proposition \ref{pro:xf} , the estimate \eqref{21} can be established. 
    % Using the same method as in \eqref{eq:6}, we derive \eqref{22}.
	\end{proof}
	
	\subsection{\texorpdfstring{The estimates of \(|v|^\kappa f^{\varepsilon,\delta}\)}{The estimates of |v| kappa f varepsilon, delta by Tao's method}}

    We first obtain the estimates on $u^{\varepsilon,\delta}$ by Tao's method.
    
	\begin{lemma}\label{lem:ulp}
    Under the initial hypotheses of Theorem \ref{thm:solution}, 
the following estimate holds for all \(p \in [2, +\infty)\):
\begin{align}
\label{26}
    \norm{u^{\varepsilon,\delta}}_{L^1(0,t; L^p(\mathbb{R}^3_x))}
\leq C (t+1)^{4} \exp\left[C (t+1)^{3} e^{4t}\right],
\end{align}
where the constant \(C > 0\) depends explicitly on \(p\) and the norms of the initial data:
\[
C = C\left( \norm{f_{\mathrm{in}}}_{L^1_{xv}(\mathbb{R}^6)}, 
\norm{f_{\mathrm{in}}}_{L^\infty_{xv}(\mathbb{R}^6)}, 
\norm{|v|^3 f_{\mathrm{in}}}_{L^1_{xv}(\mathbb{R}^6)}, 
\norm{u_{\mathrm{in}}}_{L^2_x(\mathbb{R}^3)}\right).
\]
	% Based on the initial hypotheses \eqref{0}, if \(u^{\varepsilon,\delta}\) is the smooth solution  of the system  \eqref{eq:NSVFK0},
	% we have
	% \begin{align}
	% 	\label{26}
	% 	&\norm{u^{\varepsilon,\delta}}_{L^1(0,t; L^p(\mathbb{R}^3_x))}\nonumber \\
	% 	&\leq C\left(p,\norm{f_{in}}_{L^{1}_{xv}},\norm{f_{in}}_{L^{\infty}_{xv}},\norms{|v|^3f_{in}}_{L^{1}_{xv}},\norm{u_{in}}_{L^2_x}\right)(t+1)^{\frac{3p+3}{2p}}e^{C(t+1)^\frac{14}{5}e^{6t}}.
	% \end{align}
	% where \(p \in (\frac{3}{2},+\infty)\).
    \end{lemma}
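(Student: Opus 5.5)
We will prove \eqref{26} by splitting the solution into a low-integrability part and a smoother part, in the spirit of Tao's $L^p$ decomposition \cite{taoLocalisationCompactnessProperties2013}. Write the momentum equation of \eqref{eq:NSVFK0} in mild form,
\begin{align*}
u^{\varepsilon,\delta}(t)=e^{t\Delta}u^{\delta}_{in}-\int_0^t e^{(t-s)\Delta}\mathbb{P}\nabla\cdot\big((\theta_\varepsilon\ast u^{\varepsilon,\delta})\otimes u^{\varepsilon,\delta}\big)ds+\int_0^t e^{(t-s)\Delta}\mathbb{P}\,\theta_\varepsilon\ast F^{\varepsilon,\delta}ds,
\end{align*}
where $F^{\varepsilon,\delta}=\int_{\mathbb{R}^3_v}(v-\theta_\varepsilon\ast u^{\varepsilon,\delta})f^{\varepsilon,\delta}dv$. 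We estimate each of the three pieces in $L^1(0,t;L^p_x)$ for a fixed $p\in[2,\infty)$. All constants must depend only on the quantities in Lemma \ref{pro:estimate} and Lemma \ref{lem:3f}, and not on $\varepsilon$ or $\delta$.

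\textbf{Linear term.} Split $e^{s\Delta}u_{in}^\delta$ in time. On $(0,\min(1,t))$ use $\|e^{s\Delta}u^\delta_{in}\|_{L^p}\le Cs^{-\frac32(\frac12-\frac1p)}\|u_{in}\|_{L^2}$. This is integrable only if $\frac32(\frac12-\frac1p)<1$, i.e.\ for all $p<\infty$, so it works for each fixed $p$. For $s\ge1$ the same bound is uniformly bounded, which gives a factor $t$.

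\textbf{Nonlinear term.} Use $\|e^{\tau\Delta}\mathbb{P}\nabla\cdot g\|_{L^p}\le C\tau^{-\frac12-\frac32(\frac1q-\frac1p)}\|g\|_{L^q}$ with $g=(\theta_\varepsilon\ast u)\otimes u$. Here $\|g\|_{L^{q}}\le\|u\|_{L^{2q}}^2$, and by \eqref{utp26} $u\in L^{r}_tL^{2q}_x$ with $r=\frac{4q}{3q-3}$, hence $g\in L^{r/2}_tL^q_x$. Choosing $q$ close to $1$, say $q\in(1,\frac32)$, makes $r/2\ge1$. Young's inequality in time then puts this term in $L^1_tL^p_x$ for moderate $p$, but only for $p$ up to the threshold where the kernel exponent $\frac12+\frac32(\frac1q-\frac1p)$ stays below $1$, i.e.\ $p<3q/(3-q)\cdot$ (roughly $p<3$).

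For large $p$ this one-shot estimate fails. Here we implement Tao's idea and iterate. Once $u\in L^1_tL^{p_0}_x$ is known, we also have $u\in L^{2}_tL^6_x\cap L^\infty_tL^2_x$, which gives $g\in L^{s}_tL^{q_1}_x$ with larger $q_1$. Feeding this back into the Duhamel formula raises the attainable $p$ by a fixed amount, since the gain in spatial integrability $\frac1q-\frac1p$ is bounded by $\frac13$ per step. Finitely many steps, depending on $p$, reach any $p<\infty$. An interpolation with the $L^\infty_tL^2$ bound is used at each step so that the time exponent never drops below $1$. The constants grow polynomially in $(t+1)$ at each step.

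\textbf{Friction term.} By \eqref{28} with $\lambda=3$, $\mu=1$, $\int|v|f\,dv\in L^\infty_tL^{3/2}_x$. Likewise $\int f\,dv\in L^\infty_tL^{2}_x$ (from $\mu=0$) and also lies in $L^\infty_tL^1_x$, with bounds from Lemma \ref{lem:3f} and Lemma \ref{pro:estimate}, which produce the factor $\exp[C(t+1)^3e^{4t}]$. Next, $(\theta_\varepsilon\ast u)\int f\,dv\in L^2_tL^{q}_x$ for some $q>1$ via $u\in L^2_tL^6_x$. The heat kernel gain is $\tau^{-\frac32(\frac1q-\frac1p)}$, which is integrable as long as $\frac1q-\frac1p<\frac23$. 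From $L^{3/2}$ this reaches every $p<\infty$, since $\frac23-\frac1p<\frac23$. For the $u\!\int\! f$ piece, a bootstrap using the improved integrability of $u$ obtained above covers large $p$.

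\textbf{Conclusion and main obstacle.} Summing the three contributions gives $C(t+1)^4\exp[C(t+1)^3e^{4t}]$. The main obstacle is the nonlinear term for large $p$, where the single-step parabolic smoothing is insufficient. There I would carefully set up the finite bootstrap, tracking exponents so that each step stays in $L^1$ in time, rather than seeking a single estimate. If the iteration is awkward, the fallback is Tao's frequency decomposition. In that approach, high frequencies are controlled by $\|\nabla u\|_{L^2_tL^2_x}$ via Bernstein, and low frequencies by energy; but this gives only $L^2_tL^6_x$-type control, so the iteration is likely needed anyway.
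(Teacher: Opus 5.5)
Your splitting of the mild formulation matches the paper's decomposition $u^{\varepsilon,\delta}=m_1+m_2+m_3+m_4$. Your treatment of $e^{t\Delta}u^\delta_{in}$ and of $\theta_\varepsilon\ast\int vf^{\varepsilon,\delta}dv$ (from $L^\infty_tL^{3/2}_x$ via \eqref{28}, with integrable kernel $\tau^{-(1-\frac{3}{2p})}$) is essentially the paper's.

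The gap is in the convective term for large $p$. The same oversight affects the piece $(\theta_\varepsilon\ast u^{\varepsilon,\delta})\int f^{\varepsilon,\delta}dv$. You restrict to $q<\frac32$, conclude that one smoothing step only reaches $p\approx 3$, and then defer to a bootstrap. That bootstrap is never specified, and as described it cannot work. A bound $u\in L^1_tL^{p_0}_x$ on one factor of $(\theta_\varepsilon\ast u)\otimes u$ forces the other factor into $L^\infty_t$, where only $L^2_x$ is available. More generally, interpolating among $L^1_tL^{p_0}_x$, $L^2_tL^6_x$ and $L^\infty_tL^2_x$, the attainable $1/\alpha$ is a convex function of the time exponent $1/a$. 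So the sum of the two spatial exponents is minimized at $1/a=1/a'=1/2$, and the product lies in $L^1_tL^{q_1}_x$ only for $q_1\le 3$. Feeding an improved bound back in therefore gains nothing.

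Your closing remark on Tao's method is also inaccurate. In the paper, the Littlewood--Paley trichotomy with Bernstein's inequality gives the endpoint bound \eqref{25}, $\|m_2\|_{L^1_tL^\infty_x}\lesssim\|\nabla u^{\varepsilon,\delta}\|^2_{L^2_{t,x}}$. This is interpolated with $\|m_2\|_{L^1_tL^{3/2}_x}\lesssim t\|\nabla u^{\varepsilon,\delta}\|^2_{L^2_{t,x}}$ from \eqref{24} to cover every $p\in[\frac32,\infty]$, with no iteration.

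In fact you need neither device; just take $q=3$. Since $\nabla\cdot(\theta_\varepsilon\ast u^{\varepsilon,\delta})=0$, the convective term is $\nabla\cdot\big((\theta_\varepsilon\ast u^{\varepsilon,\delta})\otimes u^{\varepsilon,\delta}\big)$. By Sobolev,
\[
\|(\theta_\varepsilon\ast u^{\varepsilon,\delta})\otimes u^{\varepsilon,\delta}\|_{L^1_tL^3_x}\le\|u^{\varepsilon,\delta}\|^2_{L^2_tL^6_x}\lesssim\|\nabla u^{\varepsilon,\delta}\|^2_{L^2_{t,x}}.
\]
The kernel exponent is $\frac12+\frac32(\frac13-\frac1p)=1-\frac{3}{2p}<1$ for every finite $p$, so Young's inequality in time gives
\[
\|m_2\|_{L^1(0,t;L^p(\mathbb{R}^3_x))}\le C\,p\,t^{\frac{3}{2p}}\|\nabla u^{\varepsilon,\delta}\|^2_{L^2((0,t)\times\mathbb{R}^3_x)},\qquad 2\le p<\infty .
\]
Similarly, $\int f^{\varepsilon,\delta}dv\in L^\infty_tL^2_x$ and $u^{\varepsilon,\delta}\in L^2_tL^6_x$ put $(\theta_\varepsilon\ast u^{\varepsilon,\delta})\int f^{\varepsilon,\delta}dv$ in $L^2_tL^{3/2}_x$. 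The same kernel $\tau^{-(1-\frac{3}{2p})}$ is integrable for all finite $p$, and this is exactly the paper's estimate of $m_4$.

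With these corrections your purely heat-kernel argument is a complete and more elementary alternative to the paper's Littlewood--Paley step. It loses the endpoint $p=\infty$ and has a constant growing like $p$. Neither matters for \eqref{26}, whose range is $p<\infty$ and whose constant may depend on $p$.
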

	\begin{proof}
		Let
	\begin{align*}
		&\hspace{0.5cm}u^{\varepsilon,\delta}\\
        &=e^{t\triangle} u_{in}^{\delta}-\int^{t}_{0}e^{(t-\tau) \triangle} \mathbb{P}\bigg\{(\theta_{\varepsilon}\ast u^{\varepsilon,\delta})\cdot \nabla u^{\varepsilon,\delta}-
		\theta_{\varepsilon}\ast \left\{\int_{\mathbb{R}^3}(v-\theta_{\varepsilon} \ast  u^{\varepsilon,\delta})f^{\varepsilon,\delta}dv\right\}\bigg\}(\tau,\cdot)d\tau\\
		&=e^{t\triangle} u_{in}^{\delta}-\int^{t}_{0}e^{(t-\tau) \triangle} \mathbb{P}\left((\theta_{\varepsilon}\ast u^{\varepsilon,\delta})\cdot \nabla u^{\varepsilon,\delta}\right)d\tau+\int^{t}_{0}e^{(t-\tau) \triangle} \mathbb{P}\left(\theta_{\varepsilon}\ast\int_{  \mathbb{R}^3_v}vf^{\varepsilon,\delta}dv\right)d\tau\\
		&\hspace{5cm}-\int^{t}_{0}e^{(t-\tau) \triangle} \mathbb{P}\left(\theta_{\varepsilon}\ast \int_{  \mathbb{R}^3_v}\theta_{\varepsilon}\ast u^{\varepsilon,\delta}f^{\varepsilon,\delta}dv\right)d\tau\\
		&=m_1+m_2+m_3+m_4.
	\end{align*}
	For \(m_1\), by Lemma~\ref{lem:semigroup} and \eqref{udelta}, this yields
	\begin{align*}
    \norm{e^{t\triangle} u_{in}^{\delta}}_{L^1_tL^p_x}\leq Ct^{\frac{p+6}{4p}}\norm{u_{in}}_{L^2_x},~p\geq 2.
	\end{align*}
	For \(m_2\), by H\"{o}lder's inequality and the embedding inequality \eqref{utp26}, this becomes
	\begin{equation}
		\label{24}
	    \begin{aligned}
		\norm{m_2}_{L^1_tL^\frac32_x}&\leq Ct\norm{(\theta_{\varepsilon}\ast  u^{\varepsilon,\delta})\cdot\nabla u^{\varepsilon,\delta}}_{L^1_tL^\frac32_x}\\
        &\leq Ct\norm{u^{\varepsilon,\delta}}_{L^2_tL^6_x}\norm{\nabla u^{\varepsilon,\delta}}_{L^2_tL^2_x}\\
        &\leq Ct\norm{\nabla u^{\varepsilon,\delta}}_{L^2_tL^2_x}^2 .
	\end{aligned}
	\end{equation}
	 Then, as in Proposition 9.1 of \cite{taoLocalisationCompactnessProperties2013}, we define the Littlewood--Paley projection operators on \(\mathbb{R}^3\). Let \(\varphi(\xi)\) be a fixed bump function supported in the ball \(\{\xi \in \mathbb{R}^3 : |\xi| \leq 2\}\) and equal to \(1\) on the ball \(\{\xi \in \mathbb{R}^3 : |\xi| \leq 1\}\). Define a \emph{dyadic number} to be a number \(N\) of the form \(N = 2^k\) for some integer \(k\). For each dyadic number \(N\), we define the Fourier multipliers
\[
\widehat{P_N f}(\xi) := \psi(\xi/N) \hat{f}(\xi) := (\varphi(\xi/N) - \varphi(2\xi/N)) \hat{f}(\xi).
\]
Thus for any tempered distribution, we have \(f = \sum_{N} P_N f\) in a weakly convergent sense at least, where the sum ranges over dyadic numbers. By Littlewood-Paley decomposition and Minkowski's inequality, we have
	\begin{align*}
		\norms{m_2}_{L^1_tL^{\infty}_x}&=\norms{\int_{0}^{t}e^{(t-\tau)\triangle}\mathbb{P}\left(\nabla \cdot (\theta_{\varepsilon}\ast u^{\varepsilon,\delta}\otimes u^{\varepsilon,\delta})\right)d\tau}_{L^1_tL^{\infty}_x}\\
		&\lesssim \sum_N\int_{0}^{T}\int_{0}^{t} \norms{P_Ne^{(t-\tau)\triangle}\mathbb{P}\left(\nabla \cdot (\theta_{\varepsilon}\ast u^{\varepsilon,\delta}\otimes u^{\varepsilon,\delta})\right)}_{L^{\infty}_x} d\tau dt.
	\end{align*}
	Using Lemma 2.2 of \cite{taoLocalisationCompactnessProperties2013} and bounding the first-order operator \(\mathbb{P}\triangle \sim -N^2\) on the range of \(P_N\), we may bound this by
	\begin{align*}
		\lesssim \sum_N \int_{0}^{T}\int_{0}^{t} \exp\left(-c(t-\tau)N^2\right)N\norms{\left( P_N(\theta_{\varepsilon}\ast u^{\varepsilon,\delta}\otimes u^{\varepsilon,\delta})\right)}_{L^{\infty}_x} d\tau dt
	\end{align*}
	for some \(c>0\); interchanging integrals and evaluating the \(\tau\) integral, this becomes
	\begin{align}
		\label{23}
		\lesssim\sum_N\int_{0}^{T}N^{-1}\norms{\left( P_N(\theta_{\varepsilon}\ast u^{\varepsilon,\delta}\otimes u^{\varepsilon,\delta})\right)}_{L^{\infty}_x}dt.
	\end{align}
	We now apply the Littlewood-Paley trichotomy to write
	\begin{align*}
		P_N(\theta_{\varepsilon}\ast u^{\varepsilon,\delta}&\otimes u^{\varepsilon,\delta})=\sum_{N_1\sim N}\sum_{N_2\lesssim N} P_N(\theta_{\varepsilon}\ast u^{\varepsilon,\delta}_{N_1}\otimes u^{\varepsilon,\delta}_{N_2})\\
		&+\sum_{N_2\sim N}\sum_{N_1\lesssim N}P_N(\theta_{\varepsilon}\ast u^{\varepsilon,\delta}_{N_1}\otimes u^{\varepsilon,\delta}_{N_2})+\sum_{N_1\gtrsim N}\sum_{N_2\sim N_1}P_N(\theta_{\varepsilon}\ast u^{\varepsilon,\delta}_{N_1}\otimes u^{\varepsilon,\delta}_{N_2}),
	\end{align*}
	where \(u_N:=P_Nu\). For \(N_1, N_2\) in the first sum,  Bernstein's inequality (Lemma \ref{Bernstein}) yields that
	\begin{align*}
		\norms{P_N(\theta_{\varepsilon}\ast u^{\varepsilon,\delta}_{N_1}\otimes u^{\varepsilon,\delta}_{N_2})}_{L^{\infty}_x} &\lesssim \norm{u^{\varepsilon,\delta}_{N_1}}_{L^{\infty}_x}\norm{u^{\varepsilon,\delta}_{N_2}}_{L^{\infty}_x}\\
		&\lesssim N_1^\frac32N_2^\frac32\norm{u^{\varepsilon,\delta}_{N_1}}_{L^{2}_x}\norm{u^{\varepsilon,\delta}_{N_2}}_{L^{2}_x}\\
		&\lesssim N(N_2/N_1)^\frac12\norm{\nabla u^{\varepsilon,\delta}_{N_1}}_{L^{2}_x}\norm{\nabla u^{\varepsilon,\delta}_{N_2}}_{L^{2}_x}.
	\end{align*}
	For the second sum, as for the proof  in the first sum, we have
	\begin{align*}
		\norms{P_N(\theta_{\varepsilon}\ast u^{\varepsilon,\delta}_{N_1}\otimes u^{\varepsilon,\delta}_{N_2})}_{L^{\infty}_x} &\lesssim N(N_1/N_2)^\frac12\norm{\nabla u^{\varepsilon,\delta}_{N_1}}_{L^{2}_x}\norm{\nabla u^{\varepsilon,\delta}_{N_2}}_{L^{2}_x}.
	\end{align*}
	For the third sum, we use Bernstein's inequality (Lemma \ref{Bernstein}) in a slightly different way to estimate
	\begin{align*}
		\norms{P_N(\theta_{\varepsilon}\ast u^{\varepsilon,\delta}_{N_1}\otimes u^{\varepsilon,\delta}_{N_2})}_{L^{\infty}_x} &\lesssim N^3\norms{\theta_{\varepsilon}\ast u^{\varepsilon,\delta}_{N_1}\otimes u^{\varepsilon,\delta}_{N_2}}_{L^{1}_x}\\
		&\lesssim N^3\norms{u^{\varepsilon,\delta}_{N_1}}_{L^{2}_x}\norms{u^{\varepsilon,\delta}_{N_2}}_{L^{2}_x}\\
		&\lesssim N(N/N_1)^2\norms{\nabla u^{\varepsilon,\delta}_{N_1}}_{L^{2}_x}\norms{\nabla u^{\varepsilon,\delta}_{N_2}}_{L^{2}_x}.
	\end{align*}
	Applying these bounds, we can estimate \eqref{23} by
	\begin{align*}
		\lesssim& \sum_{N_1\sim N}\sum_{N_2\lesssim N}(N_2/N_1)^\frac12\int_{0}^{T}\norm{\nabla u^{\varepsilon,\delta}_{N_1}}_{L^{2}_x}\norm{\nabla u^{\varepsilon,\delta}_{N_2}}_{L^{2}_x}dt\\
		&+ \sum_{N_2\sim N}\sum_{N_1\lesssim N}(N_1/N_2)^\frac12\int_{0}^{T}\norm{\nabla u^{\varepsilon,\delta}_{N_1}}_{L^{2}_x}\norm{\nabla u^{\varepsilon,\delta}_{N_2}}_{L^{2}_x}dt\\
		&+\sum_{N_1\gtrsim N}\sum_{N_2\sim N_1}(N/N_1)^{2}\int_{0}^{T}\norm{\nabla u^{\varepsilon,\delta}_{N_1}}_{L^{2}_x}\norm{\nabla u^{\varepsilon,\delta}_{N_2}}_{L^{2}_x}dt.
		\end{align*}
	Performing the \(N\) summation first and then using Cauchy-Schwartz's inequality, one can bound this by
	\begin{align*}
		\lesssim& \sum_{N_1\sim N}\sum_{N_2\lesssim N}(N_2/N_1)^\frac12a_{N_1}a_{N_2}+ \sum_{N_2\sim N}\sum_{N_1\lesssim N}(N_1/N_2)^\frac12a_{N_1}a_{N_2}\\
        &+\sum_{N_1\gtrsim N}\sum_{N_2\sim N_1}(N/N_1)^{2}a_{N_1}a_{N_2},
	\end{align*}
	where
	\begin{align*}
		a_N:=\norm{\nabla u^{\varepsilon,\delta}_{N}}_{L^{2}((0,t)\times\mathbb{R}^3_x)}^2.
	\end{align*}
	But form \eqref{eq:esti} and Bessel's inequality, we have
	\begin{align}
		\label{25}
			\norms{m_2}_{L^1_tL^{\infty}_x} \lesssim \norm{\nabla u^{\varepsilon,\delta}}_{L^{2}((0,t)\times\mathbb{R}^3_x)}^2.
	\end{align}
	Finally, by interpolation inequality for \(L^p\)-norms, \eqref{eq:esti}, \eqref{24} and \eqref{25}, we derive
	\begin{align*}
	\norms{m_2}_{L^1_tL^{p}_x} &\leq \norms{\norms{m_2}^\frac{3}{2p}_{L^\frac32_x}\norms{m_2}^\frac{2p-3}{2p}_{L^{\infty}_x}}_{L^1_t}\\
	& \leq \norms{m_2}^\frac{3}{2p}_{L^1_tL^\frac32_{x}}\norms{m_2}^\frac{2p-3}{2p}_{L^1_tL^{\infty}_{x}}\\
	&\leq C\left(\norm{f_{in}}_{L^{\infty}_{xv}(\mathbb{R}^{6})},\norm{f_{in}}_{L^{1}_{xv}(\mathbb{R}^{6})},\norms{|v|^2f_{in}}_{L^{1}_{xv}(\mathbb{R}^{6})},\norm{u_{in}}^{2}_{L^{2}_{x}(\mathbb{R}^{3})} \right)\left[(t+1)^3e^{3t}\right]
	\end{align*}
	for any \(p \in (\frac{3}{2}, +\infty)\). To sum up, it becomes
    \[\norms{m_2}_{L^1_tL^{p}_x}\leq C, ~p\in\left[\frac{3}{2},+\infty\right].\]
	For \(m_3\), applying Lemma~\ref{lem:semigroup} we get
	\begin{align*}
		\norm{m_3}_{L^1_tL^p_x}&\leq \norms{\int^{t}_{0}\norms{e^{(t-\tau) \triangle} \mathbb{P}\left(\theta_{\varepsilon}\ast\int_{  \mathbb{R}^3_v}vf^{\varepsilon,\delta}dv\right)}_{L^p_x}d\tau}_{L^1_t}\\
		&\leq \norms{\int^{t}_{0}(t-\tau)^\frac{3-2p}{2p}\norms{\int_{  \mathbb{R}^3_v}vf^{\varepsilon,\delta}dv}_{L^\frac32_x}d\tau}_{L^1_t}\\
		&\leq Ct^\frac{3}{2p}\norms{\int_{ \mathbb{R}^3_v}|v|f^{\varepsilon,\delta}dv}_{L^\frac32_xL^1_t},
	\end{align*}
	and by \eqref{28}, \eqref{5} and \eqref{10}, this becomes
	\begin{align*}
		&\leq Ct^\frac{3}{2p}\norms{f^{\varepsilon,\delta}}^\frac13_{L^{\infty}_{txv}}\norms{|v|^3f^{\varepsilon,\delta}}^\frac23_{L^1_{txv}}\\ 
        &\leq C(t+1)^{3}e^{C(t+1)^3e^{4t}},
	\end{align*}	
    where
    \(C=C\left(\norm{f_{in}}_{L^{1}_{xv}},\norm{f_{in}}_{L^{\infty}_{xv}},\norms{|v|^3f_{in}}_{L^{1}_{xv}}
    ,\norm{u_{in}}_{L^2_x}\right)\) and \(p\in \left[\frac{3}{2},+\infty\right)\).\\
	Following the same methodology as in the estimate of \(m_3\), we derive \(m_4\):
	\begin{align*}
		\norm{m_4}_{L^1_tL^p_x}&\leq \norms{\int^{t}_{0}\norms{e^{(t-\tau) \triangle} \mathbb{P}\left(\theta_{\varepsilon}\ast \int_{  \mathbb{R}^3_v}\theta_{\varepsilon}\ast u^{\varepsilon,\delta}f^{\varepsilon,\delta}dv\right)}_{L^p_x}d\tau}_{L^1_t}\\
		&\leq \norms{\int^{t}_{0}(t-\tau)^\frac{3-2p}{2p}\norms{\theta_{\varepsilon}\ast u^{\varepsilon,\delta}\int_{  \mathbb{R}^3_v}f^{\varepsilon,\delta}dv}_{L^\frac32_x}d\tau}_{L^1_t}\\
		&\leq Ct^\frac{3}{2p}\norms{\norms{u^{\varepsilon,\delta}}_{L^6_x}\norms{\int_{  \mathbb{R}^3_v}f^{\varepsilon,\delta}dv}_{L^2_x}}_{L^1_t}.
	\end{align*}
	By \eqref{28}, \eqref{5}, \eqref{eq:esti} and \eqref{10}, this becomes
	\begin{align*}
		&\leq Ct^\frac{3+p}{2p}\norms{u^{\varepsilon,\delta}}_{L^2_tL^6_x}\norms{f^{\varepsilon,\delta}}^\frac12_{L^{\infty}_{txv}}\norms{|v|^3f^{\varepsilon,\delta}}^\frac12_{L^{\infty}_tL^1_{xv}}\\
		&\leq C\left(\norm{f_{in}}_{L^{1}_{xv}},\norm{f_{in}}_{L^{\infty}_{xv}},\norms{|v|^3f_{in}}_{L^{1}_{xv}},\norm{u_{in}}_{L^2_x}\right)(t+1)^{3}e^{C(t+1)^4e^{4t}},
	\end{align*}
	where \(p\in \left[\frac{3}{2},+\infty\right)\). To sum up, we prove \eqref{26}.
	\end{proof}

    Using the above lemma, we then obtain the estimate on $|v|^{\kappa}f^{\varepsilon,\delta}$.
	
	\begin{lemma}\label{lem:mf}
		Under the assumptions of Theorem~\ref{thm:solution} and by Lemma~\ref{lem:ulp}, we have
	\begin{equation}
		\label{31}
	    \begin{aligned}
		&\norms{|v|^{\kappa}f^{\varepsilon,\delta}}_{L^{\infty}(0,t;L^1(\mathbb{R}^3_x\times\mathbb{R}^3_v))}\\
        % +\norms{|v|^{\bar{\kappa}}f^{\varepsilon,\delta}\log f^{\varepsilon,\delta} }_{L^{\infty}(0,t;L^{\sigma}(\mathbb{R}^3_x\times\mathbb{R}^3_v))}\nonumber\\
		&\leq C\left(\norm{u_{in}}_{L^2_x}, \norms{|v|^{\kappa}f_{in}}_{L^{1}_{xv}},\norms{f_{in}}_{L^{1}_{xv}},\norms{f_{in}}_{L^{\infty}_{xv}}\right)(t+1)^{\frac{3\kappa+12}{2}}e^{C(\kappa)(t+1)^\frac{14}{5}e^{4t}},
	\end{aligned}
	\end{equation}
    where \(\kappa\geq 3\).
 %    \begin{align}
	% 	\label{31}
	% 	&\norms{|v|^{\kappa}f^{\varepsilon,\delta}}_{L^{\infty}(0,t;L^1(\mathbb{R}^3_x\times\mathbb{R}^3_v))}+\norms{|v|^{\frac{{\kappa}}{2}-}f^{\varepsilon,\delta}\log f^{\varepsilon,\delta} }_{L^{1}\left(\left(0,t\right)\times\Omega_x\times\mathbb{R}^3_v\right)}\nonumber\\
	% 	&\leq C\left(\norm{u_{in}}_{L^2_x(\mathbb{R}^3)}, \norms{|v|^{\kappa}f_{in}}_{L^{1}_{xv}(\mathbb{R}^{6})},\norms{f_{in}}_{L^{1}_{xv}(\mathbb{R}^{6})},\norms{f_{in}}_{L^{\infty}_{xv}(\mathbb{R}^{6})}\right)\times(t+1)^{\frac{3\kappa+12}{2}}e^{C(\kappa)(t+1)^\frac{14}{5}e^{6t}}.
	% \end{align}
	\end{lemma}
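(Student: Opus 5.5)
We propose to prove \eqref{31} by a velocity-moment energy estimate for $\eqref{eq:NSVFK0}_2$, closed with Gr\"onwall's inequality. The only non-uniform ingredient is the fluid velocity, which we control through the $L^1_tL^p_x$ bound of Lemma \ref{lem:ulp}. The point is that we must avoid the $L^2_t\dot H^1_x$ route used in Lemma \ref{lem:3f}. That route only gives $u\in L^2_tL^6_x$, and it pairs with $m_{\kappa-1}f$ in $L^{6/5}_x$; by \eqref{28} this is admissible only when $\kappa=3$.

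\emph{Moment identity.} Multiply $\eqref{eq:NSVFK0}_2$ by $|v|^\kappa$ and integrate over $\mathbb{R}^6_{xv}$. The transport term vanishes. Integrating by parts in $v$ gives
\[
\frac{d}{dt}\int |v|^\kappa f^{\varepsilon,\delta}+\kappa\int |v|^\kappa\gamma_\delta f^{\varepsilon,\delta}
=\kappa\int |v|^{\kappa-2}v\cdot(\theta_\varepsilon\ast u^{\varepsilon,\delta})f^{\varepsilon,\delta}+\kappa(\kappa+1)\int |v|^{\kappa-2}f^{\varepsilon,\delta}.
\]
The damping term on the left is nonnegative, so we drop it. For the last term we use $|v|^{\kappa-2}\le 1+|v|^\kappa$, which bounds it by $C(M_0f+M_\kappa f)$. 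The constants here do not depend on $\varepsilon$ or $\delta$.

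\emph{Drag term.} This is the main step. By H\"older in $x$, the drag term is bounded by $\kappa\|u^{\varepsilon,\delta}\|_{L^p_x}\|m_{\kappa-1}f^{\varepsilon,\delta}\|_{L^{p'}_x}$. We apply \eqref{28} with $\mu=\kappa-1$ and $\lambda=\kappa$, which controls $m_{\kappa-1}f$ in $L^{(3+\kappa)/(2+\kappa)}_x$. We therefore choose $p=\kappa+3$, so that $p'=(\kappa+3)/(\kappa+2)$. This gives
\[
\Big|\int |v|^{\kappa-2}v\cdot(\theta_\varepsilon\ast u)f\Big|\le C\|u\|_{L^{\kappa+3}_x}\|f\|_{L^\infty_{xv}}^{\frac1{3+\kappa}}\,(M_\kappa f)^{\frac{2+\kappa}{3+\kappa}}\le C\|u\|_{L^{\kappa+3}_x}\|f\|_{L^\infty_{xv}}^{\frac1{3+\kappa}}(1+M_\kappa f).
\]
Here $\|\theta_\varepsilon\ast u\|_{L^p}\le\|u\|_{L^p}$, so the mollifier costs nothing, and \eqref{5} gives $\|f\|_{L^\infty}\le e^{3t}\|f_{in}\|_{L^\infty}$. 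The resulting inequality is linear in $M_\kappa f$. Its coefficient is $a(s)=C e^{3s/(\kappa+3)}\|u^{\varepsilon,\delta}(s)\|_{L^{\kappa+3}_x}+C$, and by Lemma \ref{lem:ulp} (with $p=\kappa+3\ge 2$) this coefficient lies in $L^1(0,t)$ uniformly in $\varepsilon,\delta$. This is the place where Lemma \ref{lem:ulp} is indispensable.

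\emph{Gr\"onwall and the explicit constants.} Gr\"onwall's inequality (Lemma \ref{gronwall}) now gives
\[
M_\kappa f(t)\le\Big(M_\kappa f^\delta_{in}+C\!\int_0^t(1+M_0f+a)\Big)\exp\Big(\int_0^ta\Big).
\]
Since $f^\delta_{in}=\varrho_{\{|v|\le 1/\delta\}}f_{in}$, we have $M_\kappa f^\delta_{in}\le M_\kappa f_{in}$, and \eqref{eq:esti} bounds $M_0f$. Inserting \eqref{26} then yields a bound of the form $C(t+1)^{N}e^{C(\kappa)(t+1)^{m}e^{4t}}$.

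\emph{Expected obstacle.} The main difficulty is matching the precise exponents $(3\kappa+12)/2$ and $(t+1)^{14/5}$ in \eqref{31}. These come from tracking the $p$-dependence of the constants in Lemma \ref{lem:ulp} (for instance the factor $t^{(p+6)/(4p)}$ in the estimate of $m_1$) with $p=\kappa+3$. If this bookkeeping does not reproduce those exponents exactly, we would settle for a bound of the same qualitative form. Any such bound is uniform in $\varepsilon,\delta$, which is all that is needed later.
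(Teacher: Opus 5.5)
Up to the drag estimate your argument is the paper's. You use the same multiplier $|v|^\kappa$, and your identity with constant $\kappa(\kappa+1)$ is correct. You also pair $\|u^{\varepsilon,\delta}\|_{L^{\kappa+3}_x}$ with \eqref{28} at $\mu=\kappa-1$, $\lambda=\kappa$ exactly as the paper does, and rely on Lemma \ref{lem:ulp} in the same way. The gap is in the closing step. Bounding $(M_\kappa f)^{\frac{2+\kappa}{3+\kappa}}\le 1+M_\kappa f$ makes $\|u^{\varepsilon,\delta}\|_{L^{\kappa+3}_x}$ a Gr\"onwall \emph{coefficient}, so the velocity norm ends up in the exponent: $\exp\big(\int_0^t a\big)\ge \exp\big(c\,\|u^{\varepsilon,\delta}\|_{L^1(0,t;L^{\kappa+3}_x)}\big)$. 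Since \eqref{26} only gives $\|u^{\varepsilon,\delta}\|_{L^1(0,t;L^{\kappa+3}_x)}\le C(t+1)^4\exp[C(t+1)^3e^{4t}]$, the bound you can derive is of size $\exp\{C(t+1)^4e^{C(t+1)^3e^{4t}}\}$, which is one exponential more than \eqref{31}. Your sentence ``inserting \eqref{26} then yields a bound of the form $C(t+1)^{N}e^{C(\kappa)(t+1)^{m}e^{4t}}$'' is therefore false. The obstruction is structural, not exponent bookkeeping. Your argument does prove a bound uniform in $\varepsilon,\delta$ for each fixed $t$, which is all the later compactness steps use, but it does not prove \eqref{31} as stated.

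The missing idea is to keep the sublinear power $\alpha=\frac{2+\kappa}{3+\kappa}<1$. The paper integrates in time and applies the Bihari-type inequality of Lemma \ref{gronwall2} with this $\alpha$. Equivalently, let $Y$ be the right-hand side of the time-integrated inequality. Then $Y^{1/(3+\kappa)}$ satisfies a \emph{linear} inequality in which $\|u^{\varepsilon,\delta}\|_{L^{\kappa+3}_x}$ is a source term, not a coefficient. This gives
\[
\sup_{s\le t}M_\kappa f^{\varepsilon,\delta}(s)\le\Big(C_1^{\frac{1}{3+\kappa}}e^{C(\kappa)t}+C(\kappa)e^{C(\kappa)t}\int_0^t\|u^{\varepsilon,\delta}(s)\|_{L^{\kappa+3}_x}\,ds\Big)^{\kappa+3},\qquad C_1=M_\kappa f_{in}+C(\kappa)\,t\,\|f_{in}\|_{L^1_{xv}},
\]
which is a polynomial of degree $\kappa+3$ in $\|u^{\varepsilon,\delta}\|_{L^1_tL^{\kappa+3}_x}$. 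Only then is \eqref{26} inserted. With \eqref{26} as stated this yields $C(t+1)^{4(\kappa+3)}e^{C(\kappa)(t+1)^3e^{4t}}$. The particular numbers $(3\kappa+12)/2$ and $14/5$ in \eqref{31} have the same origin. They are what one gets by raising a velocity bound of the form $(t+1)^{\frac{3p+3}{2p}}e^{C(t+1)^{14/5}e^{4t}}$, with $p=\kappa+3$, to the power $\kappa+3$. They do not come from the $m_1$ term of Lemma \ref{lem:ulp}, as you guessed.
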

	 \begin{proof}
	 	Firstly, multiply \(|v|^{\kappa}\) on both sides of the equation \(\eqref{eq:NSVFK0}_2\):
\begin{equation}
	\label{30}
    \begin{aligned}
	\partial_t\int_{ \mathbb{R}_{xv}^6}|v|^{\kappa}f^{\varepsilon,\delta}dvdx =&\int_{ \mathbb{R}_{xv}^6} [(\theta_{\varepsilon} \ast u^{\varepsilon,\delta}-v\gamma_
	\delta(v))f^{\varepsilon,\delta}-\nabla_v f^{\varepsilon,\delta}]\cdot\nabla_v|v|^{\kappa}dvdx\\
	\leq& C({\kappa})\int_{ \mathbb{R}_x^3}|\theta_{\varepsilon}\ast u^{\varepsilon,\delta}|\int_{  \mathbb{R}^3_v}|v|^{{\kappa}-1}f^{\varepsilon,\delta}dvdx+C({\kappa})\int_{ \mathbb{R}_{xv}^6}|v|^{\kappa}f^{\varepsilon,\delta}dvdx\\
	&+C({\kappa})\int_{ \mathbb{R}_{xv}^6}|v|^{{\kappa}-2}f^{\varepsilon,\delta}dvdx.
\end{aligned}
\end{equation}
By Young's inequality, this becomes
\begin{align*}
	\leq C({\kappa})\int_{ \mathbb{R}_x^3}|\theta_{\varepsilon}\ast u^{\varepsilon,\delta}|\int_{  \mathbb{R}^3_v}|v|^{\kappa-1}f^{\varepsilon,\delta}dvdx+C({\kappa})\int_{ \mathbb{R}_{xv}^6}|v|^{\kappa}f^{\varepsilon,\delta}dvdx+C({\kappa})\int_{ \mathbb{R}_{xv}^6}f^{\varepsilon,\delta}dvdx,
\end{align*}
and using \eqref{28} and H\"{o}lder's inequality, it can be controlled  by
\begin{align*}
	\leq &C({\kappa})\norms{f^{\varepsilon,\delta}}_{L^{\infty}_{xv}}^{\frac{1}{3+\kappa}}\norm{u^{\varepsilon,\delta}}_{L^{\kappa+3}_x}\left(\int_{ \mathbb{R}_{xv}^6}|v|^{\kappa}f^{\varepsilon,\delta}dvdx\right)^{\frac{2+\kappa}{3+\kappa}}\\
	&+C({\kappa})\int_{ \mathbb{R}_{xv}^6}|v|^{\kappa}f^{\varepsilon,\delta}dvdx+C({\kappa})\int_{ \mathbb{R}_{xv}^6}f^{\varepsilon,\delta}dvdx.
\end{align*}
Then, integrating \eqref{30} with respect to \(t\) and by \eqref{fdelta}, this yields
\begin{align*}
	&\int_{ \mathbb{R}_{xv}^6}|v|^{\kappa}f^{\varepsilon,\delta}(t,\cdot)dvdx\\ \leq& \int_{ \mathbb{R}_{xv}^6}|v|^{\kappa}f_{in}dvdx +C({\kappa})\int_{0}^{t}\int_{\mathbb{R}_{xv}^6}f^{\varepsilon,\delta}dvdxds+C({\kappa})\int_{0}^{t}\int_{ \mathbb{R}_{xv}^6}|v|^{\kappa}f^{\varepsilon,\delta}dvdxds\\
	&+C({\kappa})\norms{f^{\varepsilon,\delta}}_{L^{\infty}_{txv}}^{\frac{1}{3+\kappa}}\int_{0}^{t}\norm{u^{\varepsilon,\delta}}_{L^{\kappa+3}_x}\left(\int_{ \mathbb{R}_{xv}^6}|v|^{\kappa}f^{\varepsilon,\delta}dvdx\right)^{\frac{2+\kappa}{3+\kappa}}ds.
\end{align*}
By \eqref{5} and the integral form of Gr\"{o}nwall's inequality (Lemma \ref{gronwall2}), we  derive\textbf{}
\begin{align*}
	\sup_{t\in [0,T]}\int_{ \mathbb{R}_{xv}^6}|v|^{\kappa}f^{\varepsilon,\delta}(t,\cdot)dxdv \leq \left(C_1^{\frac{1}{3+\kappa}}\exp[C({\kappa})t]+C({\kappa})\exp[C({\kappa})t]\int_{0}^{t}\norm{u^{\varepsilon,\delta}}_{L^{\kappa+3}_x}(s)ds\right)^{\kappa+3},
\end{align*}
 which \(C_1 = \int_{ \mathbb{R}_{xv}^6}|v|^{\kappa}f_{in}dvdx +C({\kappa})t\int_{\mathbb{R}_{xv}^6}f_{in}dvdx\).
 % Note that \[\norms{|v|^{\bar{\kappa}}f^{\varepsilon,\delta}\log f^{\varepsilon,\delta} }_{L^{\infty}(0,t;L^{\sigma}(\mathbb{R}^3_x\times\mathbb{R}^3_v))}\]can be controlled by \(\norms{|v|^{\kappa}f^{\varepsilon,\delta}}_{L^{\infty}(0,t;L^1(\mathbb{R}^3_x\times\mathbb{R}^3_v))}\), \(\norms{f^{\varepsilon,\delta}}_{L^{\infty}(0,t;L^1(\mathbb{R}^3_x\times\mathbb{R}^3_v))}\) and \(\norms{f^{\varepsilon,\delta}}_{L^{\infty}\left((0,t)\times \mathbb{R}^3_x\times\mathbb{R}^3_v\right)}\), since one can consider two cases of $f^{\varepsilon,\delta}\leq 1$ and $f^{\varepsilon,\delta}>1$. We omitted it. 
Finally, \eqref{31} is proved  by Lemma \ref{lem:ulp}.
	 \end{proof}

\subsection{\texorpdfstring{The estimate of \(P^{\varepsilon,\delta}\)}{The estimate of P varepsilon,delta}}
\begin{lemma}\label{le:p}
    Under the initial conditions prescribed in Theorem \ref{thm:solution}
    , we have
    \begin{align}
        \norms{P^{\varepsilon,\delta}}_{L^\frac53((0,t)\times \mathbb{R}^3_x )}\leq C\left((t+1)^{3}e^{5t}\right)^\frac{3}{5},
    \end{align}
    where
    \begin{align*}
        C=C\left(\norm{f_{in}}_{L^{\infty}_{xv}( \mathbb{R}^{3}\times \mathbb{R}^{3})}, \norm{f_{in}}_{L^{1}_{xv}(\mathbb{R}^{6})},\norms{|v|^2f_{in}}_{L^{1}_{xv}(\mathbb{R}^{6})},\norm{u_{in}}^{2}_{L^{2}_{x}(\mathbb{R}^{3})} \right).
    \end{align*}
\end{lemma}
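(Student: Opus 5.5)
Taking the divergence of the first equation of \eqref{eq:NSVFK0} and using $\nabla\cdot u^{\varepsilon,\delta}=0$ (so that $(\theta_\varepsilon\ast u^{\varepsilon,\delta})\cdot\nabla u^{\varepsilon,\delta}=\nabla\cdot(\theta_\varepsilon\ast u^{\varepsilon,\delta}\otimes u^{\varepsilon,\delta})$), the pressure solves
\begin{align*}
-\Delta P^{\varepsilon,\delta}=\partial_i\partial_j\big((\theta_\varepsilon\ast u^{\varepsilon,\delta}_i)u^{\varepsilon,\delta}_j\big)-\nabla\cdot\Big(\theta_\varepsilon\ast\int_{\mathbb{R}^3_v}(v-\theta_\varepsilon\ast u^{\varepsilon,\delta})f^{\varepsilon,\delta}dv\Big).
\end{align*}
I would write $P^{\varepsilon,\delta}=P_1+P_2$ with $P_1=R_iR_j\big((\theta_\varepsilon\ast u^{\varepsilon,\delta}_i)u^{\varepsilon,\delta}_j\big)$ and $P_2=(-\Delta)^{-1}\nabla\cdot F$, where $F:=\theta_\varepsilon\ast\int(v-\theta_\varepsilon\ast u^{\varepsilon,\delta})f^{\varepsilon,\delta}dv$. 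I would then estimate each piece in $L^{5/3}_{t,x}$.

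For $P_1$, Calder\'on--Zygmund boundedness of the Riesz transforms on $L^{5/3}$ and Young's inequality for the mollifier reduce matters to $\norm{u^{\varepsilon,\delta}}^2_{L^{10/3}_{t,x}}$. By \eqref{utp26} with $p=q=10/3$ and \eqref{eq:esti}, this is bounded by $C(t+1)e^{3t}$.

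For $P_2$, the operator $(-\Delta)^{-1}\nabla\cdot$ is a Riesz potential of order one. Hardy--Littlewood--Sobolev gives $\norm{P_2}_{L^{5/3}_x}\le C\norm{F}_{L^{15/14}_x}$, since $\frac{14}{15}-\frac13=\frac35$. I would split $F$ into two parts.

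\begin{itemize}
\item The part $\int vf\,dv$: by \eqref{28} with $\mu=1$, $\lambda=2$ it lies in $L^{5/4}_x$. Since $\int f\,dv\in L^1$ with $f\in L^\infty$, interpolation (or \eqref{28} with $\mu$ near $0$) puts $m_1 f$ also in $L^1_x$, hence in $L^{15/14}_x$ uniformly in time. Integrating over $(0,t)$ in the $5/3$ power costs a factor $t^{3/5}$.
\item The part $(\theta_\varepsilon\ast u)\int f\,dv$: take $u\in L^{10/3}_{t,x}$ and $\int f\,dv\in L^\infty_tL^{5/3}_x$. The latter follows from \eqref{8} with $\alpha=0$, $\beta=2$, which gives $\rho\lesssim(m_2f)^{3/5}$ and hence $\rho\in L^{5/3}_x$, with norm controlled by $\norms{|v|^2f}_{L^1}^{3/5}$. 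The product then lies in $L^{10/3}_tL^{10/9}_x$. This is not $L^{15/14}$; the exponents have to be matched by interpolating $\rho$ between $L^1$ and $L^{5/3}$ and using Hölder in time on the bounded interval $(0,t)$.
\end{itemize}

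Alternatively, one can avoid HLS by writing $P_2=-(-\Delta)^{-1}\nabla\cdot F$ and putting $F$ in $L^{5/3}_tL^{15/14}_x$ directly. The $\nabla$ estimate then gives $\nabla P_2\in L^{15/14}$ in the Sobolev sense, which embeds into $L^{5/3}$.

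The main obstacle is this exponent bookkeeping for the drag term, in particular the product $u\rho$. I would first try $u\in L^2_tL^6_x$ with $\rho\in L^\infty_tL^{q}_x$, choosing $q$ so that $\frac16+\frac1q=\frac{14}{15}$, i.e.\ $q=\frac{30}{23}\in[1,\frac53]$. This gives $F\in L^2_tL^{15/14}_x\subset L^{5/3}_tL^{15/14}_x$ on bounded time intervals. Collecting the powers $(t+1)^{3}e^{5t}$ from \eqref{eq:esti} and \eqref{5} and taking the $3/5$ power yields the stated bound.
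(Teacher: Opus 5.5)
Your proposal is correct and is essentially the paper's proof. Both use Calder\'on--Zygmund on $L^{5/3}$ for the quadratic part and Hardy--Littlewood--Sobolev $L^{15/14}_x\to L^{5/3}_x$ for the drag part, with $\int vf^{\varepsilon,\delta}dv$ and $(\theta_\varepsilon\ast u^{\varepsilon,\delta})\int f^{\varepsilon,\delta}dv$ controlled through Lemma~\ref{lem:f}. The only differences are exponent choices: the paper pairs $u^{\varepsilon,\delta}\in L^{10/3}_{t,x}$ with $\int f^{\varepsilon,\delta}dv\in L^\infty_tL^{30/19}_x$, and it puts $\int vf^{\varepsilon,\delta}dv$ into $L^{15/14}_x$ directly by \eqref{28} with $\mu=1$, $\lambda=9/7$. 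That avoids your $L^1$--$L^{5/4}$ interpolation, whose stated justification for the $L^1$ endpoint is off, although that endpoint follows at once from \eqref{27}.

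If anything, your time bookkeeping is more careful than the paper's: you require $\norm{F}_{L^{15/14}_x}\in L^{5/3}_t$, whereas the paper's displayed HLS step raises this norm to the power $1$ instead of $5/3$. This does not affect the $\varepsilon,\delta$-independent bound, which is all that is used later.
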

     \begin{proof}
         \par By the equation of \(u^{\varepsilon,\delta}\), we have
	\begin{align*}
		-\Delta P^{\varepsilon,\delta}=\partial_{i}\partial_{j}((\theta_{\varepsilon}\ast u^{\varepsilon,\delta}_i)u^{\varepsilon,\delta}_j)+\nabla\cdot \left[\theta_{\varepsilon}\ast\int_{\mathbb{R}^3_x}(v-\theta_{\varepsilon}\ast u^{\varepsilon,\delta})f^{\varepsilon,\delta}dv\right].
	\end{align*}
	By Calder\'{o}n-Zygmund theorem, Lemma \ref{riesz} and Lemma \ref{hardy littlewood sobolev}, it shows that
	\begin{align*}
		\int_{ \mathbb{R}_x^3}|P^{\varepsilon,\delta}|^\frac53dx&\leq C\int_{ \mathbb{R}_x^3}|(\theta_{\varepsilon}\ast u^{\varepsilon,\delta})\otimes u^{\varepsilon,\delta}|^\frac53dx\\
		&\hspace{3cm}+C\left(\int_{ \mathbb{R}_x^3}\left|\theta_\varepsilon \ast\int_{\mathbb{R}^3_v}(v-\theta_{\varepsilon}\ast u^{\varepsilon,\delta})f^{\varepsilon,\delta}dv\right|^\frac{15}{14}dx\right)^\frac{14}{15}.
	\end{align*}
	Integrating both sides of the inequality with respect of \(t\) from 0 to \(t\), this becomes
	\begin{align*}
		\int_{(0,t)\times \mathbb{R}_x^3}|P^{\varepsilon,\delta}|^\frac53dxdt&\leq C\int_{ (0,t)\times\mathbb{R}_x^3}|(\theta_{\varepsilon}\ast u^{\varepsilon,\delta})\otimes u^{\varepsilon,\delta}|^\frac53dxdt\\
		&\hspace{0.5cm}+C\int^t_0\left(\int_{ \mathbb{R}_x^3}\left|\int_{\mathbb{R}^3_v}(v-\theta_{\varepsilon}\ast u^{\varepsilon,\delta})f^{\varepsilon,\delta}dv\right|^\frac{15}{14}dx\right)^\frac{14}{15}\int_{\mathbb{R}^3_x}|\theta_\varepsilon| dx dt\\
        &\leq C\int_{ (0,t)\times\mathbb{R}_x^3}|(\theta_{\varepsilon}\ast u^{\varepsilon,\delta})\otimes u^{\varepsilon,\delta}|^\frac53dxdt\\
		&\hspace{0.5cm}+C\int^t_0\left(\int_{ \mathbb{R}_x^3}\left|\int_{\mathbb{R}^3_v}(v-\theta_{\varepsilon}\ast u^{\varepsilon,\delta})f^{\varepsilon,\delta}dv\right|^\frac{15}{14}dx\right)^\frac{14}{15}dt\\
		&:= l_{1}+l_{2}.
	\end{align*}
	For \(l_1\), using H\"{o}lder's inequality, Young's inequality and the interpolation inequality, we have
	\begin{align*}
		l_1&\leq C\int_{ (0,t)\times\mathbb{R}_x^3}|u^{\varepsilon,\delta}|^\frac{10}{3}dxdt\\
		&\leq C\left(\norm{u^{\varepsilon,\delta}}_{L^{\infty}\left (0,t;L^2{(\mathbb{R}^3_x)}\right)}+\norm{\nabla u^{\varepsilon,\delta}}_{L^{2}\left(0,t;L^2{(\mathbb{R}^3_x)}\right)}\right)^\frac{10}{3}.
	\end{align*}
	For \(l_2\), by Minkowski's inequality, it follows that
	\begin{align*}
		l_2&\leq C\int^t_0\left(\int_{ \mathbb{R}_x^3}\left|\int_{ \mathbb{R}_v^3}(\theta_{\varepsilon}\ast u^{\varepsilon,\delta})f^{\varepsilon,\delta}dv\right|^\frac{15}{14}dx\right)^\frac{14}{15}dt\\
		&\hspace{5cm}+ C\int^t_0\left(\int_{ \mathbb{R}_x^3}\left|\int_{ \mathbb{R}_v^3}vf^{\varepsilon,\delta}dv\right|^\frac{15}{14}dx\right)^\frac{14}{15}dt\\
		&:= l_{21}+l_{22}.
	\end{align*}
	For \(l_{21}\), by \eqref{28}, H\"{o}lder's inequality, Young's convolution inequality, \eqref{eq:esti} and \eqref{utp26}, this yields
	\begin{align*}
		l_{21}&\leq C\int^t_0\left\||\theta_\varepsilon\ast u^{\varepsilon,\delta}|\left|\int_{\mathbb{R}^3}f^{\varepsilon,\delta}dv\right|\right\|_{L_x^\frac{15}{14}}dt\\
		&\leq C\int^t_0\|\theta_\varepsilon\ast u^{\varepsilon,\delta}\|_{L_x^\frac{10}{3}}\left\|\int_{\mathbb{R}^3}f^{\varepsilon,\delta}dv\right\|_{L_x^\frac{30}{19}}dt\\
        &\leq C\int^t_0\|u^{\varepsilon,\delta}\|_{L_x^\frac{10}{3}}\left\|\int_{\mathbb{R}^3}|v|^\frac{33}{19}f^{\varepsilon,\delta}dv\right\|_{L_x^1}^\frac{19}{30}\|f^{\varepsilon,\delta}\|_{L_{xv}^\infty}^\frac{11}{30}dt\\
		&\leq Ct^\frac{7}{10}
        \|u^{\varepsilon,\delta}\|_{L_{tx}^\frac{10}{3}}\left\|\int_{\mathbb{R}^3}(1+|v|^2)f^{\varepsilon,\delta}dv\right\|_{L_t^\infty L_x^1}^\frac{19}{30}\|f^{\varepsilon,\delta}\|_{L_{txv}^\infty}^\frac{11}{30}\\
       &\leq Ct^\frac{7}{10}
        (\|u^{\varepsilon,\delta}\|_{L_t^\infty L_x^2}+\|\nabla u^{\varepsilon,\delta}\|_{L_t^2 L_x^2})\left\|\int_{\mathbb{R}^3}(1+|v|^2)f^{\varepsilon,\delta}dv\right\|_{L_t^\infty L_x^1}^\frac{19}{30}\|f^{\varepsilon.\delta}\|_{L_{txv}^\infty}^\frac{11}{30}.
	\end{align*}
	For \(l_{22}\), similar to the proof  in \(l_{21}\), by \eqref{28} we derive
	\begin{align*}
		l_{22}&\leq C\int^t_0\left\|\int_{\mathbb{R}^3}|v|f^{\varepsilon,\delta}dv\right\|_{L_x^\frac{15}{14}}dt\\
        &\leq C\int^t_0\left\|\int_{\mathbb{R}^3}|v|^\frac97f^{\varepsilon,\delta}dv\right\|_{L_{x}^1}^\frac{14}{15}\|f^{\varepsilon,\delta}\|_{L_{xv}^\infty}^\frac{1}{15}dt\\
		&\leq Ct\|f^{\varepsilon,\delta}\|_{L_{txv}^\infty}^\frac{1}{15}\left\|\int_{\mathbb{R}^3}(1+|v|^2)f^{\varepsilon,\delta}dv\right\|_{L_t^\infty L_{x}^1}^\frac{14}{15}.
	\end{align*}
Thus, by \eqref{eq:esti} it holds that
\begin{align*}
	\int_{(0,t)\times \mathbb{R}_x^3} \left| P^{\varepsilon,\delta} \right|^\frac53 dxds \leq C(t+1)^{3}e^{5t},
\end{align*}
where
\begin{align*}
    C=C\left(\norm{f_{in}}_{L^{\infty}_{xv}( \mathbb{R}^{3}\times \mathbb{R}^{3})}, \norm{f_{in}}_{L^{1}_{xv}(\mathbb{R}^{6})},\norms{|v|^2f_{in}}_{L^{1}_{xv}(\mathbb{R}^{6})},\norm{u_{in}}^{2}_{L^{2}_{x}(\mathbb{R}^{3})} \right).
\end{align*}
     \end{proof}

	\section{Global energy inequality and local energy inequality}
In this section, we establish a global energy inequality and two local energy inequalities of weak type for the regularized system, which play an important role in the proof of suitable weak solutions.

	\subsection{Construction of global energy inequality}$ $
	\begin{lemma}\label{lem:global}
		Under the assumptions of Theorem~\ref{thm:solution}, assume that $(u^{\varepsilon,\delta}, f^{\varepsilon,\delta})$ is a regular solution of \eqref{eq:NSVFK0} in $(0,T) \times \mathbb{R}_x^3 \times \mathbb{R}_v^3$. Then the following inequality holds:
	\begin{equation}
		\label{eq:13}
	    \begin{aligned}
		\frac{1}{2}&\int_{\mathbb{R}^3_x}|u^{\varepsilon,\delta}(t,\cdot)|^2dx+\int_{\mathbb{R}^6_{xv}}\left(\frac{1}{2}|v|^2f^{\varepsilon,\delta}+f^{\varepsilon,\delta}\log f^{\varepsilon,\delta}\right)(t,\cdot)dxdv\\
		&\hspace{0.5cm}+\int_{(0,t)\times \mathbb{R}_x^3}|\nabla u^{\varepsilon,\delta}|^2dxds+\int_{(0,t)\times \mathbb{R}_{xv}^6}\frac{|(\theta_{\varepsilon}\ast u^{\varepsilon,\delta}-v)f^{\varepsilon,\delta}-\nabla_{v}f^{\varepsilon,\delta}|^2}{f^{\varepsilon,\delta}}dxdvds\\
        &\leq \int_{(0,t)\times \mathbb{R}_{xv}^6}|v|^{2}f^{\varepsilon,\delta}(1-\gamma_{\delta}(v))dxdvds\\
		&\hspace{0.5cm}+\int_{\mathbb{R}^6_{xv}}\left(\frac{1}{2}|v|^2f_{in}+f_{in}^{\delta}\log f_{in}^{\delta}\right)dxdv+\frac{1}{2}\int_{\mathbb{R}^3_x}|u_{in}|^2dx.
	\end{aligned}
	\end{equation}
	\end{lemma}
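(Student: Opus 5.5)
The plan is to add the $L^2$ energy identity for $u^{\varepsilon,\delta}$ to the entropy identity for $f^{\varepsilon,\delta}$, so that the two friction contributions combine into the dissipation term. For the fluid part, I will multiply the first equation of \eqref{eq:NSVFK0} by $u^{\varepsilon,\delta}$ and integrate over $\mathbb{R}^3_x$. Since $\nabla\cdot u^{\varepsilon,\delta}=0$ and $\nabla\cdot(\theta_\varepsilon\ast u^{\varepsilon,\delta})=0$, the convection and pressure terms vanish. Moving the mollifier onto $u^{\varepsilon,\delta}$ (as $\theta_\varepsilon$ is even, or after replacing it by its reflection) gives \eqref{eq:1}:
\begin{align*}
\frac12\frac{d}{dt}\int_{\mathbb{R}^3_x}|u^{\varepsilon,\delta}|^2dx+\int_{\mathbb{R}^3_x}|\nabla u^{\varepsilon,\delta}|^2dx=\int_{\mathbb{R}^6_{xv}}(\theta_\varepsilon\ast u^{\varepsilon,\delta})\cdot(v-\theta_\varepsilon\ast u^{\varepsilon,\delta})f^{\varepsilon,\delta}dxdv.
\end{align*}

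For the kinetic part, I will reuse the computation from the proof of Lemma \ref{lem:sqrtf}. There I multiply $\eqref{eq:NSVFK0}_2$ by $1+\frac{|v|^2}{2}+\log f^{\varepsilon,\delta}$; the transport term $e_2$ vanishes, and $e_3$ produces the dissipation $\int |(\theta_\varepsilon\ast u^{\varepsilon,\delta}-v)f^{\varepsilon,\delta}-\nabla_v f^{\varepsilon,\delta}|^2/f^{\varepsilon,\delta}$ together with three remainder terms. The first remainder is $-\int(\theta_\varepsilon\ast u^{\varepsilon,\delta})\cdot(\theta_\varepsilon\ast u^{\varepsilon,\delta}-v)f^{\varepsilon,\delta}$. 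The second is $\int|v|^2 f^{\varepsilon,\delta}(1-\gamma_\delta)$, which comes from writing $v\gamma_\delta=v-v(1-\gamma_\delta)$. The third is $\int v\cdot\nabla_v f^{\varepsilon,\delta}(1-\gamma_\delta)$. After time integration this is the identity underlying \eqref{vfglobal1}.

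Adding the two identities, the first remainder in the kinetic identity cancels exactly the right-hand side of \eqref{eq:1}. For the third remainder, I integrate by parts in $v$ to get
\[
\int v\cdot\nabla_v f^{\varepsilon,\delta}(1-\gamma_\delta)=-3\int f^{\varepsilon,\delta}(1-\gamma_\delta)+\int f^{\varepsilon,\delta}\,v\cdot\nabla_v\gamma_\delta\le 0,
\]
using $0\le\gamma_\delta\le1$ and $v\cdot\nabla_v\gamma_\delta\le0$, so I can drop it. Only $\int_0^t\int|v|^2f^{\varepsilon,\delta}(1-\gamma_\delta)$ survives on the right-hand side. Integrating in time then gives \eqref{eq:13}, with the initial terms $\frac12\|u_{in}^\delta\|_{L^2}^2\le\frac12\|u_{in}\|_{L^2}^2$ by \eqref{udelta} and Young's convolution inequality, and $\int\frac{|v|^2}{2}f_{in}^\delta\le\int\frac{|v|^2}{2}f_{in}$ by \eqref{fdelta}.

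The main obstacle is justifying the entropy multiplier $\log f^{\varepsilon,\delta}$. The function $f^{\varepsilon,\delta}$ may vanish, and $f\log f$ needs integrability at large $|x|,|v|$; this is where Proposition \ref{pro:xf} helps, provided $|x|^2f_{in}\in L^1$. I would first work with $\log(f^{\varepsilon,\delta}+\eta)$, or equivalently with strictly positive data $f_{in}^\delta+\eta e^{-|x|^2-|v|^2}$, using the regularity from Theorem \ref{thm:solution} and Lemma \ref{lem:deltaf}. I would also truncate with cutoffs in $x,v$ to handle the boundary terms, and then let $\eta\to0$. In this limit I use monotone convergence for the dissipation (Fatou), and dominated convergence for $f\log f$ via the bound \eqref{18}. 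Because of the Fatou step the result is an inequality rather than an identity, which matches the form of \eqref{eq:13}.
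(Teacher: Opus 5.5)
Your proposal is correct and follows the paper's proof. The paper likewise adds the fluid identity \eqref{13} to the entropy identity \eqref{vfglobal1} from Lemma \ref{lem:sqrtf}, cancels the friction cross terms, and discards $S_2\le 0$ by the same integration by parts in $v$. Your $\log(f^{\varepsilon,\delta}+\eta)$ justification and your remark that $\theta$ must be even are extra care the paper omits: it treats the solution as regular and uses the symmetry of $\theta_\varepsilon$ tacitly. Note that the cancellation genuinely needs this evenness, so your ``replace by its reflection'' alternative would not work without changing the regularized scheme.
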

	\begin{proof}
		Firstly, multiplying \(u^{\varepsilon,\delta}\) on the first equation of \eqref{eq:NSVFK0} and integrating over $\mathbb{R}^3$ yields:
	\begin{align}
		\label{13}
		\frac{1}{2}&\partial_t \int_{\mathbb{R}^{3}}|u^{\varepsilon,\delta}|^{2}dx + \norm{\nabla u^{\varepsilon,\delta}}^{2}_{L^{2}_{x}(\mathbb{R}^{3})} +
		\int_{\mathbb{R}^{6}}\theta_{\varepsilon}\ast u^{\varepsilon,\delta}\cdot(\theta_{\varepsilon}\ast u^{\varepsilon,\delta}-v)f^{\varepsilon,\delta}dxdv=0.
	\end{align}
    Combining \eqref{vfglobal1} and \eqref{13}, we obtain \eqref{eq:13} and the proof is complete.

	\end{proof}

    \subsection{Construction of local energy inequality of the first type}$ $
	\begin{lemma}
		Let \(\Omega_{x}\in \mathbb{R}^3_x\) and \(\Omega_{v}\in \mathbb{R}^3_v\) be arbitrary bounded domains and \(\phi\) and \(\psi\) be cut-off functions, which vanish on the parabolic boundary of \((0,t)\times\Omega_x \times\Omega_v\) and \((0,t)\times\Omega_x\). 
        %In addition, we set \(\phi(t,x,v)=\psi(t,x)\omega(v)\).
        Under the assumptions of Theorem~\ref{thm:solution}, assume that $(u^{\varepsilon,\delta}, f^{\varepsilon,\delta})$ is a regular solution of \eqref{eq:NSVFK0} in \((0,t)\times\Omega_x \times\Omega_v \subset \mathbb{R}^{+}\times\mathbb{R}^{3}_{x}\times\mathbb{R}^{3}_{v}\) and \((0,t)\times\Omega_x  \subset \mathbb{R}^{+}\times\mathbb{R}^{3}_{x}\). Then the following inequality holds:
        \begin{equation}\label{eq:14}
            \begin{split}
		&\frac{1}{2}\int_{\Omega_x}\left(|u^{\varepsilon,\delta}|^2\psi\right)(t,\cdot) dx+\int_{\Omega_x \times \Omega_v}\left(\left(\frac{1}{2}|v|^2f^{\varepsilon,\delta}+f^{\varepsilon,\delta}\log f^{\varepsilon,\delta}\right)\phi\right)(t,\cdot) dxdv\\
		&+\int_{(0,t)\times \Omega_x}|\nabla u^{\varepsilon,\delta}|^2\psi dxds+\int_{(0,t)\times \Omega_x \times \Omega_v}\frac{|(\theta_{\varepsilon}\ast u^{\varepsilon,\delta}-v)f^{\varepsilon,\delta}-\nabla_{v}f^{\varepsilon,\delta}|^2}{f^{\varepsilon,\delta}}\phi dxdvds\\
		\leq&\frac{1}{2}\int_{(0,t)\times\Omega_{x}}|u^{\varepsilon,\delta}|^2(\partial_t\psi+\Delta_x \psi) dxds+\frac{1}{2}\int_{(0,t)\times\Omega_{x}}|u^{\varepsilon,\delta}|^2 (\theta_{\varepsilon}\ast u^{\varepsilon,\delta})\cdot \nabla_{x}\psi dxds\\
		&+\int_{(0,t)\times \Omega_x \times \Omega_v}\left(\frac{|v|^2}{2}+\log f^{\varepsilon,\delta}\right)f^{\varepsilon,\delta}(\partial_t\phi+\Delta_v\phi)dxdvds\\
		&+ \int_{(0,t) \times \Omega_x \times \Omega_v} \left( \frac{|v|^2}{2} + \log f^{\varepsilon,\delta} \right)f^{\varepsilon,\delta} v\cdot  \left( \nabla_{x}\phi - \nabla_{v}\phi \right)dxdvds \\
		&+\int_{(0,t)\times\Omega_{x}}(P^{\varepsilon,\delta}-\bar{P}^{\varepsilon,\delta})u^{\varepsilon,\delta}\cdot\nabla_{x}\psi dxds-\int_{(0,t) \times \Omega_x \times \Omega_v}f^{\varepsilon,\delta}v\cdot (\theta_{\varepsilon}\ast u^{\varepsilon,\delta})\phi dxdvds \\
		&+\int_{(0,t) \times \Omega_x \times \mathbb{R}^3_v}f^{\varepsilon,\delta}v\cdot \left(\theta_{\varepsilon}\ast (u^{\varepsilon,\delta}\psi)\right) dxdvds\\
        &+ \int_{(0,t) \times \Omega_x \times \Omega_v} \left(2+\frac{|v|^2}{2}+\log f^{\varepsilon,\delta}\right)f^{\varepsilon,\delta} (\theta_\varepsilon\ast u^{\varepsilon,\delta}) \cdot \nabla_{v}\phi dxdvds \\
        &+ \int_{(0,t) \times \Omega_x \times \mathbb{R}^3_v}|\theta_{\varepsilon}\ast u^{\varepsilon,\delta}|^2f^{\varepsilon,\delta}\psi-(\theta_{\varepsilon}\ast u^{\varepsilon,\delta})\cdot \left(\theta_{\varepsilon}\ast (u^{\varepsilon,\delta}\psi)\right)f^{\varepsilon,\delta} dxdvds\\
        &+\int_{(0,t) \times \Omega_x \times \Omega_v}f^{\varepsilon,\delta}|v|^2(1-\gamma_{\delta}(v))\phi dxdvds\\
		&+\int_{(0,t) \times \Omega_x \times \Omega_v}\left(\frac{|v|^2}{2}+\log f^{\varepsilon,\delta}\right)f^{\varepsilon,\delta}(1-\gamma_{\delta}(v))v\cdot\nabla_{v}\phi dxdvds.
	\end{split}
        \end{equation}
	\end{lemma}
	\begin{proof}
		Firstly, multiplying \(u^{\varepsilon,\delta}\psi\) on the first equation of \eqref{eq:NSVFK0} and integrating over $\mathbb{R}^3$ yields:
	\begin{align*}
		\int_{\Omega_x}\partial_t u^{\varepsilon,\delta}\cdot u^{\varepsilon,\delta}\psi dx +& \int_{\Omega_{x}}(\theta_{\varepsilon}\ast u^{\varepsilon,\delta})\cdot \nabla u^{\varepsilon,\delta}\cdot u^{\varepsilon,\delta}\psi dx- \int_{ \Omega_{x}}\Delta u^{\varepsilon,\delta}\cdot u^{\varepsilon,\delta}\psi dx \\
		+& \int_{\Omega_{x}}\nabla_{x} P^{\varepsilon,\delta}\cdot u^{\varepsilon,\delta}\psi dx -
		\int_{\Omega_{x}}\int_{\mathbb{R}^3_v}f^{\varepsilon,\delta}(v-\theta_{\varepsilon}\ast u^{\varepsilon,\delta})\cdot \theta_{\varepsilon}\ast(u^{\varepsilon,\delta}\psi) dvdx \\
		&:=F_1+F_2+F_3+F_4+F_5 = 0,
	\end{align*}
	where
	\begin{align*}
		F_1&=\frac{1}{2}\int_{\Omega_x}\psi\partial_t |u^{\varepsilon,\delta}|^2 dx\\
		&=\frac{1}{2}\partial_t \int_{\Omega_x}|u^{\varepsilon,\delta}|^2\psi dx-\frac{1}{2}\int_{\Omega_x}|u^{\varepsilon,\delta}|^2\partial_t \psi dx;\\
		F_2&=\frac{1}{2}\int_{\Omega_{x}}\psi (\theta_{\varepsilon}\ast u^{\varepsilon,\delta})\cdot \nabla|u^{\varepsilon,\delta}|^2dx\\
		&=-\frac{1}{2}\int_{\Omega_{x}} |u^{\varepsilon,\delta}|^2(\theta_{\varepsilon}\ast u^{\varepsilon,\delta})\cdot \nabla_{x}\psi dx\\
		F_3&=\int_{ \Omega_{x}}|\nabla u^{\varepsilon,\delta}|^2\psi dx+\frac{1}{2}\int_{ \Omega_{x}}\nabla| u^{\varepsilon,\delta}|^2\nabla_{x}\psi dx\\
		&=\int_{ \Omega_{x}}|\nabla u^{\varepsilon,\delta}|^2\psi dx-\frac{1}{2}\int_{ \Omega_{x}} |u^{\varepsilon,\delta}|^2\Delta_{x}\psi dx;\\
		F_4&=-\int_{\Omega_{x}}P^{\varepsilon,\delta}u^{\varepsilon,\delta}\cdot\nabla_{x}\psi dx\\
		&=-\int_{\Omega_{x}}(P^{\varepsilon,\delta}-\bar{P}^{\varepsilon,\delta})u^{\varepsilon,\delta}\cdot\nabla_{x}\psi dx;
	\end{align*}
	and
	\begin{align*}
		F_5&=-\int_{\Omega_{x}}\int_{ \mathbb{R}^3_v}f^{\varepsilon,\delta}v\cdot (\theta_{\varepsilon}\ast (u^{\varepsilon,\delta}\psi)) dvdx+\int_{\Omega_{x}}\int_{ \mathbb{R}^3_v}f^{\varepsilon,\delta}(\theta_{\varepsilon}\ast u^{\varepsilon,\delta})\cdot (\theta_{\varepsilon}\ast (u^{\varepsilon,\delta}\psi)) dvdx.
	\end{align*}
	To sum up, we obtain
	\begin{equation}
		\label{eq:15}
	    \begin{aligned}
		\frac{1}{2}\partial_t \int_{\Omega_x}&|u^{\varepsilon,\delta}|^2\psi dx+\int_{ \Omega_{x}}|\nabla u^{\varepsilon,\delta}|^2\psi dx\\
		=&\frac{1}{2}\int_{\Omega_x}|u^{\varepsilon,\delta}|^2\partial_t \psi dx+\frac{1}{2}\int_{\Omega_{x}} |u^{\varepsilon,\delta}|^2(\theta_{\varepsilon}\ast u^{\varepsilon,\delta})\cdot \nabla_{x}\psi dx\\
		&+\frac{1}{2}\int_{ \Omega_{x}} |u^{\varepsilon,\delta}|^2\Delta_{x}\psi dx+\int_{\Omega_{x}}(P^{\varepsilon,\delta}-\bar{P}^{\varepsilon,\delta})u^{\varepsilon,\delta}\cdot\nabla_{x}\psi dx\\
		&+\int_{\Omega_{x}}\int_{ \mathbb{R}^3_v}f^{\varepsilon,\delta}v\cdot (\theta_{\varepsilon}\ast (u^{\varepsilon,\delta}\psi)) dvdx-\int_{\Omega_{x}}\int_{ \mathbb{R}^3_v}f^{\varepsilon,\delta}(\theta_{\varepsilon}\ast u^{\varepsilon,\delta})\cdot (\theta_{\varepsilon}\ast (u^{\varepsilon,\delta}\psi)) dvdx.
	\end{aligned}
	\end{equation}
	Then, multiplying \(\left(1+\frac{|v|^{2}}{2}+\log f^{\varepsilon,\delta}\right)\phi\) on the second equation of \eqref{eq:NSVFK0} and integrating over $\mathbb{R}^6$ yields:
	\begin{align*}
		\int_{ \Omega_{x}\times\Omega_{v}}&\partial_t f^{\varepsilon,\delta}\left(1+\frac{|v|^{2}}{2}+\log f^{\varepsilon,\delta}\right)\phi dxdv \nonumber\\
        &+ \int_{  \Omega_{x}\times\Omega_{v}}(v\cdot \nabla_x) f^{\varepsilon,\delta}\left(1+\frac{|v|^{2}}{2}+\log f^{\varepsilon,\delta}\right)\phi dxdv\\
		&+ \int_{ \Omega_{x}\times\Omega_{v}}\nabla_v \cdot \left[\left(\theta_{\varepsilon} \ast u^{\varepsilon,\delta}-v\gamma_
		\delta(v)\right)f^{\varepsilon,\delta}-\nabla_v f^{\varepsilon,\delta}\right]\left(1+\frac{|v|^{2}}{2}+\log f^{\varepsilon,\delta}\right)\phi dxdv\\
		&:=G_1+G_2+G_3 = 0,
	\end{align*}
	where
	\begin{align*}
		G_1=&\partial_t \int_{  \Omega_{x}\times\Omega_{v}}f^{\varepsilon,\delta}\left(1+\frac{|v|^{2}}{2}+\log f^{\varepsilon,\delta}\right)\phi dxdv\nonumber\\
        &-\int_{  \Omega_{x}\times\Omega_{v}}f^{\varepsilon,\delta}\left(1+\frac{|v|^{2}}{2}+\log f^{\varepsilon,\delta}\right)\partial_t\phi dxdv\\
		&-\int_{  \Omega_{x}\times\Omega_{v}}f^{\varepsilon,\delta}\partial_t\left(1+\frac{|v|^{2}}{2}+\log f^{\varepsilon,\delta}\right)\phi dxdv\\
		=&\partial_t \int_{  \Omega_{x}\times\Omega_{v}}f^{\varepsilon,\delta}\left(\frac{|v|^{2}}{2}+\log f^{\varepsilon,\delta}\right)\phi dxdv-\int_{  \Omega_{x}\times\Omega_{v}}f^{\varepsilon,\delta}\left(\frac{|v|^{2}}{2}+\log f^{\varepsilon,\delta}\right)\partial_t\phi dxdv;\\
		G_2=& -\int_{ \Omega_{x}\times\Omega_{v}}  f^{\varepsilon,\delta}\left(1+\frac{|v|^{2}}{2}+\log f^{\varepsilon,\delta}\right)v\cdot\nabla_x\phi dxdv-\int_{  \Omega_{x}\times\Omega_{v}}  \phi v\cdot\nabla_xf^{\varepsilon,\delta}dxdv\\
		=&-\int_{ \Omega_{x}\times\Omega_{v}}  f^{\varepsilon,\delta}\left(\frac{|v|^{2}}{2}+\log f^{\varepsilon,\delta}\right)v\cdot\nabla_x\phi dxdv;
	\end{align*}
	and
	\begin{align*}
		G_3=&\int_{ \Omega_{x}\times\Omega_{v}}\left[\left(\theta_{\varepsilon} \ast u^{\varepsilon,\delta}-v\gamma_
		\delta(v)\right)f^{\varepsilon,\delta}-\nabla_v f^{\varepsilon,\delta}\right]\cdot\left(-v-\frac{\nabla_{v}f^{\varepsilon,\delta}}{f^{\varepsilon,\delta}}\right)\phi dxdv\\
		&-\int_{ \Omega_{x}\times\Omega_{v}}\left(1+\frac{|v|^{2}}{2}+\log f^{\varepsilon,\delta}\right)\left [\left(\theta_{\varepsilon} \ast u^{\varepsilon,\delta}-v\gamma_
		\delta(v)\right)f^{\varepsilon,\delta}-\nabla_v f^{\varepsilon,\delta}\right]\cdot\nabla_v \phi dxdv\\
		=&G_{31}+G_{32}.
	\end{align*}\\
	For \(G_{31}\), we have
	\begin{align*}
		G_{31}=&\int_{ \Omega_{x}\times\Omega_{v}}\left[(\theta_\varepsilon \ast u^{\varepsilon,\delta}-v)f^{\varepsilon,\delta}-\nabla_v f^{\varepsilon,\delta}\right]\cdot(-v-\frac{\nabla_{v}f^{\varepsilon,\delta}}{f^{\varepsilon,\delta}})\phi dxdv\\
		&+\int_{  \Omega_{x}\times\Omega_{v}}(1-\gamma_{\delta}(v))f^{\varepsilon,\delta}v\cdot\left(-v-\frac{\nabla_{v}f^{\varepsilon,\delta}}{f^{\varepsilon,\delta}}\right)\phi dxdv\\
		=&\int_{\Omega_x \times \Omega_v}\frac{|(\theta_\varepsilon \ast u^{\varepsilon,\delta}-v)f^{\varepsilon,\delta}-\nabla_{v}f^{\varepsilon,\delta}|^2}{f^{\varepsilon,\delta}}\phi dxdv-\int_{ \Omega_x \times \Omega_v}\theta_\varepsilon \ast u^{\varepsilon,\delta}\cdot (\theta_\varepsilon \ast u^{\varepsilon,\delta}-v)f^{\varepsilon,\delta}\phi dxdv\\
		&-\int_{  \Omega_{x}\times\Omega_{v}}f^{\varepsilon,\delta}(\theta_\varepsilon \ast u^{\varepsilon,\delta})\cdot\nabla_{v}\phi dxdv-\int_{ \Omega_{x}\times\Omega_{v}}v\cdot(vf^{\varepsilon,\delta}+\nabla_{v}f^{\varepsilon,\delta})(1-\gamma_{\delta}(v))\phi dxdv\\
		\geq&\int_{\Omega_x \times \Omega_v}\frac{|(\theta_\varepsilon \ast u^{\varepsilon,\delta}-v)f^{\varepsilon,\delta}-\nabla_{v}f^{\varepsilon,\delta}|^2}{f^{\varepsilon,\delta}}\phi dxdv-\int_{ \Omega_x \times \Omega_v}\theta_\varepsilon \ast u^{\varepsilon,\delta}\cdot (\theta_\varepsilon \ast u^{\varepsilon,\delta}-v)f^{\varepsilon,\delta}\phi dxdv\\
		&-\int_{ \Omega_{x}\times\Omega_{v}}|v|^2f^{\varepsilon,\delta}(1-\gamma_{\delta}(v))\phi dxdv+\int_{ \Omega_{x}\times\Omega_{v}}f^{\varepsilon,\delta}(1-\gamma_{\delta}(v))v\cdot\nabla_{v}\phi dxdv\\
		&-\int_{  \Omega_{x}\times\Omega_{v}}f^{\varepsilon,\delta}(\theta_\varepsilon \ast u^{\varepsilon,\delta})\cdot\nabla_{v}\phi dxdv.
	\end{align*}
	For \(G_{32}\), this becomes
	\begin{align*}
		G_{32}=&\int_{ \Omega_{x}\times\Omega_{v}}\left(1+\frac{|v|^{2}}{2}+\log f^{\varepsilon,\delta}\right) \left[(v-\theta_{\varepsilon}\ast u^{\varepsilon,\delta})f^{\varepsilon,\delta}+\nabla_v f^{\varepsilon,\delta}\right]\cdot\nabla_v \phi dxdv\\
		&+\int_{ \Omega_{x}\times\Omega_{v}}\left(1+\frac{|v|^{2}}{2}+\log f^{\varepsilon,\delta}\right) f^{\varepsilon,\delta}(\gamma_
		\delta(v)-1)v\cdot\nabla_v \phi dxdv\\
		=&G_{32-1}+G_{32-2};
	\end{align*}
	where
	\begin{align*}
		G_{32-1}=&\int_{ \Omega_{x}\times\Omega_{v}}\left(1+\frac{|v|^{2}}{2}+\log f^{\varepsilon,\delta}\right) f^{\varepsilon,\delta}v\cdot\nabla_v \phi dxdv\\
		&-\int_{  \Omega_{x}\times\Omega_{v}}f^{\varepsilon,\delta}\left(v+\frac{\nabla_{v}f^{\varepsilon,\delta}}{f^{\varepsilon,\delta}}\right)\cdot\nabla_{v}\phi dxdv\\
        &-\int_{ \Omega_{x}\times\Omega_{v}}\left(1+\frac{|v|^{2}}{2}+\log f^{\varepsilon,\delta}\right) f^{\varepsilon,\delta}\Delta_v \phi dxdv\\
        &-\int_{ \Omega_{x}\times\Omega_{v}}\left(1+\frac{|v|^{2}}{2}+\log f^{\varepsilon,\delta}\right) f^{\varepsilon,\delta}(\theta_{\varepsilon}\ast u^{\varepsilon,\delta})\cdot\nabla_v \phi dxdv\\
		=&\int_{ \Omega_{x}\times\Omega_{v}}\left(\frac{|v|^{2}}{2}+\log f^{\varepsilon,\delta}\right) f^{\varepsilon,\delta}v\cdot\nabla_v \phi dxdv\\
        &-\int_{ \Omega_{x}\times\Omega_{v}}\left(\frac{|v|^{2}}{2}+\log f^{\varepsilon,\delta}\right) f^{\varepsilon,\delta}\Delta_v \phi dxdv\\
		&-\int_{ \Omega_{x}\times\Omega_{v}}\left(1+\frac{|v|^{2}}{2}+\log f^{\varepsilon,\delta}\right) f^{\varepsilon,\delta}(\theta_{\varepsilon}\ast u^{\varepsilon,\delta})\cdot\nabla_v \phi dxdv.
  %       \int_{ \Omega_{x}\times\Omega_{v}}\left(1+\frac{|v|^{2}}{2}+\log f^{\varepsilon,\delta}\right) f^{\varepsilon,\delta}v\cdot\nabla_v \phi dxdv\\
		% &-\int_{ \Omega_{x}\times\Omega_{v}}\left(1+\frac{|v|^{2}}{2}+\log f^{\varepsilon,\delta}\right) f^{\varepsilon,\delta} (\theta_{\varepsilon}\ast u^{\varepsilon,\delta})\cdot\nabla_v \phi dxdv\\
		% &+\int_{ \Omega_{x}\times\Omega_{v}}\left(1+\frac{|v|^{2}}{2}+\log f^{\varepsilon,\delta}\right) \nabla_{v}f^{\varepsilon,\delta}\cdot\nabla_v \phi dxdv\\
		% =&
	\end{align*}
	To sum up, it follows that
	\begin{equation}
		\label{eq:16}
         \begin{aligned}
		\partial_t &\int_{  \Omega_{x}\times\Omega_{v}}f^{\varepsilon,\delta}\left(\frac{|v|^{2}}{2}+\log f^{\varepsilon,\delta}\right)\phi dxdv+\int_{\Omega_x \times \Omega_v}\frac{|(\theta_{\varepsilon}\ast u^{\varepsilon,\delta}-v)f^{\varepsilon,\delta}-\nabla_{v}f^{\varepsilon,\delta}|^2}{f^{\varepsilon,\delta}}\phi dxdv\\
		\leq&\int_{  \Omega_{x}\times\Omega_{v}}f^{\varepsilon,\delta}\left(\frac{|v|^{2}}{2}+\log f^{\varepsilon,\delta}\right)(\partial_t\phi+\Delta_{v}\phi) dxdv\\
        &+\int_{ \Omega_{x}\times\Omega_{v}}\left(2+\frac{|v|^{2}}{2}+\log f^{\varepsilon,\delta}\right) f^{\varepsilon,\delta}(\theta_\varepsilon\ast u^{\varepsilon,\delta})\cdot\nabla_v \phi dxdv\\
		&+\int_{ \Omega_{x}\times\Omega_{v}}  f^{\varepsilon,\delta}\left(\frac{|v|^{2}}{2}+\log f^{\varepsilon,\delta}\right)v\cdot(\nabla_x\phi-\nabla_{v}\phi) dxdv\\
        &+\int_{ \Omega_x \times \Omega_v}\theta_{\varepsilon}\ast u^{\varepsilon,\delta}\cdot(\theta_{\varepsilon}\ast u^{\varepsilon,\delta}-v)f^{\varepsilon,\delta}\phi dxdv\\
		&+\int_{ \Omega_{x}\times\Omega_{v}}|v|^2f^{\varepsilon,\delta}(1-\gamma_{\delta}(v))\phi dxdv\\
        &+\int_{ \Omega_{x}\times\Omega_{v}}\left(\frac{|v|^{2}}{2}+\log f^{\varepsilon,\delta}\right) f^{\varepsilon,\delta}(1-\gamma_
		\delta(v))v\cdot\nabla_v \phi dxdv.
	\end{aligned}
	\end{equation}
	Combining \eqref{eq:15} and \eqref{eq:16}, and integrating with respect to $t$,  \eqref{eq:14} is proved. Thus, the proof is complete.
	\end{proof}

	\subsection{Construction of local energy inequality of the second type}$ $
	\begin{lemma}\label{lem:local2}
		Let \(\Omega_{x}\in \mathbb{R}^3_x\) be arbitrarily bounded domains, and let \(\psi\) be cut-off functions that vanish on the parabolic boundary of \((0,t)\times\Omega_x\). Under the assumptions stated in Theorem \ref{thm:solution}, assume \((u^{\varepsilon,\delta}, f^{\varepsilon,\delta})\) is a regular solution of \eqref{eq:NSVFK0} in \((0,t)\times\Omega_x  \subset \mathbb{R}^{+}\times\mathbb{R}^{3}_{x}\) and \((0,t)\times\Omega_x \times\mathbb{R}^{3}_{v} \subset \mathbb{R}^{+}\times\mathbb{R}^{3}_{x}\times\mathbb{R}^{3}_{v}\). Then the following inequality holds:
	    \begin{align}
		\label{eq:23}
		&\frac{1}{2}\int_{\Omega_x}\left(|u^{\varepsilon,\delta}|^2\psi\right)(t,\cdot) dx+\int_{\Omega_x \times \mathbb{R}^{3}_{v}}\left(\left(\frac{1}{2}|v|^2f^{\varepsilon,\delta}+f^{\varepsilon,\delta}\log f^{\varepsilon,\delta}\right)\psi\right)(t,\cdot) dxdv\nonumber\\
		&+\int_{(0,t)\times \Omega_x}|\nabla u^{\varepsilon,\delta}|^2\psi dxds+\int_{(0,t)\times \Omega_x \times \mathbb{R}^{3}_{v}}\frac{|(\theta_{\varepsilon}\ast u^{\varepsilon,\delta}-v)f^{\varepsilon,\delta}-\nabla_{v}f^{\varepsilon,\delta}|^2}{f^{\varepsilon,\delta}}\psi dxdvds\nonumber\\
        \leq& \frac{1}{2}\int_{(0,t)\times\Omega_{x}}|u^{\varepsilon,\delta}|^2(\partial_t\psi+\Delta_x\psi) dxds\nonumber\\
		&+\frac{1}{2}\int_{(0,t)\times\Omega_{x}}|u^{\varepsilon,\delta}|^2(\theta_{\varepsilon}\ast u^{\varepsilon,\delta})\cdot \nabla_{x}\psi dxds
		+\int_{(0,t)\times\Omega_{x}}(P^{\varepsilon,\delta}-\bar{P}^{\varepsilon,\delta})u^{\varepsilon,\delta}\cdot\nabla_{x}\psi dxds\nonumber\\
        &+ \int_{(0,t) \times \Omega_x \times \mathbb{R}^3_v}|\theta_{\varepsilon}\ast u^{\varepsilon,\delta}|^2f^{\varepsilon,\delta}\psi-(\theta_{\varepsilon}\ast u^{\varepsilon,\delta})\cdot \left(\theta_{\varepsilon}\ast (u^{\varepsilon,\delta}\psi)\right)f^{\varepsilon,\delta} dxdvds
        \\
        &+\int_{(0,t) \times \Omega_x \times \mathbb{R}^{3}_{v}}f^{\varepsilon,\delta}|v|^2(1-\gamma_{\delta}(v))\psi dxdvds\nonumber\\
        &+\int_{(0,t)\times \Omega_x \times \mathbb{R}^{3}_{v}}\left(\frac{|v|^2}{2}+\log f^{\varepsilon,\delta}\right)f^{\varepsilon,\delta}\partial_t\psi dxdvds\nonumber\\
		&+ \int_{(0,t) \times \Omega_x \times \mathbb{R}^{3}_{v}} \left( \frac{|v|^2}{2} + \log f^{\varepsilon,\delta} \right)f^{\varepsilon,\delta} v\cdot \nabla_{x}\psi dxdvds \nonumber\\
        &+\int_{(0,t) \times \Omega_x \times \mathbb{R}^3_v}f^{\varepsilon,\delta}v\cdot \left(\theta_{\varepsilon}\ast (u^{\varepsilon,\delta}\psi)\right)-f^{\varepsilon,\delta}v\cdot \left(\theta_{\varepsilon}\ast u^{\varepsilon,\delta}\right)\psi dxdvds\nonumber.
	\end{align}
	\end{lemma}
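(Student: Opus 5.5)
The plan mirrors the proof of the first-type inequality \eqref{eq:14}, but the test function in the kinetic equation is now $\big(1+\tfrac{|v|^2}{2}+\log f^{\varepsilon,\delta}\big)\psi(t,x)$, which does not depend on $v$. The fluid part is unchanged. Multiplying the first equation of \eqref{eq:NSVFK0} by $u^{\varepsilon,\delta}\psi$ gives exactly \eqref{eq:15}: the terms $F_1$–$F_4$ produce the $\partial_t\psi$, $\Delta_x\psi$, convective and pressure contributions, and $F_5$ produces the two friction terms $\int f^{\varepsilon,\delta}v\cdot\theta_\varepsilon\ast(u^{\varepsilon,\delta}\psi)$ and $-\int f^{\varepsilon,\delta}(\theta_\varepsilon\ast u^{\varepsilon,\delta})\cdot\theta_\varepsilon\ast(u^{\varepsilon,\delta}\psi)$. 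All $v$-integrals here are over $\mathbb{R}^3_v$, and the $x$-integrals over $\Omega_x$ are justified because $\psi$ vanishes near $\partial\Omega_x$.

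For the kinetic part, I would multiply $\eqref{eq:NSVFK0}_2$ by $\big(1+\tfrac{|v|^2}{2}+\log f^{\varepsilon,\delta}\big)\psi$ and integrate over $\Omega_x\times\mathbb{R}^3_v$, splitting the result as $G_1+G_2+G_3$.
\begin{itemize}
\item $G_1$: as before, this gives $\partial_t\int(\tfrac{|v|^2}{2}+\log f^{\varepsilon,\delta})f^{\varepsilon,\delta}\psi$ minus the corresponding $\partial_t\psi$ term. The term coming from $\partial_t\log f^{\varepsilon,\delta}$ cancels against the constant $1$, up to $\int\partial_t f^{\varepsilon,\delta}\psi$, which I absorb by the same bookkeeping as in \eqref{eq:14}.
\item $G_2$: integrating by parts in $x$ only, it becomes $-\int(\tfrac{|v|^2}{2}+\log f^{\varepsilon,\delta})f^{\varepsilon,\delta}v\cdot\nabla_x\psi$.
\item $G_3$: since $\nabla_v\psi=0$, the analogue of $G_{32}$ vanishes identically. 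Only the analogue of $G_{31}$ survives, namely the integral of the flux against $-(v+\nabla_v f^{\varepsilon,\delta}/f^{\varepsilon,\delta})\psi$.
\end{itemize}

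Writing $-v\gamma_\delta=-v+(1-\gamma_\delta)v$, the flux term $G_{31}$ expands into three pieces:
\begin{itemize}
\item the dissipation $\int\frac{|(\theta_\varepsilon\ast u^{\varepsilon,\delta}-v)f^{\varepsilon,\delta}-\nabla_vf^{\varepsilon,\delta}|^2}{f^{\varepsilon,\delta}}\psi$;
\item the cross term $-\int\theta_\varepsilon\ast u^{\varepsilon,\delta}\cdot(\theta_\varepsilon\ast u^{\varepsilon,\delta}-v)f^{\varepsilon,\delta}\psi$, which comes from $\theta_\varepsilon\ast u\cdot(\text{flux})/f$ and in which the $\nabla_v f$ part integrates to zero because $\psi$ is $v$-independent;
\item the cutoff term $-\int(1-\gamma_\delta)v\cdot(vf^{\varepsilon,\delta}+\nabla_vf^{\varepsilon,\delta})\psi$.
\end{itemize}
For the cutoff term, integrating by parts in $v$ gives
\[
\int(1-\gamma_\delta)v\cdot\nabla_v f^{\varepsilon,\delta}\psi=-\int f^{\varepsilon,\delta}\big(3(1-\gamma_\delta)-v\cdot\nabla_v\gamma_\delta\big)\psi\le0,
\]
using $v\cdot\nabla_v\gamma_\delta\le0$ and $\psi\ge0$, exactly as for $S_2$ in Lemma \ref{lem:sqrtf}. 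This yields the inequality with the remaining contribution $\int f^{\varepsilon,\delta}|v|^2(1-\gamma_\delta)\psi$ on the right-hand side.

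Adding the two identities and integrating in time, the cross term $\int\theta_\varepsilon\ast u\cdot(\theta_\varepsilon\ast u-v)f\psi$ combines with the two $F_5$ terms. This produces precisely the grouped terms in \eqref{eq:23}:
\[
\int|\theta_\varepsilon\ast u^{\varepsilon,\delta}|^2f^{\varepsilon,\delta}\psi-\int(\theta_\varepsilon\ast u^{\varepsilon,\delta})\cdot\theta_\varepsilon\ast(u^{\varepsilon,\delta}\psi)f^{\varepsilon,\delta}
\]
and
\[
\int f^{\varepsilon,\delta}v\cdot\theta_\varepsilon\ast(u^{\varepsilon,\delta}\psi)-\int f^{\varepsilon,\delta}v\cdot(\theta_\varepsilon\ast u^{\varepsilon,\delta})\psi.
\]
The time-boundary terms at $s=0$ vanish because $\psi$ vanishes on the parabolic boundary.

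The main obstacle is justifying the integrations by parts over the unbounded velocity domain $\mathbb{R}^3_v$, which were not needed for \eqref{eq:14} since $\Omega_v$ was bounded. The boundary terms at $|v|\to\infty$ involve $|v|^3f^{\varepsilon,\delta}$, $|v|^2\nabla_vf^{\varepsilon,\delta}$ and $f^{\varepsilon,\delta}\log f^{\varepsilon,\delta}$ with polynomial weights. I would first work with a truncation $\chi(v/R)$ and then let $R\to\infty$, using the moment bounds of Lemma \ref{lem:mf} (with $\kappa>3$), Lemma \ref{lem:deltaf} and \eqref{18}, together with the regularity of the strong solution from Theorem \ref{thm:solution}, to show that every term involving $\nabla_v\chi(\cdot/R)$ tends to zero.
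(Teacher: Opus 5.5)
Your proposal is correct and is essentially the paper's proof: the paper replaces $\phi$ by the $v$-independent $\psi$ in \eqref{eq:16}, so every $\nabla_v\phi$ and $\Delta_v\phi$ term drops and the kinetic integrals run over $\Omega_x\times\mathbb{R}^3_v$, and then adds \eqref{eq:15}, which yields exactly the grouped friction terms you identify. The only differences are that your $G_1$ step is an exact identity (since $\partial_t f\,(1+\log f)=\partial_t(f\log f)$, there is no leftover $\int\partial_t f\,\psi$ to absorb), and your truncation in $v$ justifying the integrations by parts over $\mathbb{R}^3_v$ is a rigor step the paper leaves implicit.
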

	\begin{proof}
    Replacing \(\phi\) with \(\psi\) in \eqref{eq:16} yields a new energy inequality. Combining this energy inequality with \eqref{eq:15} implies \eqref{eq:23}.  
	\end{proof}

	\section{Convergence}
	\subsection{\texorpdfstring{Weak convergence in \(L^p\) Spaces}{The weak convergence of  j varepsilon delta u varepsilon delta and n varepsilon delta u varepsilon delta in L {1}((0,T) times mathbb{R} 3 x)}}$ $
    \par 
    Define \(j^{\varepsilon,\delta}=\int_{ \mathbb{R}_{v}^3}vf^{\varepsilon,\delta}dv, n^{\varepsilon,\delta}=\int_{ \mathbb{R}_{v}^3}f^{\varepsilon,\delta}dv\). In this subsection, we prove weak convergence of $j^{\varepsilon,\delta}$, $n^{\varepsilon,\delta}$ and 
    $\theta_{\varepsilon} \ast u^{\varepsilon,\delta}$ in some $L^p$ norms.
    Before giving the proof, we recall a useful lemma regarding the issue of weak convergence for the product of two weakly convergent functions.
	\begin{proposition}[Lemma 5.1 in \cite{lionsMathematicalTopicsFluid1998}]\label{pro:lions}
		 Let \(\varOmega\) be \(\mathbb{R}^d\) or a bounded open domain with smooth boundary. Suppose \(g^n,h^n\) converge weakly to \(g,h\) respectively in \(L^{p_1}(0,T;L^{p_2}(\varOmega))\) and \( L^{q_1}(0,T;L^{q_2}(\varOmega))\) where \((p_1,p_2), (q_1,q_2)\) are conjugate pairs, and \(1\leq p_i,q_i \leq \infty\). We assume that 
        %for some \(m\geq 0\) which is independent of \(n\),
	\begin{gather*}
		\partial_tg^n ~\text{bounded in}~ L^{1}\left(0,T;W^{-1,1}(\varOmega)\right),\\
		\norm{h^n-h^n(\cdot,\cdot+\xi)}_{L^{q_1}(0,T;L^{q_2}(\varOmega))}\rightarrow 0 \quad \text{as}~ |\xi|\rightarrow0, \text{uniformly in}~ n.
	\end{gather*}
	Then \(g^nh^n \rightharpoonup gh\) (in the sense of distribution uniformly on \(\varOmega \times (0,T)\)).
	\end{proposition}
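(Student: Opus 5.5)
The plan is the classical div--curl/compensated-compactness route in the spirit of Lions. Passing to the limit only needs to be justified in the distributional sense, so I fix a test function $\chi \in C_c^\infty((0,T)\times\varOmega)$. The goal is
\[
\int_0^T\!\!\int_\varOmega g^n h^n \chi \, dx\, dt \;\to\; \int_0^T\!\!\int_\varOmega g h \chi \, dx\, dt .
\]
First mollify only the space-regular factor: let $\rho_\eta$ be a spatial mollifier and write
\[
g^n h^n=g^n (h^n\ast\rho_\eta)+g^n\big(h^n-h^n\ast\rho_\eta\big).
\]
The second term is uniformly small. By H\"older in the conjugate pairs $(p_1,p_2)$ and $(q_1,q_2)$, and by Minkowski's integral inequality, it is bounded by
\[
\sup_n\norm{g^n}_{L^{p_1}L^{p_2}}\;\sup_{|\xi|\le\eta}\norm{h^n-h^n(\cdot,\cdot+\xi)}_{L^{q_1}L^{q_2}}.
\]
This tends to $0$ as $\eta\to0$ uniformly in $n$. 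Here I use that weakly convergent sequences are bounded (uniform boundedness; in the weak-$*$ sense if an exponent is $\infty$). The same bound holds for the limit $gh$, because $h$ inherits the equicontinuity by weak lower semicontinuity of the norm.

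For fixed $\eta$, the main step is to show $g^n (h^n\ast\rho_\eta)\rightharpoonup g\,(h\ast\rho_\eta)$. Now $h^n\ast\rho_\eta$ is smooth in $x$: all its spatial derivatives are bounded in $L^{q_1}(0,T;L^{q_2}\cap W^{k,\infty}_{loc})$ uniformly in $n$, but it has no time regularity. I therefore move the compactness onto $g^n$. Because $\partial_t g^n$ is bounded in $L^1(0,T;W^{-1,1})$ and $g^n$ is bounded in $L^{p_1}(0,T;L^{p_2})$, an Aubin--Lions/Simon type argument (or a direct Arzel\`a--Ascoli argument in time after spatial mollification) gives the following. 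The spatially mollified sequence $g^n\ast\rho_\eta$, or equivalently $g^n$ tested against smooth spatial functions, is relatively compact in $L^{p_1}(0,T;W^{-s,p_2}_{loc})$ for large $s$; for $p_1=\infty$, in $C([0,T];W^{-s,p_2}_{loc})$. Concretely, I will write
\[
\iint g^n (h^n\ast\rho_\eta)\chi=\int_0^T\big\langle g^n(t),\,(h^n\ast\rho_\eta)(t)\chi(t)\big\rangle\,dt,
\]
where $(h^n\ast\rho_\eta)\chi$ is bounded in $L^{q_1}(0,T;W^{s,p_2'}_0)$ and converges weakly there to $(h\ast\rho_\eta)\chi$. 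The strong convergence of $g^n$ in the dual space $L^{p_1}(0,T;W^{-s,p_2})$ then makes the pairing of a strongly and a weakly convergent sequence converge. Finally I let $\eta\to0$, using the uniform smallness from the first step. By a diagonal/density argument over $\chi$ this yields $g^nh^n\rightharpoonup gh$ in $\mathcal D'$.

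The expected main obstacle is the time-compactness step for $g^n$ in the borderline exponents. The bound $L^1(0,T;W^{-1,1})$ on $\partial_t g^n$ is weak, and when $p_1=1$ or $p_1=\infty$, strong compactness in $L^{p_1}_t$ requires care. I would handle it with Simon's compactness criterion for time translates,
\[
\norm{g^n(\cdot+\tau)-g^n}_{L^1(0,T-\tau;W^{-1,1})}\le \tau\norm{\partial_t g^n}_{L^1W^{-1,1}}.
\]
Interpolating against the $L^{p_1}L^{p_2}$ bound gives compactness in $L^{r}(0,T;W^{-s,p_2}_{loc})$ for every $r<p_1$ (or $r<\infty$). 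I then need to check that the exponent $r$ still pairs with $h^n$ in $L^{q_1}$. If this fails at the endpoint, I would instead use the simpler approach: because $\partial_t\langle g^n,\varphi\rangle$ is bounded in $L^1(0,T)$, the maps $t\mapsto \langle g^n(t),\varphi\rangle$ are BV-bounded, hence compact in $L^r_t$ by Helly. I would combine this with a finite-rank approximation of $(h^n\ast\rho_\eta)\chi$ in $x$; this is possible because the family is compact in space for each fixed $\eta$.
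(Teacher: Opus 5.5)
The paper does not prove this statement; it only quotes Lions' Lemma~5.1. Your overall route is the standard argument behind that lemma: mollify $h^n$ in $x$ (controlled by the uniform translation bound), obtain time compactness of localized $g^n$ in a negative Sobolev space, and pair a strongly convergent sequence with a weakly convergent one. For $1\le p_1<\infty$ it works as stated in your main paragraph. Simon's theorem ($g^n$ bounded in $L^{p_1}(0,T;X)$, $\partial_t g^n$ bounded in $L^1(0,T;Y)$, $X\subset\subset B\subset Y$) gives relative compactness in $L^{p_1}(0,T;B)$ for \emph{every} finite $p_1$, including $p_1=1$. So there is no interpolation loss, and you should not weaken this to ``$r<p_1$''. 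Compactness only in $L^r_t$ with $r<p_1$ would not suffice to pair against a sequence converging only weakly in $L^{q_1}_t$.

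The genuine gap is at the endpoint $p_1=\infty$, $q_1=1$.

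First, your claim that $g^n$ is then relatively compact in $C([0,T];W^{-s,p_2}_{loc})$ is false, because an $L^1$-in-time bound on $\partial_t g^n$ gives no equicontinuity. Take $g^n(t,x)=\phi(x)\sigma(n(t-T/2))$, with $\phi\in C_c^\infty$ and $\sigma$ a smooth monotone step from $0$ to $1$ on $[0,1]$. Then $\partial_t g^n$ is bounded in $L^1(0,T;L^1)$, but the limit has a jump. Simon's $C_t$-compactness requires $\partial_t g^n$ bounded in $L^r_t$ with $r>1$.

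Second, your fallback (BV in time plus Helly, and a finite-rank approximation in $x$) gives only convergence of $t\mapsto\langle g^n(t),\varphi\rangle$ in $L^r(0,T)$, $r<\infty$, with a uniform $L^\infty$ bound. You still have to pair this with coefficients that converge only weakly in $L^1(0,T)$, and $L^1_t$-boundedness alone does not allow that. Indeed, $h^n=\phi(x)\,n\mathbf{1}_{[T/2,T/2+1/n]}(t)$ satisfies the translation hypothesis, yet against the $g^n$ above $\iint g^nh^n\chi$ tends to a limit that depends on $\sigma$.

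The missing idea is that weak $L^1$ convergence rules out concentration in time (Dunford--Pettis). Hence $b^n(t):=\norm{h^n(t)}_{L^{q_2}}$ is uniformly integrable on $(0,T)$; alternatively, after your finite-rank reduction, the scalar coefficients are. Set $a^n(t):=\norm{g^n(t)-g(t)}_{W^{-s,p_2}(K)}$, which is uniformly bounded and tends to $0$ in measure. Split $(0,T)$ into $\{a^n>\epsilon\}$ and its complement. On the first set the small measure and uniform integrability make $\int a^nb^n$ small; on the complement it is at most $\epsilon\sup_n\norm{b^n}_{L^1}$. This gives $\int_0^T a^nb^n\,dt\to0$, hence $\int_0^T\langle g^n-g,\Psi^n\rangle\,dt\to0$ for $\Psi^n=(h^n\ast\rho_\eta)\chi$. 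The remaining term $\int_0^T\langle g,\Psi^n\rangle\,dt$ converges by weak convergence of $\Psi^n$.

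This endpoint is not marginal. It is exactly how the paper uses the proposition in Proposition~\ref{pro:jnweak}: $n^{\varepsilon,\delta}$ and $j^{\varepsilon,\delta}$ converge weak-$*$ in $L^\infty_tL^p_x$, and the translation estimate for $u^{\varepsilon,\delta}$ is in $L^1_tL^p_x$.
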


Motivated by Lemma 3.2 in \cite{melletBarotropicCompressibleNavierStokes2007}, we have the following estimates.
    
	\begin{lemma}\label{lem:jnbound}
		 Let \(f\) be a measurable function in region \([0,T] \times \mathbb{R}^3_x\times \mathbb{R}^3_v\). Assume further that \(f\) satisfies
	\begin{align*}
		\norm{f}_{L^{\infty}([0,T] \times \mathbb{R}^3_x\times \mathbb{R}^3_v)}+\sup_{t\in[0,T]}\int_{ \mathbb{R}_{xv}^6}|v|^{2}fdxdv \leq M.
	\end{align*}
    Then there exists a constant \(C\), depending on \(M\), such that
	\ben\label{eq:thebound-n}
    \sup_{t\in[0,T]}\norm{n(t)}_{L^{p}(\mathbb{R}^3_x)} &\leq C \quad \text{for} ~p \in [1,\frac{5}{3}],\een
\ben\label{eq:thebound-j}
		\sup_{t\in[0,T]}\norm{j(t)}_{L^{p}(\mathbb{R}^3_x)} &\leq C \quad \text{for} ~p \in [1,\frac{5}{4}],
	\een
    where \[j=\int_{ \mathbb{R}_{v}^3}vfdv, n=\int_{ \mathbb{R}_{v}^3}fdv.\]
	\end{lemma}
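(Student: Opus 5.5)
The plan is the standard velocity-splitting argument, done pointwise in \((t,x)\) and then integrated in \(x\); this is essentially the interpolation \eqref{8}/\eqref{28} of Lemma \ref{lem:f}, made explicit. Fix \(t\in[0,T]\) and \(x\), and let \(R>0\) be a radius to be chosen depending on \((t,x)\). For any \(\alpha\in\{0,1\}\) I will write
\begin{align*}
\int_{\mathbb{R}^3_v}|v|^{\alpha}f\,dv\leq \int_{|v|<R}|v|^{\alpha}f\,dv+\int_{|v|\geq R}|v|^{\alpha}f\,dv\leq C\norm{f}_{L^\infty}R^{3+\alpha}+R^{\alpha-2}\int_{\mathbb{R}^3_v}|v|^2f\,dv.
\end{align*}
Let \(m_2(t,x)=\int|v|^2fdv\), so that \(\norm{m_2(t)}_{L^1_x}\leq M\) by hypothesis. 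Choosing \(R=(m_2/\norm{f}_{L^\infty})^{1/5}\) balances the two terms, which is the optimization behind \eqref{8}.

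With this choice I get \(n(t,x)\leq C M^{2/5}\, m_2(t,x)^{3/5}\) and \(|j(t,x)|\leq C M^{1/5}\, m_2(t,x)^{4/5}\). Raising to the powers \(5/3\) and \(5/4\) respectively and integrating in \(x\) then gives \(\norm{n(t)}_{L^{5/3}_x}^{5/3}\leq C\norm{m_2(t)}_{L^1_x}\leq CM\) and \(\norm{j(t)}_{L^{5/4}_x}^{5/4}\leq CM\), uniformly in \(t\). The same bounds follow directly by quoting \eqref{28} with \(\lambda=2\) and \(\mu=0,1\), since \(\tfrac{3+\lambda}{3+\mu}=\tfrac53\) and \(\tfrac54\). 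The case \(\mu=0\) is a slight extension of \eqref{28}, which is stated for \(\mu>0\), but the computation above covers it.

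For the \(L^1\) endpoints I will use \(\norm{n(t)}_{L^1_x}=\norm{f(t)}_{L^1_{xv}}\) and \(|j|\leq n+m_2\) pointwise. Here the hypothesis as stated is slightly deficient, because it controls \(|v|^2f\) in \(L^1\) but not \(f\) itself. The \(L^1\) bound on \(f\) is, however, available for our solutions from \eqref{5}, and I will either add it to \(M\) or note that \(\int f\) is implicitly controlled in the applications. The intermediate exponents then follow by Hölder interpolation between \(L^1\) and \(L^{5/3}\), respectively between \(L^1\) and \(L^{5/4}\).

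The only real obstacle is this \(L^1\) endpoint: the assumption \(\sup_t\int|v|^2f\leq M\) alone does not bound \(\int f\) near \(v=0\). I would handle it by including \(\norm{f}_{L^\infty_tL^1_{xv}}\) in the constant \(M\), as justified by Lemma \ref{pro:estimate}. Everything else is the routine splitting above.
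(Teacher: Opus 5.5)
Your proof is correct, and you are right that the hypothesis is deficient. At the critical exponents the paper argues as you do: it quotes \eqref{8}, which is the same velocity splitting up to the choice of radius. It treats the subcritical range differently. For each \(p<5/3\) it writes \(n=\int (1+|v|)^{2/p}f^{1/p}\cdot f^{1/q}(1+|v|)^{-2/p}\,dv\) with \(1/p+1/q=1\) and applies H\"older in \(v\). This gives \(n\le C\norm{f}_{L^\infty}^{1/q}(\int(1+|v|)^2f\,dv)^{1/p}\) whenever \(2q/p>3\), i.e.\ \(p<5/3\), hence \(\norm{n(t)}_{L^p_x}^p\le C\int(1+|v|)^2f\,dxdv\). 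The same works for \(j\) under \(2q/p-q>3\), i.e.\ \(p<5/4\). That route gives a separate estimate, linear in the moment, for each subcritical \(p\) without passing through the endpoint. Your route proves only the endpoint bound and recovers the whole range by Lebesgue interpolation against the trivial \(L^1\) bounds; this is more economical and makes the role of the mass explicit. The paper's right-hand side \(\int(1+|v|)^2f\,dxdv\) contains \(\norm{f}_{L^1_{xv}}\), so its proof silently uses exactly the extra hypothesis you propose to add. Both arguments also tacitly use \(f\ge 0\).

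That extra hypothesis is genuinely needed, not just a convenience. Take \(f=\chi_{\{|x|\le L,\ |v|\le L^{-3/5}\}}\). Then \(\norm{f}_{L^\infty}=1\) and \(\int|v|^2f\,dxdv\) is bounded independently of \(L\), while \(\norm{n}_{L^p_x}^p\sim L^{3-9p/5}\to\infty\) for every \(p<5/3\). An off-centred version of the same example does the same for \(j\) when \(p<5/4\). So as stated the lemma holds only at \(p=5/3\) for \(n\) and \(p=5/4\) for \(j\). Adding \(\sup_t\norm{f(t)}_{L^1_{xv}}\) to \(M\) is the right fix, and this bound is available from \eqref{5} in every application.
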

	\begin{proof}
{\bf Proof of \eqref{eq:thebound-n}.}
		Let \(p,q \in (1,+\infty)\)  satisfying \(\frac{1}{p}+\frac{1}{q}=1\), and we have
	\begin{align*}
		n(t,x)&=\int_{ \mathbb{R}_v^3}(1+|v|)^{\frac{2}{p}}f^{\frac{1}{p}}\frac{f^{\frac{1}{q}}}{(1+|v|)^{\frac{2}{p}}}dv\\
		&\leq \left(\int_{ \mathbb{R}_v^3}(1+|v|)^{2}fdv\right)^{\frac{1}{p}}\left(\int_{ \mathbb{R}_v^3}\frac{f}{(1+|v|)^{\frac{2q}{p}}}dv\right)^{\frac{1}{q}}.
	\end{align*}
	In particular, if \(\frac{2q}{p} >3\), we deduce
	\begin{align*}
		n(x,t)\leq C\norm{f}_{L^{\infty}_v}^{\frac{1}{q}}\left(\int_{ \mathbb{R}_v^3}(1+|v|)^{2}fdv\right)^{\frac{1}{p}}
	\end{align*}
	and
	\begin{align*}
		\sup_{t\in[0,T]}\norm{n(t)}_{L^{p}_x}^p\leq C\sup_{t\in[0,T]}\int_{ \mathbb{R}_{xv}^6}(1+|v|)^2fdvdx.
	\end{align*}
	Then, note that the condition \(\frac{2q}{p}>3\) is equivalent to
	\begin{align*}
		p<\frac{5}{3}.
	\end{align*}
	As \(p=\frac{5}{3}\), it follows from Lemma \ref{lem:f} that
	\begin{align*}
		\sup_{t\in[0,T]}\norm{n(t)}^\frac53_{L^\frac53(\mathbb{R}^3_x)}\leq \left(\frac{4}{3}\pi\norm{f}_{L^{\infty}_{txv}}+1\right)^\frac53\sup_{t\in[0,T]}\int_{ \mathbb{R}_{xv}^6}f|v|^2dxdv\leq C.
	\end{align*}
	To sum up, we derive \(p\in [1,\frac{5}{3}]\). 

{\bf Proof of \eqref{eq:thebound-j}.} The same arguments imply
	\begin{align*}
		j(t,x)&\leq\int_{ \mathbb{R}_v^3}(1+|v|)^{\frac{2}{p}}f^{\frac{1}{p}}\frac{f^{\frac{1}{q}}}{(1+|v|)^{\frac{2}{p}-1}}dv\\
		&\leq \left(\int_{ \mathbb{R}_v^3}(1+|v|)^{2}fdv\right)^{\frac{1}{p}}\left(\int_{ \mathbb{R}_v^3}\frac{f}{(1+|v|)^{\frac{2q}{p}-q}}dv\right)^{\frac{1}{q}}.
	\end{align*}
	In particular, if \(\frac{2q}{p}-q >3\), we deduce
	\begin{align*}
		j(x,t)\leq C\norm{f}_{L^{\infty}_v}^{\frac{1}{q}}\left(\int_{ \mathbb{R}_v^3}(1+|v|)^{2}fdv\right)^{\frac{1}{p}}
	\end{align*}
	and
	\begin{align*}
		\sup_{t\in[0,T]}\norm{j(t)}_{L^{p}_x}^p\leq C\sup_{t\in[0,T]}\int_{ \mathbb{R}_{xv}^6}(1+|v|)^2fdvdx.
	\end{align*}
	Finally, note that the condition \(\frac{2q}{p}-q>3\) is equivalent to
	\begin{align*}
		p<\frac{5}{4}.
	\end{align*}
	As \(p=\frac{5}{4}\), it follows from Lemma \ref{lem:f} that
	\begin{align*}
		\sup_{t\in[0,T]}\norm{j(t)}^\frac{5}{4}_{L^\frac{5}{4}(\mathbb{R}^3_x)}\leq \left(\frac{4}{3}\pi\norm{f}_{L^{\infty}_{txv}}+1\right)^\frac{5}{4}\sup_{t\in[0,T]}\int_{ \mathbb{R}_{xv}^6}f|v|^2dxdv\leq C.
	\end{align*}
	To sum up, we derive \(p\in [1,\frac{5}{4}]\).
	\end{proof}
	
	\begin{proposition}\label{pro:jnweak}
		Under the assumptions of Theorem  \ref{thm:solution}, the following weak convergence results hold:
	\begin{align}
		\label{eq:17}
	j^{\varepsilon,\delta}u^{\varepsilon,\delta}&\rightharpoonup ju \quad \text{in}~L^{1}((0,T)\times \mathbb{R}^3_x),\\
	n^{\varepsilon,\delta}u^{\varepsilon,\delta}&\rightharpoonup nu \quad \text{in}~L^{1}((0,T)\times \mathbb{R}^3_x),
	\end{align}
   where \(j^{\varepsilon,\delta}=\int_{ \mathbb{R}_{v}^3}vf^{\varepsilon,\delta}dv, n^{\varepsilon,\delta}=\int_{ \mathbb{R}_{v}^3}f^{\varepsilon,\delta}dv\).
	% Moreover, the following limit behavior occurs as $\delta \to 0$:\\
	% \begin{align}
	% 	\label{eq:18}
	% 	j^{\delta}u&\rightharpoonup ju \quad \text{in}~L^{1}((0,T)\times \mathbb{R}^3_x),\\
	% 	n^{\delta}u&\rightharpoonup nu \quad \text{in}~L^{1}((0,T)\times \mathbb{R}^3_x).
	% \end{align}
	\end{proposition}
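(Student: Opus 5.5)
The plan is to apply Proposition \ref{pro:lions} with $g^n$ taken to be the moments $n^{\varepsilon,\delta}$ or $j^{\varepsilon,\delta}$ and $h^n=u^{\varepsilon,\delta}$. The moments carry the time compactness, since they satisfy local conservation laws. The velocity carries the space compactness, through the uniform bound on $\nabla u^{\varepsilon,\delta}$ in $L^2_{t,x}$ from Lemma \ref{pro:estimate}.

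\textbf{Step 1: extracting weak limits.} By Lemma \ref{pro:estimate}, \eqref{5} and Lemma \ref{lem:jnbound}, $n^{\varepsilon,\delta}$ is bounded in $L^\infty(0,T;L^p)$ for $p\in[1,\frac53]$ and $j^{\varepsilon,\delta}$ is bounded in $L^\infty(0,T;L^p)$ for $p\in[1,\frac54]$. For the higher integrability needed in the pairing, I would use the $|v|^3$ moment bound from Lemma \ref{lem:3f} together with \eqref{28}. This gives $j^{\varepsilon,\delta}$ bounded in $L^\infty_tL^{3/2}_x$ and $n^{\varepsilon,\delta}$ bounded in $L^\infty_tL^2_x$. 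On the other side, $u^{\varepsilon,\delta}$ is bounded in $L^2_tL^6_x$ and in $L^\infty_tL^2_x$. Since $\frac{2}{3}+\frac{1}{6}<1$, I would work with a conjugate pair such as $L^2(0,T;L^{6/5})$ and $L^2(0,T;L^6)$, using interpolation between $L^1$ and $L^{3/2}$ for $j$. After passing to subsequences we get $n^{\varepsilon,\delta}\rightharpoonup n$, $j^{\varepsilon,\delta}\rightharpoonup j$ and $u^{\varepsilon,\delta}\rightharpoonup u$ in these spaces. The limits $n,j$ are identified as the moments of the weak-$*$ limit $f$; the moment bounds give tightness in $v$, which justifies this identification.

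\textbf{Step 2: checking the hypotheses of Proposition \ref{pro:lions}.} Integrating $\eqref{eq:NSVFK0}_2$ in $v$ gives
\[
\partial_t n^{\varepsilon,\delta}+\nabla_x\cdot j^{\varepsilon,\delta}=0 .
\]
Hence $\partial_t n^{\varepsilon,\delta}$ is bounded in $L^1(0,T;W^{-1,1})$, because $j^{\varepsilon,\delta}$ is bounded in $L^\infty_tL^1_x$. Multiplying $\eqref{eq:NSVFK0}_2$ by $v$ and integrating gives
\[
\partial_t j^{\varepsilon,\delta}+\nabla_x\cdot\int v\otimes v f^{\varepsilon,\delta}\,dv=\int(\theta_\varepsilon\ast u^{\varepsilon,\delta}-v\gamma_\delta)f^{\varepsilon,\delta}\,dv .
\]
The flux term is bounded in $L^\infty_tL^1_x$ by the second moment. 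On the right-hand side, $(\theta_\varepsilon\ast u^{\varepsilon,\delta})n^{\varepsilon,\delta}$ is bounded in $L^2_tL^1_x$ by $L^6\times L^{6/5}$, and the term with $v\gamma_\delta$ is controlled by $\||v|f^{\varepsilon,\delta}\|_{L^1}$. So $\partial_t j^{\varepsilon,\delta}$ is also bounded in $L^1(0,T;W^{-1,1})$. For the translation condition on $h^n=u^{\varepsilon,\delta}$, we have $\|u(\cdot+\xi)-u\|_{L^2_tL^2_x}\le|\xi|\,\|\nabla u\|_{L^2_{t,x}}$. Interpolating with the bound in $L^2_tL^6_x$ gives uniform equicontinuity in $L^2_tL^q_x$ for every $q<6$, and I would choose the exponent pair in Step 1 so that $q<6$ suffices. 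The conclusion is that $n^{\varepsilon,\delta}u^{\varepsilon,\delta}\to nu$ and $j^{\varepsilon,\delta}u^{\varepsilon,\delta}\to ju$ in distributions.

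\textbf{Step 3: upgrading to weak $L^1$ convergence.} The products are bounded in $L^1$, and in fact equi-integrable, since they are bounded in $L^r$ for some $r>1$ by the exponent choices above. On a space of finite measure, equi-integrability combined with distributional convergence gives weak $L^1$ convergence. On $\mathbb{R}^3$, uniform tightness in $x$ is also needed, and I would obtain it from the $|x|^2f$ moment in Proposition \ref{pro:xf}, together with the uniform $L^2$ bound on $u^{\varepsilon,\delta}$. If needed, the convergence can instead be stated locally.

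\textbf{Main obstacle.} I expect the hardest step to be matching the exponents exactly as Proposition \ref{pro:lions} requires conjugate pairs: for $j$ this leaves only a small margin ($\frac23+\frac16$) and depends on the $|v|^3$ moment, and the same exponents must then be reused in Step 3 to get equi-integrability.
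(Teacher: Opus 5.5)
Your proposal is correct and is essentially the paper's proof: Proposition \ref{pro:lions} with $g^n=n^{\varepsilon,\delta}$ or $j^{\varepsilon,\delta}$ (time compactness from the zeroth and first moment equations) and $h^n=u^{\varepsilon,\delta}$ (translation continuity from $\nabla u^{\varepsilon,\delta}\in L^2_{t,x}$, interpolated strictly below $L^6_x$). The $|v|^3$ moment is not actually needed, since $j^{\varepsilon,\delta}\in L^\infty_tL^{5/4}_x$ from Lemma \ref{lem:jnbound} already pairs with $u^{\varepsilon,\delta}$ in $L^1_tL^5_x$.

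Your Step 3 fills a point the paper passes over, because Proposition \ref{pro:lions} only yields convergence in distributions. One correction there: for the $x$-tightness, pair the $L^2_tL^6_x$ bound on $u^{\varepsilon,\delta}$ (not the $L^\infty_tL^2_x$ bound alone, which does not suffice) with $\int_{|x|>R}(n^{\varepsilon,\delta}+|j^{\varepsilon,\delta}|)\,dx\lesssim R^{-1}$, which follows from the $|x|^2f$ and $|v|^2f$ moments. Interpolate this tail bound against the $L^{5/3}_x$ and $L^{5/4}_x$ bounds.
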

	\begin{proof}
		Firstly, due to Lemma \ref{lem:jnbound}, it follows that
	\begin{align*}
		n^{\varepsilon,\delta}\rightharpoonup n \quad L^{\infty}(0,T;L^{p}(\mathbb{R}_x^3))-\text{weak\textasteriskcentered} \quad \forall p\in [1,\frac{5}{3}],\\
		j^{\varepsilon,\delta}\rightharpoonup j \quad L^{\infty}(0,T;L^{p}(\mathbb{R}_x^3))-\text{weak\textasteriskcentered} \quad \forall p\in [1,\frac{5}{4}],
	\end{align*}
	as $\varepsilon,\delta\rightarrow 0$ (up to a subsequence)\footnote{Next the statement of $\varepsilon,\delta\rightarrow 0$ (taking at most one subsequence) is frequently used below. For the sake of simplicity and without causing confusion, we will omit this expression.}. Also, due to \eqref{eq:esti}, we have
	\begin{align}
		\label{01}
		u^{\varepsilon,\delta} \rightharpoonup u \quad L^{2}(0,T;L^{p}(\mathbb{R}_x^3))-\text{weak} \quad \forall p\in [2,6].
	\end{align}
	Then, integrating the second equation of \eqref{eq:NSVFK0} with respect to \(v\), we find \(\partial_t n^{\varepsilon,\delta}=-div_x j^{\varepsilon,\delta}\). It shows that
	\begin{align*}
		\partial_t n^{\varepsilon,\delta} ~\text{is bounded in}~                 L^{1}(0,T;W^{-1,1}(\mathbb{R}^3_x)).
	\end{align*}
	Multiplying \(v\) on the second equation of \eqref{eq:NSVFK0} and integrating over $\mathbb{R}^6$ yields
	\begin{align*}
		\partial_t j^{\varepsilon,\delta} = -\int_{ \mathbb{R}_v^3}\left(v\cdot \nabla_x f^{\varepsilon,\delta}\right)vdv+3\int_{ \mathbb{R}_v^3} \left(\theta_{\varepsilon} \ast u^{\varepsilon,\delta}-v\gamma_
		\delta(v)\right)f^{\varepsilon,\delta}dv.
	\end{align*}
	For \(\int_{ \mathbb{R}_{v}^3} \left(\theta_{\varepsilon} \ast u^{\varepsilon,\delta}-v\gamma_
	\delta(v)\right)f^{\varepsilon,\delta}dv\), by \eqref{eq:esti}, \eqref{utp26} and \eqref{eq:thebound-n}, this becomes
	\begin{align*}
		\int_{ (0,T)\times\mathbb{R}_{xv}^6} &|\left(\theta_{\varepsilon} \ast u^{\varepsilon,\delta}-v\gamma_
		\delta(v)\right)f^{\varepsilon,\delta}|dxdvdt\\
		\leq&\int_{ (0,T)\times\mathbb{R}_{xv}^6} |\theta_{\varepsilon} \ast u^{\varepsilon,\delta}|f^{\varepsilon,\delta}dvdxdt +\int_{ (0,T)\times\mathbb{R}_{xv}^6} |v|f^{\varepsilon,\delta}dvdxdt\\
		\leq& CT^\frac{17}{20}\norm{n^{\varepsilon,\delta}}_{L^{\infty}(0,T;L^\frac53_{x}(\mathbb{R}^{3}))} \norm{u^{\varepsilon,\delta}}_{L^{\frac{20}{3}}(0,T;L^{\frac{5}{2}}_{x}(\mathbb{R}^{3}))}+\frac{1}{2}\int_{ (0,T)\times\mathbb{R}_{xv}^6}f^{\varepsilon,\delta}dvdxdt\\
        &+\frac{1}{2}\int_{ (0,T)\times\mathbb{R}_{xv}^6}|v|^2f^{\varepsilon,\delta}dvdxdt\\
		\leq&C(T).
	\end{align*}
	By
    \begin{align*}
	&-\int_{ \mathbb{R}_v^3}\left(v\cdot \nabla_x f^{\varepsilon,\delta}\right)vdv\in L^{1}(0,T;W^{-1,1}(\mathbb{R}^3_x)),\\
    &\int_{ \mathbb{R}_v^3} \left(\theta_{\varepsilon} \ast u^{\varepsilon,\delta}-v\gamma_
	\delta(v)\right)f^{\varepsilon,\delta}dv\in L^{1}(0,T;L^{1}(\mathbb{R}^3_{x})),    
	\end{align*}
    it shows that
	\begin{align*}
		\partial_t j^{\varepsilon,\delta}~ \text{is bounded in}                 ~L^{1}(0,T;W^{-1,1}(\mathbb{R}^3_x)).
	\end{align*}
	Finally, since \(\nabla u^{\varepsilon,\delta}\) is bounded in \(L^{2}((0,T)\times \mathbb{R}^3_x)\), we derive
	\begin{align*}
		\norm{u^{\varepsilon,\delta}(t,x+\xi)-u^{\varepsilon,\delta}(t,x)}_{L^1_tL^2_x}&\leq \int_{0}^{1}|\xi|\norm{\nabla u^{\varepsilon,\delta}(t,x+\tau\xi)}_{L^1_tL^2_x}d\tau\\
		&\leq C|\xi|.
	\end{align*}
The interpolation inequality for $L^p$-norms yields
\begin{align*}
	&\hspace{0.5cm}\|u^{\varepsilon,\delta}(t,x+\xi) - u^{\varepsilon,\delta}(t,x)\|_{L^1_tL^p_x}\\ 
	&\leq \|u^{\varepsilon,\delta}(t,x+\xi) - u^{\varepsilon,\delta}(t,x)\|_{L^1_tL^2_x}^{\frac{6-p}{2p}} 
	\|u^{\varepsilon,\delta}(t,x+\xi) - u^{\varepsilon,\delta}(t,x)\|_{L^1_tL^6_x}^{\frac{3p-6}{2p}} \\
	&\leq C(p)|\xi|^{\frac{6-p}{2p}},
\end{align*}
where $p \in (2,6)$. As \(\xi \rightarrow 0\), it follows that
	\begin{align*}
		\norm{u^{\varepsilon,\delta}(t,x+\xi)-u^{\varepsilon,\delta}(t,x)}_{L^1_tL^p_x} \rightarrow 0.
	\end{align*}
	To sum up, the weak convergence of
	\begin{align*}
		j^{\varepsilon,\delta}u^{\varepsilon,\delta}&\rightharpoonup ju \quad \text{in}~L^{1}((0,T)\times \mathbb{R}^3_x),\\
		n^{\varepsilon,\delta}u^{\varepsilon,\delta}&\rightharpoonup nu \quad \text{in}~L^{1}((0,T)\times \mathbb{R}^3_x)
	\end{align*}
 	is proved by Proposition \ref{pro:lions}.
 % (this proof is also same as in \cite{melletGlobalWeakSolutions2007}).\\
	% The same argument yields:
	% \begin{align*}
	% 	j^{\delta}u &\rightharpoonup ju \quad \text{in }~L^{1}((0,T)\times \mathbb{R}^3_x), \\
	% 	n^{\delta}u &\rightharpoonup nu \quad \text{in }~L^{1}((0,T)\times \mathbb{R}^3_x).
	% \end{align*}
	\end{proof}
	
	 \begin{proposition}\label{pro:uweak}
	 	According to the properties of convolution and weak convergence, the following weak convergence holds for all $p \in [2,6]$ and $\tilde{p}\in [2,\frac{10}{3}]$:
	 	\begin{align}
	 		\label{02}
	 		\theta_{\varepsilon} \ast u^{\varepsilon,\delta} \rightharpoonup u \quad &\text{weakly in } L^2(0,T;L^p(\mathbb{R}^3_x)),\\
        \label{upweakly}
	 		\theta_{\varepsilon} \ast u^{\varepsilon,\delta} \rightharpoonup u \quad &\text{weakly in } L^{\tilde{p}}((0,T)\times\mathbb{R}^3_x),
	 	\end{align}
        where \(u\) is the limit function of \eqref{01}. 
	 \end{proposition}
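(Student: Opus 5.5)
The plan is to split
\[
\theta_{\varepsilon}\ast u^{\varepsilon,\delta}-u=\theta_{\varepsilon}\ast(u^{\varepsilon,\delta}-u)+(\theta_{\varepsilon}\ast u-u)
\]
and test against a fixed $\varphi$ in the dual space. For \eqref{02} take $\varphi\in L^2(0,T;L^{p'}(\mathbb{R}^3_x))$, with $p'$ the conjugate exponent of $p\in[2,6]$.

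For the first piece, I would use that $\theta$ can be chosen symmetric (or else work with $\check\theta_\varepsilon(x)=\theta_\varepsilon(-x)$). Then
\[
\int_0^T\!\!\int \theta_\varepsilon\ast(u^{\varepsilon,\delta}-u)\,\varphi=\int_0^T\!\!\int (u^{\varepsilon,\delta}-u)\,\check\theta_\varepsilon\ast\varphi .
\]
I rewrite this as
\[
\int (u^{\varepsilon,\delta}-u)\varphi+\int (u^{\varepsilon,\delta}-u)(\check\theta_\varepsilon\ast\varphi-\varphi).
\]
The first term tends to zero by the weak convergence \eqref{01}. The second is bounded by
\[
\|u^{\varepsilon,\delta}-u\|_{L^2_tL^p_x}\,\|\check\theta_\varepsilon\ast\varphi-\varphi\|_{L^2_tL^{p'}_x}.
\]
Here $\|u^{\varepsilon,\delta}\|_{L^2_tL^p_x}$ is uniformly bounded by \eqref{eq:esti} and \eqref{utp26}, and weak lower semicontinuity bounds $u$ in the same norm. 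Since $p'\in[6/5,2]$ is finite, mollifiers converge strongly in $L^{p'}_x$ for a.e.\ $t$, with $\|\check\theta_\varepsilon\ast\varphi-\varphi\|_{L^{p'}_x}\le 2\|\varphi\|_{L^{p'}_x}$. Dominated convergence in $t$ then sends this factor to zero. The second piece, $\int(\theta_\varepsilon\ast u-u)\varphi$, tends to zero by the same mollifier convergence, now applied to $u\in L^2_tL^p_x$ with $p<\infty$, together with dominated convergence in time.

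For \eqref{upweakly}, the argument is identical once uniform bounds in $L^{\tilde p}((0,T)\times\mathbb{R}^3_x)$ are available for $\tilde p\in[2,10/3]$. These come from \eqref{utp26} with $q=p$: the choice $q=p=10/3$ satisfies $2/q+3/p=3/2$, and interpolating with $L^2_{t,x}$, which is bounded via $L^\infty_tL^2_x$ on a finite interval, covers the whole range. Then $u^{\varepsilon,\delta}\rightharpoonup \tilde u$ in $L^{\tilde p}_{t,x}$ along a subsequence. Testing against $C_c^\infty$ functions, which lie in both dual spaces, identifies $\tilde u=u$. The mollifier splitting above, with $\varphi\in L^{\tilde p'}_{t,x}$ and $\tilde p'\in[10/7,2]$ finite, finishes the proof.

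The only delicate point is that the mollification parameter and the sequence vary simultaneously. This is exactly why the term $\int(u^{\varepsilon,\delta}-u)(\check\theta_\varepsilon\ast\varphi-\varphi)$ has to be controlled by uniform bound $\times$ strong convergence, rather than by weak convergence alone.
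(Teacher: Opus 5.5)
Your argument is correct and is essentially the paper's proof: the same three-term splitting $\int(u^{\varepsilon,\delta}-u)\varphi+\int(u^{\varepsilon,\delta}-u)(\check\theta_\varepsilon\ast\varphi-\varphi)+\int(\theta_\varepsilon\ast u-u)\varphi$, handled respectively by \eqref{01}, by a uniform bound times mollifier convergence of the test function, and by mollifier convergence of $u$, with the $L^{\tilde p}$ case reduced to the uniform bounds from \eqref{utp26}. Your use of $\check\theta_\varepsilon$, the dominated-convergence justification in time, and the identification of the $L^{\tilde p}$ weak limit with $u$ make explicit points that the paper leaves implicit.
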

	\begin{proof}
		For any \(\psi \in L^{2}(0,T;L^{p'}(\mathbb{R}_x^3 \times \mathbb{R}_v^3))\), which $\frac{1}{p}+\frac{1}{p'}=1$, we have
	\begin{align*}
		&\int_{(0,T)\times \mathbb{R}^3}(\theta_{\varepsilon}\ast u^{\varepsilon,\delta}-u)\psi dxds\\
		&=\int_{(0,T)\times \mathbb{R}^3}(\theta_{\varepsilon}\ast u^{\varepsilon,\delta}-\theta_{\varepsilon}\ast u)\psi dxds+\int_{(0,T)\times \mathbb{R}^3}(\theta_{\varepsilon}\ast u-u)\psi dxds\\
		&=\int_{(0,T)\times \mathbb{R}^3}(u^{\varepsilon,\delta}-u)\psi dxds+\int_{(0,T)\times \mathbb{R}^3}(u^{\varepsilon,\delta}-u)(\theta_{\varepsilon}\ast\psi-\psi) dxds\\
		&\hspace{8cm}+\int_{(0,T)\times \mathbb{R}^3}(\theta_{\varepsilon}\ast u-u)\psi dxds\\
		&:=m_{11}+m_{12}+m_{13}.
	\end{align*}
	For \(m_{11}\), due to \eqref{01}, we derive
	\begin{align*}
		\int_{(0,T)\times \mathbb{R}^3}(u^{\varepsilon,\delta}-u)\psi dxds \rightarrow 0, \quad\text{as}~ \varepsilon,\delta \rightarrow 0.
	\end{align*}
	For \(m_{12}\) and \(m_{13}\), due to the properties of mollifiers, \eqref{eq:esti} and \eqref{utp26}, we have
	\begin{align*}
		\bigg|\int_{(0,T)\times \mathbb{R}^3}&(u^{\varepsilon,\delta}-u)(\theta_{\varepsilon}\ast\psi-\psi) dxds\bigg| \\
		&\leq \left(\norm{u^{\varepsilon,\delta}}_{L^2(0,T;L^p(\mathbb{R}^3_x))}+\norm{u}_{L^2(0,T;L^p(\mathbb{R}^3_x))}\right)\norm{\theta_{\varepsilon}\ast\psi-\psi}_{L^2(0,T;L^{p'}(\mathbb{R}^3_x))}\rightarrow 0, \quad\text{as}~ \varepsilon \rightarrow 0.		
	\end{align*}
	and
	\begin{align*}
		\left|\int_{(0,T)\times \mathbb{R}^3}(\theta_{\varepsilon}\ast u-u)\psi dxds\right|&\leq\int_{(0,T)\times \mathbb{R}^3}\left|(\theta_{\varepsilon}\ast u-u)\psi\right| dxds\\
		&\leq \norm{\theta_{\varepsilon}\ast u-u}_{L^{2}(0,T;L^{p}(\mathbb{R}_x^3))}\norm{\psi}_{L^{2}(0,T;L^{p'}(\mathbb{R}_x^3))}\rightarrow 0, \quad \text{as}~\varepsilon \rightarrow 0.
	\end{align*}
	\par To sum up, the proof of  \eqref{02} is completed. Due to the embedding inequality \eqref{utp26},
	% \begin{align}\label{utp26}
	% 	\|u\|_{L_t^q L_x^p}^2 \leq C\left(\|u\|_{L_t^\infty L_x^2}^2 + \|\nabla u\|_{L_t^2 L_x^2}^2\right), \quad \text{for} \quad \frac{2}{q} + \frac{3}{p} = \frac{3}{2}, ~ 2 \leq p \leq 6,
	% \end{align}
%     we derive
%     \begin{align*}
%         \norms{u^{\varepsilon,\delta}}_{L^{\tilde{p}}_{tx}}&\leq t^{\frac{1}{\tilde{p}}-\frac{1}{p_1}}\norms{u^{\varepsilon,\delta}}_{L^{p_1}_tL^{\tilde{p}}_x}\\
%         &\leq t^{\frac{1}{\tilde{p}}-\frac{1}{p_1}}\left(\norms{u^{\varepsilon,\delta}}_{L^{\infty}_tL^2_x}+\norms{\nabla u^{\varepsilon,\delta}}_{L^{2}_{tx}}\right),
%     \end{align*}
%     where \(\frac{2}{p_1}+\frac{3}{\tilde{p}}=\frac32\) and \(\tilde{p}\in [2,\frac{10}{3}]\).
%     This yields
% \begin{align*} 
% u ^{\varepsilon,\delta} \rightharpoonup u \quad \text{weakly in } L^{\tilde{p}}((0,T)\times\mathbb{R}^3_x).
%     \end{align*}
%      Then by the same methods as in \eqref{02}, 
the following weak convergence holds
    \begin{align*}
	 		\theta_{\varepsilon} \ast u^{\varepsilon,\delta} \rightharpoonup u \quad \text{weakly in } L^{\tilde{p}}((0,T)\times\mathbb{R}^3_x).
    \end{align*}
  \end{proof}

\begin{corollary}\label{jthetau}
    The similar arguments as in  Proposition \ref{pro:jnweak}, by noting Proposition \ref{pro:uweak}, yields that
	\begin{align}
		\label{eq:04}
		j^{\varepsilon,\delta}(\theta_{\varepsilon}\ast u^{\varepsilon,\delta})&\rightharpoonup ju \quad \text{in}~L^{1}((0,T)\times \mathbb{R}^3_x),\\
		\label{eq:05}
		n^{\varepsilon,\delta}(\theta_{\varepsilon}\ast u^{\varepsilon,\delta})&\rightharpoonup nu \quad \text{in}~L^{1}((0,T)\times \mathbb{R}^3_x).
	\end{align}
\end{corollary}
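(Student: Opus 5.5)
We plan to rerun the argument of Proposition \ref{pro:jnweak}, with $u^{\varepsilon,\delta}$ replaced by $\theta_{\varepsilon}\ast u^{\varepsilon,\delta}$. The conclusion will then follow from Proposition \ref{pro:lions} applied to $g^{\varepsilon,\delta}\in\{n^{\varepsilon,\delta},j^{\varepsilon,\delta}\}$ and $h^{\varepsilon,\delta}=\theta_{\varepsilon}\ast u^{\varepsilon,\delta}$.

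On the $g$ side nothing changes. Lemma \ref{lem:jnbound} together with the uniform bounds \eqref{eq:esti} gives weak-$\ast$ compactness of $n^{\varepsilon,\delta}$ in $L^\infty(0,T;L^{p}_x)$ for $p\le 5/3$, and of $j^{\varepsilon,\delta}$ for $p\le5/4$. The bounds on $\partial_t n^{\varepsilon,\delta}=-\mathrm{div}_x j^{\varepsilon,\delta}$ and on $\partial_t j^{\varepsilon,\delta}$ in $L^1(0,T;W^{-1,1}(\mathbb{R}^3_x))$ were already established in that proof. They involve only the $\varepsilon,\delta$-uniform moment bounds and the estimate of $\int(\theta_\varepsilon\ast u^{\varepsilon,\delta}-v\gamma_\delta)f^{\varepsilon,\delta}dv$ in $L^1_{tx}$, so we take them over verbatim.

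On the $h$ side, Proposition \ref{pro:uweak} supplies the weak limit $\theta_{\varepsilon}\ast u^{\varepsilon,\delta}\rightharpoonup u$ in $L^2(0,T;L^p_x)$, $p\in[2,6]$, and in $L^{\tilde p}_{tx}$. We choose exponents conjugate to those used for $n$ and $j$. For $j\in L^\infty_tL^{5/4}_x$ the conjugate pair is $L^1_tL^5_x$. For $n\in L^\infty_tL^{5/3}_x$ it is $L^1_tL^{5/2}_x$. Both lie in the range controlled by \eqref{utp26}, and on the bounded interval $(0,T)$ one can pass from $L^2_t$ to $L^1_t$.

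The remaining hypothesis of Proposition \ref{pro:lions} is equicontinuity of translations uniformly in $(\varepsilon,\delta)$. Since convolution commutes with translations and $\|\theta_\varepsilon\|_{L^1}=1$, Young's inequality gives
\begin{align*}
\norm{\theta_\varepsilon\ast u^{\varepsilon,\delta}(\cdot,\cdot+\xi)-\theta_\varepsilon\ast u^{\varepsilon,\delta}}_{L^1_tL^p_x}\le \norm{u^{\varepsilon,\delta}(\cdot,\cdot+\xi)-u^{\varepsilon,\delta}}_{L^1_tL^p_x}\le C(p)|\xi|^{\frac{6-p}{2p}},
\end{align*}
where the last bound is the estimate already proved in Proposition \ref{pro:jnweak} for $p\in(2,6)$. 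Proposition \ref{pro:lions} then yields $j^{\varepsilon,\delta}(\theta_\varepsilon\ast u^{\varepsilon,\delta})\rightharpoonup ju$ and $n^{\varepsilon,\delta}(\theta_\varepsilon\ast u^{\varepsilon,\delta})\rightharpoonup nu$ in the sense of distributions.

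To upgrade this to weak convergence in $L^1((0,T)\times\mathbb{R}^3_x)$ we first use uniform $L^1_{tx}$ bounds on the products, which come from H\"older's inequality with the bounds above. Uniform integrability comes from the higher integrability margin, since both $n$ and $u$ are bounded in spaces slightly better than conjugate. Tightness at spatial infinity comes from the bound on $|x|^2f^{\varepsilon,\delta}$ in Proposition \ref{pro:xf}, which controls $n^{\varepsilon,\delta}$ and $j^{\varepsilon,\delta}$ away from bounded sets.

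The main obstacle is the exponent bookkeeping for $j$. It lies only in $L^\infty_tL^{5/4}_x$, so its partner must be in $L^1_tL^5_x$, which is inside $[2,6]$ but near the edge. The plan is to interpolate the translation estimate at $p=5$, which gives the modulus $|\xi|^{1/10}$, and if needed to use the improved moments $|v|^3f$ from Lemma \ref{lem:3f}, which place $j$ in a better $L^q_x$ and relax the requirement on the $h$ side.
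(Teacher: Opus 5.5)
Your proposal is correct and follows the paper's argument. You apply Proposition \ref{pro:lions} with $g^{\varepsilon,\delta}\in\{n^{\varepsilon,\delta},j^{\varepsilon,\delta}\}$, whose bounds and $\partial_t$ estimates carry over unchanged, and $h^{\varepsilon,\delta}=\theta_\varepsilon\ast u^{\varepsilon,\delta}$, identify the limit via Proposition \ref{pro:uweak}, and pass the uniform translation estimate through the mollifier by Young's inequality, which is the point the paper leaves implicit. Your Dunford--Pettis step goes slightly beyond the paper, which only obtains distributional convergence from Proposition \ref{pro:lions}: equi-integrability follows from $L^r_{tx}$ bounds ($r>1$) on the products, and tightness from the $|x|^2f^{\varepsilon,\delta}$ moment, so this validly justifies the stated weak $L^1$ convergence.
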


	\subsection{\texorpdfstring{The strong convergence of \(f^{\varepsilon,\delta}\) in \(L^p((0,T)\times\mathbb{R}^3_x\times\mathbb{R}^3_v)\)}{The strong convergence of f varepsilon delta in L p((0,T)R 3 x times}}$ $
    In this subsection, we prove the strong convergence of \(f^{\varepsilon,\delta}\) in \(L^p((0,T)\times\mathbb{R}^3_x\times\mathbb{R}^3_v)\), and the following lemma of DiPerna-Lions plays an important role.
	\begin{proposition}[\cite{dipernaFokkerPlanckBoltzmannEquation1988}]
       \label{pro:compact}
		 Let \(h^n\) be a bounded sequence in \(L^1((0,T)\times\mathbb{R}^m)\) satisfying
	\begin{align*}
		\sup_{n}\int_{0}^{T}\int_{|x|\geq R}|h^n|dxdt \rightarrow0,~\text{as}~R\rightarrow \infty,
	\end{align*}
	and let \(g^n_{in}\) be a bounded sequence in \(L^1(\mathbb{R}^m)\) satisfying
	\begin{align*}
		\sup_{n}\int_{|x|\geq R}|g^n_{in}|dx\rightarrow 0,~\text{as}~R\rightarrow \infty.
	\end{align*}
	If \(g^n\) is the solution of
	\begin{align*}
		\partial_t g^n +Eg^n = h^n~ \text{in}~(0,T)\times \mathbb{R}^m,g^n|_{t=0}=g^n_{in}~\text{in}~\mathbb{R}^m,
	\end{align*}
	which \(E\) is a second-order, elliptic, possibly degenerate operator on \(\mathbb{R}^m\) with smooth coefficients. The sequence \(g^n\) is compact in \(L^1((0,T)\times \mathbb{R}^m)\).
	\end{proposition}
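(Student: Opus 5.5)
The plan is to write $g^n$ through Duhamel's formula and prove compactness with the Kolmogorov--Riesz--Fréchet criterion in $L^1((0,T)\times\mathbb{R}^m)$. Let $S(t)$ be the solution semigroup of $\partial_t+E$. Then
\begin{align*}
g^n(t)=S(t)g^n_{in}+\int_0^t S(t-s)h^n(s)\,ds .
\end{align*}
Since the paper only applies the proposition to the Fokker--Planck operator $E=v\cdot\nabla_x-\Delta_v+\dots$, I will work with an explicit kernel. Concretely, $S(t)$ has a kernel of the form $G$ in Lemma \ref{lem:G}, with lower-order drift terms treated perturbatively. For a general degenerate $E$ I would replace this by the existence of a smooth, mass-preserving fundamental solution, which is where Hörmander-type hypoellipticity enters. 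Two properties are used. First, $S(t)$ is a contraction on $L^1$, by positivity and mass preservation (Lemma \ref{lem:G}(i)). Second, the kernel has a quantitative modulus of continuity in $L^1$ under translations: translating by $\xi$ in $x$ and $\zeta$ in $v$ changes it in $L^1$ by at most
\begin{align*}
\min\{2,\;C|\xi|\tau^{-3/2}+C|\zeta|\tau^{-1/2}\}
\end{align*}
at time $\tau$. This follows from Lemma \ref{lem:G}(ii) and the analogous bound for $\nabla_x G$, which scales like $\tau^{-3/2}$.

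\textbf{Tightness.} I first reduce to uniform tightness of $g^n$. Because $S$ preserves positivity and mass, and its kernel has Gaussian tails, the hypotheses on $h^n$ and $g^n_{in}$ give
\begin{align*}
\sup_n\int_0^T\int_{|x|\ge R}|g^n|\,dx\,dt\to 0 \quad\text{as } R\to\infty.
\end{align*}
One can also see this by testing the equation against a cutoff $1-\chi(x/R)$: the commutator with $E$ is $O(1/R)$ times $\|g^n\|_{L^1}$, which stays bounded by the $L^1$ contraction.

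\textbf{Space translations.} For $0<t<T$, split the Duhamel integral into $s\in(t-\sigma,t)$ and $s<t-\sigma$. On the second range, the translation bound above gives a factor $C|\xi|\sigma^{-3/2}+C|\zeta|\sigma^{-1/2}$ times $\|h^n\|_{L^1}$. On the first range, the $L^1$ norm integrated over $t$ is at most $2\sup_{t}\int_{t-\sigma}^t\|h^n(s)\|\,ds$ integrated in $t$, i.e. at most $2\sigma\|h^n\|_{L^1_{t,x}}$ after Fubini. Optimizing $\sigma$ gives a uniform modulus of order $|\xi|^{2/3}+|\zeta|^{2/3}$. The term $S(t)g^n_{in}$ is handled the same way, with $\sigma$ replaced by a small initial time layer whose contribution is at most $\sigma\sup_n\|g^n_{in}\|_{L^1}$.

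\textbf{Time translations.} Write
\begin{align*}
g^n(t+\eta)-g^n(t)=(S(\eta)-I)g^n(t)+\int_t^{t+\eta}S(t+\eta-s)h^n(s)\,ds .
\end{align*}
After integration in $t$, the second term is at most $\eta\|h^n\|_{L^1}$. For the first term, $S(\eta)-I$ is small on any family that is equicontinuous under spatial translations and tight, since its kernel is an approximate identity. The space-translation step supplies exactly such a family, up to the small time layers already controlled.

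Combining tightness with the two translation estimates, Kolmogorov--Riesz yields relative compactness of $g^n$ in $L^1((0,T)\times\mathbb{R}^m)$.

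I expect the main obstacle to be the degenerate direction. Only the hypoelliptic smoothing gives the $x$-regularity, through the $\tau^{-3/2}$ kernel bound. The delicate point is that $h^n$ is merely bounded in $L^1$, so it may concentrate in time, and no uniform integrability is available. That is why I use the $\sigma$-splitting and integrate in $t$ before taking suprema: Fubini converts possible concentration in time into a harmless factor $\sigma$.
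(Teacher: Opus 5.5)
The statement as transcribed cannot be proved in the stated generality. It omits a hypoellipticity assumption on $\partial_t+E$, which the paper only alludes to in Remark~\ref{rem:compact}.

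Here is a counterexample. Take $E=-\Delta_v$ on $\mathbb{R}^3_x\times\mathbb{R}^3_v$, which is second order, degenerate elliptic, with constant coefficients. Let $h^n=0$ and $g^n_{in}=\chi(x)\sin(nx_1)\phi(v)$ with nonnegative, nonzero $\chi,\phi\in C_c^\infty$. Then $g^n=\chi(x)\sin(nx_1)\,\bigl(e^{t\Delta_v}\phi\bigr)(v)$ converges weakly to $0$ while $\|g^n\|_{L^1}$ stays bounded below, so no subsequence converges strongly.

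You clearly sense this, but you should say it explicitly. Your one-line reduction of the general case also does not work. Hypoellipticity is a local property, so it does not supply the global ingredients your argument uses: mass preservation, an $L^1$ translation modulus of the kernel uniform in the source point, and uniform tails.

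What you actually prove is the case $E=v\cdot\nabla_x-\Delta_v$, which is the only one the paper needs. Moreover, Lemma~\ref{lem:fcompact} puts the whole term $-\nabla_v\cdot[(\theta_\varepsilon\ast u^{\varepsilon,\delta}-v\gamma_\delta)f^{\varepsilon,\delta}]$ into $h^n$, so no perturbative treatment of drifts is required.

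For that operator your argument is correct and genuinely different from what the paper relies on. The paper gives no proof; it cites the hypoellipticity-based compactness of DiPerna--Lions, which is general but non-quantitative. Your route (Duhamel, the explicit kernel of Lemma~\ref{lem:G}, Kolmogorov--Riesz) is elementary and gives an explicit modulus. Your Fubini/$\sigma$-splitting correctly handles sources that are merely bounded, not equi-integrable, in $L^1$.

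Two steps need repair.

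\textbf{The commutator shortcut for tightness.} It is wrong as stated. On the support of $\nabla\chi(z/R)$, with $z=(x,v)$, one has $|v|\sim R$, so $v\cdot\nabla_x[\chi(z/R)]=O(1)$, not $O(1/R)$. Use an anisotropic cutoff $\chi_1(x/R^2)\chi_2(v/R)$, or rely on the kernel argument instead.

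\textbf{Where the kernel is centred.} The Kolmogorov kernel is centred at the transported point $(y+\tau w,w)$, not at the source $(y,w)$. So ``Gaussian tails'' and ``approximate identity'' hold only locally uniformly in the source velocity $w$. This affects two steps.
\begin{itemize}
\item In the tightness step, split the sources at a radius such as $R/(4(1+T))$ rather than $R/2$.
\item In the time-translation step, $S(\eta)-I$ contains the $v$-dependent shift $x\mapsto x-\eta v$, which is not a translation, and your spatial modulus is only available after integration in $t$. Discard $|v|\ge R$ by tightness. Then mollify in $x$ at a scale $\epsilon$; the mollification error is controlled by the translation modulus at scale $\epsilon$. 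Finally, bound the shift of the mollified function by $C\eta R\epsilon^{-1}\|g^n\|_{L^1}$.
\end{itemize}
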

	
	 \begin{remark}\label{rem:compact}
As shown on pages 7 and 21 of \cite{dipernaFokkerPlanckBoltzmannEquation1988}, the partial diffusive transport (hypoelliptic) operator
		\(L_v\equiv \partial_t +v\cdot\nabla_{x}-\Delta_{v}\)
    satisfies the conditions of Proposition \ref{pro:compact} to hold. 
    	 \end{remark}

    It follows from Proposition \ref{pro:compact} and Remark \ref{rem:compact} that the following lemma holds:
	\begin{lemma}\label{lem:fcompact}
		Let \(\{f^{\varepsilon,\delta}\}\) be the solution family of system \eqref{eq:NSVFK0} under the initial conditions in Theorem \ref{thm:solution}. Then for any \(1 \leq p < \infty\), 
		\[
		\{f^{\varepsilon,\delta}\} \text{ is compact in } L^p((0,T) \times \mathbb{R}^3_x \times \mathbb{R}^3_v).
		\]
	\end{lemma}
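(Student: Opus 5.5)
We apply Proposition \ref{pro:compact} with $m=6$, $E=v\cdot\nabla_x-\Delta_v$ (admissible by Remark \ref{rem:compact}), $g^n=f^{\varepsilon,\delta}$, and the source
\begin{align*}
h^{\varepsilon,\delta}:=-\nabla_v\cdot\big[(\theta_\varepsilon\ast u^{\varepsilon,\delta}-v\gamma_\delta(v))f^{\varepsilon,\delta}\big].
\end{align*}
This gives $L^1$ compactness, which we then upgrade to $L^p$ compactness for $1\le p<\infty$.

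\textbf{Step 1: controlling the source.} The difficulty is that $h^{\varepsilon,\delta}$ is a $v$-divergence, so it is not uniformly bounded in $L^1$. There are two ways around this.
\begin{itemize}
\item[(a)] Use the refined form of the DiPerna--Lions averaging lemma, in which the source may lie in $L^1$ after applying $(1-\Delta_v)^{1/2}$. That is, it suffices to have $h=\nabla_v\cdot F$ with $F$ bounded and tight in $L^1$; by hypoellipticity, the gain of one $v$-derivative from $\Delta_v$ absorbs the divergence.
\item[(b)] Renormalize. Take $\beta(f)=f/(1+\eta f)$. Then $\beta(f)$ solves a transport–diffusion equation with source $-\nabla_v\cdot[(\cdot)\beta'(f)f]$ plus the nonnegative defect $\beta''(f)|\nabla_v f|^2$, and the latter is bounded in $L^1$ by Lemma \ref{lem:deltaf}.
\end{itemize}

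I would follow route (a). For this we need $F^{\varepsilon,\delta}=(\theta_\varepsilon\ast u^{\varepsilon,\delta}-v\gamma_\delta)f^{\varepsilon,\delta}$ to be bounded in $L^1((0,T)\times\mathbb{R}^6)$.
\begin{itemize}
\item The term $|v|f^{\varepsilon,\delta}$ is bounded by $f+|v|^2f$, which is controlled by Lemma \ref{pro:estimate}.
\item The term $|\theta_\varepsilon\ast u^{\varepsilon,\delta}|\,n^{\varepsilon,\delta}$ is bounded by $\|u\|_{L^2_tL^{5/2}_x}\,\|n\|_{L^\infty_tL^{5/3}_x}$, using Lemma \ref{lem:jnbound} and \eqref{utp26}. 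This is the same computation used for $\partial_t j$ in Proposition \ref{pro:jnweak}.
\end{itemize}

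\textbf{Step 2: tightness in $(x,v)$.} The uniform moment bound $\||x|^2f\|$ from Proposition \ref{pro:xf}, together with the $|v|^2f$ and $|v|^3f$ bounds (Lemma \ref{lem:3f}), gives $\sup\int_{|x|+|v|\ge R}(f+|F|)\to0$ as $R\to\infty$. For $|F|$ on this region we use the $|v|^3 f$ moment, and for the $u$-part we use Hölder with $n$ restricted to $\{|x|\ge R\}$. The initial data $f^\delta_{in}=\varrho f_{in}\le f_{in}$ are dominated, so they are bounded and tight in $L^1$. Proposition \ref{pro:compact} then yields compactness of $f^{\varepsilon,\delta}$ in $L^1((0,T)\times\mathbb{R}^6)$.

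\textbf{Step 3: upgrading to $L^p$.} By \eqref{5}, $f^{\varepsilon,\delta}$ is uniformly bounded in $L^\infty$ by $e^{3T}\|f_{in}\|_{L^\infty}$. Interpolation gives
\begin{align*}
\|f_1-f_2\|_{L^p}\le \|f_1-f_2\|_{L^1}^{1/p}\,(2Ce^{3T})^{1-1/p},
\end{align*}
so a Cauchy subsequence in $L^1$ is Cauchy in every $L^p$ with $p<\infty$.

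\textbf{Main obstacle.} The hard step is Step 1, because the source has divergence form and the drift $\theta_\varepsilon\ast u$ is only in $L^2_tL^6_x$. If the averaging lemma with divergence-form sources is not acceptable as stated, I would fall back on the renormalization (b). Its bounded defect measure is controlled by $\int|\nabla_v f|^2$, and the remaining divergence term is handled by testing against smooth functions, as in DiPerna--Lions. Strong $L^1$ convergence of $\beta_\eta(f)$ then transfers to $f$ as $\eta\to0$, uniformly, thanks to the $L^\infty$ bound.
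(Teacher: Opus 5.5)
Your Steps 2 and 3 are sound. Tightness comes from the $|x|^2f$, $|v|^2f$, $|v|^3f$ moments and from $f^\delta_{in}\le f_{in}$, and the upgrade from $L^1$ to $L^p$ follows by interpolation with the uniform $L^\infty$ bound. Step 1, however, has a genuine gap.

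You assert that $h^{\varepsilon,\delta}=-\nabla_v\cdot[(\theta_\varepsilon\ast u^{\varepsilon,\delta}-v\gamma_\delta)f^{\varepsilon,\delta}]$ is not bounded in $L^1$. Yet at the end of Step 2 you still invoke Proposition \ref{pro:compact}, whose hypothesis is exactly an $L^1$ bound on the source. What you really rely on is a ``refined'' version allowing sources $\nabla_v\cdot F$ with $F\in L^1$; this is neither available in the paper nor proved by you. The name you give it is also misleading. The DiPerna--Lions--Meyer averaging lemma does tolerate $\nabla_v$-derivatives in the source, but it only gives compactness of velocity averages $\int f\varphi(v)\,dv$, not of $f$ itself. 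Compactness of $f$ itself is what the lemma asserts and what is needed later for $f\log f$.

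Your fallback (b) does not repair this. Since $\theta_\varepsilon\ast u^{\varepsilon,\delta}$ does not depend on $v$, multiplying by $\beta'(f)$ reproduces the same divergence term $-\nabla_v\cdot[(\theta_\varepsilon\ast u^{\varepsilon,\delta}-v\gamma_\delta)\beta(f)]$, plus a bounded zero-order term. Testing that term against smooth functions only gives weak convergence, not strong convergence.

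The missing idea is that the obstacle is not there: $h^{\varepsilon,\delta}$ is uniformly bounded and tight in $L^1$. This follows from the Fisher-information bound $\|\nabla_v\sqrt{f^{\varepsilon,\delta}}\|_{L^2((0,T)\times\mathbb{R}^6)}\le C$ of Lemma \ref{lem:sqrtf}, which comes from entropy dissipation (this is where $f_{in}\log f_{in}$ and $|x|^2f_{in}$ enter). Expand the divergence as $(\theta_\varepsilon\ast u^{\varepsilon,\delta})\cdot\nabla_vf-\nabla_v\cdot(v\gamma_\delta)f-\gamma_\delta v\cdot\nabla_vf$, write $\nabla_vf=2\sqrt f\,\nabla_v\sqrt f$, and apply Cauchy--Schwarz:
\[
\int_0^T\!\!\int_{\mathbb{R}^6}|\theta_\varepsilon\ast u^{\varepsilon,\delta}|\,|\nabla_vf^{\varepsilon,\delta}|\le 2\Big(\int_0^T\!\!\int_{\mathbb{R}^3}|\theta_\varepsilon\ast u^{\varepsilon,\delta}|^2n^{\varepsilon,\delta}\Big)^{1/2}\|\nabla_v\sqrt{f^{\varepsilon,\delta}}\|_{L^2},
\]
\[
\int_0^T\!\!\int_{\mathbb{R}^6}|v|\,|\nabla_vf^{\varepsilon,\delta}|\le 2\Big(\int_0^T\!\!\int_{\mathbb{R}^6}|v|^2f^{\varepsilon,\delta}\Big)^{1/2}\|\nabla_v\sqrt{f^{\varepsilon,\delta}}\|_{L^2}.
\]
The first factor is bounded using $u^{\varepsilon,\delta}\in L^2_tL^5_x$ (from \eqref{utp26}) and $\|n^{\varepsilon,\delta}\|_{L^{5/3}_x}\lesssim\||v|^2f^{\varepsilon,\delta}\|_{L^1_{xv}}^{3/5}$ (from \eqref{28}). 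The zero-order term is harmless since $|\nabla_v\cdot(v\gamma_\delta)|\le C$.

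For tightness, restrict these estimates to $\{|v|\ge R\}$ (via \eqref{34}) and insert the weight $|x|^{3/20}$ (via the $|x|^2f$ and $|v|^3f$ moments). Proposition \ref{pro:compact} then applies exactly as stated; this is how the paper proves \eqref{nablaf1}--\eqref{nablaf3}.

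Note that the bound $\nabla_vf\in L^2$ from Lemma \ref{lem:deltaf} would not suffice here, because $\theta_\varepsilon\ast u^{\varepsilon,\delta}$ does not decay in $v$. It is the factor $\sqrt f$ that supplies the $v$-integrability.

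Alternatively, your route (a) could be made rigorous. You would use the Duhamel formula together with the kernel bound $|\nabla_\upsilon G|\le C(t-\tau)^{-1/2}G(\cdot/2)$ of Lemma \ref{lem:G}, whose time singularity is integrable. That is, however, a separate compactness argument that you would have to write out.
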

	\begin{proof}
		Firstly, due to the assumption of $f_{in}^{\delta}$, we have
	\begin{equation*}
    \left\{
	    \begin{aligned}
		\int_{ \mathbb{R}_v^3\times \mathbb{R}_x^3}|v|^3f_{in}^{\delta}dxdv &\leq \int_{ \mathbb{R}_v^3\times \mathbb{R}_x^3}|v|^3f_{in}dxdv,\\
		\int_{ \mathbb{R}_v^3\times \mathbb{R}_x^3}|x|^2f_{in}^{\delta}dxdv &\leq \int_{ \mathbb{R}_v^3\times \mathbb{R}_x^3}|x|^2f_{in}dxdv,
	\end{aligned}
    \right.
	\end{equation*}
	and
    \begin{equation*}\label{3x}
    \left\{
        \begin{aligned}
		\int_{\mathbb{R}_x^3}\int_{|v|\geq R}f_{in}^{\delta}dxdv &\leq R^{-3}\int_{\mathbb{R}_x^3}\int_{|v|\geq R}|v|^3f_{in}dvdx \rightarrow 0,\\
        \int_{|x|\geq R}\int_{\mathbb{R}_v^3}f_{in}^{\delta}dxdv &\leq R^{-2}\int_{|x|\geq R}\int_{\mathbb{R}_v^3}|x|^2f_{in}dvdx \rightarrow 0,\
	\end{aligned}
    \right.
    \end{equation*}
   as~$R\rightarrow +\infty$, uniformly for $\delta$.
	By the same arguments, combining \eqref{10} and \eqref{17}, it follows that
	\begin{align}
  %       \label{3x}
		% \sup_{\varepsilon,\delta}\sup_{t\in [0,T]}\int_{|x|\geq R}\int_{\mathbb{R}^3_v}f_{in}^{\delta}dvdx\rightarrow 0,~\text{as}~R\rightarrow \infty;\\
		\label{32}
		\sup_{\varepsilon,\delta}\sup_{t\in [0,T]}\int_{\mathbb{R}^3_x}\int_{|v|\geq R}f^{\varepsilon,\delta}dvdx\rightarrow 0,~\text{as}~R\rightarrow \infty;\\
		% \label{33}
		\sup_{\varepsilon,\delta}\sup_{t\in [0,T]}\int_{\mathbb{R}^3_x}\int_{|v|\geq R}|v|f^{\varepsilon,\delta}dvdx\rightarrow 0,~\text{as}~R\rightarrow \infty;\nonumber\\
		\label{34}
		\sup_{\varepsilon,\delta}\sup_{t\in [0,T]}\int_{\mathbb{R}^3_x}\int_{|v|\geq R}|v|^2f^{\varepsilon,\delta}dvdx\rightarrow 0,~\text{as}~R\rightarrow \infty.
	\end{align}
    Claim that  
\begin{align}\label{nablaf1}
		\norms{\nabla_{v}\cdot\left[\left(\theta_{\varepsilon} \ast u^{\varepsilon,\delta}-v\gamma_
		\delta(v)\right)f^{\varepsilon,\delta}\right]}_{L^1((0,T)\times\mathbb{R}^3_x\times\mathbb{R}^3_v)} \leq C,
	\end{align}
    \begin{align}\label{nablaf2}
		\sup_{\varepsilon,\delta}\int_{0}^{T}\int_{ \mathbb{R}_x^3}\int_{|v|\geq R}\left|\nabla_{v}\cdot\left[\left(\theta_{\varepsilon} \ast u^{\varepsilon,\delta}-v\gamma_
		\delta(v)\right)f^{\varepsilon,\delta}\right]\right|dvdxds \rightarrow 0,~\text{as}~R~\rightarrow \infty,
	\end{align}
    and
    \begin{align}\label{nablaf3}
        \sup_{\varepsilon,\delta}\int_{0}^{T}\int_{ |x|\geq R}\int_{\mathbb{R}^3_v}\left|\nabla_{v}\cdot\left[\left(\theta_{\varepsilon} \ast u^{\varepsilon,\delta}-v\gamma_
		\delta(v)\right)f^{\varepsilon,\delta}\right]\right|dvdxds \rightarrow 0,~\text{as}~R~\rightarrow \infty.
    \end{align}
    % Then the compactness of \(f^{\varepsilon,\delta}\) in \(L^1((0,T)\times\mathbb{R}^3_x\times\mathbb{R}^3_v)\) can be obtained. 
    Then, by Proposition \ref{pro:compact}, there exists a subsequence such that
\begin{align*}
f^{\varepsilon,\delta} \rightarrow f \quad \text{strongly in } L^1((0,T)\times\mathbb{R}^3_x\times\mathbb{R}^3_v).
\end{align*}
	By the interpolation inequality for $L^p$-norms combined with the uniform estimate \eqref{5}, we obtain the strong convergence
	\begin{equation}\label{eq:strong_conv}
		f^{\varepsilon,\delta} \to f \quad \text{strongly in } L^p((0,T) \times \mathbb{R}^3_x \times \mathbb{R}^3_v)
	\end{equation}
	for all $p \in [1,+\infty)$.

    {\bf Proof of \eqref{nablaf1} and \eqref{nablaf2}.}
	Considering the velocity divergence term
	\begin{equation*}\label{eq:div_term}
		\nabla_{v} \cdot \left[ \left( \theta_{\varepsilon} \ast u^{\varepsilon,\delta} - v\gamma_{\delta}(v) \right) f^{\varepsilon,\delta} \right],
	\end{equation*}
	we obtain the following estimate for $|v| \geq R$:
	\begin{align*}
		&\int_{0}^{T} \int_{\mathbb{R}_x^3} \int_{|v|\geq R} \left| \nabla_{v} \cdot \left[ \left( \theta_{\varepsilon} \ast u^{\varepsilon,\delta} - v\gamma_{\delta}(v) \right) f^{\varepsilon,\delta} \right] \right| dvdxds \nonumber \\
		&\leq \int_{0}^{T} \int_{\mathbb{R}_x^3} \int_{|v|\geq R} \left| (\theta_{\varepsilon} \ast u^{\varepsilon,\delta}) \nabla_{v} f^{\varepsilon,\delta} \right| dvdxds \nonumber \\
		&\quad + 3\int_{0}^{T} \int_{\mathbb{R}_x^3} \int_{|v|\geq R} \gamma_{\delta}(v) f^{\varepsilon,\delta} dvdxds \nonumber \\
		&\quad + \int_{0}^{T} \int_{\mathbb{R}_x^3} \int_{|v|\geq R} \left| \gamma_{\delta}(v) v \cdot \nabla_{v} f^{\varepsilon,\delta} \right| dvdxds \nonumber \\
		&\quad + \int_{0}^{T} \int_{\mathbb{R}_x^3} \int_{|v|\geq R} \left| f^{\varepsilon,\delta}v \cdot \nabla_{v} \gamma_{\delta}(v)  \right| dvdxds \nonumber \\
		&:= e_1 + e_2 + e_3 + e_4.
	\end{align*}
	For \(e_1\), by H\"{o}lder's inequality, \eqref{8}, \eqref{eq:esti}, \eqref{21} and \eqref{utp26}, this becomes
	\begin{align*}
		e_1 &= 2\norms{\sqrt{f^{\varepsilon,\delta}}(\theta_{\varepsilon}\ast u^{\varepsilon,\delta})\cdot\nabla_{v}\sqrt{f^{\varepsilon,\delta}}}_{L^1((0,T)\times\mathbb{R}^3_x\times\{|v|\geq R\})}\\
		&\leq C\norms{(\theta_{\varepsilon}\ast u^{\varepsilon,\delta})\norms{f^{\varepsilon,\delta}}_{L^1(|v|\geq R)}^\frac12\norms{\nabla_{v}\sqrt{f^{\varepsilon,\delta}}}_{L^2(|v|\geq R)}}_{L^1((0,T)\times\mathbb{R}^3_x)}\\
		&\leq C\left(1+\frac{4}{3}\pi\norms{f^{\varepsilon,\delta}}_{L^{\infty}_{txv}}\right)^{\frac{1}{2}}\norms{(\theta_{\varepsilon}\ast u^{\varepsilon,\delta})\norms{|v|^2f^{\varepsilon,\delta}}_{L^1(|v|\geq R)}^\frac{3}{10}\norms{\nabla_{v}\sqrt{f^{\varepsilon,\delta}}}_{L^2(|v|\geq R)}}_{L^1_{tx}}\\
		&\leq C\left(1+\frac{4}{3}\pi\norms{f^{\varepsilon,\delta}}_{L^{\infty}_{txv}}\right)^\frac12\norm{u^{\varepsilon,\delta}}_{L^2_tL^5_x}\norms{\nabla_{v}\sqrt{f^{\varepsilon,\delta}}}_{L^2_{txv}}\norms{|v|^2f^{\varepsilon,\delta}}_{L^{\infty}_tL^1(\mathbb{R}^3_x\times\{|v|\geq R\})}^\frac{3}{10}\\
		&\leq C\norms{|v|^2f^{\varepsilon,\delta}}_{L^{\infty}_tL^1(\mathbb{R}^3_x\times\{|v|\geq R\})}^\frac{3}{10},
	\end{align*}
    where
    \begin{align*}
        C=C\left(T,\norm{f_{in}}_{L^{\infty}_{xv}}, \norm{f_{in}}_{L^{1}_{xv}},\norms{|v|^2f_{in}}_{L^{1}_{xv}},\norm{u_{in}}^{2}_{L^{2}_{x}},\norm{f_{in}\log f_{in}}_{L^{1}_{xv}},\norms{|x|^2f_{in}}_{L^{1}_{xv}} \right).
    \end{align*}
    For \(e_2\), it is similary with \eqref{32}. We omit it.\\
	For \(e_3\), by H\"{o}lder's inequality and \eqref{21}, we have
	\begin{align*}
		e_3 &= \int_{0}^{T}\int_{ \mathbb{R}_x^3}\int_{|v|\geq R}\left|\gamma_{\delta}(v)\sqrt{f^{\varepsilon,\delta}}v\cdot\nabla_{v}\sqrt{f^{\varepsilon,\delta}}\right|dvdxds\\
		&\leq CT^\frac12\norms{\nabla_{v}\sqrt{f^{\varepsilon,\delta}}}_{L^2((0,T)\times\mathbb{R}^3_x\times\mathbb{R}^3_v)}\norms{|v|^2f^{\varepsilon,\delta}}_{L^{\infty}_tL^1(\mathbb{R}^3_x\times\{|v|\geq R\})}^\frac12\\
		&\leq C\norms{|v|^2f^{\varepsilon,\delta}}_{L^{\infty}_tL^1(\mathbb{R}^3_x\times\{|v|\geq R\})}^\frac{1}{2},
	\end{align*}
    where
\begin{align*}
    C=C\left(T,\norm{f_{in}}_{L^{\infty}_{xv}}, \norm{f_{in}}_{L^{1}_{xv}},\norms{|v|^2f_{in}}_{L^{1}_{xv}},\norm{u_{in}}^{2}_{L^{2}_{x}},\norm{f_{in}\log f_{in}}_{L^{1}_{xv}},\norms{|x|^2f_{in}}_{L^{1}_{xv}} \right).
\end{align*}
	For \(e_4\), due to \(|v\cdot \nabla_{v}\gamma|\leq C\) and the positive of \(f^{\varepsilon,\delta}\), we obtain
	\begin{align*}
		e_4 \leq C\int_{0}^{T}\int_{ \mathbb{R}_x^3}\int_{|v|\geq R}f^{\varepsilon,\delta}dvdxds,
	\end{align*}
	%and by integration by parts, it becomes
% 	\ben\label{eq:e4}
% 		e_4&\leq&-\int_{0}^{t}\int_{ \mathbb{R}_x^3}\int_{|v|= R}v\cdot n\gamma_{\delta}(v)f^{\varepsilon,\delta}dvdxds\nonumber\\
% &&+ 3\int_{0}^{t}\int_{ \mathbb{R}_x^3}\int_{|v|\geq R}\gamma_{\delta}(v)f^{\varepsilon,\delta}dvdxds+\int_{0}^{t}\int_{ \mathbb{R}_x^3}\int_{|v|\geq R}\left|\gamma_{\delta}(v)v\cdot\nabla_{v}f^{\varepsilon,\delta}\right|dvdxds.\nonumber\\
% 	\een
% 	Similar to the proof  in \(e_2\) and \(e_3\), we derive
% 	\beno
% 		e_4&\leq& \int_{0}^{t}\int_{ \mathbb{R}_x^3}\int_{|v|= R}v\cdot n\gamma_{\delta}(v)f^{\varepsilon,\delta}dvdxds\\
%  &&+C\left(\norms{|v|^2f^{\varepsilon,\delta}}_{L^{\infty}_tL^1(\mathbb{R}^3_x\times\{|v|\geq R\})}^\frac12+\norms{f^{\varepsilon,\delta}}_{L^{\infty}_tL^1(\mathbb{R}^3_x\times\{|v|\geq R\})}\right).
% 	\eeno
	To sum up, \eqref{nablaf2} can be obtained by \eqref{32} and \eqref{34}.
	Moreover, by replacing \(|v|\geq R\) with \(\mathbb{R}^3_v\), \eqref{nablaf1} also can be obtained.
	% \begin{align*}
	% 	\norms{\nabla_{v}\cdot\left[\left(\theta_{\varepsilon} \ast u^{\varepsilon,\delta}-v\gamma_
	% 	\delta(v)\right)f^{\varepsilon,\delta}\right]}_{L^1((0,T)\times\mathbb{R}^3_x\times\mathbb{R}^3_v)} \leq C.
	% \end{align*}

  %   Next we claim that
  %   \begin{align}\label{x320}
  %       \norms{|x|^{\frac{3}{20}}\nabla_{v}\cdot\left[\left(\theta_{\varepsilon} \ast u^{\varepsilon,\delta}-v\gamma_
		% \delta(v)\right)f^{\varepsilon,\delta}\right]}_{L^1((0,T)\times\mathbb{R}^3_x\times\mathbb{R}^3_v)} \leq C.
  %   \end{align}
    {\bf Proof of \eqref{nablaf3}.}
   \begin{align*}
		&\int_{0}^{T} \int_{\mathbb{R}_x^3} \int_{\mathbb{R}_v^3} |x|^{\frac{3}{20}}\left| \nabla_{v} \cdot \left[ \left( \theta_{\varepsilon} \ast u^{\varepsilon,\delta} - v\gamma_{\delta}(v) \right) f^{\varepsilon,\delta} \right] \right| dvdxds \nonumber \\
		&\leq \int_{0}^{T} \int_{\mathbb{R}_x^3} \int_{\mathbb{R}_v^3} |x|^{\frac{3}{20}}\left| (\theta_{\varepsilon} \ast u^{\varepsilon,\delta}) \nabla_{v} f^{\varepsilon,\delta} \right| dvdxds \nonumber \\
		&\quad + 3\int_{0}^{T} \int_{\mathbb{R}_x^3} \int_{\mathbb{R}_v^3} |x|^{\frac{3}{20}}\gamma_{\delta}(v) f^{\varepsilon,\delta} dvdxds \nonumber \\
		&\quad + \int_{0}^{T} \int_{\mathbb{R}_x^3} \int_{\mathbb{R}_v^3} |x|^{\frac{3}{20}}\left| \gamma_{\delta}(v) v \cdot \nabla_{v} f^{\varepsilon,\delta} \right| dvdxds \nonumber \\
		&\quad + \int_{0}^{T} \int_{\mathbb{R}_x^3} \int_{\mathbb{R}_v^3} |x|^{\frac{3}{20}}\left| v \cdot \nabla_{v} \gamma_{\delta}(v) f^{\varepsilon,\delta} \right| dvdxds \nonumber \\
		&:= O_1 + O_2 + O_3 + O_4.
	\end{align*}
	For \(O_1\), by \eqref{8}, \eqref{eq:esti} and \eqref{21}, this becomes
	\begin{align*}
		O_1 &= 2\norms{|x|^{\frac{3}{20}}\sqrt{f^{\varepsilon,\delta}}(\theta_{\varepsilon}\ast u^{\varepsilon,\delta})\cdot\nabla_{v}\sqrt{f^{\varepsilon,\delta}}}_{L^1((0,T)\times\mathbb{R}^3_x\times\mathbb{R}_v^3)}\\
		&\leq C\norms{|x|^{\frac{3}{20}}(\theta_{\varepsilon}\ast u^{\varepsilon,\delta})\norms{f^{\varepsilon,\delta}}_{L^1(\mathbb{R}_v^3)}^\frac12\norms{\nabla_{v}\sqrt{f^{\varepsilon,\delta}}}_{L^2(\mathbb{R}_v^3)}}_{L^1((0,T)\times\mathbb{R}^3_x)}\\
		&\leq C\left(1+\frac{4}{3}\pi\norms{f^{\varepsilon,\delta}}_{L^{\infty}_{txv}}\right)^{\frac{1}{2}}\norms{|x|^{\frac{3}{20}}(\theta_{\varepsilon}\ast u^{\varepsilon,\delta})\norms{|v|^2f^{\varepsilon,\delta}}_{L^1(\mathbb{R}_v^3)}^\frac{3}{10}\norms{\nabla_{v}\sqrt{f^{\varepsilon,\delta}}}_{L^2(\mathbb{R}_v^3)}}_{L^1_{tx}}\\
        &\leq C\left(1+\frac{4}{3}\pi\norms{f^{\varepsilon,\delta}}_{L^{\infty}_{txv}}\right)^{\frac{1}{2}}\norms{(\theta_{\varepsilon}\ast u^{\varepsilon,\delta})\norms{|x|^{\frac{1}{2}}|v|^2f^{\varepsilon,\delta}}_{L^1(\mathbb{R}_v^3)}^\frac{3}{10}\norms{\nabla_{v}\sqrt{f^{\varepsilon,\delta}}}_{L^2(\mathbb{R}_v^3)}}_{L^1_{tx}}\\
		&\leq C\left(1+\frac{4}{3}\pi\norms{f^{\varepsilon,\delta}}_{L^{\infty}_{txv}}\right)^\frac12\norm{u^{\varepsilon,\delta}}_{L^2_tL^5_x}\norms{\nabla_{v}\sqrt{f^{\varepsilon,\delta}}}_{L^2_{txv}}\norms{|x|^{\frac{1}{2}}|v|^2f^{\varepsilon,\delta}}_{L^{\infty}_tL^1(\mathbb{R}^3_x\times\mathbb{R}_v^3)}^\frac{3}{10}\\
		&\leq C\norms{|x|^{\frac{3}{2}}f^{\varepsilon,\delta}}_{L^{\infty}_tL^1(\mathbb{R}^3_x\times\mathbb{R}_v^3)}^\frac{1}{10}\norms{|v|^3f^{\varepsilon,\delta}}_{L^{\infty}_tL^1(\mathbb{R}^3_x\times\mathbb{R}_v^3)}^\frac{1}{5},
	\end{align*}
    where
    \begin{align*}
        C=C\left(T,\norm{f_{in}}_{L^{\infty}_{xv}}, \norm{f_{in}}_{L^{1}_{xv}},\norms{|v|^3f_{in}}_{L^{1}_{xv}},\norm{u_{in}}^{2}_{L^{2}_{x}},\norm{f_{in}\log f_{in}}_{L^{1}_{xv}},\norms{|x|^2f_{in}}_{L^{1}_{xv}} \right).
    \end{align*}
    Due to \eqref{eq:esti}, \eqref{10} and \eqref{17}, it follows that
    \begin{align*}
        O_1\leq C,
    \end{align*}
    where
    \begin{align*}
        C=C\left(T,\norm{f_{in}}_{L^{\infty}_{xv}}, \norm{f_{in}}_{L^{1}_{xv}},\norms{|v|^3f_{in}}_{L^{1}_{xv}},\norm{u_{in}}^{2}_{L^{2}_{x}},\norm{f_{in}\log f_{in}}_{L^{1}_{xv}},\norms{|x|^2f_{in}}_{L^{1}_{xv}} \right).
    \end{align*}
	For \(O_3\), by H\"{o}lder's inequality and \eqref{21}, we have
	\begin{align*}
		O_3 &= \int_{0}^{T}\int_{ \mathbb{R}_x^3}\int_{\mathbb{R}_v^3}|x|^{\frac{3}{20}}\left|\gamma_{\delta}(v)\sqrt{f^{\varepsilon,\delta}}v\cdot\nabla_{v}\sqrt{f^{\varepsilon,\delta}}\right|dvdxds\\
		&\leq C\norms{\nabla_{v}\sqrt{f^{\varepsilon,\delta}}}_{L^2((0,T)\times\mathbb{R}^3_x\times\mathbb{R}^3_v)}\norms{|x|^{\frac{3}{10}}|v|^2f^{\varepsilon,\delta}}_{L^1((0,T)\times\mathbb{R}^3_x\times\mathbb{R}_v^3)}^\frac12\\
		&\leq CT^\frac12\norms{|x|^{\frac{9}{10}}f^{\varepsilon,\delta}}_{L^{\infty}_tL^1(\mathbb{R}^3_x\times\mathbb{R}_v^3)}^\frac{1}{6}\norms{|v|^3f^{\varepsilon,\delta}}_{L^{\infty}_tL^1(\mathbb{R}^3_x\times\mathbb{R}_v^3)}^\frac13,
	\end{align*}
    where
    \begin{align*}
        C=C\left(T,\norm{f_{in}}_{L^{\infty}_{xv}}, \norm{f_{in}}_{L^{1}_{xv}},\norms{|v|^2f_{in}}_{L^{1}_{xv}},\norm{u_{in}}^{2}_{L^{2}_{x}},\norm{f_{in}\log f_{in}}_{L^{1}_{xv}},\norms{|x|^2f_{in}}_{L^{1}_{xv}} \right).
    \end{align*}
    Due to \eqref{eq:esti}, \eqref{10} and \eqref{17}, it follows that
    \begin{align*}
        O_3\leq C,
    \end{align*}
    where
    \begin{align*}
        C=C\left(T,\norm{f_{in}}_{L^{\infty}_{xv}}, \norm{f_{in}}_{L^{1}_{xv}},\norms{|v|^3f_{in}}_{L^{1}_{xv}},\norm{u_{in}}^{2}_{L^{2}_{x}},\norm{f_{in}\log f_{in}}_{L^{1}_{xv}},\norms{|x|^2f_{in}}_{L^{1}_{xv}} \right).
    \end{align*}
	For \(O_2\) and \(O_4\), due to \(|v\cdot \nabla_{v}\gamma|\leq C\) and the positive of \(f^{\varepsilon,\delta}\), we obtain
	\begin{align*}
		O_2+O_4 \leq C\int_{0}^{T}\int_{ \mathbb{R}_x^3}\int_{\mathbb{R}_v^3}|x|^{\frac{3}{20}}f^{\varepsilon,\delta}dvdxds.
	\end{align*}
	Due to \eqref{eq:esti} and \eqref{17}, we derive
	\begin{align*}
	    O_2+O_4\leq C,
	\end{align*}
    where 
    \begin{align*}
        C=C\left(T,\norm{f_{in}}_{L^{1}_{xv}},\norms{|x|^2f_{in}}_{L^{1}_{xv}} \right).
    \end{align*}
    The following estimate holds for \(|x|\geq R\):
    \begin{align*}
        &\int_{0}^{T} \int_{|x|\geq R} \int_{\mathbb{R}^3_v} \left| \nabla_{v} \cdot \left[ \left( \theta_{\varepsilon} \ast u^{\varepsilon,\delta} - v\gamma_{\delta}(v) \right) f^{\varepsilon,\delta} \right] \right| dvdxds\\
        =&\int_{0}^{T} \int_{|x|\geq R} \int_{\mathbb{R}^3_v} |x|^{\frac{3}{20}}|x|^{-\frac{3}{20}}\left| \nabla_{v} \cdot \left[ \left( \theta_{\varepsilon} \ast u^{\varepsilon,\delta} - v\gamma_{\delta}(v) \right) f^{\varepsilon,\delta} \right] \right| dvdxds\\
        \leq&R^{-\frac{3}{20}}\int_{0}^{T} \int_{|x|\geq R} \int_{\mathbb{R}^3_v} |x|^{\frac{3}{20}}\left| \nabla_{v} \cdot \left[ \left( \theta_{\varepsilon} \ast u^{\varepsilon,\delta} - v\gamma_{\delta}(v) \right) f^{\varepsilon,\delta} \right] \right| dvdxds\\
        \leq&CR^{-\frac{3}{20}}
    \end{align*}
    which implies that
    \begin{align*}
        \sup_{\varepsilon,\delta}\int_{0}^{T}\int_{ |x|\geq R}\int_{\mathbb{R}^3_v}\left|\nabla_{v}\cdot\left[\left(\theta_{\varepsilon} \ast u^{\varepsilon,\delta}-v\gamma_
		\delta(v)\right)f^{\varepsilon,\delta}\right]\right|dvdxds \rightarrow 0,~\text{as}~R~\rightarrow \infty.
    \end{align*}
	% Finally, due to the partial diffusive transport operator
	% \begin{align*}
	% 	L_v\equiv \partial_t +v\cdot\nabla_{x}-\Delta_{v}
	% \end{align*}
	% is hypoelliptic, according to \(\eqref{eq:NSVFK0}_2\), \(f^{\varepsilon,\delta}\) is compact in \(L^1((0,T)\times\mathbb{R}^3_x\times\mathbb{R}^3_v)\). 
    The proof is complete.
	\end{proof}

	\subsection{\texorpdfstring{The strong convergence of \(|v|^{\kappa}f^{\varepsilon,\delta}\) in \(L^{1}((0,T)\times\mathbb{R}^3_x\times\mathbb{R}^3_v)\)}{The strong convergence of}}$ $
    
    It follows from  Lemma \ref{lem:fcompact} that the strong convergence of \(|v|^{\kappa}f^{\varepsilon,\delta}\) in \(L^{1}((0,T)\times\mathbb{R}^3_x\times\mathbb{R}^3_v)\) still holds.

	\begin{lemma}\label{lem:vfcompact}
		Under the assumptions stated in Theorem \ref{thm:solution}, we have the strong convergence
		\begin{equation}\label{eq:velocity_moment_convergence}
			|v|^p f^{\varepsilon,\delta} \to |v|^p f \quad \text{strongly in } L^1((0,T)\times\mathbb{R}^3_x\times\mathbb{R}^3_v), \quad 0\leq p<\kappa,
		\end{equation}
        as \(\varepsilon,\delta \rightarrow 0.\)
	\end{lemma}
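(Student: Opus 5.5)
The plan is to combine the strong convergence of Lemma \ref{lem:fcompact} with the uniform high-moment bound of Lemma \ref{lem:mf}, using a truncation in velocity. Fix $0\le p<\kappa$ and $R>0$, and split
\begin{align*}
\norms{|v|^p f^{\varepsilon,\delta}-|v|^p f}_{L^1_{txv}}
\le \norms{|v|^p (f^{\varepsilon,\delta}-f)\chi_{\{|v|\le R\}}}_{L^1_{txv}}
+\norms{|v|^p f^{\varepsilon,\delta}\chi_{\{|v|> R\}}}_{L^1_{txv}}
+\norms{|v|^p f\chi_{\{|v|> R\}}}_{L^1_{txv}} .
\end{align*}

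The first term is bounded by $R^p\norm{f^{\varepsilon,\delta}-f}_{L^1((0,T)\times\mathbb{R}^6_{xv})}$. By Lemma \ref{lem:fcompact}, along the chosen subsequence, this tends to $0$ for every fixed $R$.

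For the second term, use $|v|^p\le R^{p-\kappa}|v|^\kappa$ on $\{|v|>R\}$. This gives the bound
\[
R^{p-\kappa}\,T\,\sup_{\varepsilon,\delta}\norms{|v|^\kappa f^{\varepsilon,\delta}}_{L^\infty(0,T;L^1_{xv})}\le C(T)R^{p-\kappa},
\]
where the supremum is uniform in $(\varepsilon,\delta)$ by \eqref{31}. Here we need $\norms{|v|^\kappa f_{in}^\delta}_{L^1}\le \norms{|v|^\kappa f_{in}}_{L^1}$, which holds by \eqref{fdelta}. Since $p<\kappa$, this term is small once $R$ is large.

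For the third term we first need the limit $f$ to inherit the moment bound. Passing to a further subsequence, $f^{\varepsilon,\delta}\to f$ a.e. Fatou's lemma then gives $\norms{|v|^\kappa f}_{L^1((0,T)\times\mathbb{R}^6)}\le \liminf \norms{|v|^\kappa f^{\varepsilon,\delta}}_{L^1}\le C(T)$. The same Chebyshev-type bound as for the second term therefore applies to the third term.

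The conclusion follows by choosing $R$ first and then letting $\varepsilon,\delta\to0$. The only real obstacle is the uniformity in $(\varepsilon,\delta)$ of the $\kappa$-th moment bound. The paper already provides this through Lemma \ref{lem:ulp}, via Tao's decomposition, and Lemma \ref{lem:mf}. Given those, the truncation argument is routine. The strict inequality $p<\kappa$ is exactly what makes the tails uniformly integrable.
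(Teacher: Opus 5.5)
Your proof is correct. It uses the same two inputs as the paper: the strong $L^1$ convergence of $f^{\varepsilon,\delta}$ from Lemma \ref{lem:fcompact}, and the uniform bound \eqref{31} on $|v|^\kappa f^{\varepsilon,\delta}$. You combine them differently, though. The paper does no truncation. It applies H\"older's inequality with respect to the measure $|f^{\varepsilon,\delta}-f|\,ds\,dx\,dv$:
\[
\norms{|v|^p(f^{\varepsilon,\delta}-f)}_{L^1_{txv}}\le \norms{|v|^\kappa|f^{\varepsilon,\delta}-f|}_{L^1_{txv}}^{\frac{p}{\kappa}}\norms{f^{\varepsilon,\delta}-f}_{L^1_{txv}}^{\frac{\kappa-p}{\kappa}}.
\]
This gives the conclusion in one line, with an explicit rate in terms of $\norm{f^{\varepsilon,\delta}-f}_{L^1}$. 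Your split at $|v|=R$ with Chebyshev tail bounds is a hands-on version of the same interpolation. Optimizing $R^p\norm{f^{\varepsilon,\delta}-f}_{L^1}+CR^{p-\kappa}$ over $R$ recovers exactly the exponent $\frac{\kappa-p}{\kappa}$. Your write-up also has one advantage. The paper bounds $\norms{|v|^\kappa|f^{\varepsilon,\delta}-f|}_{L^1_{txv}}$ by a constant citing only Lemma \ref{lem:mf}, which controls $f^{\varepsilon,\delta}$ but not the limit $f$. Your Fatou step, giving $|v|^\kappa f\in L^1((0,T)\times\mathbb{R}^6_{xv})$, is exactly what is needed to justify that bound. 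The paper only establishes it later, when verifying (ii) of Definition \ref{def:weak}.
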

		\begin{proof}
			By H\"{o}lder's inequality and Lemma \ref{lem:mf}, it shows that
		\begin{align*}
			\norms{|v|^p(f^{\varepsilon,\delta}-f)}_{L^1_{txv}}&\leq \norms{|v|^{\kappa}\left|f^{\varepsilon,\delta}-f\right|}^{\frac{p}{\kappa}}_{L^1_{txv}}\norms{f^{\varepsilon,\delta}-f}^{\frac{\kappa-p}{\kappa}}_{L^1_{txv}}\\
			&\leq C\norms{f^{\varepsilon,\delta}-f}^{\frac{\kappa-p}{\kappa}}_{L^1_{txv}}.
		\end{align*}
		Hence, the proof is completed by Lemma \ref{lem:fcompact}.
		\end{proof}

        \begin{corollary}\label{lem:tpcompact}
	Under the assumptions stated in Theorem \ref{thm:solution}, the following strong convergence holds:
	\begin{equation*}
		|v|^p f^{\varepsilon,\delta} \to |v|^p f \quad \text{strongly in } L^q(0,T;L^1(\mathbb{R}^3_x\times\mathbb{R}^3_v))
	\end{equation*}
	for all exponents $p \in [0, \kappa)$ and $q\in[1,+\infty)$.
\end{corollary}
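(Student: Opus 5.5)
The plan is to upgrade the space--time $L^1$ convergence of Lemma \ref{lem:vfcompact} to $L^q$ in time with values in $L^1_{xv}$. The tool is interpolation between two facts: convergence of the time-integrated norm, and a uniform-in-time bound. Set
\[
g^{\varepsilon,\delta}(t):=\norms{|v|^p(f^{\varepsilon,\delta}-f)(t)}_{L^1_{xv}}.
\]
Lemma \ref{lem:vfcompact} gives $\int_0^T g^{\varepsilon,\delta}(t)\,dt\to0$ for every $p\in[0,\kappa)$. We therefore only need $\sup_{\varepsilon,\delta}\norm{g^{\varepsilon,\delta}}_{L^\infty(0,T)}<\infty$, after which the elementary inequality
\[
\int_0^T (g^{\varepsilon,\delta})^q\,dt\le \norm{g^{\varepsilon,\delta}}_{L^\infty(0,T)}^{q-1}\int_0^T g^{\varepsilon,\delta}\,dt
\]
yields the claim for every $q\in[1,\infty)$.

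For the uniform bound, Lemma \ref{lem:mf}, together with \eqref{5} for the case $p=0$ and the interpolation \eqref{27} for intermediate $p$, gives
\[
\sup_{\varepsilon,\delta}\norms{|v|^pf^{\varepsilon,\delta}}_{L^\infty(0,T;L^1_{xv})}\le C(T)\quad\text{for all } p\in[0,\kappa].
\]
For the limit $f$ we need the same bound. Since $f^{\varepsilon,\delta}\to f$ strongly in $L^1_{txv}$ (Lemma \ref{lem:fcompact}), a further subsequence converges a.e. in $(t,x,v)$. By Fubini, for a.e. $t$ a subsequence converges a.e. in $(x,v)$, and Fatou's lemma then gives $\norms{|v|^pf(t)}_{L^1_{xv}}\le\liminf\norms{|v|^pf^{\varepsilon,\delta}(t)}_{L^1_{xv}}\le C(T)$ for a.e. $t$. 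Consequently $g^{\varepsilon,\delta}(t)\le 2C(T)$ for a.e. $t$, uniformly in $\varepsilon,\delta$.

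The only step needing care is the Fatou argument, because the a.e.-convergent subsequence may depend on $t$. Two routes avoid this issue, and either suffices. One is to apply Fatou directly in $L^1_{txv}$ on $(t_0,t_0+h)\times\mathbb{R}^6$, which gives $\frac1h\int_{t_0}^{t_0+h}\norms{|v|^pf}_{L^1_{xv}}\le C(T)$, and then let $h\to0$ using Lebesgue's differentiation theorem. The other is weak-$*$ lower semicontinuity, since $|v|^pf^{\varepsilon,\delta}$ is bounded in $L^\infty(0,T;\mathcal M(\mathbb R^6))$ and the limit agrees with $|v|^pf$. With the uniform bound in hand, the interpolation above completes the proof for all $p\in[0,\kappa)$ and $q\in[1,\infty)$.
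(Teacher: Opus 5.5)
Your proof is correct and is essentially the paper's argument: the paper uses the same interpolation $\|g\|_{L^q(0,T)}\le\|g\|_{L^\infty(0,T)}^{(q-1)/q}\|g\|_{L^1(0,T)}^{1/q}$ with $g(t)=\norms{|v|^p(f^{\varepsilon,\delta}-f)(t)}_{L^1_{xv}}$, taking the uniform bound from Lemma \ref{lem:mf} and the $L^1_{txv}$ convergence from Lemma \ref{lem:vfcompact}. Where you go beyond the paper is in checking that the limit $f$ itself satisfies the $L^\infty(0,T;L^1_{xv})$ moment bound, which the paper uses without comment; this step is simpler than you feared, because by Fubini the single subsequence converging a.e.\ in $(t,x,v)$ already converges a.e.\ in $(x,v)$ for a.e.\ $t$, so a direct Fatou argument suffices.
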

\begin{proof}
    By an interpolation inequality for \(L^p\) norms, it follows that
    \begin{align}\label{lem:ftp}
        \norms{|v|^p\left(f^{\varepsilon,\delta}-f\right)}_{{L^q_tL^1_{xv}}}\leq \norms{|v|^p\left(f^{\varepsilon,\delta}-f\right)}^{\frac{q-1}{q}}_{L^\infty_tL^1_{xv}}\norms{|v|^p\left(f^{\varepsilon,\delta}-f\right)}_{L^1_{txv}}^\frac 1q ,\quad \forall~q\in [1,+\infty).
    \end{align}
   By Lemma \ref{eq:esti}, Lemma \ref{lem:mf} and Lemma \ref{lem:vfcompact}, we derive
    \begin{align*}
        \norms{|v|^p\left(f^{\varepsilon,\delta}-f\right)}_{{L^q_tL^1_{xv}}}\rightarrow0,
    \end{align*}
    where $q\in[1,+\infty)$.
\end{proof}

	\subsection{\texorpdfstring{The strong convergence of \(u^{\varepsilon,\delta}\) in \(L^{2}(0,T;L^q(\Omega_x))\)}{The strong convergence of}}$ $
    In this subsection, we consider the strong convergence of \(u^{\varepsilon,\delta}\) by the Aubin-Lions lemma.
	\begin{lemma}\label{lem:ul2}
% 		\( u^{\varepsilon,\delta} \rightarrow u \) strongly in \( L^{2}(0,T; L^{q}(\Omega_x
%         )) \)  and \(L^{\tilde{q}}((0,T)\times\Omega_x)\) for
% all \( q \in \left[1, 6\right) ,\tilde{q} \in \big[1,\frac{10}{3}\big)\) as \( \varepsilon, \delta \rightarrow 0 \):
There hold
 \begin{equation}\label{40}
        \left\{
        \begin{aligned}
		u^{\varepsilon,\delta}\rightarrow u\quad &\text{strongly in}~ L^{2}(0,T;L^q(\Omega_x))~\text{for} ~q \in \big[1,6\big)\\
        u^{\varepsilon,\delta}\rightarrow u\quad &\text{strongly in}~ L^{\tilde{q}}((0,T)\times\Omega_x)~\text{for} ~\tilde{q} \in \big[1,\frac{10}{3}\big)
        \end{aligned}
        \right.
    \end{equation}
    as \(\varepsilon,\delta \rightarrow 0.\)
	\end{lemma}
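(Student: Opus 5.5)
The plan is to apply the Aubin--Lions lemma on the bounded domain $\Omega_x$, using the triple $H^1(\Omega_x)\hookrightarrow\hookrightarrow L^q(\Omega_x)\hookrightarrow (H^{s}_0(\Omega_x))'$ for $q<6$ and a large $s$. The strong convergence in $L^{\tilde q}_{tx}$ will then follow by interpolation.

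\textbf{Spatial bound.} By Lemma \ref{pro:estimate}, $u^{\varepsilon,\delta}$ is bounded in $L^\infty(0,T;L^2)\cap L^2(0,T;\dot H^1)$. Hence it is bounded in $L^2(0,T;H^1(\Omega_x))$ uniformly in $\varepsilon,\delta$.

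\textbf{Time-derivative bound.} We need $\partial_t u^{\varepsilon,\delta}$ bounded in $L^1(0,T;(H^{s}_0(\Omega_x))')$ for some $s$; we take $s=3$, so that $H^3_0\subset W^{1,\infty}$. From the first equation of \eqref{eq:NSVFK0}, test against $\varphi\in H^3_0(\Omega_x)$ and write
\[
\partial_t u^{\varepsilon,\delta}=\Delta u^{\varepsilon,\delta}-\nabla\cdot\big((\theta_\varepsilon\ast u^{\varepsilon,\delta})\otimes u^{\varepsilon,\delta}\big)-\nabla P^{\varepsilon,\delta}+\theta_\varepsilon\ast\big(j^{\varepsilon,\delta}-(\theta_\varepsilon\ast u^{\varepsilon,\delta})n^{\varepsilon,\delta}\big).
\]
Each term is bounded in the weak sense as follows:
\begin{itemize}
\item[] $\Delta u^{\varepsilon,\delta}$ lies in $L^2(0,T;H^{-1})$.
\item[] The convective term lies in $L^1(0,T;W^{-1,1})$, since $\|(\theta_\varepsilon\ast u)\otimes u\|_{L^1_x}\le\|u\|_{L^2_x}^2$.
\item[] The pressure gradient lies in $L^{5/3}(0,T;W^{-1,5/3})$ by Lemma \ref{le:p}.
\item[] The friction term lies in $L^1(0,T;L^1)$, using Lemma \ref{lem:jnbound} for $j^{\varepsilon,\delta},n^{\varepsilon,\delta}$ and the bound $\|n(\theta_\varepsilon\ast u)\|_{L^1_{tx}}\le \|n\|_{L^\infty_tL^{5/3}_x}\|u\|_{L^2_tL^{5/2}_x}$ exactly as in the proof of Proposition \ref{pro:jnweak}.
\end{itemize}
All of these spaces embed into $L^1(0,T;(H^3_0(\Omega_x))')$. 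Consequently $\{u^{\varepsilon,\delta}\}$ is relatively compact in $L^2(0,T;L^q(\Omega_x))$ for every $q<6$, and for $q\in[1,2)$ this follows from H\"older since $\Omega_x$ is bounded. The limit is identified with $u$ from \eqref{01} by uniqueness of weak limits. A diagonal argument over an exhaustion of $\mathbb{R}^3$ by balls gives a single subsequence valid for every bounded $\Omega_x$.

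\textbf{Space--time convergence.} For $\tilde q<10/3$, interpolate
\[
\|u^{\varepsilon,\delta}-u\|_{L^{\tilde q}_{tx}}\le \|u^{\varepsilon,\delta}-u\|_{L^2_{tx}}^{\lambda}\,\|u^{\varepsilon,\delta}-u\|_{L^{10/3}_{tx}}^{1-\lambda}
\]
with $\lambda\in(0,1]$. The second factor is uniformly bounded by \eqref{utp26}, and the first tends to zero by the $L^2_tL^2_x$ case above.

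The main obstacle is the time-derivative estimate. One has to make sure the friction and pressure contributions are uniformly bounded independently of $\varepsilon,\delta$ in some negative Sobolev space; the bound for $\theta_\varepsilon\ast(n^{\varepsilon,\delta}\,\theta_\varepsilon\ast u^{\varepsilon,\delta})$ needs the $L^{5/3}$ control of $n^{\varepsilon,\delta}$ from Lemma \ref{lem:jnbound}. The version of Aubin--Lions with only $L^1$-in-time control of $\partial_t u$ (Simon's lemma) suffices here.
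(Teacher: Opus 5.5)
Your proof is correct. It differs from the paper's mainly in how the pressure is handled.

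The paper tests the momentum equation only against divergence-free $\chi$ supported in $\Omega_x$. This makes $\int P^{\varepsilon,\delta}\,\nabla\cdot\chi=0$, so no pressure estimate is needed. It bounds the convective term more sharply through $\|u^{\varepsilon,\delta}\|_{L^{8/3}_tL^4_x}$ from \eqref{utp26}, giving $\partial_t u^{\varepsilon,\delta}$ bounded in $L^{4/3}(0,T;H^{-1}(\Omega_x))$. The classical Aubin--Lions lemma then gives strong convergence in $L^2((0,T)\times\Omega_x)$, and only afterwards are all $q<6$ and $\tilde q<10/3$ recovered by H\"older and \eqref{utp26}.

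You instead test against all of $H^3_0(\Omega_x)$. This costs you the uniform pressure bound of Lemma \ref{le:p} and lands you in the much weaker dual space $(H^3_0(\Omega_x))'$. In exchange, choosing $Y=L^q(\Omega_x)$ yields the $L^2(0,T;L^q(\Omega_x))$ statement in one step.

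You do not actually need Simon's $L^1$-in-time variant. Each term you list lies in $L^r(0,T;(H^3_0(\Omega_x))')$ with $r>1$: the pressure term in $L^{5/3}$, the Laplacian and friction terms in $L^2$, the convective term in $L^\infty$. So the reflexive version the paper cites also applies.

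Your route is also more robust on one point. A bound tested only against compactly supported solenoidal fields controls $\partial_t u^{\varepsilon,\delta}$ only modulo gradients of harmonic functions on $\Omega_x$. The paper's local Aubin--Lions step therefore tacitly relies on the global divergence-free structure on $\mathbb{R}^3$, or on a pressure bound, to be airtight. Your bound is on the full time derivative, and $L^q(\Omega_x)\hookrightarrow(H^3_0(\Omega_x))'$ is injective, so the compactness lemma applies directly.
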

	\begin{proof}
		Note that
	\begin{align*}
		\partial_t u^{\varepsilon,\delta} &=- (\theta_{\varepsilon}\ast u^{\varepsilon,\delta})\cdot \nabla u^{\varepsilon,\delta}+ \Delta u^{\varepsilon,\delta} - \nabla_{x} P^{\varepsilon,\delta} +
		\theta_{\varepsilon}\ast\left(\int_{\mathbb{R}^3}(v-\theta_{\varepsilon}\ast u^{\varepsilon,\delta})f^{\varepsilon,\delta}dv\right)\\
		&=\sum^4_{i=1}J_i.
	\end{align*}
	For \(J_1\), by H\"{o}lder's inequality, \eqref{eq:esti} and \eqref{utp26}, we have
	\begin{align*}
		&-\int_{(0,T) \times \Omega_x}(\theta_{\varepsilon}\ast u^{\varepsilon,\delta})\cdot \nabla  u^{\varepsilon,\delta} \cdot \chi dxds\\
        &=\int_{(0,T) \times \Omega_x}(\theta_{\varepsilon}\ast u^{\varepsilon,\delta})\cdot \nabla_{x}\chi \cdot u^{\varepsilon,\delta}dxds\\
		&\leq \int^T_0\norm{\nabla_{x}\chi}_{L^2_x(\Omega_{x})}\norm{u^{\varepsilon,\delta}}^2_{L^4(\mathbb{R}^3_x)}ds\\
		&\leq \norm{\nabla_{x}\chi}_{L^4(0,T;L^2_x(\Omega_{x}))}\norm{ u^{\varepsilon,\delta}}^2_{ L^{\frac83}(0,T;L^4(\mathbb{R}^3_x))}\\
		&\leq C\norm{\nabla_{x}\chi}_{L^4(0,T;L^2_x(\Omega_{x}))}\left(\norm{\nabla u^{\varepsilon,\delta}}^2_{L^{2}((0,T)\times \mathbb{R}^3_x)}+\norm{u^{\varepsilon,\delta}}^2_{L^{\infty}(0,T;L^2(\mathbb{R}^3_x))}\right)\\
		&\leq C\norm{\nabla_{x}\chi}_{L^4(0,T;L^2(\Omega_x))},
	\end{align*}
	where  \(\chi \in C_c^{\infty}((0,T)\times\mathbb{R}^3_x)\cap L^4(0,T;H^1(\mathbb{R}^3_x))\), \(\supp \chi\subset\Omega_x\) and \(\nabla\cdot \chi =0\).\\
	For \( J_2 \), applying integration by parts, followed by H\"older's inequality, and then using the uniform bound \eqref{eq:esti}, we obtain:
	\begin{align*}
		\int_{0}^{T} \int_{\Omega_x} \Delta u^{\varepsilon,\delta} \chi dxds 
		&= -\int_{0}^{T} \int_{\Omega_x} \nabla u^{\varepsilon,\delta} : \nabla_x \chi dxds\\
		&\leq \|\nabla_x \chi\|_{L^2(0,T;L^2(\Omega_x))} \|\nabla u^{\varepsilon,\delta}\|_{L^2(0,T;L^2(\Omega_x))} \\
		&\leq C \|\nabla_x \chi\|_{L^2(0,T;L^2(\Omega_x))}.
	\end{align*}
	For \(J_3\), it follows from integration by parts that
	\begin{align*}	
		- \int_{(0,T) \times \Omega_x}\nabla_{x} P^{\varepsilon,\delta}	\cdot \chi dxds=\int_{(0,T) \times \Omega_x} P^{\varepsilon,\delta}	\nabla_{x}\cdot \chi dxds=0.
	\end{align*}
	For \(J_4\), by \eqref{eq:esti}, the embedding inequality \eqref{utp26} and \eqref{eq:thebound-n}, this yields
	\begin{align*}
		\int_{(0,T)\times R^3_v\times \Omega_{x}}&(\theta_{\varepsilon}\ast \chi)\cdot (v-\theta_{\varepsilon}\ast u^{\varepsilon,\delta})f^{\varepsilon,\delta} dvdxds\\
		&\leq \int_{(0,T) \times \Omega_x \times \mathbb{R}^3_v}|v|f^{\varepsilon,\delta}|\theta_{\varepsilon}\ast\chi| dvdxds+\int_{(0,T) \times \Omega_x \times \mathbb{R}^3_v}|\theta_{\varepsilon}\ast u^{\varepsilon,\delta}|f^{\varepsilon,\delta}|\theta_{\varepsilon}\ast\chi| dvdxds\\
		&\leq C\norm{\nabla u^{\varepsilon,\delta}}_{L^{2}(0,T;L^2(\mathbb{R}^3_x))}\norm{\chi}_{L^{2}(0,T;L^6(\mathbb{R}^3_x))}\norms{\int_{ \mathbb{R}_v^3}f^{\varepsilon,\delta}dv}_{L^{\infty}(0,T;L^\frac32(\mathbb{R}^3_x))}\\
		&\hspace{3cm}+C\norm{\chi}_{L^{2}(0,T;L^6(\mathbb{R}^3_x))}\norms{\int_{\mathbb{R}_v^3}|v|f^{\varepsilon,\delta}dv}_{L^{2}(0,T;L^\frac{6}{5}(\mathbb{R}^3_x))}\\
		&\leq C\norm{\chi}_{L^{2}(0,T;L^6(\mathbb{R}^3_x))}\leq C\norm{\chi}_{L^{2}(0,T;H^1(\Omega_x)}.
	\end{align*}
	Thus we get
	\begin{align*}
		\norm{\partial_tu^{\varepsilon,\delta}}_{L^\frac43(0,T;H^{-1}(\Omega_{x}))}\leq C,
	\end{align*}
	along with
	\begin{align*}
		\norm{u^{\varepsilon,\delta}}_{L^{2}(0,T;H^{1}(\Omega_{x}))}\leq C
	\end{align*}
	%\begin{align}
	%	\norm{\partial_tu^{\varepsilon,\delta}}_{L^\frac43(0,T;H^{-1}(\Omega_{x}))}\leq C,
	%\end{align}
	and the Aubin-Lions lemma yields
	\begin{align*}
		u^{\varepsilon,\delta}\rightarrow u\quad \text{strongly in}~ L^{2}((0,T)\times\Omega_x).
	\end{align*}
	Using H\"{o}lder's inequality and \eqref{utp26}, the proof of \eqref{40} is complete.
   
 %    for any \(q\in [\frac{6}{5},6)\). \qed\\

	% Furthermore, by applying H\"{o}lder's inequality twice, first in space and second in time (chapter 3 of \cite{tsaiLecturesNavierStokesEquations2018}), we  get
	% \begin{align}
	% 	\label{40}
	% 	u^{\varepsilon,\delta}\rightarrow u\quad \text{strongly in}~ L^{q}((0,T)\times\Omega_x)~\text{for} ~q \in \big[2,\frac{10}{3}\big)
	% \end{align}
 %    as \(\varepsilon,\delta \rightarrow 0.\)
	\end{proof}
	
	\subsection{\texorpdfstring{The strong convergence of \(f^{\varepsilon,\delta}\log f^{\varepsilon,\delta}\) in \(L^p((0,T)\times\mathbb{R}^3_x\times\mathbb{R}^3_v)\)}{The strong convergence of}}$ $
	\begin{lemma}\label{lem:flogfbound}
		Assume that \((u^{\varepsilon,\delta},f^{\varepsilon,\delta})\) is a weak solution to the NSVFP equation \eqref{eq:NSVFK0}, there holds
        \begin{equation*}
        \left\{
            \begin{aligned}
		 \norms{f^{\varepsilon,\delta}\log f^{\varepsilon,\delta}}_{L^p_{txv}}&\leq C_1(T+1)e^{6T} &&\text{as}~p\in(1,+\infty),\\
         \norms{f^{\varepsilon,\delta}\log f^{\varepsilon,\delta}}_{L^1_{txv}}&\leq C_2(T+1)^5e^{12T} &&\text{as}~p=1
	\end{aligned}
    \right.
        \end{equation*}
        and
        \begin{equation}\label{eq:log_sq_estimate}
        \left\{
            \begin{aligned}
		 \norms{f^{\varepsilon,\delta}(\log f^{\varepsilon,\delta})^2}_{L^p_{txv}}&\leq C_1(T+1)e^{6T} &&\text{as}~p\in(1,+\infty),\\
         \norms{f^{\varepsilon,\delta}(\log f^{\varepsilon,\delta})^2}_{L^1_{txv}}&\leq C_2(T+1)^5e^{12T} &&\text{as}~p=1,
	\end{aligned}
    \right.
        \end{equation}
        where 
        \begin{equation*}
            \left\{
            \begin{aligned}
            &C_1=C(\norms{f_{in}}_{L^1_{xv}},\norms{f_{in}}_{L^{\infty}_{xv}}),\\
            &C_2=C\left(\norm{f_{in}}_{L^{\infty}_{xv}}, \norm{f_{in}}_{L^{1}_{xv}},\norms{|v|^2f_{in}}_{L^{1}_{xv}},\norm{u_{in}}^{2}_{L^{2}_{x}}, \norm{f_{in}\log f_{in}}_{L^{1}_{xv}}\right).
            \end{aligned}
            \right.
        \end{equation*}\
	\end{lemma}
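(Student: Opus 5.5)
The plan is to split $\{f^{\varepsilon,\delta}\le 1\}$ from $\{f^{\varepsilon,\delta}>1\}$ and treat the two regions by different mechanisms. On $\{f>1\}$ (dropping superscripts) we have $0\le \log f\le \log\|f\|_{L^\infty_{txv}}$. Since $\|f\|_{L^\infty_{txv}}\le e^{3T}\|f_{in}\|_{L^\infty_{xv}}$ by \eqref{5}, this gives $\log f\le 3T+\log^+\|f_{in}\|_{L^\infty}$. Hence
\[
|f\log f|^p+|f(\log f)^2|^p\le C(1+T)^{2p} f^p .
\]
Integrating with \eqref{flp}, $\int_0^T\|f\|_{L^p_{xv}}^p\,dt\le Te^{3(p-1)T}\|f_{in}\|_{L^p}^p$, and $\|f_{in}\|_{L^p}$ is controlled by interpolating $L^1$ and $L^\infty$. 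This region therefore contributes $C_1(T+1)^{c}e^{cT}$ for every $p\ge1$. The displayed form $(T+1)e^{6T}$ then follows after absorbing polynomial factors into the exponential and taking the $p$-th root.

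On $\{f\le1\}$ with $p>1$, I use the elementary bounds $s|\log s|\le C_\eta s^{1-\eta}$ and $s(\log s)^2\le C_\eta s^{1-\eta}$ for $s\in(0,1]$, with $\eta>0$ chosen so that $p(1-\eta)\ge1$, for instance $\eta=(p-1)/(2p)$. This gives $|f\log f|^p\le C f^{p(1-\eta)}$. Because $f\le1$ on this set, $f^{p(1-\eta)}\le f$, so the integral is bounded by $\int_0^T\|f\|_{L^1}\le T\|f_{in}\|_{L^1}$. This closes the case $p\in(1,\infty)$ using only $\|f_{in}\|_{L^1}$ and $\|f_{in}\|_{L^\infty}$, which matches $C_1$.

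The case $p=1$ on $\{f\le1\}$ is the main obstacle, because $f|\log f|$ is not dominated by $f$ near $0$. Here I reuse the splitting from the proof of Proposition \ref{pro:xf} with the weight $\omega=(|x|^2+|v|^2)/8$:
\[
f|\log f|\chi_{f\le1}\le f\omega+Ce^{-\omega/2}.
\]
The term $f\omega$ is controlled by the second moments \eqref{eq:esti} and \eqref{17}; the Gaussian term is integrable. Alternatively, I can invoke \eqref{18} directly together with the entropy bound \eqref{nablavf}. Either route yields $\sup_t\int f|\log f|\le C_2(t+1)^4e^{12t}$, and integrating in time gives $(T+1)^5e^{12T}$. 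This is why $C_2$ involves $\|f_{in}\log f_{in}\|_{L^1}$ and the moments.

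For $f(\log f)^2$ at $p=1$ I run the same splitting with a stronger weight. On $\{e^{-\omega}\le f\le1\}$ I use $(\log f)^2\le\omega^2$, and on $\{f<e^{-\omega}\}$ I use $f(\log f)^2\le C\sqrt f\le Ce^{-\omega/2}$. This requires bounding $\int f\omega^2$, i.e.\ fourth moments in $x$ and $v$, which are not available under the stated assumptions. To avoid them, I would instead take a sublinear weight $\omega=\epsilon_0(1+|x|^2+|v|^2)^{1/2}$. Then $f\omega^2\lesssim f(1+|x|^2+|v|^2)$ is controlled by the second moments \eqref{eq:esti}, \eqref{17}, while $e^{-\omega/2}$ remains integrable on $\mathbb{R}^6$. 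This should give the same $(T+1)^5e^{12T}$-type bound, with the constant possibly depending also on $\||x|^2f_{in}\|_{L^1}$. That extra dependence is the only point where I expect the stated form of $C_2$ may need adjustment.
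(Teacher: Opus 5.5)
Your argument is correct and uses the same scheme as the paper: split by the size of $f$; control small values by boundedness of $s^{a}|\log s|$ plus the $L^1$ bound; control large values by the $L^p$/$L^\infty$ bounds \eqref{5}; and for $p=1$ use the weighted splitting from Proposition~\ref{pro:xf}.

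There are a few differences worth recording.

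\textbf{Large values of $f$.} The paper uses $\log f<f$ on $\{f>e\}$ together with the $L^{2p}$ bound, and this is where its factor $e^{6T}$ comes from. You use $\log f\le 3T+\log^+\|f_{in}\|_{L^\infty}$ instead, which trades exponential growth for a polynomial factor in $T$. Both are absorbed into $(T+1)e^{6T}$.

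\textbf{$f\log f$ at $p=1$.} Your direct route bounds $\{f>1\}$ by $\log^+\|f\|_{L^\infty}$ rather than by the entropy inequality \eqref{eq:13}. It therefore never uses $\|f_{in}\log f_{in}\|_{L^1}$, and it gives the better growth $(T+1)^3e^{4T}$. Only your second route, through \eqref{18} and \eqref{nablavf}, reproduces the paper's $(T+1)^5e^{12T}$ and its dependence on the initial entropy. So your remark about why $C_2$ contains $\|f_{in}\log f_{in}\|_{L^1}$ applies to that route only.

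\textbf{$f(\log f)^2$ at $p=1$.} The paper only writes ``similarly'' here, and your analysis is the right one. There is no signed identity analogous to $f|\log f|=f\log f-2f\log f\,\chi_{\{f\le1\}}$. A quadratic weight would require fourth moments, which are not available in $x$ and not in $v$ when $\kappa=3$. The sublinear weight $\omega=\epsilon_0(1+|x|^2+|v|^2)^{1/2}$ fixes this: it gives $f\omega^2\lesssim(1+|x|^2+|v|^2)f$ and an integrable $e^{-\omega/2}$, and this is what actually makes the case work.

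\textbf{Dependence on $\||x|^2f_{in}\|_{L^1}$.} This is not specific to your last step. It already enters your $f\log f$ argument through \eqref{17} or \eqref{18}. It enters the paper's proof the same way, because the constant in \eqref{18} contains $\||x|^2f_{in}\|_{L^1}$. So the stated $C_2$ is incomplete in the paper as well, not only in your argument.
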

	\begin{proof}
		Setting \(Q=(0,T)\times\mathbb{R}^3_{x}\times\mathbb{R}^3_{v}\), we have
	\begin{align}\label{l1l2}
		&\hspace{0.5cm}\int_Q |f^{\varepsilon,\delta}\log f^{\varepsilon,\delta}|^pdxdvds\nonumber\\
        &=\int_{Q\cap\{{f^{\varepsilon,\delta}}>e\}} |f^{\varepsilon,\delta}\log f^{\varepsilon,\delta}|^pdxdvds+\int_{Q\cap\{{f^{\varepsilon,\delta}}\leq e\}} |f^{\varepsilon,\delta}\log f^{\varepsilon,\delta}|^pdxdvds\\
		&=L_1+L_2\nonumber
	\end{align}
	for any \(p\in (1,+\infty)\).\\
	For \( L_1 \), the inequality 
	\begin{equation*}\label{eq:log_estimate}
		\log f^{\varepsilon,\delta} < f^{\varepsilon,\delta} \quad \text{for} \quad f^{\varepsilon,\delta} > e
	\end{equation*}
	leads to the following \( L^p \)-estimate:
	\begin{align*}
		\int_{Q \cap \{f^{\varepsilon,\delta} > e\}} |f^{\varepsilon,\delta} \log f^{\varepsilon,\delta}|^p dxdvds		&\leq \int_{Q \cap \{f^{\varepsilon,\delta} > e\}} |f^{\varepsilon,\delta}|^{2p} dxdvds\\
		&\leq \|f^{\varepsilon,\delta}\|^{2p}_{L^{2p}(Q)}.
	\end{align*}
	For \(L_2\), due to \(\left|(f^{\varepsilon,\delta})^{1-\frac1p}\log f^{\varepsilon,\delta}\right| \leq \frac{p}{(p-1)e}\), it shows that
	\begin{align*}
		\int_{Q\cap\{{f^{\varepsilon,\delta}}\leq e\}} |f^{\varepsilon,\delta}\log f^{\varepsilon,\delta}|^pdxdvds \leq C\norms{f^{\varepsilon,\delta}}^p_{L^1((0,T)\times\mathbb{R}^3_x\times\mathbb{R}^3_z)}.
	\end{align*}
	% where \(\kappa\) is an arbitrarily small positive number satisfying \(p-p\kappa = 1\).
	To sum up, we derive
	\begin{align*}
		\norm{f^{\varepsilon,\delta}\log f^{\varepsilon,\delta}}^{p}_{L^{p}\big((0,T)\times\mathbb{R}^3_{x}\times\mathbb{R}^3_v\big)}&\leq C\left(\norm{f^{\varepsilon,\delta}}^{2p}_{L^{2p}\big((0,T)\times\mathbb{R}^3_{x}\times\mathbb{R}^3_v\big)}+\norm{f^{\varepsilon,\delta}}^p_{L^1\big((0,T)\times\mathbb{R}^3_{x}\times\mathbb{R}^3_v\big)}\right).\\
		% &\leq C_1\norm{f^{\varepsilon,\delta}}^{2p}_{L^{\infty}\big((0,T)\times\mathbb{R}^3_{x}\times\mathbb{R}^3_v\big)}+C_2\norm{f^{\varepsilon,\delta}}^{p(1-\kappa)}_{L^{\infty}\big((0,T)\times\mathbb{R}^3_x\times\mathbb{R}^3_v\big)}.
	\end{align*}
	By \eqref{5} and interpolation inequality for \(L^p\)-norms, it follows
	\begin{align*}
		 \norm{f^{\varepsilon,\delta}\log f^{\varepsilon,\delta}}_{L^{p}\big((0,T)\times\mathbb{R}^3_{x}\times\mathbb{R}^3_v\big)}&\leq \left(\norm{f^{\varepsilon,\delta}}^{2p}_{L^{2p}\big((0,T)\times\mathbb{R}^3_{x}\times\mathbb{R}^3_v\big)}+\norm{f^{\varepsilon,\delta}}^p_{L^{1}\big((0,T)\times\mathbb{R}^3_{x}\times\mathbb{R}^3_v\big)}\right)^{\frac{1}{p}}\\
         % &\leq C \left(\norm{f_{in}}^{2}_{L^{2p}\big(\mathbb{R}^3_{x}\times\mathbb{R}^3_v\big)}+\norm{f_{in}}^{1-\kappa}_{L^{p(1-\kappa)}\big(\mathbb{R}^3_{x}\times\mathbb{R}^3_v\big)}\right)\\
         % &\leq C \bigg(\norm{f_{in}}^{\frac{1}{p}}_{L^{1}\big(\mathbb{R}^3_{x}\times\mathbb{R}^3_v\big)}\norm{f_{in}}^{\frac{2p-1}{p}}_{L^{\infty}\big(\mathbb{R}^3_{x}\times\mathbb{R}^3_v\big)}\\
         % &\hspace{4cm}+\norm{f_{in}}^{\frac{1}{p}}_{L^{1}\big(\mathbb{R}^3_{x}\times\mathbb{R}^3_v\big)}\norm{f_{in}}^{1-\kappa-\frac{1}{p}}_{L^{\infty}\big(\mathbb{R}^3_{x}\times\mathbb{R}^3_v\big)}\bigg)\\
        &\leq C(\norms{f_{in}}_{L^1_{xv}},\norms{f_{in}}_{L^{\infty}_{xv}})(T+1)e^{6T} ,
        % &\leq C_1\norm{f^{\varepsilon,\delta}}^{2}_{L^{\infty}\big((0,T)\times\mathbb{R}^3_{x}\times\mathbb{R}^3_v\big)}+C_2\norm{f^{\varepsilon,\delta}}^{1-\kappa}_{L^{\infty}\big((0,T)\times\mathbb{R}^3_{x}\times\mathbb{R}^3_v\big)}\\
		% &\leq Ce^{6T}\left(\norm{f_{in}}^{2}_{L^{\infty}\big((0,T)\times\mathbb{R}^3_x\times\mathbb{R}^3_v\big)}+\norm{f_{in}}^{1-\kappa}_{L^{\infty}\big((0,T)\times\mathbb{R}^3_x\times\mathbb{R}^3_v\big)}\right)
	\end{align*}
for any \(p\in (1,+\infty)\).
 %   \end{align*}
% , where
%     \begin{align*}
%         C_1&=max\left\{1,\norm{f_{in}}_{L^{1}\big(\mathbb{R}^3_{x}\times\mathbb{R}^3_v\big)}\right\},\\ C_2&=max\left\{1,\norm{f_{in}}^2_{L^{\infty}\big(\mathbb{R}^3_{x}\times\mathbb{R}^3_v\big)}\right\},\\
%         C_3&=max\left\{1,\norm{f_{in}}^{1-\kappa}_{L^{\infty}\big(\mathbb{R}^3_{x}\times\mathbb{R}^3_v\big)}\right\}.
    Thus we have
	\begin{align*}
		\norms{f^{\varepsilon,\delta}\log f^{\varepsilon,\delta}}_{L^{\infty}_{txv}}\leq C(\norms{f_{in}}_{L^1_{xv}},\norms{f_{in}}_{L^{\infty}_{xv}})(T+1)e^{6T}  .
	\end{align*}
For $p = 1$, the estimates \eqref{18} and \eqref{eq:13} yield
\begin{align*}
    \norms{f^{\varepsilon,\delta}\log f^{\varepsilon,\delta}}_{L^1_{txv}}\leq C_2(T+1)^5e^{12T},
\end{align*}
where \(C_2=C\left(\norm{f_{in}}_{L^{\infty}_{xv}}, \norm{f_{in}}_{L^{1}_{xv}},\norms{|v|^2f_{in}}_{L^{1}_{xv}},\norm{u_{in}}^{2}_{L^{2}_{x}}, \norm{f_{in}\log f_{in}}_{L^{1}_{xv}}\right).\) Similarly, \eqref{eq:log_sq_estimate} also can be obtained.
% Following the preceding argument, the estimate
% 	\begin{equation}\label{eq:log_sq_estimate}
% 		f^{\varepsilon,\delta}(\log f^{\varepsilon,\delta})^2 \in L^p((0,T)\times\mathbb{R}^3_x\times\mathbb{R}^3_v)
% 	\end{equation}
% 	holds for all $p \in (1,+\infty]$.

% \begin{equation*}
% 	f^{\varepsilon,\delta}\log f^{\varepsilon,\delta} \in L^1((0,T)\times\mathbb{R}^3_x\times\mathbb{R}^3_v).
% \end{equation*}
\end{proof}

	 \begin{lemma}\label{lem:flogfcompact}
	 	\(f^{\varepsilon,\delta}\log f^{\varepsilon,\delta} \to f\log f\) converges strongly in \(L^p((0,T)\times\mathbb{R}^3_{x}\times\mathbb{R}^3_{v})\) for \(p\in (1,+\infty)\).
	 \end{lemma}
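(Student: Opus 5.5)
Throughout, write $Q=(0,T)\times\mathbb{R}^3_x\times\mathbb{R}^3_v$. We combine the strong convergence of Lemma \ref{lem:fcompact} with the uniform higher-integrability bounds of Lemma \ref{lem:flogfbound}, and close with a Vitali-type argument.

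\textbf{Step 1 (a.e. convergence).} By Lemma \ref{lem:fcompact}, after passing to a subsequence, $f^{\varepsilon,\delta}\to f$ in $L^1(Q)$ and also a.e. in $Q$. The map $s\mapsto s\log s$ (set equal to $0$ at $s=0$) is continuous on $[0,\infty)$. Hence $f^{\varepsilon,\delta}\log f^{\varepsilon,\delta}\to f\log f$ a.e. in $Q$.

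\textbf{Step 2 (reduction to uniform integrability).} Fix $p\in(1,\infty)$ and pick $q\in(p,\infty)$. Lemma \ref{lem:flogfbound} gives
$$\|f^{\varepsilon,\delta}\log f^{\varepsilon,\delta}\|_{L^q(Q)}\le C,$$
uniformly in $\varepsilon,\delta$. Fatou's lemma then puts $f\log f$ in $L^q(Q)$ as well. Set $g^{\varepsilon,\delta}=|f^{\varepsilon,\delta}\log f^{\varepsilon,\delta}-f\log f|^p$. This family is bounded in $L^{q/p}(Q)$ with $q/p>1$, so it is uniformly integrable on sets of small measure. Since $Q$ has infinite measure, Vitali's theorem additionally needs tightness: the integral of $g^{\varepsilon,\delta}$ over the complement of a large bounded set must be small, uniformly in $\varepsilon,\delta$.

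\textbf{Step 3 (tightness; the main obstacle).} We split according to the size of $f^{\varepsilon,\delta}$, as in the proof of Lemma \ref{lem:flogfbound}.

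On the set $\{f^{\varepsilon,\delta}\le e\}$ we use the pointwise bound
$$|f\log f|^p\le C_r f^{r}\quad\text{for any } r\in(1,p).$$
With $r$ close to $1$, $\int f^{r}$ is controlled by $\|f\|_{L^\infty}^{r-1}\int f$. Restricted to $\{|x|\ge R\}\cup\{|v|\ge R\}$, this is at most $C\int_{\{|x|\ge R\}\cup\{|v|\ge R\}} f$. That quantity tends to $0$ uniformly by \eqref{32} and the $|x|^2$ moment bound \eqref{17}, since $\int_{|x|\ge R}f\le R^{-2}\||x|^2f\|_{L^1}$.

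On the set $\{f^{\varepsilon,\delta}>e\}$, the quantity $|f\log f|^p$ is bounded by $\|f\|_{L^\infty}^{2p-1}f$. The same moment tails therefore control it.

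The same bounds apply to the limit $f$, because its moments are inherited via Fatou. Combining these with Steps 1 and 2, Vitali's theorem gives $\int_Q g^{\varepsilon,\delta}\to0$, which is the claim.

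We could instead avoid Vitali altogether: interpolate $L^p$ between $L^1(Q)$ convergence and the uniform $L^q$ bound, provided $L^1$ convergence of $f^{\varepsilon,\delta}\log f^{\varepsilon,\delta}$ is known. Obtaining that $L^1$ convergence on all of $Q$ requires the same tail control as Step 3. So whichever route is taken, tightness is where the moment estimates \eqref{17}, \eqref{32} and Lemma \ref{lem:mf} are essential.
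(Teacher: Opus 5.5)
Your proof is correct, but it takes a different route from the paper's.

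\textbf{The paper's route.} The paper never invokes Vitali or tightness. It splits $Q$ according to whether $f^{\varepsilon,\delta}<1$ or $f^{\varepsilon,\delta}>1$, and on the first set also according to whether $f^{\varepsilon,\delta}\gtrless f$. On each piece it writes $f^{\varepsilon,\delta}\log f^{\varepsilon,\delta}-f\log f$ as $(f^{\varepsilon,\delta}-f)\log f^{\varepsilon,\delta}+f\log(f^{\varepsilon,\delta}/f)$, or the symmetric version. It then uses three facts: $\log(1+y)\le y$, the boundedness of $s^{p-1}|\log s|^p$ on $[0,1]$, and the uniform $L^\infty$ bound. Together these give the pointwise estimate $|f^{\varepsilon,\delta}\log f^{\varepsilon,\delta}-f\log f|^p\le C\bigl(|f^{\varepsilon,\delta}-f|+|f^{\varepsilon,\delta}-f|^p\bigr)$. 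Integrating over $Q$ and applying Lemma \ref{lem:fcompact} finishes the proof.

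This amounts to the fact that $s\mapsto s\log s$ is $\tfrac1p$-H\"older on $[0,M]$ when $p>1$. The resulting estimate is quantitative. It needs no a.e.\ convergence and no subsequence extraction. It also uses no moment bounds beyond those already spent in proving Lemma \ref{lem:fcompact}.

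\textbf{Your route.} You combine a.e.\ convergence, uniform integrability from Lemma \ref{lem:flogfbound}, and tightness from \eqref{17} and \eqref{32}, then apply Vitali. This is valid but heavier. It also needs the usual sub-subsequence remark to pass from your a.e.-convergent subsequence back to the full family.

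\textbf{Your closing remark overstates the role of the moments.} Your own Step 3 bound $|s\log s|^p\le C_M s$ on $[0,M]$ already gives
$\int_{Q\setminus K}|f^{\varepsilon,\delta}\log f^{\varepsilon,\delta}|^p\le C\int_{Q\setminus K}f+C\|f^{\varepsilon,\delta}-f\|_{L^1(Q)}$.
So for $p>1$, tightness follows directly from the global $L^1(Q)$ convergence of $f^{\varepsilon,\delta}$. In fact Vitali can be dropped entirely, as in the paper. Also, Lemma \ref{lem:mf} plays no role anywhere in your argument.

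Explicit tail control becomes genuinely necessary only at $p=1$, where $|s\log s|$ is not bounded by $Cs$ near $0$. This is why the paper proves $L^1$ convergence of $f^{\varepsilon,\delta}\log f^{\varepsilon,\delta}$ only on $(0,T)\times\Omega_x\times\mathbb{R}^3_v$ (Lemma \ref{lem:flogfcompact1}). There it uses the bound on $f^{\varepsilon,\delta}(\log f^{\varepsilon,\delta})^2$ together with \eqref{32}.
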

	\begin{proof}
		We divide the following inequality into four parts as follows. The difference of logarithmic terms admits the decomposition:
		\begin{align*}
			\int_{Q} & |f^{\varepsilon,\delta}\log f^{\varepsilon,\delta} - f\log f|^p \,dx\,dv\,ds \\
			&\leq \int_{\substack{Q\cap\{f^{\varepsilon,\delta}<1\} \cap \{f^{\varepsilon,\delta}>f\}}} |f^{\varepsilon,\delta}\log f^{\varepsilon,\delta} - f\log f|^p \,dx\,dv\,ds \\
			&\quad + \int_{\substack{Q\cap\{f^{\varepsilon,\delta}<1\} \cap \{f^{\varepsilon,\delta}<f\}}} |f^{\varepsilon,\delta}\log f^{\varepsilon,\delta} - f\log f|^p \,dx\,dv\,ds \\
			&\quad + C\int_{Q\cap\{f^{\varepsilon,\delta}>1\}} |(f^{\varepsilon,\delta}-f)\log f^{\varepsilon,\delta}|^p \,dx\,dv\,ds \\
			&\quad + C\int_{Q\cap\{f^{\varepsilon,\delta}>1\}} |f(\log f^{\varepsilon,\delta}-\log f)|^p \,dx\,dv\,ds \\
			&:= K_1 + K_2 + K_3 + K_4.
		\end{align*}
	For \(K_1\), it shows that
	\begin{align*}
		K_1=&\int_{Q\cap\{f^{\varepsilon,\delta}<1\}\cap \{f^{\varepsilon,\delta}>f\}}\left|(f^{\varepsilon,\delta}-f)\log f^{\varepsilon,\delta}+f\log\frac{f^{\varepsilon,\delta}}{f}\right|^pdxdvds\\
		\leq& C\int_{Q\cap\{f^{\varepsilon,\delta}<1\}\cap \{f^{\varepsilon,\delta}>f\}}|(f^{\varepsilon,\delta}-f)\log f^{\varepsilon,\delta}|^pdxdvds\\
        &+C\int_{Q\cap\{f^{\varepsilon,\delta}<1\}\cap \{f^{\varepsilon,\delta}>f\}}\left|f\log\frac{f^{\varepsilon,\delta}}{f}\right|^pdxdvds\\
		:= &K_{11}+K_{12}.
	\end{align*}
	For \(K_{11}\), as the proof  of \(L_2\) in \eqref{l1l2}, this yields
	\begin{align*}
		K_{11}
        % &\leq C\int_{Q\cap\{f^{\varepsilon,\delta}<1\}\cap \{f^{\varepsilon,\delta}>f\}}|(f^{\varepsilon,\delta}-f)(f^{\varepsilon,\delta})^{-\kappa}|^pdxdvds\\
		&\leq C\int_{Q\cap\{f^{\varepsilon,\delta}<1\}\cap \{f^{\varepsilon,\delta}>f\}}|f^{\varepsilon,\delta}-f|dxdvds\\
		&\leq C\norm{f^{\varepsilon,\delta}-f}_{L^{1}((0,T)\times\mathbb{R}^3_{x}\times\mathbb{R}^3_{v})}.
	\end{align*}
	For \(K_{12}\), by the inequality \(\log(1+\tilde{f})\leq \tilde{f}\) when \(\tilde{f}>-1\), we obtain
	\begin{align*}
		K_{12}=&\int_{Q\cap\{f^{\varepsilon,\delta}<1\}\cap \{f^{\varepsilon,\delta}>f\}}|f\log(1+\frac{f^{\varepsilon,\delta}-f}{f})|^pdxdvds\\
		\leq& \int_{Q\cap\{f^{\varepsilon,\delta}<1\}\cap \{f^{\varepsilon,\delta}>f\}}|f^{\varepsilon,\delta}-f|^pdxdvds\\
		\leq& \norm{f^{\varepsilon,\delta}-f}^{p}_{L^{p}((0,T)\times\mathbb{R}^3_{x}\times\mathbb{R}^3_{v})}.
	\end{align*}
	For \(K_2\), it follows that
	\begin{align*}
		K_2&= \int_{Q\cap\{f^{\varepsilon,\delta}<1\}\cap \{f^{\varepsilon,\delta}<f\}}\left|(f-f^{\varepsilon,\delta})\log f+f^{\varepsilon,\delta}\log\frac{f}{f^{\varepsilon,\delta}}\right|^pdxdvds\\
		&\leq C\int_{Q\cap\{f^{\varepsilon,\delta}<1\}\cap \{f^{\varepsilon,\delta}<f\}}|(f-f^{\varepsilon,\delta})\log f|^pdxdvds\\
        &\hspace{0.5cm}+C\int_{Q\cap\{f^{\varepsilon,\delta}<1\}\cap \{f^{\varepsilon,\delta}<f\}}\left|f^{\varepsilon,\delta}\log\frac{f}{f^{\varepsilon,\delta}}\right|^pdxdvds\\
		&=C\int_{Q\cap\{f^{\varepsilon,\delta}<1\}\cap \{f^{\varepsilon,\delta}<f\}\cap \{f<1\}}|(f-f^{\varepsilon,\delta})\log f|^pdxdvds\\
		&\hspace{0.5cm}+C\int_{Q\cap\{f^{\varepsilon,\delta}<1\}\cap \{f^{\varepsilon,\delta}<f\}\cap \{f\geq1\}}|(f-f^{\varepsilon,\delta})\log f|^pdxdvds\\
		&\hspace{0.5cm}+C\int_{Q\cap\{f^{\varepsilon,\delta}<1\}\cap \{f^{\varepsilon,\delta}<f\}}\left|f^{\varepsilon,\delta}\log\frac{f}{f^{\varepsilon,\delta}}\right|^pdxdvds\\
		&:=K_{21}+K_{22}+K_{23}.
	\end{align*}
	For \(K_{21}\), as the proof  of \(L_2\) in \eqref{l1l2}, it becomes
	\begin{align*}
		K_{21}&\leq C\int_{Q\cap\{f^{\varepsilon,\delta}<1\}\cap \{f^{\varepsilon,\delta}<f\}\cap \{f<1\}}|f-f^{\varepsilon,\delta}|^{p(1-\kappa)}dxdvds\\
		&\leq C\norm{f^{\varepsilon,\delta}-f}^{p(1-\kappa)}_{L^{p(1-\kappa)}((0,T)\times\mathbb{R}^3_{x}\times\mathbb{R}^3_{v})}.
	\end{align*}
	For \(K_{22}\), we get
	\begin{align*}
		K_{22} &\leq C\int_{Q\cap\{f^{\varepsilon,\delta}<1\}\cap \{f^{\varepsilon,\delta}<f\}\cap \{f\geq1\}}|(f^{\varepsilon,\delta}-f)f|^pdxdvds\\
		&\leq C\norm{f}^{p}_{L^{\infty}((0,T) \times \mathbb{R}^3_x\times \mathbb{R}^3_v)}\norm{f^{\varepsilon,\delta}-f}^{p}_{L^{p}((0,T)\times\mathbb{R}^3_{x}\times\mathbb{R}^3_{v})}.
	\end{align*}
	For \(K_{23}\), as the proof  in \(K_{12}\), there holds
	\begin{align*}
		K_{23} &\leq C\int_{Q\cap\{f^{\varepsilon,\delta}<1\}\cap \{f^{\varepsilon,\delta}<f\}}|f-f^{\varepsilon,\delta}|^pdxdvds\\
		&\leq C\norm{f^{\varepsilon,\delta}-f}^{p}_{L^{p}((0,T)\times\mathbb{R}^3_{x}\times\mathbb{R}^3_{v})}.
	\end{align*}
For $K_3$, the following estimates hold:
\begin{align*}
	K_{3} &\leq C\int_{Q\cap\{f^{\varepsilon,\delta}>1\}}|(f^{\varepsilon,\delta}-f)f^{\varepsilon,\delta}|^pdxdvds\\
	&\leq C\norm{f^{\varepsilon,\delta}}^{p}_{L^{\infty}((0,T) \times \mathbb{R}^3_x\times \mathbb{R}^3_v)}\norm{f^{\varepsilon,\delta}-f}^{p}_{L^{p}((0,T)\times\mathbb{R}^3_{x}\times\mathbb{R}^3_{v})}.
\end{align*}
For $K_4$, we have
\begin{align*}
	K_4 &= \int_{Q\cap\{f^{\varepsilon,\delta}>1\}\cap\{f^{\varepsilon,\delta}>f\}}\left|f\log\left(1+\frac{f^{\varepsilon,\delta}-f}{f}\right)\right|^p \,dx\,dv\,ds\\
    &\hspace{0.5cm}+ \int_{Q\cap\{f^{\varepsilon,\delta}>1\}\cap\{f^{\varepsilon,\delta}<f\}}\left|f\log\left(1+\frac{f-f^{\varepsilon,\delta}}{f^{\varepsilon,\delta}}\right)\right|^p \,dx\,dv\,ds\\
	&\leq \int_{Q\cap\{f^{\varepsilon,\delta}>1\}\cap\{f^{\varepsilon,\delta}>f\}}|f^{\varepsilon,\delta}-f|^p \,dx\,dv\,ds+\int_{Q\cap\{f^{\varepsilon,\delta}>1\}\cap\{f^{\varepsilon,\delta}<f\}}\left|f\left(\frac{f-f^{\varepsilon,\delta}}{f^{\varepsilon,\delta}}\right)\right|^p \,dx\,dv\,ds\\
	&\leq C\|f^{\varepsilon,\delta}-f\|^{p}_{L^{p}(Q)}.
\end{align*}
	To sum up, by Lemma \ref{lem:fcompact}, this lemma is proven.
	\end{proof}

\begin{lemma}\label{lem:flogf2bound}
	Let $\Omega_{x} \subset \mathbb{R}^3_x$ be a bounded domain. The following uniform bound holds:
	\begin{equation*}
		\|f^{\varepsilon,\delta}(\log f^{\varepsilon,\delta})^2\|_{L^1((0,T)\times\Omega_{x}\times\mathbb{R}^3_v)} \leq C,
	\end{equation*}
	where $C$ is independent of $\varepsilon$ and $\delta$.
\end{lemma}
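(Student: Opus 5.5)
We split the integrand according to the size of $f^{\varepsilon,\delta}$, as in \eqref{l1l2}: on $\{f^{\varepsilon,\delta}\geq 1\}$ and on $\{f^{\varepsilon,\delta}<1\}$. This is the same decomposition used to prove Lemma \ref{lem:flogfbound}. The difference is that we integrate over the bounded domain $\Omega_x$ and keep the whole of $\mathbb{R}^3_v$, so the bound has to come from moment and entropy controls and not from an $L^1$ bound on $f(\log f)^2$ over all of $\mathbb{R}^6$.

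\textbf{The large set $\{f^{\varepsilon,\delta}\geq 1\}$.} By \eqref{5}, $1\le f^{\varepsilon,\delta}\le e^{3T}\|f_{in}\|_{L^\infty_{xv}}$ there, so $(\log f^{\varepsilon,\delta})^2\le C(T)$. Hence $f^{\varepsilon,\delta}(\log f^{\varepsilon,\delta})^2\le C(T) f^{\varepsilon,\delta}$ on this set. Integrating and using $\|f^{\varepsilon,\delta}\|_{L^\infty_tL^1_{xv}}\le\|f_{in}\|_{L^1_{xv}}$ gives a bound of the form $C(T)\,T\,\|f_{in}\|_{L^1_{xv}}$, uniformly in $\varepsilon,\delta$.

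\textbf{The small set $\{f^{\varepsilon,\delta}<1\}$.} Here the danger is that $(\log f)^2$ blows up where $f$ is tiny, and the $v$-domain is unbounded. We use the comparison trick from the proof of \eqref{18}, with weight $\omega=\omega(v)=\frac{|v|^2}{8}$, or $\omega=\frac{|v|^2+|x|^2}{8}$ if extra room is needed.
\begin{itemize}
\item On $\{e^{-\omega}\le f^{\varepsilon,\delta}<1\}$ we have $(\log f^{\varepsilon,\delta})^2\le\omega^2$. So the integrand is bounded by $C|v|^4 f^{\varepsilon,\delta}$, which is integrable uniformly by Lemma \ref{lem:mf} as long as $\kappa\ge 4$. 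If only $\kappa\ge 3$ is available, we instead take $\omega=|v|^{3/2}/8$, so the integrand is bounded by $C|v|^3f^{\varepsilon,\delta}$ and Lemma \ref{lem:3f} applies.
\item On $\{f^{\varepsilon,\delta}<e^{-\omega}\}$ we use $s(\log s)^2\le C s^{1/2}$ for $s\in(0,1)$. This gives $f^{\varepsilon,\delta}(\log f^{\varepsilon,\delta})^2\le C e^{-\omega/2}$. The function $e^{-\omega(v)/2}$ is integrable over $\mathbb{R}^3_v$, and $\Omega_x$ and $(0,T)$ are bounded, so the integral over $(0,T)\times\Omega_x\times\mathbb{R}^3_v$ is at most $C T|\Omega_x|$.
\end{itemize}

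Summing the three pieces gives the claim, with a constant depending only on $T$, $|\Omega_x|$ and the initial norms appearing in \eqref{5} and Lemmas \ref{lem:3f} and \ref{lem:mf}.

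The main point to check is the middle region. The power of $|v|$ produced by $\omega^2$ must be covered by the uniform moment bounds. This is why the weight must depend only on $v$, or on $v$ together with the bounded variable $x$: that lets the spatial cutoff to $\Omega_x$ make the tail term finite without needing any decay in $x$.
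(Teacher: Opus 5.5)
Your argument is correct, but it follows a different route from the paper's.

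\textbf{The paper's route.} The paper applies the moment interpolation \eqref{8}, with $\alpha=0$ and $\beta=1$, directly to $g=f^{\varepsilon,\delta}(\log f^{\varepsilon,\delta})^2$. It then bounds $g\le f^3$ on $\{f>e\}$ and $g\lesssim f^{3/4}$ on $\{f\le e\}$, using $|f^{1/8}\log f|\le 8/e$. Finally it applies H\"older in $v$ against $(1+|v|)^{-4}$ and H\"older in $(t,x)$. This reduces everything to $\||v|f^{\varepsilon,\delta}\|_{L^1}$ and $\|(1+|v|)^{8/3}f^{\varepsilon,\delta}\|_{L^1}$, and the latter is controlled by \eqref{10}. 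That route stays inside the interpolation machinery used throughout the paper and needs no choice of weight. However, the outer powers $3/4$ and $9/16$ are sublinear, so the finite measure of $(0,T)\times\Omega_x$ is used essentially. It also needs a moment of order $8/3>2$.

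\textbf{What your route buys.} Your pointwise comparison with the threshold $e^{-\omega}$ is the device from the proof of \eqref{18}, and it is more elementary. It needs neither \eqref{8} nor an $L^\infty$ bound on $f(\log f)^2$, and $\Omega_x$ enters only through $|\Omega_x|$ in the tail term. It is also more flexible than your closing remark suggests:
\begin{itemize}
\item Any weight $\omega=c|v|^a$ with $a>0$ works. In particular $\omega=|v|/8$ already closes the argument using only the second moment from \eqref{eq:esti}, so neither Lemma \ref{lem:3f} nor Lemma \ref{lem:mf} is needed.
\item Take $\omega=(|x|+|v|)/8$ and add the bound \eqref{17} on $|x|^2f^{\varepsilon,\delta}$. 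The same three-piece estimate then works on all of $(0,T)\times\mathbb{R}^3_x\times\mathbb{R}^3_v$, with tail term $\lesssim e^{-(|x|+|v|)/16}\in L^1(\mathbb{R}^6)$. This is exactly the global $p=1$ case of \eqref{eq:log_sq_estimate}, which the paper asserts only by ``similarly''.
\end{itemize}
So the weight need not be confined to $v$ or to a bounded $x$-region; a term linear in $|x|$ suffices.
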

	\begin{proof}
		By \eqref{8}, it shows that
	\begin{align*}
		\int_{(0,T) \times \Omega_x \times \mathbb{R}^3_v}&f^{\varepsilon,\delta}(\log f^{\varepsilon,\delta})^2dvdxds \\
		&\leq \left(1+\frac{4}{3}\pi\norms{f^{\varepsilon,\delta}(\log f^{\varepsilon,\delta})^2}_{L^{\infty}_{txv}}\right)\int_{(0,T) \times \Omega_x}\left(\int_{  \mathbb{R}^3_v}|v|f^{\varepsilon,\delta}(\log f^{\varepsilon,\delta})^2dv\right)^\frac34dxds.
	\end{align*}
	For \(\left(\int_{  \mathbb{R}^3_v}|v|f^{\varepsilon,\delta}(\log f^{\varepsilon,\delta})^2dv\right)^\frac34\), due to
    \begin{align}\label{38}
        \left|f^\frac18\log f\right|\leq \frac8e,
    \end{align}
	% \begin{align}
	% 	\label{38}
	% 	\lim_{f^{\varepsilon,\delta}\rightarrow 0} (f^{\varepsilon,\delta})^{\frac18}\log f^{\varepsilon,\delta}=0,
	% \end{align}
	we find
	\begin{align*}
		&\hspace{0.5cm}\int_{(0,T) \times \Omega_x}\left(\int_{  \mathbb{R}^3_v}|v|f^{\varepsilon,\delta}(\log f^{\varepsilon,\delta})^2dv\right)^\frac34dxds\\
		 &= 	\int_{(0,T) \times \Omega_x}\left(\int_{  \mathbb{R}^3_v \cap \{f^{\varepsilon,\delta}>e\}}|v|f^{\varepsilon,\delta}(\log f^{\varepsilon,\delta})^2dv+\int_{  \mathbb{R}^3_v \cap \{f^{\varepsilon,\delta} \leq e\}}|v|f^{\varepsilon,\delta}(\log f^{\varepsilon,\delta})^2dv\right)^\frac34dxds\\
		% & \leq C\int_{(0,T) \times \Omega_x}\left(\int_{  \mathbb{R}^3_v \cap \{f^{\varepsilon,\delta}>e\}}|v|(f^{\varepsilon,\delta})^3dv+\int_{  \mathbb{R}^3_v \cap \{f^{\varepsilon,\delta} \leq e\}}|v|(f^{\varepsilon,\delta})^\frac34dv\right)^\frac34dxds\\
		&\leq C\int_{(0,T) \times \Omega_x}\left(\int_{  \mathbb{R}^3_v}|v|(f^{\varepsilon,\delta})^3dv\right)^\frac34dxds+C\int_{(0,T) \times \Omega_x}\left(\int_{  \mathbb{R}^3_v}|v|(f^{\varepsilon,\delta})^\frac34dv\right)^\frac34dxds\\
		&:= k_1+k_2.
	\end{align*}
	For \(k_1\), by H\"{o}lder's inequality, it becomes that
	\begin{align*}
		k_1 \leq C\norms{f^{\varepsilon,\delta}}_{L^{\infty}((0,T)\times\Omega_{x}\times\mathbb{R}^3_v)}^\frac32\norms{|v|f^{\varepsilon,\delta}}_{L^1((0,T)\times\Omega_{x}\times\mathbb{R}^3_v)}^\frac34.
	\end{align*}
	For \(k_2\), we have
	\begin{align*}
		k_2\leq \int_{(0,T) \times \Omega_x}\left(\int_{  \mathbb{R}^3_v}\frac{1}{1+|v|}(|v|+1)^2(f^{\varepsilon,\delta})^\frac34dv\right)^\frac34dxds,
	\end{align*}
	and by H\"{o}lder's inequality about \(v\), it becomes
	\begin{align*}
		&\leq \int_{(0,T) \times \Omega_x}\left(\int_{  \mathbb{R}^3_v}\frac{1}{(1+|v|)^4}dv\right)^{\frac{3}{16}}\left(\int_{  \mathbb{R}^3_v}(1+|v|)^{\frac83}f^{\varepsilon,\delta}dv\right)^{\frac{9}{16}}dxds\\
		&\leq C\left(\int_{(0,T) \times \Omega_x \times \mathbb{R}^3_v}(1+|v|)^{\frac83}f^{\varepsilon,\delta}dvdxds\right)^{\frac{9}{16}}.
	\end{align*}
	By \eqref{5} and \eqref{10}, we derive
	\begin{align*}
		\norms{f^{\varepsilon,\delta}(\log f^{\varepsilon,\delta})^2}_{L^1((0,T)\times\Omega_{x}\times\mathbb{R}^3_v)}\leq C.
	\end{align*}
	\end{proof}

	\begin{lemma}\label{lem:flogfcompact1}
		\(f^{\varepsilon,\delta}\log f^{\varepsilon,\delta} \rightarrow f\log f\) strongly in \(L^1((0,T)\times\Omega_{x}\times\mathbb{R}^3_v)\) as \(\varepsilon,\delta \rightarrow 0\).
	\end{lemma}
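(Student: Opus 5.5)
The natural route is Vitali's convergence theorem on the set $Q_\Omega:=(0,T)\times\Omega_x\times\mathbb{R}^3_v$, which has infinite measure in $v$. We therefore need three ingredients: convergence in measure, uniform integrability, and tightness in $v$.

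\emph{Convergence in measure.} By Lemma \ref{lem:fcompact} we may extract a subsequence with $f^{\varepsilon,\delta}\to f$ a.e. on $Q_\Omega$. Since $s\mapsto s\log s$ is continuous on $[0,\infty)$ (with value $0$ at $s=0$), it follows that $f^{\varepsilon,\delta}\log f^{\varepsilon,\delta}\to f\log f$ a.e. as well.

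\emph{Uniform integrability.} This comes from Lemma \ref{lem:flogf2bound}. On a measurable $E\subset Q_\Omega$, split according to whether $|\log f^{\varepsilon,\delta}|\le M$ or $|\log f^{\varepsilon,\delta}|>M$:
\begin{align*}
\int_E |f^{\varepsilon,\delta}\log f^{\varepsilon,\delta}|
\le M\int_E f^{\varepsilon,\delta} + \frac1M \int_{Q_\Omega} f^{\varepsilon,\delta}(\log f^{\varepsilon,\delta})^2 .
\end{align*}
The second term is at most $C/M$, uniformly in $\varepsilon,\delta$. The first term is controlled by the uniform $L^\infty$ bound \eqref{5} together with $|E|$, or alternatively by the equi-integrability of $f^{\varepsilon,\delta}$ coming from its strong $L^1$ convergence.

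\emph{Tightness in $v$.} This is the main point, because the domain is unbounded in $v$. We must show
\begin{align*}
\sup_{\varepsilon,\delta}\int_{(0,T)\times\Omega_x\times\{|v|\ge R\}}|f^{\varepsilon,\delta}\log f^{\varepsilon,\delta}|\,dxdvds\to0 \quad\text{as } R\to\infty.
\end{align*}
Where $f^{\varepsilon,\delta}\ge1$, we have $|\log f^{\varepsilon,\delta}|\le \log(e^{3T}\|f_{in}\|_{L^\infty})$, so that part is bounded by a constant times $R^{-3}\||v|^3f^{\varepsilon,\delta}\|_{L^1}$, using \eqref{10}. Where $f^{\varepsilon,\delta}<1$, the small-$f$ logarithm must be controlled. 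We use $|f\log f|\le C_\eta f^{1-\eta}$ and Hölder in $v$ with a weight:
\begin{align*}
\int_{|v|\ge R} (f^{\varepsilon,\delta})^{1-\eta}dv \le \Big(\int_{|v|\ge R}|v|^{-\frac{3(1-\eta)}{\eta}}dv\Big)^{\eta}\Big(\int |v|^{3}f^{\varepsilon,\delta}dv\Big)^{1-\eta}.
\end{align*}
For small $\eta$ (say $\eta<1/2$) the weight integral is finite and tends to $0$ as $R\to\infty$. Integrating over $(0,T)\times\Omega_x$ and applying Hölder in $x$ on the bounded set $\Omega_x$, the $|v|^3f$ factor is controlled by \eqref{10}. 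This is exactly the mechanism of the $k_2$ estimate in Lemma \ref{lem:flogf2bound}.

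With these three facts, Vitali's theorem on $Q_\Omega$ gives the strong $L^1$ convergence. For a cleaner write-up, one can instead show directly that $\|f^{\varepsilon,\delta}\log f^{\varepsilon,\delta}-f\log f\|_{L^1}$ over $\{|v|<R\}$ tends to zero. This follows from Lemma \ref{lem:flogfcompact} with, say, $p=2$, plus Hölder on the bounded set $(0,T)\times\Omega_x\times B_R$, since $L^2$ convergence on a finite-measure set gives $L^1$ convergence there. The tail $\{|v|\ge R\}$ is then handled by the tightness bound above, which is uniform in $\varepsilon,\delta$ and, by Fatou's lemma, also holds for the limit $f$. Letting first $\varepsilon,\delta\to0$ and then $R\to\infty$ finishes the proof.
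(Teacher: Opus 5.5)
Your proof is correct. It follows the paper's overall strategy: Vitali's theorem on $(0,T)\times\Omega_x\times\mathbb{R}^3_v$, fed by a.e.\ convergence, small-set equi-integrability and tightness in $v$. The difference is in where Lemma \ref{lem:flogf2bound} is used.

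The paper gets tightness in one line from Cauchy--Schwarz,
\[
\int_{(0,T)\times\Omega_x\times\{|v|\ge R\}} f^{\varepsilon,\delta}|\log f^{\varepsilon,\delta}|
\le \Big(\int_{(0,T)\times\Omega_x\times\{|v|\ge R\}} f^{\varepsilon,\delta}\Big)^{1/2}\Big(\int_{(0,T)\times\Omega_x\times\mathbb{R}^3_v} f^{\varepsilon,\delta}(\log f^{\varepsilon,\delta})^2\Big)^{1/2},
\]
combining the $v$-tail bound \eqref{32} with Lemma \ref{lem:flogf2bound}. It gets equi-integrability simply from the uniform $L^\infty$ bound on $f^{\varepsilon,\delta}\log f^{\varepsilon,\delta}$.

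You do the reverse: you use Lemma \ref{lem:flogf2bound} for equi-integrability. You prove tightness directly, via the split $f\ge 1$ / $f<1$ and a weighted H\"older estimate of $(f^{\varepsilon,\delta})^{1-\eta}$ against $|v|^3 f^{\varepsilon,\delta}$. In effect you inline the $k_2$ mechanism from the proof of Lemma \ref{lem:flogf2bound} into the tail estimate.

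What each buys:
\begin{itemize}
\item Your tightness bound is self-contained: it needs only \eqref{5}, \eqref{10} and the boundedness of $\Omega_x$, and it gives an explicit decay rate in $R$.
\item The paper's version is shorter once Lemma \ref{lem:flogf2bound} is available.
\end{itemize}

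Your equi-integrability step does more work than needed. Since $0\le f^{\varepsilon,\delta}\le e^{3T}\|f_{in}\|_{L^\infty}$, the function $|f^{\varepsilon,\delta}\log f^{\varepsilon,\delta}|$ is uniformly bounded, so $\int_E |f^{\varepsilon,\delta}\log f^{\varepsilon,\delta}|\le C|E|$ directly. With that simplification your argument avoids Lemma \ref{lem:flogf2bound} entirely.

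Your alternative ending is also valid: convergence on $\{|v|<R\}$ from Lemma \ref{lem:flogfcompact} plus Hölder, the uniform tail bound, and Fatou to pass that bound to $f$. It simply replaces the appeal to Vitali's theorem with its direct proof.
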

	 \begin{proof}
	 	Firstly, by H\"{o}lder's inequality, it follows that
	\begin{align*}
		\int_{(0,T) \times \Omega_x}&\int_{|v|\geq R} f^{\varepsilon,\delta}|\log f^{\varepsilon,\delta}|dvdxds \leq\\ &\left(\int_{(0,T) \times \Omega_x}\int_{|v|\geq R} f^{\varepsilon,\delta}dvdxds\right)^\frac12\left(\int_{(0,T) \times \Omega_x}\int_{|v|\geq R} f^{\varepsilon,\delta}(\log f^{\varepsilon,\delta})^2dvdxds\right)^\frac12.
	\end{align*}
	Under the result of  \eqref{32}, we derive
	\begin{align}
		\label{35}
		\sup_{\varepsilon,\delta}\int_{0}^{T}\int_{\Omega_{x}}\int_{|v|\geq R}f^{\varepsilon,\delta}|\log f^{\varepsilon,\delta}|dvdxds\rightarrow 0,~\text{as}~R\rightarrow \infty.
	\end{align}	
	Then, by Lemma \ref{lem:flogfcompact}, one has
	\begin{align*}
		f^{\varepsilon,\delta}\log f^{\varepsilon,\delta} \rightarrow f\log f~\text{a.e. on}~(0,T)\times\Omega_{x}\times\mathbb{R}^3_v.
	\end{align*}
    
	Finally, we prove \(\int_{(0,T) \times \Omega_x \times \mathbb{R}^{3}_{v}}f^{\varepsilon,\delta}\log f^{\varepsilon,\delta}dvdxds\) is equi-integrable. Due to \eqref{35}, this yields the followings: For \(\forall \varepsilon_1>0, \exists R>0\), while \(|v|>R\), for \(\forall \varepsilon,\delta\), \( s.t.\)
	\begin{align}
		\label{36}
		\sup_{\varepsilon,\delta}\int_{0}^{T}\int_{\Omega_x}\int_{|v|\geq R}f^{\varepsilon,\delta}\left|\log f^{\varepsilon,\delta}\right|dvdxds < \varepsilon_1
	\end{align} holds.
	For any \(E \subset (0,T)\times\Omega_{x}\times\mathbb{R}^3_v\), if \(E \subset (0,T)\times\Omega_{x}\times B_R\), 
    \(\mu(E) < \frac{\varepsilon_1}{\norms{f^{\varepsilon,\delta}\log f^{\varepsilon,\delta}}_{L^{\infty}_{txv}}}\), then
	\begin{align}
		\label{37}
		\sup_{\varepsilon,\delta}\int_{E}f^{\varepsilon,\delta}\left|\log f^{\varepsilon,\delta}\right|dvdxds \leq \norms{f^{\varepsilon,\delta}\log f^{\varepsilon,\delta}}_{L^{\infty}_{txv}} \mu(E) < \varepsilon_1
	\end{align}
	holds. 
%     If \[E \cap (0,T)\times\Omega_{x}\times B^c_R =\varnothing,\] by \eqref{36}, it is obvious that \eqref{37} holds. If \[E \cap \big((0,T)\times\Omega_{x}\times B^c_R\big) \neq \varnothing,\]
% we write \(E = E_1 \cup E_2\) with \(E_1 \subset (0,T)\times\Omega_{x}\times B_R\) 
% and \(E_2 \cap \big((0,T)\times\Omega_{x}\times B^c_R\big) = \varnothing\), 
% which implies that~\eqref{37} still holds.
 By the Lebesgue-Vitali theorem (Lemma \ref{A.6}), we prove this lemma.
	 \end{proof}

\begin{lemma}\label{lem:vflogf compact}
	Under the assumptions stated in Theorem \ref{thm:solution}, 
    it holds
    \begin{align}
        \norms{|v|^{q}f^{\varepsilon,\delta}\log f^{\varepsilon,\delta}}_{L^1((0,T)\times\Omega_{x}\times\mathbb{R}^3_v)} \leq C
    \end{align}
    with \(q\in [0,\frac{\kappa}{2}]\). Moreover, the following strong convergence holds:
	\begin{equation*}
		|v|^p f^{\varepsilon,\delta}\log f^{\varepsilon,\delta} \to |v|^p f\log f \quad \text{strongly in } L^1((0,T)\times\Omega_x\times\mathbb{R}^3_v)
	\end{equation*}
	for all exponents $p \in [0, \frac{\kappa}{2})$.
\end{lemma}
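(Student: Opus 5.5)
The uniform bound comes from Cauchy--Schwarz against the quantities already controlled. For $q\in[0,\frac{\kappa}{2}]$ we write
\begin{align*}
\int_{(0,T)\times\Omega_x\times\mathbb{R}^3_v}|v|^{q}f^{\varepsilon,\delta}|\log f^{\varepsilon,\delta}|\,dvdxds
\leq \Big(\int |v|^{2q}f^{\varepsilon,\delta}\Big)^{\frac12}\Big(\int f^{\varepsilon,\delta}(\log f^{\varepsilon,\delta})^2\Big)^{\frac12}.
\end{align*}
Both integrals are over $(0,T)\times\Omega_x\times\mathbb{R}^3_v$.

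\begin{itemize}
\item Since $2q\le\kappa$, the first factor is bounded by $T\,\||v|^{\kappa}f^{\varepsilon,\delta}\|_{L^\infty_tL^1_{xv}}+T\|f^{\varepsilon,\delta}\|_{L^\infty_tL^1_{xv}}$. This is uniformly bounded by Lemma \ref{lem:mf} and \eqref{5}.
\item The second factor is uniformly bounded by Lemma \ref{lem:flogf2bound}.
\end{itemize}
This gives the first claim with $C$ independent of $\varepsilon,\delta$.

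For the strong convergence with $p\in[0,\frac{\kappa}{2})$, we use the Lebesgue--Vitali theorem (Lemma \ref{A.6}), as in Lemma \ref{lem:flogfcompact1}. Almost everywhere convergence of $|v|^pf^{\varepsilon,\delta}\log f^{\varepsilon,\delta}$ to $|v|^pf\log f$ follows, after extracting a subsequence, from Lemma \ref{lem:flogfcompact}, since $|v|^p$ is a fixed continuous weight. Equi-integrability splits into two parts.

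\begin{itemize}
\item \emph{Tails in $v$.} On $\{|v|\ge R\}$ we repeat the Cauchy--Schwarz step with the weight $|v|^{2p}$. This gives
\begin{align*}
\int_{(0,T)\times\Omega_x\times\{|v|\ge R\}}|v|^pf^{\varepsilon,\delta}|\log f^{\varepsilon,\delta}|
\le C\Big(\int_{(0,T)\times\Omega_x\times\{|v|\ge R\}}|v|^{2p}f^{\varepsilon,\delta}\Big)^{\frac12}.
\end{align*}
Because $2p<\kappa$, we have $|v|^{2p}\le R^{2p-\kappa}|v|^{\kappa}$ on $\{|v|\ge R\}$. Lemma \ref{lem:mf} then bounds the right-hand side by $CR^{(2p-\kappa)/2}\to0$, uniformly in $\varepsilon,\delta$.
\item \emph{Small sets inside a ball.} On $(0,T)\times\Omega_x\times B_R$, the weight satisfies $|v|^p\le R^p$. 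The function $f^{\varepsilon,\delta}\log f^{\varepsilon,\delta}$ is uniformly bounded in $L^\infty_{txv}$ by Lemma \ref{lem:flogfbound}. Together with \eqref{5}, this gives $\int_E|v|^pf^{\varepsilon,\delta}|\log f^{\varepsilon,\delta}|\le CR^p\mu(E)$ for any measurable set $E$, so small sets carry uniformly small mass.
\end{itemize}
The domain $(0,T)\times\Omega_x\times B_R$ has finite measure, so Vitali applies there. Combining it with the uniform tail bound yields $L^1$ convergence on all of $(0,T)\times\Omega_x\times\mathbb{R}^3_v$.

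Alternatively, the convergence can be obtained by interpolation, as in Lemma \ref{lem:vfcompact}. For $p<p'<\frac{\kappa}{2}$, Hölder's inequality gives $\||v|^{p}g\|_{L^1}\le\||v|^{p'}g\|_{L^1}^{p/p'}\|g\|_{L^1}^{1-p/p'}$ with $g=f^{\varepsilon,\delta}\log f^{\varepsilon,\delta}-f\log f$. The first factor is bounded by the uniform bound just proved, applied to both $f^{\varepsilon,\delta}$ and the limit $f$ via Fatou. The second factor tends to $0$ by Lemma \ref{lem:flogfcompact1}. This route is shorter, and I would present it.

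The main obstacle is that the limit $f$ must satisfy the same weighted bound $\||v|^{p'}f\log f\|_{L^1}\le C$. I would get this from Fatou's lemma applied to the almost everywhere convergent sequence $|v|^{p'}f^{\varepsilon,\delta}|\log f^{\varepsilon,\delta}|$. The other point needing care is that $p$ stays strictly below $\frac{\kappa}{2}$: this strict inequality is what lets us choose $p'$ between $p$ and $\frac{\kappa}{2}$, and in the Vitali route it is what makes the tail factor $R^{(2p-\kappa)/2}$ decay.
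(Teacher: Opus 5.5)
Your proposal is correct and follows the paper's proof. The uniform bound is the same Cauchy--Schwarz step combined with Lemma \ref{lem:mf} and Lemma \ref{lem:flogf2bound}, and the convergence is the same H\"older interpolation against Lemma \ref{lem:flogfcompact1}; the paper interpolates directly with the endpoint weight $|v|^{\kappa/2}$ (allowed since the bound holds for $q=\frac{\kappa}{2}$), so your intermediate $p'$ is unnecessary but harmless, and your Fatou argument for the weighted bound on the limit $f\log f$ supplies a step the paper uses without comment.
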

	\begin{proof}
	The first step is to establish the bound
	\begin{equation*}
		\left\||v|^{q} f^{\varepsilon,\delta} \log f^{\varepsilon,\delta} \right\|_{L^1((0,T)\times\Omega_{x}\times\mathbb{R}^3_v)} \leq C.
	\end{equation*}
	 Using H\"{o}lder's inequality, it becomes
	\begin{align*}
		&\hspace{0.5cm}\norms{|v|^{q}f^{\varepsilon,\delta}\log f^{\varepsilon,\delta}}_{L^1((0,T)\times\Omega_{x}\times\mathbb{R}^3_v)} \\
        &\leq 	\norms{|v|^{2q}f^{\varepsilon,\delta}}^\frac12_{L^1((0,T)\times\Omega_{x}\times\mathbb{R}^3_v)}	\norms{f^{\varepsilon,\delta}(\log f^{\varepsilon,\delta})^2}^\frac12_{L^1((0,T)\times\Omega_{x}\times\mathbb{R}^3_v)}.
	\end{align*}
	By Lemma \ref{lem:mf} and Lemma \ref{lem:flogf2bound}, it follows that
	\begin{align*}
		&\norms{|v|^p(f^{\varepsilon,\delta}\log f^{\varepsilon,\delta}-f\log f)}_{L^1_{txv}}\\
		&\leq \norms{|v|^{\frac{\kappa}{2}}\left|f^{\varepsilon,\delta}\log f^{\varepsilon,\delta}-f\log f\right|}^{\frac{2p}{\kappa}}_{L^1_{txv}}\norms{f^{\varepsilon,\delta}\log f^{\varepsilon,\delta}-f\log f}^{\frac{\kappa-2p}{\kappa}}_{L^1_{txv}}\\
		&\leq C\norms{f^{\varepsilon,\delta}\log f^{\varepsilon,\delta}-f\log f}^{\frac{\kappa-2p}{\kappa}}_{L^1_{txv}}.
	\end{align*}
	By Lemma~\ref{lem:flogfcompact1}, the lemma is proved.
	\end{proof}

	\section{Verification of the conditions in \texorpdfstring{$i)-iv)$}{i)-iv)} of Definition \ref{def:weak}.}
    
	\par In this section, we will verify the conditions in \texorpdfstring{$i)-iv)$}{i)-iv)} in Definition \ref{def:weak} and show the global energy inequality that is satisfied for \(\varepsilon,\delta\rightarrow 0\). Firstly, let us review some necessary estimates in Section 3. By Proposition \ref{pro:estimate}, Lemma \ref{lem:deltaf}, Lemma \ref{lem:3f}, Proposition \ref{pro:xf} and Lemma \ref{le:p}, we have
    \begin{equation}\label{eq:uniform norms of f}
        \begin{aligned}
            &\norm{f^{\varepsilon,\delta}(t,x,v)}_{L^{\infty}(0,t;L^{1}(\mathbb{R}^{3}_x\times \mathbb{R}^{3}_v))}+\norm{f^{\varepsilon,\delta}(t,x,v)}_{L^{\infty}((0,t)\times\mathbb{R}^{3}_x\times \mathbb{R}^{3}_v)}\\
        &+\norm{\nabla_{v}f^{\varepsilon,\delta}}^2_{L^{2}((0,t)\times \mathbb{R}^3_{x}\times \mathbb{R}^3_{v})}+\norms{|v|^2f^{\varepsilon,\delta}}_{L^{\infty}(0,t;{L^{1}(\mathbb{R}^{3}_x\times\mathbb{R}^{3}_v)})}+\norms{|v|^3f^{\varepsilon,\delta}}_{L^{\infty}(0,t;L^1(\mathbb{R}^3_x\times\mathbb{R}^3_v))}\\
        &+\norms{f^{\varepsilon,\delta}\log f^{\varepsilon,\delta}}_{L^{\infty}\left(0,t;L^1\left(\mathbb{R}^3_x\times\mathbb{R}^3_v\right)\right)}+\norm{\nabla u^{\varepsilon,\delta}}^{2}_{L^{2}(0,t;{L^{2}(\mathbb{R}^{3}_x)})}+\norm{u^{\varepsilon,\delta}}^{2}_{L^{\infty}(0,t;{L^{2}(\mathbb{R}^{3}_x)})}\\
        &\hspace{2cm}+\norms{P^{\varepsilon,\delta}}_{L^\frac53((0,t)\times \mathbb{R}^3_x \times \mathbb{R}^3_v)}\leq C\left(t+1\right)^3 \exp\left[C(t+1)^3e^{4t}\right],
        \end{aligned}
    \end{equation}
    where
    \begin{align*}
        C=C\left(\norm{f_{in}}_{L^{\infty}_{xv}}, \norm{f_{in}}_{L^{1}_{xv}},\norms{|v|^3f_{in}}_{L^{1}_{xv}},\norm{u_{in}}^{2}_{L^{2}_{x}},\norm{f_{in}\log f_{in}}_{L^{1}_{xv}},\norms{|x|^2f_{in}}_{L^{1}_{xv}} \right).
    \end{align*}
	Furthermore, applying Tao's method   to the a priori estimates of the Navier-Stokes equations, the following estimate is obtained:
	\ben\label{eq:uniform norms of f1}
		\|u^{\varepsilon,\delta}\|_{L^1(0,t; L^{\kappa+3}(\mathbb{R}^3_x))} + \left\||v|^\kappa f^{\varepsilon,\delta}\right\|_{L^{\infty}(0,t; L^1(\mathbb{R}^3_x \times \mathbb{R}^3_v))} \leq C(t+1)^{\frac{3\kappa+12}{2}}e^{C(\kappa)(t+1)^\frac{14}{5}e^{4t}},
	\een
	where \(\kappa \in [3, +\infty)\) and
    \begin{align*}
C=C\left(\kappa,\norm{f_{in}}_{L^{1}_{xv}},\norm{f_{in}}_{L^{\infty}_{xv}},\norms{|v|^\kappa f_{in}}_{L^{1}_{xv}},\norm{u_{in}}_{L^2_x}\right),
    \end{align*} as a consequence of Lemmas~\ref{lem:ulp} and~\ref{lem:mf}.

{\bf \underline{Verification of a priori norms in $i)$ of Definition \ref{def:weak}.} } Due to Lemma \ref{lem:ul2},  \eqref{eq:uniform norms of f} and weak  lower semi-continuity, there hold
\begin{align}
\label{ul2weak}
    \norm{u}^{2}_{L^{\infty}(0,t;{L^{2}_{x}(\mathbb{R}^{3})})}&\leq\liminf_{\varepsilon,\delta\rightarrow0}\norm{u^{\varepsilon,\delta}}^{2}_{L^{\infty}(0,t;{L^{2}_{x}(\mathbb{R}^{3})})};
\end{align}
% Due to \eqref{eq:uniform norms of f} and weak lower semi-continuity, there hold
\begin{align}
    \label{uh2weak}
    \norm{u}^{2}_{L^{2}(0,t;{\dot{H}^{1}_{x}(\mathbb{R}^{3})})}&\leq\liminf_{\varepsilon,\delta\rightarrow0}\norm{u^{\varepsilon,\delta}}^{2}_{L^{2}(0,t;{\dot{H}^{1}_{x}(\mathbb{R}^{3})})};\\
    \norms{P}_{L^\frac53((0,t)\times \mathbb{R}^3_x \times \mathbb{R}^3_v)}&\leq\liminf_{\varepsilon,\delta\rightarrow0}\norms{P^{\varepsilon,\delta}}_{L^\frac53((0,t)\times \mathbb{R}^3_x \times \mathbb{R}^3_v)}\nonumber
\end{align}
(see also  the arguments by constructing an auxiliary function in the $t$ direction,   Theorem 4.2 in \cite{bedrossianMathematicalAnalysisIncompressible2022}).
Moreover, it follows from Theorem \ref{thm:solution} and Lemma \ref{lem:fcompact} that 
  \(f\geq0\), which along with Fatou's lemma in \(xv\), weak* lower semi-continuity in \(t\) and Lemma \ref{lem:flogfcompact} implies that
  \begin{align}
      \norms{f}_{L^{\infty}\left(0,t;L^1\left(\mathbb{R}^3_v\times\mathbb{R}^3_x\right)\right)}&\leq\liminf_{\varepsilon,\delta\rightarrow0}\norms{f^{\varepsilon,\delta}}_{L^{\infty}\left(0,t;L^1\left(\mathbb{R}^3_v\times\mathbb{R}^3_x\right)\right)};\nonumber\\
      \label{eq:flog weaksemi}
      \norms{f\log f}_{L^{\infty}\left(0,t;L^1\left(\mathbb{R}^3_v\times\mathbb{R}^3_x\right)\right)}&\leq\liminf_{\varepsilon,\delta\rightarrow0}\norms{f^{\varepsilon,\delta}\log f^{\varepsilon,\delta}}_{L^{\infty}\left(0,t;L^1\left(\mathbb{R}^3_v\times\mathbb{R}^3_x\right)\right)}.
  \end{align}
% \ben\label{eq:flog weaksemi}
% \norms{f\log f}_{L^{\infty}\left(0,t;L^1\left(\mathbb{R}^3_v\times\mathbb{R}^3_x\right)\right)}&\leq\liminf_{\varepsilon,\delta\rightarrow0}\norms{f^{\varepsilon,\delta}\log f^{\varepsilon,\delta}}_{L^{\infty}\left(0,t;L^1\left(\mathbb{R}^3_v\times\mathbb{R}^3_x\right)\right)}.
% \een
% By Theorem 4.3 in \cite{giustiDirectMethodsCalculus2003}, it follows that
% \begin{align*}
%     \norms{f\log f}_{L^{\infty}\left(0,t;L^1\left(\mathbb{R}^3_v\times\mathbb{R}^3_x\right)\right)}&\leq\liminf_{\varepsilon,\delta\rightarrow0}\norms{f^{\varepsilon,\delta}\log f^{\varepsilon,\delta}}_{L^{\infty}\left(0,t;L^1\left(\mathbb{R}^3_v\times\mathbb{R}^3_x\right)\right)}.
% \end{align*}
Due to \eqref{17} and \eqref{eq:uniform norms of f}, also by Fatou's lemma in \(xv\) and weak* lower semi-continuity in \(t\), we have
\begin{align}
\label{xvfweak}
\norms{\left(|x|^2+|v|^2\right)f}_{L^{\infty}\left(0,t;L^1\left(\mathbb{R}^3_v\times\mathbb{R}^3_x\right)\right)}\leq\liminf_{\varepsilon,\delta\rightarrow0}\norms{\left(|x|^2+|v|^2\right)f^{\varepsilon,\delta}}_{L^{\infty}\left(0,t;L^1\left(\mathbb{R}^3_v\times\mathbb{R}^3_x\right)\right)}.
\end{align}
 For \eqref{flp}, it follows that
\begin{align*}
    	\norm{f}_{L^{\infty}_tL^{p}_{xv}((0,T)\times\mathbb{R}^{6})} \leq\liminf_{\varepsilon,\delta\rightarrow0}\norm{f^{\varepsilon,\delta}}_{{L^{\infty}_tL^{p}_{xv}((0,T)\times\mathbb{R}^{6})}} \leq e^{\frac{3(p-1)}{p}T}\norm{f_{in}}_{L^{p}_{xv}(\mathbb{R}^{6})}
    \end{align*}
    for arbitrary \(p \in [1,+\infty)\), which yields
    \begin{align*}
        \norm{f}_{L^{\infty}((0,T)\times\mathbb{R}^{6}_{xv})} \leq e^{3T}\norm{f_{in}}_{L^{\infty}_{xv}(\mathbb{R}^{6})}.
    \end{align*}
    % The behavior of \(\delta\rightarrow0\) is the same as that of \(\varepsilon\rightarrow0\). We omit it.

{\bf \underline{Verification of a priori norms in $ii)$ of Definition \ref{def:weak}.}} From Lemma \ref{lem:ulp}, Lemma \ref{lem:mf}, Lemma \ref{lem:ul2} and Lemma \ref{lem:vflogf compact}, using Fatou's lemma, we have 
\begin{align*}
    \norm{u}_{L^1(0,t; L^{p}(\mathbb{R}^3_x))}&\leq\liminf_{\varepsilon,\delta\rightarrow0}\norm{u^{\varepsilon,\delta}}_{L^1(0,t; L^{p}(\mathbb{R}^3_x))};\\
    \norms{|v|^{\kappa}f}_{L^{\infty}(0,t;L^1(\mathbb{R}^3_x\times\mathbb{R}^3_v))}&\leq\liminf_{\varepsilon,\delta\rightarrow0}\norms{|v|^{\kappa}f^{\varepsilon,\delta}}_{L^{\infty}(0,t;L^1(\mathbb{R}^3_x\times\mathbb{R}^3_v))};\\
    \left\||v|^{\frac{\bar{\kappa}}{2}} f \log f \right\|_{L^1((0,T)\times\Omega_{x}\times\mathbb{R}^3_v)}&\leq\liminf_{\varepsilon,\delta\rightarrow0}\left\||v|^{\frac{\bar{\kappa}}{2}} f^{\varepsilon,\delta} \log f^{\varepsilon,\delta} \right\|_{L^1((0,T)\times\Omega_{x}\times\mathbb{R}^3_v)},
\end{align*}
where the second inequality uses weak* lower semi-continuity in \(t\).
% As \(\delta \rightarrow0\), the following inequalities hold: 
% \begin{align*}
%     \norm{u}_{L^1(0,t; L^{\kappa}(\mathbb{R}^3_x))}&\leq\liminf_{\delta\rightarrow0}\norm{u}_{L^1(0,t; L^{\kappa}(\mathbb{R}^3_x))};\\
%     \norms{|v|^{\kappa}f}_{L^{\infty}(0,t;L^1(\mathbb{R}^3_x\times\mathbb{R}^3_v))}&\leq\liminf_{\delta\rightarrow0}\norms{|v|^{\kappa}f}_{L^{\infty}(0,t;L^1(\mathbb{R}^3_x\times\mathbb{R}^3_v))};\\
%     \left\||v|^{\frac{\bar{\kappa}}{2}} f \log f \right\|_{L^1((0,T)\times\Omega_{x}\times\mathbb{R}^3_v)}&\leq\liminf_{\delta\rightarrow0}\left\||v|^{\frac{\bar{\kappa}}{2}} f \log f \right\|_{L^1((0,T)\times\Omega_{x}\times\mathbb{R}^3_v)}.
% \end{align*}
 Based on the above inequalities, we  derive
\begin{align*}
    \norms{|v|^{\bar{\kappa}}f\log f}&_{L^{\infty}\left(0,t;L^{\eta}(\mathbb{R}_x^3\times\mathbb{R}_v^3)\right)}\\
    &=\norms{|v|^{\bar{\kappa}}f\log f}_{L^{\infty}\left(0,t;L^{\eta}(\mathbb{R}_x^3\times\mathbb{R}_v^3)\right)\cap\{f>1\}}+\norms{|v|^{\bar{\kappa}}f\log f}_{L^{\infty}\left(0,t;L^{\eta}(\mathbb{R}_x^3\times\mathbb{R}_v^3)\right)\cap\{f\leq1\}}\\
    &\leq C\left(\norms{|v|^{\bar{\kappa}}f^2}_{L^{\infty}\left(0,t;L^{\eta}(\mathbb{R}_x^3\times\mathbb{R}_v^3)\right)\cap\{f>1\}}+\norms{|v|^{\bar{\kappa}}f^{1-\varepsilon}}_{L^{\infty}\left(0,t;L^{\eta}(\mathbb{R}_x^3\times\mathbb{R}_v^3)\right)\cap\{f\leq1\}} \right)\\
    % &\leq C\left(\norms{|v|^{\bar{\kappa}}f^2}_{L^{\infty}\left(0,t;L^{\eta}(\mathbb{R}_x^3\times\mathbb{R}_v^3)\right)}+\norms{|v|^{\bar{\kappa}}f^{1-\varepsilon}}_{L^{\infty}\left(0,t;L^{\eta}(\mathbb{R}_x^3\times\mathbb{R}_v^3)\right)} \right)\\
    &\leq C\sup_{t}\left(\left[\int_{\mathbb{R}^6_{xv}}|v|^{\eta\bar{\kappa}}f^{2\eta}dxdv\right]^{\frac{1}{\eta}}+\left[\int_{\mathbb{R}^6_{xv}}|v|^{\eta\bar{\kappa}}f^{(1-\varepsilon) \eta}dxdv \right]^{\frac{1}{\eta}} \right)
\end{align*}
with \((1-\varepsilon)\eta=1\) and \(\eta\bar{\kappa}=\kappa\). Due to the bound of \(\norms{|v|^{\kappa}f}_{L^{\infty}(0,t;L^1(\mathbb{R}^3_x\times\mathbb{R}^3_v))}\) and \(\norm{f}_{L^{\infty}((0,t)\times\mathbb{R}^{6}_{xv})}\), we get 
\begin{align*}
    &\norms{|v|^{\bar{\kappa}}f\log f}_{L^{\infty}\left(0,t;L^{\eta}(\mathbb{R}_x^3\times\mathbb{R}_v^3)\right)}\\
    &\leq C\left(\left(t+1\right)^{\frac{3\kappa+12}{2}} \exp\left[C(\kappa)(t+1)^3e^{4t}\right]\right)^2+C\left(\left(t+1\right)^{\frac{3\kappa+12}{2}} \exp\left[C(\kappa)(t+1)^3e^{4t}\right]\right)^{1-\varepsilon}\\
    &\leq C\left(t+1\right)^{3\kappa+12} \exp\left[C(\kappa)(t+1)^3e^{4t}\right].
\end{align*}

 \par {\bf \underline{Verification of weak solutions in $iii)$ of Definition \ref{def:weak}.}}$ $
 
 For \((\theta_\varepsilon \ast u^{\varepsilon,\delta}) n^{\varepsilon,\delta}\), by Corollary \ref{jthetau} we get
 \begin{align*}
     \int_{(0,t) \times \mathbb{R}^3_x}n^{\varepsilon,\delta}(\theta_{\varepsilon}\ast u^{\varepsilon,\delta})\cdot\Psi dxds \rightarrow \int_{(0,t) \times \mathbb{R}^3_x}nu\cdot\Psi dxds
 \end{align*}
 as \(\varepsilon,\delta \rightarrow 0.\)
 For \((\theta_\varepsilon \ast u^{\varepsilon,\delta}) \otimes u^{\varepsilon,\delta}\), by Lemma \ref{lem:ul2} we get
 \begin{align*}
     \int_{(0,t) \times \mathbb{R}^3_x}u^{\varepsilon,\delta}\otimes(\theta_{\varepsilon}\ast u^{\varepsilon,\delta}):\nabla\Psi dxds \rightarrow \int_{(0,t) \times \mathbb{R}^3_x}u\otimes u:\nabla\Psi dxds
 \end{align*}
 as \(\varepsilon,\delta \rightarrow 0.\)
 Combining \eqref{eq:uniform norms of f},  the functions \( u \) and \( f \) solve the Navier-Stokes-Vlasov-Fokker-Planck equations \eqref{eq:NSVFK} in the sense of distributions:

\[
% \sup_{s\in [0,T]}\int_{\mathbb{R}^3} (u \cdot \Psi)(s, x)  dx 
% + 
\int_0^T \int_{\mathbb{R}^3} \big( \nabla u : \nabla \Psi - u \otimes u : \nabla \Psi - u \cdot \partial_t \Psi \big)(s, x)  dx  ds
\]
\[
= \int_{\mathbb{R}^3} u_{in}(x) \Psi(0, x)  dx 
+ \int_0^T \int_{\mathbb{R}^3} \int_{\mathbb{R}^3} (v - u) f  dv  \Psi(s, x)  dx  ds,
\]
with
\(
\nabla u : \nabla \Psi = \sum_{j,k=1}^d \partial_j u^k \partial_j \Psi^k
\quad \text{and} \quad
u \otimes u : \nabla \Psi = \sum_{j,k=1}^d u^j u^k \partial_j \Psi^k\).

\[
\int_{0}^{T} \int_{\mathbb{R}^3 \times \mathbb{R}^3} f(\partial_t \Phi + v \cdot \nabla_x \Phi + (u - v) \cdot \nabla_v \Phi + \Delta_v \Phi)  dx  dv  dt 
= -\int_{\mathbb{R}^3 \times \mathbb{R}^3} f_{in} \Phi(0, x, v)  dx  dv,
\]
for any \(\Psi \in C^1_0([0,T); (C_c^\infty (\mathbb{R}^3_x))^3)\) with \(\nabla \cdot \Psi = 0\) and 
\(\Phi \in C^1_0([0,T); C_c^\infty (\mathbb{R}^3_x \times \mathbb{R}^3_v))\).

    \par {\bf \underline{Verification of global energy inequality in $iv)$ of Definition \ref{def:weak}.}}
    Next, we prove the convergence of global energy inequality. For the left-hand side of \eqref{eq:13}, by Fatou's lemma in \(xv\) and weak* lower semi-continuity in \(t\), it shows that
	  %    \label{eq: u estimates}
   %          ? \int_{\mathbb{R}^3_x} |u|^2 dx 
	 	% \leq& \liminf_{\varepsilon,\delta \rightarrow 0} \int_{\mathbb{R}^3_x} |u^{\varepsilon,\delta}|^2 dx;\\
   %      \int_{(0,t) \times \mathbb{R}^3_x} |\nabla u|^2 dxds 
	 	% \leq& \liminf_{\varepsilon,\delta \rightarrow 0} \int_{(0,t) \times \mathbb{R}^3_x} |\nabla u^{\varepsilon,\delta}|^2 dxds;\\
	 	% \int_{\mathbb{R}^6_{xv}} \frac{1}{2} |v|^2 f dxdv
	 	% \leq& \liminf_{\varepsilon,\delta \rightarrow 0} \int_{\mathbb{R}^6_{xv}} \frac{1}{2} |v|^2 f^{\varepsilon,\delta} dxdv;  \\
         \begin{align*}
	 	\sup_{s\in [0,t]}\int_{\mathbb{R}^3_x \times \mathbb{R}^3_v} f \log f dxdv
	 	\leq& \liminf_{\varepsilon,\delta \rightarrow 0} \sup_{s\in [0,t]}\int_{\mathbb{R}^3_x \times \mathbb{R}^3_v} f^{\varepsilon,\delta} \log f^{\varepsilon,\delta} dxdv,
         \end{align*}
     where it is similar with \eqref{eq:flog weaksemi}. Others are the same as in \eqref{ul2weak}, \eqref{uh2weak} and \eqref{xvfweak}.

     Next we prove
     	\ben\label{eq:global inequality}
		&&\frac{1}{2}\int_{\mathbb{R}^3_x}|u(t,\cdot)|^2dx+\int_{\mathbb{R}^6_{xv}}\left(\frac{1}{2}|v|^2f+f\log f\right)(t,\cdot)dxdv+\int_{(0,t)\times \mathbb{R}_x^3}|\nabla u|^2dxds\nonumber\\
		&&+\int_{(0,t)\times \mathbb{R}_{xv}^6}\frac{|(u-v)f-\nabla_{v}f|^2}{f}dxdvds\\
		&&\hspace{4cm}\leq \int_{\mathbb{R}^6_{xv}}\left(\frac{1}{2}|v|^2f_{in}+f_{in}\log f_{in}\right)dxdv+\frac{1}{2}\int_{\mathbb{R}^3_x}|u_{in}|^2dx.\nonumber
	\een

% {\bf \underline{Verification of a priori norms in $i)$ of Definition \ref{def:weak}.} }

% {\bf Verification of a priori norms in $ii)$ of Definition \ref{def:weak}.}

% {\bf Verification of $iii)$ of Definition \ref{def:weak}.}

	For \(\int_{(0,t)\times \mathbb{R}_{xv}^6}\frac{|(\theta_{\varepsilon}\ast u^{\varepsilon,\delta}-v)f^{\varepsilon,\delta}-\nabla_{v}f^{\varepsilon,\delta}|^2}{f^{\varepsilon,\delta}}dxdvds\), it follows that
	\begin{align*}
		&\int_{(0,t)\times \mathbb{R}_{xv}^6}\frac{|(\theta_{\varepsilon}\ast u^{\varepsilon,\delta}-v)f^{\varepsilon,\delta}-\nabla_{v}f^{\varepsilon,\delta}|^2}{f^{\varepsilon,\delta}}dxdvds\\
		&=\int_{(0,t)\times \mathbb{R}_{xv}^6}|\theta_{\varepsilon}\ast u^{\varepsilon,\delta}-v|^2f^{\varepsilon,\delta}dxdvds+4\int_{(0,t)\times \mathbb{R}_{xv}^6}\left|\nabla_{v}\sqrt{f^{\varepsilon,\delta}}\right|^2dxdvds\\
		&\hspace{6cm}-2\int_{(0,t)\times \mathbb{R}_{xv}^6}(\theta_{\varepsilon}\ast u^{\varepsilon,\delta}-v)\cdot\nabla_{v}f^{\varepsilon,\delta}dxdvds\\
		&=\int_{(0,t)\times \mathbb{R}_{xv}^6}|\theta_{\varepsilon}\ast u^{\varepsilon,\delta}-v|^2f^{\varepsilon,\delta}dxdvds+4\int_{(0,t)\times \mathbb{R}_{xv}^6}\left|\nabla_{v}\sqrt{f^{\varepsilon,\delta}}\right|^2dxdvds-6\int_{(0,t)\times \mathbb{R}_{xv}^6}f^{\varepsilon,\delta}dxdvds\\
		&=\int_{(0,t)\times \mathbb{R}_{xv}^6}|\theta_{\varepsilon}\ast u^{\varepsilon,\delta}|^2f^{\varepsilon,\delta}dxdvds-2\int_{(0,t)\times \mathbb{R}_{xv}^6} v\cdot (\theta_{\varepsilon}\ast u^{\varepsilon,\delta})f^{\varepsilon,\delta}dxdvds+\int_{(0,t)\times \mathbb{R}_{xv}^6}|v|^2f^{\varepsilon,\delta}dxdvds\\
		&\hspace{5cm}+4\int_{(0,t)\times \mathbb{R}_{xv}^6}\left|\nabla_{v}\sqrt{f^{\varepsilon,\delta}}\right|^2dxdvds-6\int_{(0,t)\times \mathbb{R}_{xv}^6}f^{\varepsilon,\delta}dxdvds\\
		&:= i_1+i_2+i_3+i_4+i_5.
	\end{align*}
	For \(i_1\), we derive
    \begin{align*}
        &\int_{(0,t)\times \mathbb{R}_{xv}^6}|u|^2f-|\theta_{\varepsilon}\ast u^{\varepsilon,\delta}|^2f^{\varepsilon,\delta}dxdvds\\
        &=\int_{(0,t)\times \mathbb{R}_{xv}^6}|u|^2f-|u|^2f^{\varepsilon,\delta}dxdvds+\int_{(0,t)\times \mathbb{R}_{xv}^6}|u|^2f^{\varepsilon,\delta}-|\theta_\varepsilon \ast u^{\varepsilon,\delta}|^2f^{\varepsilon,\delta}dxdvds\\
        &:=i_{11}+i_{12}.
    \end{align*}
    By \eqref{8}, \eqref{utp26} and \eqref{eq:uniform norms of f}, \(i_{11}\) becomes
    \begin{align*}
        |i_{11}|&\leq C\left(1+\frac43 \pi\norms{f-f^{\varepsilon,\delta}}_{L^{\infty}_{txv}}\right)\int_{(0,t)\times\mathbb{R}^3_x}|u|^2\left(\int_{\mathbb{R}^3_v}|v|^2|f-f^{\varepsilon,\delta}|dv\right)^{\frac35}dxds\\
        &\leq C\left(1+\frac43 \pi\norms{f-f^{\varepsilon,\delta}}_{L^{\infty}_{txv}}\right)\int^t_0\norms{u}_{L^5_{x}}^2\norms{|v|^2(f-f^{\varepsilon,\delta})}_{L^1_{xv}}^\frac35ds\\
        &\leq C\left(1+\frac43 \pi\norms{f-f^{\varepsilon,\delta}}_{L^{\infty}_{txv}}\right)\norms{u}^2_{L^\frac{20}{9}_tL^5_{x}}\norms{|v|^2(f-f^{\varepsilon,\delta})}_{L^6_t L^1_{xv}}^\frac{3}{5}\\
        &\leq C\left(1+\frac43 \pi\norms{f-f^{\varepsilon,\delta}}_{L^{\infty}_{txv}}\right)\left(\norms{u}_{L^\infty_tL^2_{x}}+\norms{u}_{L^2_t\dot{H}^1_{x}}\right)^2\norms{|v|^2(f-f^{\varepsilon,\delta})}_{L^6_tL^1_{xv}}^\frac{3}{5}\\
        &\leq C\norms{|v|^2(f-f^{\varepsilon,\delta})}_{L^6_tL^1_{xv}}^\frac{3}{5}.
        % &\leq Ct^{(\frac14-\frac{1}{2p})}\norms{|v|^3(f-f^{\varepsilon,\delta})}_{L^p_tL^1_{xv}}^\frac12.
    \end{align*}
     Due to Corollary \ref{lem:tpcompact}, we have 
    \[i_{11}\rightarrow 0\] as \(\varepsilon,\delta\rightarrow 0.\) For \(i_{12}\), we derive
    \begin{align*}
        |i_{12}|&\leq\int_{(0,t)\times \mathbb{R}_{xv}^6}\left|(u-\theta_\varepsilon \ast u^{\varepsilon,\delta})\cdot (u+\theta_\varepsilon \ast u^{\varepsilon,\delta})f^{\varepsilon,\delta}\right|dxdvds\\
        &\leq \int_{(0,t)\times \mathbb{R}_{v}^3\times\{|x|\leq R\}}\left|(u-\theta_\varepsilon \ast u^{\varepsilon,\delta})\cdot (u+\theta_\varepsilon \ast u^{\varepsilon,\delta})f^{\varepsilon,\delta}\right|dxdvds\\
        &\hspace{0.5cm}+\int_{(0,t)\times \mathbb{R}_{v}^3\times\{|x|\geq R\}}\left|(u-\theta_\varepsilon \ast u^{\varepsilon,\delta})\cdot (u+\theta_\varepsilon \ast u^{\varepsilon,\delta})f^{\varepsilon,\delta}\right|dxdvds\\
        &:=i_{12-1}+i_{12-2}.
    \end{align*}
    By \eqref{8}, \eqref{utp26} and \eqref{eq:uniform norms of f}, it becomes
    \begin{align*}
          i_{12-1}&\leq C\left(1+\frac43 \pi\norms{f^{\varepsilon,\delta}}_{L^{\infty}_{txv}}\right)\int_0^t\int_{|x|\leq R} \left|(u-\theta_\varepsilon \ast u^{\varepsilon,\delta})\cdot (u+\theta_\varepsilon \ast u^{\varepsilon,\delta})\right|\left(\int_{\mathbb{R}^3_v}|v|^2f^{\varepsilon,\delta}dv\right)^{\frac35}dxds\\
          &\leq Ct^{\frac{1}{20}}\left(1+\frac43 \pi\norms{f^{\varepsilon,\delta}}_{L^{\infty}_{txv}}\right)\norms{|v|^2f^{\varepsilon,\delta}}_{L^\infty_t L^1_{xv}}^{\frac35}\norms{u+\theta_\varepsilon \ast u^{\varepsilon,\delta}}_{L^\frac{20}{9}_tL^5_x}\norms{u-\theta_\varepsilon \ast u^{\varepsilon,\delta}}_{L^2(0,t;L^5(|x|\leq R))}\\
          &\leq C(t)\norms{u-\theta_\varepsilon \ast u^{\varepsilon,\delta}}_{L^2(0,t;L^5(|x|\leq R))},
    \end{align*}
and
    \begin{align*}
        i_{12-2}&\leq C\left(1+\frac43 \pi\norms{f^{\varepsilon,\delta}}_{L^{\infty}_{txv}}\right)\int_0^t\int_{|x|\geq R} \left|(u-\theta_\varepsilon \ast u^{\varepsilon,\delta})\cdot (u+\theta_\varepsilon \ast u^{\varepsilon,\delta})\right|\left(\int_{\mathbb{R}^3_v}|v|^2f^{\varepsilon,\delta}dv\right)^{\frac35}dxds\\
        &\leq Ct^\frac{1}{10}\left(1+\frac43 \pi\norms{f^{\varepsilon,\delta}}_{L^{\infty}_{txv}}\right)\norms{u+\theta_\varepsilon \ast u^{\varepsilon,\delta}}_{L^\frac{20}{9}_tL^5_x}\norms{u-\theta_\varepsilon \ast u^{\varepsilon,\delta}}_{L^\frac{20}{9}_tL^5_x}\\
        &\hspace{1cm}\times\left(\sup_{s\in [0,T]}\int_{\{|x|\geq R\}\times \mathbb{R}^3_v}|v|^2f^{\varepsilon,\delta}dxdv\right)^\frac35\\
        &\leq C(t)\left(\sup_{s\in [0,T]}\int_{\{|x|\geq R\}\times \mathbb{R}^3_v}|x|^{-\frac23}|x|^{\frac23}|v|^2f^{\varepsilon,\delta}dxdv\right)^\frac35\\
        &\leq C(t)R^{-\frac25}\norms{|x|^2f^{\varepsilon,\delta}}_{L^\infty_tL^1_{xv}}^\frac15 \norms{|v|^3f^{\varepsilon,\delta}}_{L^\infty_tL^1_{xv}}^\frac25.
    \end{align*}
    Due to Lemma \ref{lem:ul2} and the uniform boundedness  property \eqref{eq:uniform norms of f} and \eqref{xvfweak}  with respect to \(\varepsilon\) and \(\delta,\)  it is deduced that
    \[ i_{12}\rightarrow 0\]
    as \(\varepsilon,\delta \rightarrow 0\).
    % For \(\int_{\mathbb{R}^3_v}(u+\theta_\varepsilon \ast u^{\varepsilon,\delta})f^{\varepsilon,\delta}dv\), we have
    % \begin{align*}
    %     &\hspace{0.5cm}\norms{(u+\theta_\varepsilon \ast u^{\varepsilon,\delta})\int_{\mathbb{R}^3_v}f^{\varepsilon,\delta}dv}_{L^2_tL^{\frac{6}{5}}_{x}}\\
    %     &\leq C\left(1+\frac43 \pi\norms{f^{\varepsilon,\delta}}_{L^{\infty}_{txv}}\right)\norms{(u+\theta_\varepsilon \ast u^{\varepsilon,\delta})\left(\int_{\mathbb{R}^3_v}|v|^3f^{\varepsilon,\delta}dv\right)^{\frac12}}_{L^2_tL^{\frac{6}{5}}_{x}}\\
    %     &\leq C\left(1+\frac43 \pi\norms{f^{\varepsilon,\delta}}_{L^{\infty}_{txv}}\right)\norms{u+\theta_\varepsilon \ast u^{\varepsilon,\delta}}_{L^2_tL^3_{x}}\norms{|v|^3f^{\varepsilon,\delta}}_{L^\infty_tL^1_{xv}}^\frac12\\
    %     &\leq Ct^\frac14\left(1+\frac43 \pi\norms{f^{\varepsilon,\delta}}_{L^{\infty}_{txv}}\right)\norms{u+\theta_\varepsilon \ast u^{\varepsilon,\delta}}_{L^4_tL^3_{x}}\norms{|v|^3f^{\varepsilon,\delta}}_{L^\infty_tL^1_{xv}}^\frac{1}{2}\\
    %     &\leq Ct^\frac14\left(1+\frac43 \pi\norms{f^{\varepsilon,\delta}}_{L^{\infty}_{txv}}\right)\left(\norms{u+\theta_\varepsilon \ast u^{\varepsilon,\delta}}_{L^\infty_tL^2_{x}}+\norms{u+\theta_\varepsilon \ast u^{\varepsilon,\delta}}_{L^2_t\dot{H}^1_{x}}\right)\norms{|v|^3f^{\varepsilon,\delta}}_{L^\infty_tL^1_{xv}}^\frac{1}{2}.
    % \end{align*}
    % Due to \eqref{02}, we deduce
    % \[i_{12}\rightarrow0\] as \(\varepsilon,\delta\rightarrow0\).
    To sum up, this leads to
    \begin{align*}
        \lim_{\varepsilon,\delta\rightarrow0}\int_{(0,t)\times \mathbb{R}_{xv}^6}|\theta_{\varepsilon}\ast u^{\varepsilon,\delta}|^2f^{\varepsilon,\delta}dxdvds=\int_{(0,t)\times \mathbb{R}_{xv}^6}|u|^2fdxdvds.
    \end{align*}
	% \begin{align*}
	% 	\int_{(0,t)\times \mathbb{R}_{xv}^6}|u|^2fdxdvds&=\int_{(0,t)\times \mathbb{R}_{xv}^6}\liminf_{\varepsilon,\delta \rightarrow 0}|\theta_{\varepsilon}\ast u^{\varepsilon,\delta}|^2f^{\varepsilon,\delta}dxdvds\\
	% 	&\leq\liminf_{\varepsilon,\delta \rightarrow 0}\int_{(0,t)\times \mathbb{R}_{xv}^6}|\theta_{\varepsilon}\ast u^{\varepsilon,\delta}|^2f^{\varepsilon,\delta}dxdvds.
	% \end{align*}
    The limit result in \(i_2\) as \(\varepsilon,\delta\rightarrow0\) is the same as \(i_1\). In fact,
    \begin{align*}
        |i_2|&\leq\int_{(0,t)\times \mathbb{R}_{xv}^6}\left|u\cdot vf-(\theta_\varepsilon \ast u^{\varepsilon,\delta})\cdot vf^{\varepsilon,\delta}\right|dxdvds\\
        &\leq\int_{(0,t)\times \mathbb{R}_{xv}^6}\left|u\cdot v(f-f^{\varepsilon,\delta})\right|dxdvds+\int_{(0,t)\times \mathbb{R}_{xv}^6}\left|(u-\theta_{\varepsilon}\ast u^{\varepsilon,\delta})\cdot vf^{\varepsilon,\delta}\right|dxdvds\\
        &:=i_{21}+i_{22}.
    \end{align*}
    For \(i_{21}\), by \eqref{8}, \eqref{utp26} and \eqref{eq:uniform norms of f}, we have
    \begin{align*}
        |i_{21}|&\leq C\left(1+\frac43 \pi\norms{f-f^{\varepsilon,\delta}}_{L^{\infty}_{txv}}\right)\int_{(0,t)\times\mathbb{R}^3_x}|u|\left(\int_{\mathbb{R}^3_v}|v|^\frac95|f-f^{\varepsilon,\delta}|dv\right)^{\frac56}dxds\\
        &\leq C\left(1+\frac43 \pi\norms{f-f^{\varepsilon,\delta}}_{L^{\infty}_{txv}}\right)\int^t_0\norms{u}_{L^6_{x}}\norms{|v|^\frac95(f-f^{\varepsilon,\delta})}_{L^1_{xv}}^\frac56ds\\
        &\leq C\left(1+\frac43 \pi\norms{f-f^{\varepsilon,\delta}}_{L^{\infty}_{txv}}\right)\norms{u}_{L^2_tL^6_{x}}\norms{|v|^\frac95(f-f^{\varepsilon,\delta})}_{L^\frac53_tL^1_{xv}}^\frac56\\
        &\leq C\left(1+\frac43 \pi\norms{f-f^{\varepsilon,\delta}}_{L^{\infty}_{txv}}\right)\left(\norms{u}_{L^\infty_tL^2_{x}}+\norms{u}_{L^2_t\dot{H}^1_{x}}\right)\norms{|v|^\frac95(f-f^{\varepsilon,\delta})}_{L^\frac53_tL^1_{xv}}^\frac56\\
        &\leq C\norms{|v|^\frac95(f-f^{\varepsilon,\delta})}_{L^\frac53_tL^1_{xv}}^\frac56.
    \end{align*}
    Due to Corollary \ref{lem:tpcompact}, it is deduced that 
    \[i_{21}\rightarrow 0\] as \(\varepsilon,\delta\rightarrow 0.\)
    For \(i_{22}\), we derive
    \begin{align*}
        |i_{22}|&\leq\int_{(0,t)\times \mathbb{R}_{xv}^6}\left|(u-\theta_\varepsilon \ast u^{\varepsilon,\delta})\cdot v)f^{\varepsilon,\delta}\right|dxdvds\\
        &\leq \int_{(0,t)\times \mathbb{R}_{v}^3\times\{|x|\leq R\}}\left|(u-\theta_\varepsilon \ast u^{\varepsilon,\delta})\cdot vf^{\varepsilon,\delta}\right|dxdvds\\
        &\hspace{0.5cm}+\int_{(0,t)\times \mathbb{R}_{v}^3\times\{|x|\geq R\}}\left|(u-\theta_\varepsilon \ast u^{\varepsilon,\delta})\cdot vf^{\varepsilon,\delta}\right|dxdvds\\
        &:=i_{22-1}+i_{22-2}.
    \end{align*}
    By \eqref{8}, \eqref{utp26} and \eqref{eq:uniform norms of f}, it becomes
    \begin{align*}
          i_{22-1}&\leq C\left(1+\frac43 \pi\norms{f^{\varepsilon,\delta}}_{L^{\infty}_{txv}}\right)\int_0^t\int_{|x|\leq R} \left|u-\theta_\varepsilon \ast u^{\varepsilon,\delta}\right|\left(\int_{\mathbb{R}^3_v}|v|^2f^{\varepsilon,\delta}dv\right)^{\frac45}dxds\\
          &\leq Ct^{\frac{1}{2}}\left(1+\frac43 \pi\norms{f^{\varepsilon,\delta}}_{L^{\infty}_{txv}}\right)\norms{|v|^2f^{\varepsilon,\delta}}_{L^\infty_t L^1_{xv}}^{\frac45}\norms{u-\theta_\varepsilon \ast u^{\varepsilon,\delta}}_{L^2(0,t;L^5(|x|\leq R))}\\
          &\leq C(t)\norms{u-\theta_\varepsilon \ast u^{\varepsilon,\delta}}_{L^2(0,t;L^5(|x|\leq R))},
    \end{align*}
and
    \begin{align*}
        i_{22-2}&\leq C\left(1+\frac43 \pi\norms{f^{\varepsilon,\delta}}_{L^{\infty}_{txv}}\right)\int_0^t\int_{|x|\geq R} \left|u-\theta_\varepsilon \ast u^{\varepsilon,\delta}\right|\left(\int_{\mathbb{R}^3_v}|v|^2f^{\varepsilon,\delta}dv\right)^{\frac45}dxds\\
        &\leq Ct^\frac{11}{20}\left(1+\frac43 \pi\norms{f^{\varepsilon,\delta}}_{L^{\infty}_{txv}}\right)\norms{u-\theta_\varepsilon \ast u^{\varepsilon,\delta}}_{L^\frac{20}{9}_tL^5_x}\left(\sup_{s\in [0,T]}\int_{\{|x|\geq R\}\times \mathbb{R}^3_v}|v|^2f^{\varepsilon,\delta}dxdv\right)^\frac45\\
        &\leq C(t)\left(\sup_{s\in [0,T]}\int_{\{|x|\geq R\}\times \mathbb{R}^3_v}|x|^{-\frac23}|x|^{\frac23}|v|^2f^{\varepsilon,\delta}dxdv\right)^\frac45\\
        &\leq C(t)R^{-\frac{8}{15}}\norms{|x|^2f^{\varepsilon,\delta}}_{L^\infty_tL^1_{xv}}^\frac{4}{15}\norms{|v|^3f^{\varepsilon,\delta}}_{L^\infty_tL^1_{xv}}^\frac{8}{15}.
    \end{align*}
    Due to Lemma \ref{lem:ul2} and the uniform smallness property of \(i_{22-2}\) at infinity with respect to \(\varepsilon\) and \(\delta,\) it is deduced that
    \[ i_{22}\rightarrow 0\]
    as \(\varepsilon,\delta \rightarrow 0\).
    % \begin{align*}
    %     i_{22}=\int_{(0,t)\times \mathbb{R}_{xv}^6}(u-\theta_\varepsilon \ast u^{\varepsilon,\delta})\cdot vf^{\varepsilon,\delta}dxdvds.
    % \end{align*}
    % For \(\int_{\mathbb{R}^3_v}vf^{\varepsilon,\delta}dv\), we have
    % \begin{align*}
    %     &\hspace{0.5cm}\norms{\int_{\mathbb{R}^3_v}vf^{\varepsilon,\delta}dv}_{L^{\frac{3}{2}}_{tx}}\\
    %     &\leq \left(\int^t_0\left(\|f^{\varepsilon,\delta}\|_{L^\infty_{xv}}^\frac13\||v|^3f\|^\frac23_{L^1_{xv}}\right)^\frac32ds\right)^\frac23\\
    %     &\leq Ct^\frac23\norms{f^{\varepsilon,\delta}}_{L^{\infty}_{txv}}^\frac13\norms{|v|^3f^{\varepsilon,\delta}}_{L^\infty_tL^{1}_{xv}}^{\frac{2}{3}}.
    % \end{align*}
    % Due to \eqref{28} and \eqref{upweakly}, we deduce
    % \[i_{22}\rightarrow0\] as \(\varepsilon,\delta\rightarrow0\).
    To sum up, this leads to
    \begin{align*}
        \lim_{\varepsilon,\delta\rightarrow0}\int_{(0,t)\times\mathbb{R}^6_{xv}}v\cdot (\theta_\varepsilon \ast u^{\varepsilon,\delta})f^{\varepsilon,\delta}dxdvds= \int_{(0,t)\times\mathbb{R}^6_{xv}}v\cdot ufdxdvds.
    \end{align*}
    For \(i_3\), it follows from Fatou's lemma that
	\begin{align*}
		\int_{(0,t)\times \mathbb{R}_{xv}^6} |v|^2 f dxdvds 
		&\leq \liminf_{\varepsilon,\delta \rightarrow 0} \int_{(0,t)\times \mathbb{R}_{xv}^6} |v|^2 f^{\varepsilon,\delta} dxdvds.
	\end{align*}
	For \(i_4\), it follows from the convexity of the entropy that
	\begin{align*}
		4 \int_{(0,t)\times \mathbb{R}_{xv}^6} \left|\nabla_{v} \sqrt{f}\,\right|^2 dxdvds 
		&\leq \liminf_{\varepsilon,\delta \rightarrow 0} 4 \int_{(0,t)\times \mathbb{R}_{xv}^6} \left|\nabla_{v} \sqrt{f^{\varepsilon,\delta}}\,\right|^2 dxdvds.
	\end{align*}
    For \(i_5\), due to the strong convergence of \(f^{\varepsilon,\delta}\) in \(L^1((0,t)\times\mathbb{R}^3_x\times\mathbb{R}^3_v)\), we have
    \begin{align*}
       - 6 \int_{(0,t)\times \mathbb{R}_{xv}^6} f dxdvds 
		\rightarrow - 6 \int_{(0,t)\times \mathbb{R}_{xv}^6} f^{\varepsilon,\delta} dxdvds,
    \end{align*}
    as \(\varepsilon,\delta\rightarrow0\).
	Consequently, the following inequality holds:
	\begin{align*}
		&\int_{(0,t)\times \mathbb{R}_{xv}^6} 
		\frac{\left| (u - v) f - \nabla_{v} f \right|^2}{f} dxdvds \\
		&\hspace{4cm}\leq \liminf_{\varepsilon,\delta \rightarrow 0} \int_{(0,t)\times \mathbb{R}_{xv}^6} 
		\frac{\left| (\theta_{\varepsilon} \ast u^{\varepsilon,\delta} - v) f^{\varepsilon,\delta} - \nabla_{v} f^{\varepsilon,\delta} \right|^2}{f^{\varepsilon,\delta}} dxdvds.
	\end{align*}
	
	For the right-hand side of \eqref{eq:13}, by \eqref{10}, Lemma \ref{lem:vfcompact} and the dominated convergence theorem we have
 %    it follows that
	% \begin{align*}
	% 	\int_{(0,t)\times \mathbb{R}_{xv}^6}|v|^2f^{\varepsilon,\delta}(1-\gamma_{\delta}(v))dxdvds &\leq C\sup_{t\in(0,T)}\int_{ \mathbb{R}_{xv}^6}|v|^2f^{\varepsilon,\delta}(1-\gamma_{\delta}(v))dxdv\\
	% 	&\leq C\norms{|v|^3f^{\varepsilon,\delta}}_{L^{1}((0,T)\times\mathbb{R}^3_x\times\mathbb{R}^3_v)}\delta.
	% \end{align*}
	% Due to \eqref{10}, we have the following convergence holds:
	\begin{align}\label{convergencevf}
		&\int_{(0,t)\times \mathbb{R}_{xv}^6}|v|^2f^{\varepsilon,\delta}(1-\gamma_{\delta}(v))dxdvds \nonumber\\
        &\leq\int_{(0,t)\times \mathbb{R}_{xv}^6}|v|^2|f^{\varepsilon,\delta}-f|(1-\gamma_{\delta}(v))dxdvds+\int_{(0,t)\times \mathbb{R}_{xv}^6}|v|^2f(1-\gamma_{\delta}(v))dxdvds\\
        &\rightarrow 0~\text{as}~\varepsilon,\delta\rightarrow 0.\nonumber
	\end{align}
    For \(f_{in}^{\delta}\log f_{in}^{\delta}\), due to \eqref{fdelta}, it follows that
    \begin{align*}
        \int_{\mathbb{R}^6_{xv}}f_{in}^{\delta}\log f_{in}^{\delta}dxdv \rightarrow \int_{\mathbb{R}^6_{xv}}f_{in}\log f_{in}dxdv, ~\text{as}~ \delta \rightarrow 0.
    \end{align*}
	To sum up, when \(\varepsilon,\delta \rightarrow 0\), the global energy inequality \eqref{eq:global inequality} is obtained.
	% \begin{align}
	% 	\label{7}
	% 	&\frac{1}{2} \int_{\mathbb{R}^3_x} |u|^2 dx 
	% 	+ \int_{\mathbb{R}^6_{xv}} \left( \frac{1}{2} |v|^2 f + f \log f \right) dxdv
	% 	+ \int_{(0,t)\times \mathbb{R}^3_x} |\nabla u|^2 dxds \nonumber \\
	% 	&\quad + \int_{(0,t)\times \mathbb{R}^6_{xv}} \frac{\left| (u - v) f - \nabla_{v} f \right|^2}{f} dxdvds \nonumber \\
	% 	&\hspace{2cm} \leq C \delta 
	% 	+ \int_{\mathbb{R}^6_{xv}} \left( \frac{1}{2} |v|^2 f_{\mathrm{in}} + f_{\mathrm{in}} \log f_{\mathrm{in}} \right) dxdv
	% 	+ \frac{1}{2} \int_{\mathbb{R}^3_x} (u_{\mathrm{in}})^2 dx.
	% \end{align}
	% \par Then, we will show the global energy inequality that is satisfied for \(\delta\rightarrow 0\).
	% \par The left-hand side of \eqref{7} can be obtained by employing the same arguments used in the derivation of the left-hand side of (iii). By the property of the initial conditions, when \(\delta \rightarrow 0\), we get the global energy inequality:

	\section{Verification of \texorpdfstring{$v)$}{v)} in Definition \ref{def:weak}: the convergence of local energy inequality of the first type}
	 \par {\bf \underline{Verification of local energy inequality in $v)$ of Definition \ref{def:weak}.}} In the following, we will verify the local energy inequality in $v)$ of Definition \ref{def:weak}. For the left-hand side of \eqref{eq:14}, the proof follows from the same procedure as in Section 6, and is therefore omitted. For the right-hand side of \eqref{eq:14}, the weak convergence of \(P^{\varepsilon,\delta}\) is first required. By the equation of \(u^{\varepsilon,\delta}\), we have
	\begin{align*}
		-\Delta(P^{\varepsilon,\delta}-\bar{P}^{\varepsilon,\delta})=\partial_{j}(\theta_{\varepsilon}\ast u^{\varepsilon,\delta}_i)\partial_{i}u^{\varepsilon,\delta}_j+\nabla\cdot\left[\theta_\varepsilon\ast\int_{\mathbb{R}^3_x}(v-\theta_{\varepsilon}\ast u^{\varepsilon,\delta})f^{\varepsilon,\delta}dv\right].
	\end{align*}
Using the same methods as Lemma \ref{le:p}, we have  
\begin{align*}
	\int_{(0,t)\times \Omega_x} \left| P^{\varepsilon,\delta} - \bar{P}^{\varepsilon,\delta} \right|^\frac53 dxdt \leq C,
\end{align*}
which implies
\begin{align}\label{pweak}
	P^{\varepsilon,\delta} - \bar{P}^{\varepsilon,\delta} \rightharpoonup P - \bar{P} \quad \text{in}~ L^\frac32((0,t)\times \Omega_x) ~\text{as}~ \varepsilon,\delta\rightarrow 0.
\end{align}
	\par Next, we prove the convergence of the right-hand terms of the local energy inequality \eqref{eq:14}. Let us write the right terms of the local energy inequality \eqref{eq:14} as \(M_1, M_2, \cdots, M_{11}\) term by term.
	\par \textbf{The estimate of \(M_1\):} Since \(\psi \in C^{\infty}_c((0,t)\times\Omega_x)\), we have \((\partial_t\psi+\Delta_x\psi) \in C^{\infty}_c((0,t)\times\Omega_x)\). Therefore, due to Lemma \ref{lem:ul2}, it shows that
	\begin{align*}
		\frac{1}{2}\int_{(0,t)\times\Omega_{x}}|u^{\varepsilon,\delta}|^2(\partial_t\psi+\Delta_x\psi) dxds \rightarrow \frac{1}{2}\int_{(0,t)\times\Omega_{x}}|u|^2(\partial_t\psi+\Delta_x\psi) dxds, ~\text{as}~ \varepsilon,\delta\rightarrow 0.
	\end{align*}
	\par \textbf{The estimate of \(M_2\):}
	\begin{align*}
		&\frac{1}{2}\int_{(0,t)\times\Omega_{x}}|u^{\varepsilon,\delta}|^2(\theta_{\varepsilon}\ast u^{\varepsilon,\delta})\cdot \nabla_{x}\psi dxds-\frac{1}{2}\int_{(0,t)\times\Omega_{x}}|u|^2u\cdot \nabla_{x}\psi dxds\\
		&=\frac{1}{2}\int_{(0,t)\times\Omega_{x}}(|u^{\varepsilon,\delta}|^2-|u|^2)(\theta_{\varepsilon}\ast u^{\varepsilon,\delta})\cdot \nabla_{x}\psi dxds+\frac{1}{2}\int_{(0,t)\times\Omega_{x}}|u|^2(\theta_{\varepsilon}\ast u^{\varepsilon,\delta}-u)\cdot \nabla_{x}\psi dxds\\
		&\leq \frac{1}{2}\norm{\nabla_{x}\psi}_{L^{\infty}((0,t)\times\Omega_{x})}\norm{u^{\varepsilon,\delta}+u}_{L^{2}(0,t;L^4(\Omega_x))}\norm{u}_{L^{\infty}(0,t;L^2(\Omega_x))}\norm{u^{\varepsilon,\delta}-u}_{L^{2}(0,t;L^4(\Omega_x))}\\
		&\hspace{0.5cm}+\frac{1}{2}\norm{\nabla_{x}\psi}_{L^{\infty}((0,t)\times\Omega_{x})}\norm{\theta_{\varepsilon}\ast u^{\varepsilon,\delta}-u}_{L^{3}((0,t)\times\Omega_x)}\norm{u}^2_{L^{3}((0,t)\times\Omega_x)}\\
        &\leq \frac{1}{2}\norm{\nabla_{x}\psi}_{L^{\infty}((0,t)\times\Omega_{x})}\norm{u^{\varepsilon,\delta}+u}_{L^{2}(0,t;L^4(\Omega_x))}\norm{u}_{L^{\infty}(0,t;L^2(\Omega_x))}\norm{u^{\varepsilon,\delta}-u}_{L^{2}(0,t;L^4(\Omega_x))}\\
		&\hspace{0.5cm}+\frac{1}{2}\norm{\nabla_{x}\psi}_{L^{\infty}((0,t)\times\Omega_{x})}\left(\norm{\theta_{\varepsilon}\ast u^{\varepsilon,\delta}-\theta_\varepsilon \ast u}_{L^{3}((0,t)\times\Omega_x)}+\norm{\theta_{\varepsilon}\ast u-u}_{L^{3}((0,t)\times\Omega_x)}\right)\norm{u}^2_{L^{3}((0,t)\times\Omega_x)}.
	\end{align*}
	By Lemma \ref{lem:ul2}, it follows that
	\begin{align*}
		M_2 \to \frac{1}{2} \int_{(0,t)\times \Omega_{x}} |u|^2 u \cdot \nabla_{x} \psi dxds ~\text{as}~ \varepsilon,\delta\rightarrow 0. 
	\end{align*}
	\par \textbf{The estimate of \(M_3\):}  Since \(\Omega_v\) is a bounded domain, for any \(v\), there exists a ball \(B(0,R)\) such that \(v \in B(0,R)\). Therefore, we obtain
	\begin{align*}
		&\int_{(0,t)\times \Omega_x \times \Omega_v} \left[ \left( \frac{|v|^2}{2} + \log f^{\varepsilon,\delta} \right) f^{\varepsilon,\delta} - \left( \frac{|v|^2}{2} + \log f \right) f \right] (\partial_t \phi + \Delta_v \phi) dxdvds \\
		&\leq \frac{R^2}{2} \int_{(0,t)\times \Omega_x \times \Omega_v} \left| (f^{\varepsilon,\delta} - f) (\partial_t \phi + \Delta_v \phi) \right| dxdvds \\
		&\hspace{3cm} + \int_{(0,t)\times \Omega_x \times \Omega_v} \left( f^{\varepsilon,\delta} \log f^{\varepsilon,\delta} - f \log f \right) (\partial_t \phi + \Delta_v \phi) dxdvds.
	\end{align*}
	By Lemma \ref{lem:fcompact} and Lemma \ref{lem:flogfcompact}, this yields
	\begin{align*}
		&\int_{(0,t)\times \Omega_x \times \Omega_v} \left( \frac{|v|^2}{2} + \log f^{\varepsilon,\delta} \right) f^{\varepsilon,\delta} (\partial_t \phi + \Delta_v \phi) dxdvds \\
		&\hspace{3cm} \to \int_{(0,t)\times \Omega_x \times \Omega_v} \left( \frac{|v|^2}{2} + \log f \right) f (\partial_t \phi + \Delta_v \phi) dxdvds, ~\text{as}~ \varepsilon,\delta\rightarrow 0.
	\end{align*}
	
	\par \textbf{The estimate of \(M_4\):}
	\begin{align*}
		&\int_{(0,t) \times \Omega_x \times \Omega_v} \left( \frac{|v|^2}{2} + \log f^{\varepsilon,\delta} \right) f^{\varepsilon,\delta}v\cdot \left( \nabla_{x}\phi - \nabla_{v}\phi \right) dxdvds\\
		&\hspace{4cm}-\int_{(0,t) \times \Omega_x \times \Omega_v} \left( \frac{|v|^2}{2} + \log f \right)f v\cdot\left( \nabla_{x}\phi - \nabla_{v}\phi \right) dxdvds\\
		&\leq \frac{R^3}{2}\int_{(0,t)\times \Omega_x \times \Omega_v}\left|(f^{\varepsilon,\delta}-f)\left( \nabla_{x}\phi - \nabla_{v}\phi \right)\right|dxdvds\\
		&\hspace{4cm}+R\int_{(0,t)\times \Omega_x \times \Omega_v}\left|(f^{\varepsilon,\delta}\log f^{\varepsilon,\delta}-f\log f)\left( \nabla_{x}\phi - \nabla_{v}\phi \right)\right|dxdvds
	\end{align*}
	Since Lemma \ref{lem:fcompact} and Lemma \ref{lem:flogfcompact}, we have
	\begin{align*}
		\int_{(0,t) \times \Omega_x \times \Omega_v} \left( \frac{|v|^2}{2} + \log f^{\varepsilon,\delta} \right)&  f^{\varepsilon,\delta} v\cdot\left( \nabla_{x}\phi - \nabla_{v}\phi \right) dxdvds \rightarrow\\
		&\int_{(0,t) \times \Omega_x \times \Omega_v} \left( \frac{|v|^2}{2} + \log f\right) fv\cdot \left( \nabla_{x}\phi - \nabla_{v}\phi \right) dxdvds, ~\text{as}~ \varepsilon,\delta\rightarrow 0.
	\end{align*}
	\par \textbf{The estimate of \(M_5\):}
	\begin{align*}
		&\int_{(0,t)\times\Omega_{x}}(P^{\varepsilon,\delta}-\bar{P}^{\varepsilon,\delta})u^{\varepsilon,\delta}\cdot\nabla_{x}\psi dxds-\int_{(0,t)\times\Omega_{x}}(P-\bar{P})u\cdot\nabla_{x}\psi dxds\\
		&=\int_{(0,t)\times\Omega_{x}}(P^{\varepsilon,\delta}-\bar{P}^{\varepsilon,\delta})(u^{\varepsilon,\delta}-u)\cdot\nabla_{x}\psi dxds+\int_{(0,t)\times\Omega_{x}}[(P^{\varepsilon,\delta}-\bar{P}^{\varepsilon,\delta})-(P-\bar{P})]u\cdot\nabla_{x}\psi dxds\\
		&\leq \norm{P^{\varepsilon,\delta}-\bar{P}^{\varepsilon,\delta}}_{L^\frac32((0,t)\times\Omega_{x})}\norm{u^{\varepsilon,\delta}-u}_{L^3((0,t)\times\Omega_{x})}\norm{\nabla_{x}\psi}_{L^{\infty}((0,t)\times\Omega_{x})}\\
		&\hspace{5cm}+\int_{(0,t)\times\Omega_{x}}\left[(P^{\varepsilon,\delta}-\bar{P}^{\varepsilon,\delta})-(P-\bar{P})\right]u\cdot\nabla_{x}\psi dxds.
	\end{align*}
From \eqref{40}, it follows that
\begin{align*}
	\norm{P^{\varepsilon,\delta} - \bar{P}^{\varepsilon,\delta}}_{L^\frac32((0,t)\times \Omega_{x})} \norm{u^{\varepsilon,\delta} - u}_{L^3((0,t)\times \Omega_{x})} \norm{\nabla_{x} \psi}_{L^{\infty}((0,t)\times \Omega_{x})} \to 0, ~\text{as}~ \varepsilon,\delta\rightarrow 0.
\end{align*}
Since \(|u \cdot \nabla_{x} \psi| \leq C |u| |\nabla_{x} \psi| \in L^3((0,t) \times \Omega_{x})\), by the definition of weak convergence and \eqref{pweak}, it holds that
\begin{align*}
	\int_{(0,t) \times \Omega_{x}} \left[ (P^{\varepsilon,\delta} - \bar{P}^{\varepsilon,\delta}) - (P - \bar{P}) \right] u \cdot \nabla_{x} \psi dxds \to 0, ~\text{as}~ \varepsilon,\delta\rightarrow 0.
\end{align*}
	\par \textbf{The estimate of \(M_6\):}
	\begin{align*}
		&\int_{(0,t) \times \Omega_x \times \Omega_v}fv\cdot u\phi dxdvds-\int_{(0,t) \times \Omega_x \times \Omega_v}f^{\varepsilon,\delta}v\cdot (\theta_{\varepsilon}\ast u^{\varepsilon,\delta})\phi dxdvds\\
		&= \int_{(0,t) \times \Omega_x \times \Omega_v}(f-f^{\varepsilon,\delta})v\cdot u\phi dxdvds+\int_{(0,t) \times \Omega_x \times \Omega_v}f^{\varepsilon,\delta}v\cdot (u-\theta_{\varepsilon}\ast u^{\varepsilon,\delta})\phi dxdvds\\
		&:=M_{61}+M_{62}.
	\end{align*}
	For the estimate of \(M_{61}\), it holds that
	\begin{align*}
		M_{61} &\leq R\norm{\phi}_{L^{\infty}((0,t)\times\Omega_{x}\times\Omega_{v})}\int_{(0,t) \times \Omega_x \times \Omega_v}|(f-f^{\varepsilon,\delta})u|dxdvds \\
		&\leq C R\norm{\phi}_{L^{\infty}((0,t)\times\Omega_{x}\times\Omega_{v})}\|u\|_{L^3((0,t)\times\Omega_x)}\left\|\int_{\Omega_v}(f-f^{\varepsilon,\delta})dv\right\|_{L^\frac32((0,t)\times\Omega_x)}\\
		&\leq C\|u\|_{L^3((0,t)\times\Omega_x)}\|f-f^{\varepsilon,\delta}\|_{L^\frac32((0,t)\times\Omega_x\times\Omega_v)}\\
        &\leq Ct^\frac{1}{12} \norm{f^{\varepsilon,\delta} - f}_{L^\frac32\left((0,t)\times \Omega_{x} \times \Omega_{v}\right)}.
	\end{align*}
	By Lemma \ref{lem:fcompact}, it is concluded that
	\begin{align*}
		M_{61} \to 0, ~\text{as}~ \varepsilon,\delta\rightarrow 0.
	\end{align*}
	For the estimate of \(M_{62}\), we have
	\begin{align*}
		M_{62} &\leq \norm{\phi}_{L^{\infty}((0,t)\times\Omega_{x}\times\Omega_{v})}\int_{(0,t) \times \Omega_x \times \Omega_v} |v| f^{\varepsilon,\delta} |u - \theta_{\varepsilon}\ast u^{\varepsilon,\delta}| dxdvds \\
		&\leq C\norm{\phi}_{L^{\infty}((0,t)\times\Omega_{x}\times\Omega_{v})}\int_{(0,t) \times \Omega_x} \left(\int_{\Omega_v}f^{\varepsilon,\delta}dv\right)
		\times|u - \theta_{\varepsilon}\ast u^{\varepsilon,\delta}| dxds \\
		&\leq C \norm{\phi}_{L^{\infty}((0,t)\times\Omega_{x}\times\Omega_{v})} \left\|\int_{\Omega_v}f^{\varepsilon,\delta}dv\right\|_{L^\frac32((0,t)\times\Omega_x)}
		\times \norm{u - \theta_{\varepsilon}\ast u^{\varepsilon,\delta}}_{L^{3}((0,t)\times(\Omega_x))}\\
        &\leq C\|f^{\varepsilon,\delta}\|_{L^\frac32((0,t)\times\Omega_x\times\Omega_v)}\norm{u - \theta_{\varepsilon}\ast u^{\varepsilon,\delta}}_{L^{3}((0,t)\times(\Omega_x))}\\
        &\leq C\|f^{\varepsilon,\delta}\|_{L^\frac32((0,t)\times\Omega_x\times\Omega_v)}\left(\norm{u - \theta_{\varepsilon}\ast u}_{L^{3}((0,t)\times(\Omega_x))}+\norm{\theta_\varepsilon \ast u - \theta_{\varepsilon}\ast u^{\varepsilon,\delta}}_{L^{3}((0,t)\times(\Omega_x))}\right).
	\end{align*}
By Lemma \ref{lem:ul2}, it implies that
	\begin{align*}
		M_{62} \to 0, ~\text{as}~ \varepsilon,\delta\rightarrow 0.
	\end{align*}
	In summary, we obtain
	\begin{align*}
		\int_{(0,t) \times \Omega_x \times \Omega_v} f^{\varepsilon,\delta} v \cdot u^{\varepsilon,\delta} \phi dxdvds \to \int_{(0,t) \times \Omega_x \times \Omega_v} f v \cdot u \phi dxdvds, ~\text{as}~ \varepsilon,\delta\rightarrow 0.
	\end{align*}
	\par \textbf{The estimate of \(M_7\):}
	\begin{align*}
		&\int_{(0,t) \times \Omega_x \times \mathbb{R}^3_v}f^{\varepsilon,\delta}v\cdot \left(\theta_{\varepsilon}\ast (u^{\varepsilon,\delta}\psi)\right) dxdvds-\int_{(0,t) \times \Omega_x \times \mathbb{R}^3_v}fv\cdot u\psi dxdvds\\
		&=\int_{(0,t) \times \Omega_x }j^{\varepsilon,\delta}u^{\varepsilon,\delta}\psi -ju\psi dxds+\int_{(0,t) \times \Omega_x \times \mathbb{R}^{3}_{v}}f^{\varepsilon,\delta}v\cdot (\theta_{\varepsilon}\ast (u^{\varepsilon,\delta}\psi)-u^{\varepsilon,\delta}\psi) dvdxds\\
        &=M_{7-1}+M_{7-2}.
	\end{align*}
    For \(M_{7-1}\), Since \eqref{eq:17}, we derive
	\begin{align*}
		\int_{(0,t) \times \Omega_x \times \mathbb{R}^3_v}f^{\varepsilon,\delta}v\cdot u^{\varepsilon,\delta}\psi dxdvds \rightarrow \int_{(0,t) \times \Omega_x \times \mathbb{R}^3_v}fv\cdot u\psi dxdvds, ~\text{as}~ \varepsilon,\delta\rightarrow 0.
	\end{align*}
    For \(M_{7-2}\), it shows that
\begin{align*}
		&\int_{(0,t) \times \Omega_x \times \mathbb{R}^{3}_{v}}f^{\varepsilon,\delta}v\cdot (\theta_{\varepsilon}\ast (u^{\varepsilon,\delta}\psi)-u^{\varepsilon,\delta}\psi) dvdxds\\
		\leq&\int_{(0,t) \times \Omega_x \times \mathbb{R}^{3}_{v}}|\theta_\varepsilon \ast (u^{\varepsilon,\delta}\psi)-u^{\varepsilon,\delta}\psi||v|f^{\varepsilon,\delta} dxdvds\\
		\leq&\left(\frac{4}{3} \pi \|f^{\varepsilon,\delta}(t,x,v)\|_{L^{\infty}} +1\right)\int_{(0,t) \times \Omega_x }|\theta_{\varepsilon}\ast (u^{\varepsilon,\delta}\psi)-u^{\varepsilon,\delta}\psi|\left(\int_{  \mathbb{R}^3_v}|v|^2f^{\varepsilon,\delta}dv\right)^\frac45dxds\\
		\leq&Ct^\frac12\left(\frac{4}{3} \pi \|f^{\varepsilon,\delta}(t,x,v)\|_{L^{\infty}} +1\right)\norm{u^{\varepsilon,\delta}\psi-\theta_{\varepsilon}\ast (u^{\varepsilon,\delta}\psi)}_{L^2(0,t;L^5(\Omega_{x}))}\norms{|v|^2f^{\varepsilon,\delta}}_{L^{\infty}(0,t;L^1(\mathbb{R}^6_{xv}))}^\frac45\\
        \leq&C(t)\bigg(\norm{\theta_{\varepsilon}\ast (u\psi)-\theta_{\varepsilon}\ast (u^{\varepsilon,\delta}\psi)}_{L^2(0,t;L^5(\Omega_{x}))}+\norm{u\psi-\theta_{\varepsilon}\ast (u\psi)}_{L^2(0,t;L^5(\Omega_{x}))}\\
        &+\norm{u^{\varepsilon,\delta}\psi-u\psi)}_{L^2(0,t;L^5(\Omega_{x}))}\bigg)\rightarrow 0, ~\text{as}~ \varepsilon,\delta\rightarrow 0.
	\end{align*}
    To sum up, we have
    \begin{align*}
        \int_{(0,t) \times \Omega_x \times \mathbb{R}^3_v}f^{\varepsilon,\delta}v\cdot \left(\theta_{\varepsilon}\ast (u^{\varepsilon,\delta}\psi)\right) dxdvds \rightarrow \int_{(0,t) \times \Omega_x \times \mathbb{R}^3_v}fv\cdot u\psi dxdvds, ~\text{as}~ \varepsilon,\delta\rightarrow 0.
    \end{align*}

	\par \textbf{The estimate of \(M_8\):}
\begin{align*}
	&\int_{(0,t) \times \Omega_x \times \Omega_v} \left( 2 + \frac{|v|^2}{2} + \log f^{\varepsilon,\delta} \right) f^{\varepsilon,\delta}  (\theta_\varepsilon \ast u^{\varepsilon,\delta})\cdot \nabla_{v}\phi dxdvds \\
	&\hspace{5cm}- \int_{(0,t) \times \Omega_x \times \Omega_v} \left( 2 + \frac{|v|^2}{2} + \log f \right) f u \cdot \nabla_{v}\phi dxdvds \\
	&= \int_{(0,t) \times \Omega_x \times \Omega_v} \left( 2 + \frac{|v|^2}{2} \right) f^{\varepsilon,\delta} \left(\theta_\varepsilon \ast u^{\varepsilon,\delta} - u \right) \cdot \nabla_{v} \phi dxdvds \\
	&\hspace{0.5cm} + \int_{(0,t) \times \Omega_x \times \Omega_v} \left( 2 + \frac{|v|^2}{2} \right) \left( f^{\varepsilon,\delta} - f \right) u \cdot \nabla_{v} \phi dxdvds \\
	&\hspace{0.5cm} + \int_{(0,t) \times \Omega_x \times \Omega_v} \left( f^{\varepsilon,\delta}\log f^{\varepsilon,\delta}  - f\log f  \right) (\theta_\varepsilon \ast u^{\varepsilon,\delta}) \cdot \nabla_{v} \phi dxdvds \\
	&\hspace{0.5cm} + \int_{(0,t) \times \Omega_x \times \Omega_v} f\log f  \left( \theta_\varepsilon \ast u^{\varepsilon,\delta} - u \right) \cdot \nabla_{v} \phi dxdvds \\
	&:= M_{81} + M_{82} + M_{83} + M_{84}.
\end{align*}
For the estimate of \(M_{81}\), by H\"{o}lder's inequality, \eqref{eq:thebound-n} and the boundedness of the velocity \(v\), we obtain
\begin{align*}
	M_{81} &= \int_{(0,t) \times \Omega_x \times \Omega_v} \left( 2 + \frac{|v|^2}{2} \right) f^{\varepsilon,\delta} \left(\theta_\varepsilon \ast u^{\varepsilon,\delta} - u \right) \cdot \nabla_{v} \phi dxdvds \\
	&\leq \left( 2 + \frac{R^2}{2} \right) \norm{\nabla_{v} \phi}_{L^{\infty}((0,t) \times \Omega_x \times \Omega_v)} \int_{(0,t) \times \Omega_x \times \Omega_v} |\theta_\varepsilon \ast u^{\varepsilon,\delta} - u| f^{\varepsilon,\delta} dxdvds \\
	&\leq Ct^\frac23 \left( 2 + \frac{R^2}{2} \right) \norm{\nabla_{v} \phi}_{L^{\infty}((0,t) \times \Omega_x \times \Omega_v)} \|n^{\varepsilon,\delta}(t,x)\|_{L^{\infty}(0,t;L^\frac{3}{2}(\Omega_x) )} \\
	&\hspace{2cm} \times \norm{\theta_\varepsilon \ast u^{\varepsilon,\delta} - u}_{L^3((0,t)\times\Omega_x)}\\
    &\leq Ct^\frac23 \left( 2 + \frac{R^2}{2} \right) \norm{\nabla_{v} \phi}_{L^{\infty}((0,t) \times \Omega_x \times \Omega_v)} \|n^{\varepsilon,\delta}(t,x)\|_{L^{\infty}(0,t;L^\frac{3}{2}(\Omega_x) )} \\
	&\hspace{2cm} \times \left(\norm{\theta_\varepsilon \ast u^{\varepsilon,\delta} - \theta_\varepsilon \ast u}_{L^3((0,t)\times\Omega_x)}+\norm{\theta_\varepsilon \ast u - u}_{L^3((0,t)\times\Omega_x)}\right).
\end{align*}
By Lemma \ref{lem:ul2}, it follows that
\begin{align*}
	M_{81} \to 0, ~\text{as}~ \varepsilon,\delta\rightarrow 0.
\end{align*}
For the estimate of \(M_{82}\), applying H\"{o}lder's inequality and the boundedness of the velocity \(v\), we have
\begin{align*}
	M_{82} &\leq \left( 2 + \frac{R^2}{2} \right) \norm{\nabla_{v} \phi}_{L^{\infty}((0,t) \times \Omega_x \times \Omega_v)} \int_{(0,t) \times \Omega_x \times \Omega_v} |(f^{\varepsilon,\delta} - f) u| dxdvds \\
	&\leq C \left( 2 + \frac{R^2}{2} \right) \norm{\nabla_{v} \phi}_{L^{\infty}((0,t) \times \Omega_x \times \Omega_v)} \norm{u}_{L^3((0,t) \times \Omega_x)} \\
	&\hspace{2cm} \times \left\|\int_{\Omega_v}|f^{\varepsilon,\delta} - f|dv\right\|_{L^\frac32((0,t)\times\Omega_x)} \\
	&\leq C \left( 2 + \frac{R^2}{2} \right) \norm{\nabla_{v} \phi}_{L^{\infty}((0,t) \times \Omega_x \times \Omega_v)} \norm{u}_{L^3((0,t) \times \Omega_x)} \\
	&\hspace{2cm} \times \norm{f^{\varepsilon,\delta} - f}_{L^\frac32((0,t) \times \Omega_x \times \Omega_v)}.
\end{align*}
By Lemma \ref{lem:fcompact}, it follows that
\begin{align*}
	M_{82} \to 0, ~\text{as}~ \varepsilon,\delta\rightarrow 0.
\end{align*}
For the estimate of \(M_{83}\), using H\"{o}lder's inequality, we obtain
\begin{align*}
	M_{83} &\leq \norm{\nabla_{v} \phi}_{L^{\infty}((0,t) \times \Omega_x \times \Omega_v)} \norm{\theta_\varepsilon \ast u^{\varepsilon,\delta}}_{L^2((0,t) \times \Omega_x)} \\
	&\hspace{1cm} \times \norm{f^{\varepsilon,\delta}\log f^{\varepsilon,\delta}  - f\log f }_{L^1_v L^2_{xt}} \\
	&\leq C \norm{\nabla_{v} \phi}_{L^{\infty}((0,t) \times \Omega_x \times \Omega_v)} \norm{u^{\varepsilon,\delta}}_{L^2((0,t) \times \Omega_x)} \\
	&\hspace{1cm} \times \norm{f^{\varepsilon,\delta}\log f^{\varepsilon,\delta}  - f\log f }_{L^2((0,t) \times \Omega_x \times \Omega_v)}.
\end{align*}
By Lemma \ref{lem:flogfcompact}, it follows that
\begin{align*}
	M_{83} \to 0, ~\text{as}~ \varepsilon,\delta\rightarrow 0.
\end{align*}
For the estimate of \(M_{84}\), applying H\"{o}lder's inequality, we get
\begin{align*}
	M_{84} &\leq C \norm{\nabla_{v} \phi}_{L^{\infty}((0,t) \times \Omega_x \times \Omega_v)} \norm{f \log f}_{L^{\frac32}((0,t) \times \Omega_x \times \Omega_v)} \\
	&\hspace{1cm} \times \norm{u - \theta_\varepsilon \ast u^{\varepsilon,\delta}}_{L^3((0,t) \times \Omega_x)} \\
	&\leq C \norm{\nabla_{v} \phi}_{L^{\infty}((0,t) \times \Omega_x \times \Omega_v)} \norm{f \log f}_{L^\frac32((0,t) \times \Omega_x \times \Omega_v)} \\
	&\hspace{1cm} \times \left(\norm{\theta_\varepsilon \ast u - \theta_\varepsilon \ast u^{\varepsilon,\delta}}_{L^3((0,t) \times \Omega_x)} +\norm{u - \theta_\varepsilon \ast u}_{L^3((0,t) \times \Omega_x)} \right).
\end{align*}
By Lemma \ref{lem:ul2}, it follows that
\begin{align*}
	M_{84} \to 0, ~\text{as}~ \varepsilon,\delta\rightarrow 0.
\end{align*}
To sum up, we obtain
	\begin{align*}
		&\int_{(0,t) \times \Omega_x \times \Omega_v} \left( 2 + \frac{|v|^2}{2} + \log f^{\varepsilon,\delta} \right)  f^{\varepsilon,\delta}u^{\varepsilon,\delta}\cdot \nabla_{v}\phi dxdvds\\ &\hspace{3cm}\rightarrow\int_{(0,t) \times \Omega_x \times \Omega_v} \left( 2 + \frac{|v|^2}{2} + \log f \right) f  u\cdot\nabla_{v}\phi dxdvds, ~\text{as}~ \varepsilon,\delta\rightarrow 0.
	\end{align*}
	\par \textbf{The estimate of \(M_9\):}
	\begin{align*}
		&\int_{(0,t) \times \Omega_x \times \mathbb{R}^3_v}|\theta_{\varepsilon}\ast u^{\varepsilon,\delta}|^2f^{\varepsilon,\delta}\psi-(\theta_{\varepsilon}\ast u^{\varepsilon,\delta})\cdot \left(\theta_{\varepsilon}\ast (u^{\varepsilon,\delta}\psi)\right)f^{\varepsilon,\delta} dxdvds\\
		=&\int_{(0,t) \times \Omega_x \times \mathbb{R}^3_v}(\theta_{\varepsilon}\ast u^{\varepsilon,\delta})\cdot (\theta_{\varepsilon}\ast u^{\varepsilon,\delta}-u^{\varepsilon,\delta})f^{\varepsilon,\delta}\psi dxdvds\\
        &+\int_{(0,t) \times \Omega_x \times \mathbb{R}^3_v}(\theta_{\varepsilon}\ast u^{\varepsilon,\delta})\cdot \left(u^{\varepsilon,\delta}\psi-\theta_{\varepsilon}
        \ast (u^{\varepsilon,\delta}\psi)\right)f^{\varepsilon,\delta}dxdvds\\
        :=&M_{9-1}+M_{9-2}.
	\end{align*}
    For \(M_{9-1}\), it becomes
    \begin{align*}
    &\int_{(0,t) \times \Omega_x \times \mathbb{R}^3_v}(\theta_{\varepsilon}\ast u^{\varepsilon,\delta})\cdot (\theta_{\varepsilon}\ast u^{\varepsilon,\delta}-u^{\varepsilon,\delta})f^{\varepsilon,\delta}\psi dxdvds\\
		\leq&\int_{(0,t) \times \Omega_x \times \mathbb{R}^3_v}|(u^{\varepsilon,\delta}-\theta_{\varepsilon}\ast u^{\varepsilon,\delta})\cdot (\theta_{\varepsilon}\ast u^{\varepsilon,\delta})|f^{\varepsilon,\delta}\psi dxdvds\\
		\leq& \left(\frac{4}{3} \pi \|f^{\varepsilon,\delta}(t,x,v)\|_{L^{\infty}} +1\right)\int_{(0,t) \times \Omega_x}|(u^{\varepsilon,\delta}-\theta_{\varepsilon}\ast u^{\varepsilon,\delta})\cdot (\theta_{\varepsilon}\ast u^{\varepsilon,\delta})|\left(\int_{ \mathbb{R}_v^3}|v|^3f^{\varepsilon,\delta}dv\right)^\frac12dx\\
		\leq& \left(\frac{4}{3} \pi \|f^{\varepsilon,\delta}(t,x,v)\|_{L^{\infty}} +1\right)\norm{u^{\varepsilon,\delta}}_{L^2(0,t;L^4(\Omega_x))}\norm{u^{\varepsilon,\delta}-\theta_{\varepsilon}\ast u^{\varepsilon,\delta}}_{L^2(0,t;L^4(\Omega_x))}\norms{|v|^3f^{\varepsilon,\delta}}_{L^{\infty}(0,t;L^1(\mathbb{R}^6_{xv}))}^\frac12\\
        \leq& C\left( \norm{u^{\varepsilon,\delta}-u}_{L^2(0,t;L^4(\Omega_x))}+\norm{u-\theta_{\varepsilon}\ast u}_{L^2(0,t;L^4(\Omega_x))}+\norm{\theta_{\varepsilon}\ast u-\theta_{\varepsilon}\ast u^{\varepsilon,\delta}}_{L^2(0,t;L^4(\Omega_x))} \right)\rightarrow 0.
	\end{align*}

For \(M_{9-2}\), we have
\begin{align*}
		&\int_{(0,t) \times \Omega_x \times \mathbb{R}^{3}_{v}}f^{\varepsilon,\delta}(\theta_{\varepsilon}\ast u^{\varepsilon,\delta})\cdot (u^{\varepsilon,\delta}\psi-\theta_{\varepsilon}\ast (u^{\varepsilon,\delta}\psi)) dvdxds\\
		\leq&\int_{(0,t) \times \Omega_x \times \mathbb{R}^{3}_{v}}\left|\left(u^{\varepsilon,\delta}\psi-\theta_{\varepsilon}\ast (u^{\varepsilon,\delta}\psi)\right)\cdot (\theta_{\varepsilon}\ast u^{\varepsilon,\delta})\right|f^{\varepsilon,\delta} dxdvds\\
		\leq&\left(\frac{4}{3} \pi \|f^{\varepsilon,\delta}(t,x,v)\|_{L^{\infty}} +1\right)\int_{(0,t) \times \Omega_x }|\theta_{\varepsilon}\ast (u^{\varepsilon,\delta}\psi)-u^{\varepsilon,\delta}\psi||\theta_{\varepsilon}\ast u^{\varepsilon,\delta}|\left(\int_{  \mathbb{R}^3_v}|v|^3f^{\varepsilon,\delta}dv\right)^\frac12dxds\\
		\leq&C\left(\frac{4}{3} \pi \|f^{\varepsilon,\delta}(t,x,v)\|_{L^{\infty}} +1\right)\norm{u^{\varepsilon,\delta}\psi-\theta_{\varepsilon}\ast (u^{\varepsilon,\delta}\psi)}_{L^2(0,t;L^4(\Omega_{x}))}\times\\
		&\norm{u^{\varepsilon,\delta}}_{L^2(0,t;L^4(\Omega_x))}\norms{|v|^3f^{\varepsilon,\delta}}_{L^{\infty}(0,t;L^1(\mathbb{R}^6_{xv}))}^\frac12\\
        \leq& C\bigg(\norm{\theta_{\varepsilon}\ast (u\psi)-\theta_{\varepsilon}\ast (u^{\varepsilon,\delta}\psi)}_{L^2(0,t;L^4(\Omega_{x}))}+\norm{u\psi-\theta_{\varepsilon}\ast (u\psi)}_{L^2(0,t;L^4(\Omega_{x}))}\\
        &+\norm{u^{\varepsilon,\delta}\psi-u\psi}_{L^2(0,t;L^4(\Omega_{x}))}\bigg)\rightarrow 0, ~\text{as}~ \varepsilon,\delta\rightarrow 0.
	\end{align*}
    To sum up, we derive
    \begin{align*}
        \int_{(0,t) \times \Omega_x \times \mathbb{R}^3_v}|\theta_{\varepsilon}\ast u^{\varepsilon,\delta}|^2f^{\varepsilon,\delta}\psi-(\theta_{\varepsilon}\ast u^{\varepsilon,\delta})\cdot \left(\theta_{\varepsilon}\ast (u^{\varepsilon,\delta}\psi)\right)f^{\varepsilon,\delta} dxdvds \rightarrow 0, ~\text{as}~ \varepsilon,\delta\rightarrow 0.
    \end{align*}

    \par \textbf{The estimate of \(M_{10}\):} Similary with \eqref{convergencevf}, we have
	\begin{align*}
		\int_{(0,t) \times \Omega_x \times \Omega_v}f^{\varepsilon,\delta}|v|^2(1-\gamma_{\delta}(v))\phi dxdvds\rightarrow 0, ~\text{as}~ \varepsilon,\delta\rightarrow 0.
	\end{align*}

	\par \textbf{The estimate of \(M_{11}\):}
	\begin{align*}
		M_{11}\leq&\frac{1}{2}\int_{(0,t) \times \Omega_x \times \Omega_v}|v|^3f^{\varepsilon,\delta}(1-\gamma_{\delta}(v))|\nabla_{v}\phi| dxdvds\\
		&+\int_{(0,t) \times \Omega_x \times \Omega_v}|vf^{\varepsilon,\delta}\log f^{\varepsilon,\delta}|(1-\gamma_{\delta}(v))|\nabla_{v}\phi| dxdvds\\
        \leq &Ct\norms{|v|^3f^{\varepsilon,\delta}}_{L^{\infty}(0,t;L^1(\mathbb{R}^6_{xv}))}\norms{\nabla_{v}\phi}_{L^{\infty}((0,t)\times\Omega_{x}\times\Omega_{v})}\norms{1-\gamma_\delta(v)}_{L^{\infty}(\Omega_{v})}\\
        &+\norms{|v|f^{\varepsilon,\delta}\log f^{\varepsilon,\delta}}_{L^1((0,T)\times\Omega_{x}\times\Omega_v)} \norms{\nabla_{v}\phi}_{L^{\infty}((0,t)\times\Omega_{x}\times\Omega_{v})}\norms{1-\gamma_\delta(v)}_{L^{\infty}(\Omega_{v})}.
		%:=&M_{11-1}+M_{11-2}.
	\end{align*}
Due to the boundedness of \(\Omega_v\), as \(\delta \rightarrow 0\), it follows that \(\gamma_\delta (v)=1\) in \(\Omega_v\). It follows that
\begin{align*}
		M_{11} \rightarrow 0, ~\text{as}~ \varepsilon,\delta\rightarrow 0.
	\end{align*}
Finally, as \(\varepsilon,\delta\rightarrow 0\), one can get the energy inequality as follows
	% \begin{align}
	% 	\label{eq:19}
	% 	\frac{1}{2}&\int_{\Omega_x}|u|^2\psi dx+\int_{\Omega_x \times \Omega_v}\left(\frac{1}{2}|v|^2f+f\log f\right)\phi dxdv+\int_{(0,t)\times \Omega_x}|\nabla u|^2\psi dxds\nonumber\\
	% 	&+\int_{(0,t)\times \Omega_x \times \Omega_v}\frac{|(u-v)f-\nabla_{v}f|^2}{f}\phi dxdvds\leq \frac{1}{2}\int_{(0,t)\times\Omega_{x}}|u|^2(\partial_t\psi+\Delta_x\psi) dxds\nonumber\\
	% 	&+\frac{1}{2}\int_{(0,t)\times\Omega_{x}}|u|^2u\cdot \nabla_{x}\psi dx
	% 	+\int_{(0,t)\times \Omega_x \times \Omega_v}\left(\frac{|v|^2}{2}+\log f\right)f(\partial_t\phi+\Delta_v\phi)dxdvds\nonumber\\
	% 	&+ \int_{(0,t) \times \Omega_x \times \Omega_v} \left( \frac{|v|^2}{2} + \log f \right) v f \left( \nabla_{x}\phi - \nabla_{v}\phi \right) dxdvds
	% 	+\int_{(0,t)\times\Omega_{x}}(P-\bar{P})u\cdot\nabla_{x}\psi dxds\nonumber \\
	% 	&-\int_{(0,t) \times \Omega_x \times \Omega_v}vfu\phi dxdvds +\int_{(0,t) \times \Omega_x \times \mathbb{R}^3_v}vfu\psi dxdvds\nonumber\\
	% 	&+ \int_{(0,t) \times \Omega_x \times \Omega_v} \left( 2 + \frac{|v|^2}{2} +\log f \right) u f \nabla_{v}\phi dxdvds+C\norms{|v|^3f^{\varepsilon,\delta}}_{L^{\infty}(0,T;L^1(\mathbb{R}^6_{xv}))}\delta\nonumber \\
	% 	&+C\norm{\nabla_{v}\phi}_{L^{\infty}((0,t)\times\Omega_{x}\times\Omega_{v})}\delta.
	% \end{align}
	% \par By completely similar arguments as the convergence of \(\left(u^{\varepsilon,\delta}, f^{\varepsilon,\delta}\right)\) in the subsection, we can get the convergence of \(\left(u, f\right)\) and the details are omitted for brevity. Thus, \(\left(u, f\right)\) satisfies the following local energy inequality:
	\begin{equation}
		\label{eq:20}
	    \begin{aligned}
		\frac{1}{2}&\int_{\Omega_x}\left(|u|^2\psi\right)(t,\cdot) dx+\int_{\Omega_x \times \Omega_v}\left(\left(\frac{1}{2}|v|^2f+f\log f\right)\phi\right)(t,\cdot) dxdv+\int_{(0,t)\times \Omega_x}|\nabla u|^2\psi dxds\\
		&+\int_{(0,t)\times \Omega_x \times \Omega_v}\frac{|(u-v)f-\nabla_{v}f|^2}{f}\phi dxdvds\leq \frac{1}{2}\int_{(0,t)\times\Omega_{x}}|u|^2(\partial_t\psi+\Delta_x\psi) dxds\\
		&+\frac{1}{2}\int_{(0,t)\times\Omega_{x}}|u|^2u\cdot \nabla_{x}\psi dxds
		+\int_{(0,t)\times \Omega_x \times \Omega_v}\left(\frac{|v|^2}{2}+\log f\right)f(\partial_t\phi+\Delta_v\phi)dxdvds\\
		&+ \int_{(0,t) \times \Omega_x \times \Omega_v} \left( \frac{|v|^2}{2} + \log f \right) fv \cdot \left( \nabla_{x}\phi - \nabla_{v}\phi \right) dxdvds
		+\int_{(0,t)\times\Omega_{x}}(P-\bar{P})u\cdot\nabla_{x}\psi dxds \\
		&-\int_{(0,t) \times \Omega_x \times \Omega_v}f
        v\cdot u\phi dxdvds +\int_{(0,t) \times \Omega_x \times \mathbb{R}^3_v}f
        v\cdot u\psi dxdvds\\
		&+ \int_{(0,t) \times \Omega_x \times \Omega_v} \left( 2 + \frac{|v|^2}{2} +\log f \right) f
        u \cdot \nabla_{v}\phi dxdv
	\end{aligned}
	\end{equation}

	\section{Verification of \texorpdfstring{$vi)$}{vi)} in Definition \ref{def:weak}: the convergence of local energy inequality of the second type}
	\par {\bf \underline{Verification of local energy inequality in $vi)$ of Definition \ref{def:weak}.}} In the following, we will prove the convergence of the local energy inequality in \eqref{eq:23}. For the left-hand terms of it, the convergence is proved in Section 6, and we omit it. Let us write the right terms of the local energy inequality \eqref{eq:23} as \(N_1, N_2, \cdots, N_{8}\) term by term. Since \(M_1\), \(M_2\), \(M_5\) and \(M_9\) are the same as \(N_1\), \(N_2\), \(N_3\) and \(N_4\), we also omit it.
    \par \textbf{For \(N_{5}\):}
	\begin{align*}
		\int_{(0,t) \times \Omega_x \times \mathbb{R}^{3}_{v}}f^{\varepsilon,\delta}|v|^2(1-\gamma_{\delta}(v))\psi dxdvds&= \int_{(0,t) \times \Omega_x \times \mathbb{R}^3_v}f^{\varepsilon,\delta}|v|^3\frac{1-\gamma_{\delta}(v)}{|v|}\psi dxdvds\\
        &\leq \norms{|v|^3f^{\varepsilon,\delta}}_{L^{\infty}(0,t;L^1(\mathbb{R}^6_{xv}))}\norms{\frac{1-\gamma_{\delta}(v)}{|v|}}_{L^\infty_v}\norms{\psi}_{L^\infty_{tx}}\\ 
        &\leq C\delta\norms{|v|^3f^{\varepsilon,\delta}}_{L^{\infty}(0,t;L^1(\mathbb{R}^6_{xv}))}\norm{\psi}_{L^{\infty}((0,t)\times\Omega_{x})}\rightarrow 0.
	\end{align*}

 %    \par \textbf{For \(N_{5}\):} Similary with \(M_{10}\), we obtain
	% \begin{align*}
	% 	\int_{(0,t) \times \Omega_x \times \mathbb{R}^{3}_{v}}f^{\varepsilon,\delta}|v|^2(1-\gamma_{\delta}(v))\psi dxdvds \rightarrow 0, \text{as}~ \varepsilon,\delta \rightarrow 0.
	% \end{align*}

	\par \textbf{For \(N_{6}\):}
	\begin{align*}
		&\int_{(0,t)\times \Omega_x}\bigg|\int_{  \mathbb{R}^3_v}\big((\frac{|v|^2}{2}+\log f^{\varepsilon,\delta})f^{\varepsilon,\delta}-(\frac{|v|^2}{2}+\log f)f\big) dv\partial_t\psi\bigg| dxds\\
		\leq&\int_{(0,t)\times \Omega_x}\bigg|\int_{  \mathbb{R}^3_v}\frac{|v|^2}{2}\big(f^{\varepsilon,\delta}-f\big)dv\partial_t\psi \bigg|dxds+\int_{(0,t)\times \Omega_x }\bigg|\int_{  \mathbb{R}^3_v}\big(f^{\varepsilon,\delta}\log f^{\varepsilon,\delta}-f\log f\big)dv\partial_t\psi\bigg| dxds\\
		:=&N_{61}+N_{62}.
	\end{align*}
	For \(N_{61}\), due to Lemma \ref{lem:vfcompact}, there holds
	\begin{align*}
		N_{61}&\leq C \norms{|v|^2\big(f^{\varepsilon,\delta}-f\big)}_{L^{1}((0,t)\times\mathbb{R}^3_x\times\mathbb{R}^3_v)}\norm{\partial_t\psi}_{L^{\infty}((0,t)\times\Omega_{x})}\rightarrow 0, ~\text{as}~ \varepsilon,\delta\rightarrow 0.
	\end{align*}
	For \(N_{62}\), due to Lemma \ref{lem:flogfcompact1}, we have
	\begin{align*}
		N_{62}&\leq C\norms{f^{\varepsilon,\delta}\log f^{\varepsilon,\delta}-f\log f}_{L^{1}((0,t)\times\Omega_{x}\times\mathbb{R}^3_v)}\norm{\partial_t\psi}_{L^{\infty}((0,t)\times\Omega_{x})}\rightarrow 0, ~\text{as}~ \varepsilon,\delta\rightarrow 0.
	\end{align*}
	In summary, we get
	\begin{align*}
		\int_{(0,t)\times \Omega_x \times \mathbb{R}^{3}_{v}}\left(\frac{|v|^2}{2}+\log f^{\varepsilon,\delta}\right)f^{\varepsilon,\delta}\partial_t\psi dxdvds \rightarrow \int_{(0,t)\times \Omega_x \times \mathbb{R}^{3}_{v}}\left(\frac{|v|^2}{2}+\log f\right)f\partial_t\psi dxdvds.
	\end{align*}
	\par \textbf{For \(N_{7}\):}
	\begin{align*}
		&\int_{(0,t)\times \Omega_x}\bigg|\int_{  \mathbb{R}^3_v}\big((\frac{|v|^2}{2}+\log f^{\varepsilon,\delta})f^{\varepsilon,\delta}-(\frac{|v|^2}{2}+\log f)f\big) vdv\cdot\nabla_x\psi\bigg| dxds\\
		\leq&\int_{(0,t)\times \Omega_x}\bigg|\int_{  \mathbb{R}^3_v}\frac{|v|^3}{2}\big(f^{\varepsilon,\delta}-f\big)dv\cdot\nabla_x\psi \bigg|dxds\\
		&\hspace{4cm}+\int_{(0,t)\times \Omega_x }\bigg|\int_{  \mathbb{R}^3_v}\big(f^{\varepsilon,\delta}\log f^{\varepsilon,\delta}-f\log f\big)vdv\cdot\nabla_x\psi\bigg| dxds\\
		:=&N_{71}+N_{72}.
	\end{align*}
	For \(N_{71}\), due to Lemma \ref{lem:vfcompact}, we have
	\begin{align*}
			N_{71}&\leq C \norms{|v|^3\big(f^{\varepsilon,\delta}-f\big)}_{L^{1}((0,t)\times\mathbb{R}^3_x\times\mathbb{R}^3_v)}\norm{\nabla_x\psi}_{L^{\infty}((0,t)\times\Omega_{x})}\rightarrow 0, ~\text{as}~ \varepsilon,\delta\rightarrow 0.
	\end{align*}
	For \(N_{72}\), due to Lemma \ref{lem:vflogf compact}, we have
	\begin{align*}
		N_{72}&\leq C\norms{|v|\left|f^{\varepsilon,\delta}\log f^{\varepsilon,\delta}-f\log f\right|}_{L^{1}((0,t)\times\Omega_{x}\times\mathbb{R}^3_v)}\norm{\nabla_x\psi}_{L^{\infty}((0,t)\times\Omega_{x})}\rightarrow 0, ~\text{as}~ \varepsilon,\delta\rightarrow 0.
	\end{align*}
	To sum up, we  get
	\begin{align*}
		&\int_{(0,t)\times \Omega_x \times \mathbb{R}^{3}_{v}}\left(\frac{|v|^2}{2}+\log f^{\varepsilon,\delta}\right)f^{\varepsilon,\delta}v\cdot\nabla_x\psi dxdvds \\
		&\hspace{4cm}\rightarrow \int_{(0,t)\times \Omega_x \times \mathbb{R}^{3}_{v}}\left(\frac{|v|^2}{2}+\log f\right)fv\cdot\nabla_x\psi dxdvds, ~\text{as}~ \varepsilon,\delta\rightarrow 0.
	\end{align*}
\par \textbf{For \(N_8\):}
\begin{align*}
    &\int_{(0,t) \times \Omega_x \times \mathbb{R}^3_v}f^{\varepsilon,\delta}v\cdot \left(\theta_{\varepsilon}\ast (u^{\varepsilon,\delta}\psi)\right)-f^{\varepsilon,\delta}v\cdot \left(\theta_{\varepsilon}\ast u^{\varepsilon,\delta}\right)\psi dxdvds\\
    =&\int_{(0,t) \times \Omega_x \times \mathbb{R}^3_v}f^{\varepsilon,\delta}v\cdot \left(u^{\varepsilon,\delta}-\theta_{\varepsilon}\ast u^{\varepsilon,\delta}\right)\psi dxdvds\\
    &+\int_{(0,t) \times \Omega_x \times \mathbb{R}^3_v}f^{\varepsilon,\delta}v\cdot \left(\theta_{\varepsilon}\ast (u^{\varepsilon,\delta}\psi)-u^{\varepsilon,\delta}\psi\right) dxdvds\\
    :=& N_{8-1}+N_{8-2}.
\end{align*}
For \(N_{8-1}\), it becomes
\begin{align*}
&\int_{(0,t) \times \Omega_x \times \mathbb{R}^3_v}f^{\varepsilon,\delta}v\cdot \left(u^{\varepsilon,\delta}-\theta_{\varepsilon}\ast u^{\varepsilon,\delta}\right)\psi dxdvds\\
		\leq&\int_{(0,t) \times \Omega_x \times \mathbb{R}^{3}_{v}}|\theta \ast u^{\varepsilon,\delta}-u^{\varepsilon,\delta}||v|f^{\varepsilon,\delta}\psi dxdvds\\
		\leq&\left(\frac{4}{3} \pi \|f^{\varepsilon,\delta}(t,x,v)\|_{L^{\infty}} +1\right)\int_{(0,t) \times \Omega_x }(\theta_{\varepsilon}\ast u^{\varepsilon,\delta}-u^{\varepsilon,\delta})\psi\left(\int_{  \mathbb{R}^3_v}|v|^2f^{\varepsilon,\delta}dv\right)^\frac45dxds\\
		\leq&Ct^\frac12\left(\frac{4}{3} \pi \|f^{\varepsilon,\delta}(t,x,v)\|_{L^{\infty}} +1\right)\norm{u^{\varepsilon,\delta}-\theta_{\varepsilon}\ast u^{\varepsilon,\delta}}_{L^2(0,t;L^5(\Omega_x))}\norms{|v|^2f^{\varepsilon,\delta}}_{L^{\infty}(0,t;L^1(\mathbb{R}^6_{xv}))}^\frac45\norm{\psi}_{L^{\infty}((0,t)\times\Omega_{x})}\\
        \leq& C(t)\bigg(\norm{u^{\varepsilon,\delta}-u}_{L^2(0,t;L^5(\Omega_x))}+\norm{u-\theta_{\varepsilon}\ast u}_{L^2(0,t;L^5(\Omega_x))}\\
        &\hspace{1cm}\norm{\theta_{\varepsilon}\ast u- \theta_{\varepsilon}\ast u^{\varepsilon,\delta}}_{L^2(0,t;L^5(\Omega_x))}\bigg) \rightarrow0, ~\text{as}~ \varepsilon,\delta\rightarrow 0.
	\end{align*}
For \(N_{8-2}\), it shows that
\begin{align*}
		&\int_{(0,t) \times \Omega_x \times \mathbb{R}^{3}_{v}}f^{\varepsilon,\delta}v\cdot (\theta_{\varepsilon}\ast (u^{\varepsilon,\delta}\psi)-u^{\varepsilon,\delta}\psi) dvdxds\\
		\leq&\int_{(0,t) \times \Omega_x \times \mathbb{R}^{3}_{v}}|\theta \ast (u^{\varepsilon,\delta}\psi)-u^{\varepsilon,\delta}\psi||v|f^{\varepsilon,\delta} dxdvds\\
		\leq&\left(\frac{4}{3} \pi \|f^{\varepsilon,\delta}(t,x,v)\|_{L^{\infty}} +1\right)\int_{(0,t) \times \Omega_x }|\theta_{\varepsilon}\ast (u^{\varepsilon,\delta}\psi)-u^{\varepsilon,\delta}\psi|\left(\int_{  \mathbb{R}^3_v}|v|^2f^{\varepsilon,\delta}dv\right)^\frac45dxds\\
		\leq&Ct^\frac12\left(\frac{4}{3} \pi \|f^{\varepsilon,\delta}(t,x,v)\|_{L^{\infty}} +1\right)\norm{u^{\varepsilon,\delta}\psi-\theta_{\varepsilon}\ast (u^{\varepsilon,\delta}\psi)}_{L^2(0,t;L^5(\Omega_{x}))}\norms{|v|^2f^{\varepsilon,\delta}}_{L^{\infty}(0,t;L^1(\mathbb{R}^6_{xv}))}^\frac45\\
        \leq&C(t)\bigg(\norm{\theta_{\varepsilon}\ast (u\psi)-\theta_{\varepsilon}\ast (u^{\varepsilon,\delta}\psi)}_{L^2(0,t;L^5(\Omega_{x}))}+\norm{u\psi-\theta_{\varepsilon}\ast (u\psi)}_{L^2(0,t;L^5(\Omega_{x}))}\\
        &+\norm{u^{\varepsilon,\delta}\psi-u\psi}_{L^2(0,t;L^5(\Omega_{x}))}\bigg)\rightarrow 0, ~\text{as}~ \varepsilon,\delta\rightarrow 0.
	\end{align*}

	\par Finally, as \(\varepsilon,\delta\rightarrow 0\), we get the energy inequality as
	% \begin{align*}
	% 	\frac{1}{2}&\int_{\Omega_x}|u|^2\psi dx+\int_{\Omega_x \times \mathbb{R}^{3}_{v}}\left(\frac{1}{2}|v|^2f+f\log f\right)\psi dxdv+\int_{(0,t)\times \Omega_x}|\nabla u|^2\psi dxds\nonumber\\
	% 	&+\int_{(0,t)\times \Omega_x \times \mathbb{R}^{3}_{v}}\frac{|(u-v)f-\nabla_{v}f|^2}{f}\psi dxdvds\leq \frac{1}{2}\int_{(0,t)\times\Omega_{x}}|u|^2(\partial_t\psi+\Delta_x\psi) dxds\nonumber\\
	% 	&+\frac{1}{2}\int_{(0,t)\times\Omega_{x}}|u|^2u\cdot \nabla_{x}\psi dx
	% 	+\int_{(0,t)\times \Omega_x \times \mathbb{R}^{3}_{v}}\left(\frac{|v|^2}{2}+\log f\right)f\partial_t\psi dxdvds\nonumber\\
	% 	&+ \int_{(0,t) \times \Omega_x \times \mathbb{R}^{3}_{v}} \left( \frac{|v|^2}{2} + \log f \right)f v\cdot \nabla_{x}\psi dxdvds +\int_{(0,t)\times\Omega_{x}}(P-\bar{P})u\cdot\nabla_{x}\psi dxds\nonumber\\
	% 	&+C\delta\norms{|v|^3f^{\varepsilon,\delta}}_{L^{\infty}(0,T;L^1(\mathbb{R}^6_{xv}))}\norm{\psi}_{L^{\infty}((0,T)\times\Omega_{x})}.
	% \end{align*}
	% \par By completely similar arguments as the convergence of \(\left(u^{\varepsilon,\delta}, f^{\varepsilon,\delta}\right)\) in the subsection, we can get the convergence of \(\left(u, f\right)\).  Thus, \(\left(u, f\right)\) satisfies the following local energy inequality:
	\begin{align*}
		\frac{1}{2}&\int_{\Omega_x}\left(|u|^2\psi\right)(t,\cdot) dx+\int_{\Omega_x \times \mathbb{R}^3_v}\left(\left(\frac{1}{2}|v|^2f+f\log f\right)\psi\right)(t,\cdot) dxdv+\int_{(0,t)\times \Omega_x}|\nabla u|^2\psi dxds\nonumber\\
		&+\int_{(0,t)\times \Omega_x \times \mathbb{R}^3_v}\frac{|(u-v)f-\nabla_{v}f|^2}{f}\psi dxdvds\leq \frac{1}{2}\int_{(0,t)\times\Omega_{x}}|u|^2(\partial_t\psi+\Delta_x\psi) dxds\nonumber\\
		&+\frac{1}{2}\int_{(0,t)\times\Omega_{x}}|u|^2u\cdot \nabla_{x}\psi dxds
		+\int_{(0,t)\times \Omega_x \times \mathbb{R}^3_v}\left(\frac{|v|^2}{2}+\log f\right)f\partial_t\psi dxdvds\nonumber\\
		&+ \int_{(0,t) \times \Omega_x \times \mathbb{R}^3_v} \left( \frac{|v|^2}{2} + \log f \right) fv \cdot \nabla_{x}\psi dxdvds
		+\int_{(0,t)\times\Omega_{x}}(P-\bar{P})u\cdot\nabla_{x}\psi dxds.
	\end{align*}

	\section{Partial regularity}
	Before we start the main content, recall \eqref{eq:uniform norms of f} and by the properties of weak convergence, we have the following:
	\begin{equation}
		\label{9.1}
	    \begin{aligned}
		&\norm{f(t,x,v)}_{L^{\infty}(0,t;L^{1}_{xv}(\mathbb{R}^{3}\times \mathbb{R}^{3}))}+\norm{f(t,x,v)}_{L^{\infty}((0,t)\times\mathbb{R}^{3}\times \mathbb{R}^{3})}+\norm{u}^{2}_{L^{\infty}(0,t;L^{2}_{x}(\mathbb{R}^{3}))} \\
        &\hspace{0.5cm}+\norms{|v|^{\kappa}f}_{L^{\infty}(0,t;{L^{1}_{xv}(\mathbb{R}^{6})})}+\norm{\nabla u}^{2}_{L^{2}(0,t;{L^{2}_{x}(\mathbb{R}^{3})})}\\
		&\leq C\left(\norm{f_{in}}_{L^{\infty}_{xv}( \mathbb{R}^{3}\times \mathbb{R}^{3})}, \norm{f_{in}}_{L^{1}_{xv}(\mathbb{R}^{6})},\norms{|v|^{\kappa}f_{in}}_{L^{1}_{xv}(\mathbb{R}^{6})},\norm{u_{in}}^{2}_{L^{2}_{x}(\mathbb{R}^{3})} \right)(t+1)^{\frac{3\kappa+12}{2}}e^{C(\kappa)(t+1)^3e^{6t}}.
	\end{aligned}
	\end{equation}
	Next we prove Theorem \ref{thm:regular} and Theorem \ref{thm:holder}.
	\subsection{The proof of Theorem \ref{thm:regular}}$ $
	\par	As in the proof of Lemma \ref{lem:mf},  multiply \(|v|^{15+\varkappa}\) \(\left(\varkappa>0\right)\) on both sides of \eqref{eq:NSVFK} and integrate with \(xv\). By Gr\"{o}nwall's inequality, \eqref{9.2} and \eqref{9.1}, we derive
	\begin{align}
		\label{15quan}
		\norms{|v|^{15+\varkappa}f}_{L^{\infty}(0,T;L^1(\mathbb{R}^3_x\times\mathbb{R}^3_v))}\leq C.
	\end{align}
	For \(\eqref{eq:NSVFK}_1\), the external force term of the Navier-Stokes equations is \(\int_{\mathbb{R}^3_v}(v-u)fdv\). Here we define $\varepsilon_1=\frac{5\varkappa}{144+6\varkappa}$. By \eqref{8} we derive
	\begin{align*}
			&\left\| u \int_{\mathbb{R}_v^3} fdv\right\|_{L^{\frac{5}{2}+\varepsilon_1}_{tx}}\\
			\leq &C\left(1 + \frac{4}{3} \pi \| f \|_{L^\infty_{txv}} \right) \left\| u \left( \int f |v|^{15 + \frac{144\varepsilon_1}{5-6\varepsilon_1}} \, dv \right)^{\frac{5-6\varepsilon_1}{30+12\varepsilon_1}} \right\|_{L^{\frac{5}{2}+\varepsilon_1}_{tx}},
	\end{align*}
	and by H\"{o}lder's inequality, it is controlled by
	\begin{align*}
		 \left\| u \int_{\mathbb{R}_v^3} fdv\right\|_{L^{\frac{5}{2}+\varepsilon_1}_{tx}}\leq C\left(1 + \frac{4}{3} \pi \| f \|_{L^\infty_{txv}} \right)\left\| f|v|^{15 + \frac{144\varepsilon_1}{5-6\varepsilon_1}}\right\|^{\frac{5-6\varepsilon_1}{30+12\varepsilon_1}}_{L^{\infty}_tL^1_{xv}} \|u\|_{L_t^{\frac{5}{2}+\varepsilon_1}L_x^{\frac{16}{7+6\varepsilon_1}+2}}.
	\end{align*}
	By \eqref{utp26},
	% \begin{align*}
	% 	\|u\|_{L_t^q L_x^p}^2 \leq C\left(\|u\|_{L_t^\infty L_x^2}^2 + \|\nabla u\|_{L_t^2 L_x^2}^2\right), \quad \text{for} \quad \frac{2}{q} + \frac{3}{p} = \frac{3}{2}, ~ 2 \leq p \leq 6,
	% \end{align*}
	 \eqref{9.1} and \eqref{15quan}, we derive
	\begin{align*}
		\left\| u \int_{\mathbb{R}_v^3} fdv\right\|_{L^{\frac{5}{2}+\varepsilon_1}_{tx}}\leq C.
	\end{align*}
	For the term of \(\int vfdv\), by \eqref{8} and H\"{o}lder's inequality, we have
	\begin{align*}
		\norms{\int_{  \mathbb{R}^3_v}fvdv}_{L^{\frac52+\varepsilon_1}_{tx}} &\leq C\left(1 + \frac{4}{3} \pi \| f \|_{L^\infty_{txv}} \right)\norms{\left(\int_{  \mathbb{R}^3_v}f|v|^{7+4\varepsilon_1}dv\right)^\frac{2}{5+2\varepsilon_1}}_{L^\frac{5+2\varepsilon_1}{2}_{tx}}\\
		&\leq Ct^{\frac{2}{5+2\varepsilon_1}}\left(1 + \frac{4}{3} \pi \| f \|_{L^\infty_{txv}} \right)\norms{f|v|^{7+4\varepsilon_1}}^{\frac{2}{5+2\varepsilon_1}}_{L^{\infty}_{t}L^1_{xv}}\\
		&\leq Ct^{\frac{2}{5+2\varepsilon_1}}\left(1 + \frac{4}{3} \pi \| f \|_{L^\infty_{txv}} \right)\left(\norms{f|v|^{15 + \frac{144\varepsilon_1}{5-6\varepsilon_1}}}^{\frac 1p}_{L^{\infty}_{t}L^1_{xv}}\norms{f}^{1-\frac1p}_{L^{\infty}_{t}L^1_{xv}}\right)^{\frac{2}{5+2\varepsilon_1}}\\
		&\leq C(t),
	\end{align*}
    where \(p=\frac{75+54\varepsilon_1}{(5-6\varepsilon_1)(7+4\varepsilon_1)}\).
	In summary, we have
	\begin{align*}
		\norms{\int(v-u)fdv}_{L^{\frac{5}{2}+\varepsilon_1}_{tx}} \leq C.
	\end{align*}
	By Lemma \ref{singular} and Lemma \ref{regular}, Theorem \ref{thm:regular} is proved. \qed

	\subsection{The proof of Theorem \ref{thm:holder}}$ $
	\par Since \(z_0\) is a regular point of \(u\), according to the definition of regular points, we have
	 \begin{align*}
	 	\norm{u}_{L^{\infty}_{tx}\left(Q(z_0,r)\right)} \leq \Lambda
	 \end{align*}
	 for some \(r>0\), which \(\Lambda\) is a constant. Then for \(\eqref{eq:NSVFK}_2\), we transform it into
	 \begin{align*}
	 	\partial_t f + (v\cdot \nabla_x) f - \Delta_v f= (v-u)\cdot\nabla_{v}f+3f.
	 \end{align*}
	 % Similarly to (1.6) and (1.7) of \cite{golseHarnackInequalityKinetic2019}, by applying
       Applying Theorem 1.2 in \cite{WZ2024}(see also Theorem 1.4 in \cite{golseHarnackInequalityKinetic2019}), the proof is completed. \qed

		\appendix
	\section{}
	\begin{lemma}\textup{(Lemma 3.7 in \cite{tsaiLecturesNavierStokesEquations2018})}
		Let \(X\subset\subset Y\subset Z\) be reflexive Banach space, if \(0<T<\infty, p\geq1, r> 1, f\in L^{p}(0,T;X)\) satisfy
	\begin{align*}
		\norm{f}_{L^{p}(0,T;X)}\leq C_1,\quad \norm{\partial_tf}_{L^{r}(0,T;Z)}\leq C_2
	\end{align*}
	for some constants \(C_1, C_2 <\infty\), then \(f_j, j\in \mathbb{N}\) is relatively compact in \(L^{p}(0,T;Y)\).
	\end{lemma}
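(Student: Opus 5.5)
The statement to prove is the Aubin–Lions–Simon compactness lemma. Let $(f_j)$ be a sequence bounded in $L^p(0,T;X)$ with $(\partial_t f_j)$ bounded in $L^r(0,T;Z)$. I will show it is relatively compact in $L^p(0,T;Y)$ by reducing it to an $L^p$ equicontinuity (Fréchet–Kolmogorov type) criterion in time.

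The first ingredient is Ehrling's interpolation inequality. For every $\eta>0$ there is $C_\eta$ with
\begin{align*}
\|w\|_Y\le \eta\|w\|_X+C_\eta\|w\|_Z\qquad\text{for all } w\in X.
\end{align*}
I will prove this by contradiction. If it failed, there would be $w_n$ with $\|w_n\|_X=1$ and $\|w_n\|_Y>\eta+n\|w_n\|_Z$. Compactness of $X\subset\subset Y$ gives a subsequence with $w_n\to w$ in $Y$. Since $\|w_n\|_Z\le C\|w_n\|_Y\le C'$ is bounded, the inequality forces $\|w_n\|_Z\to0$, so $w=0$ by continuity of $Y\subset Z$. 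This contradicts $\|w\|_Y\ge\eta$.

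Applying Ehrling to differences $f_j-f_k$ gives
\begin{align*}
\|f_j-f_k\|_{L^p(Y)}\le 2\eta C_1+C_\eta\|f_j-f_k\|_{L^p(Z)}.
\end{align*}
Hence it suffices to show that $(f_j)$ is relatively compact in $L^p(0,T;Z)$. A diagonal argument over $\eta\to0$ then yields a subsequence that is Cauchy in $L^p(0,T;Y)$.

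For compactness in $L^p(0,T;Z)$ I will verify two conditions.

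\textbf{Time translations.} Since $f_j(t+h)-f_j(t)=\int_t^{t+h}\partial_t f_j$ and $r>1$, Hölder's inequality gives
\begin{align*}
\|f_j(t+h)-f_j(t)\|_Z\le h^{1-1/r}C_2 .
\end{align*}
Hence $\|f_j(\cdot+h)-f_j\|_{L^p(0,T-h;Z)}\le C\,T^{1/p}h^{1-1/r}\to0$ uniformly in $j$. (Here I use $f_j\in W^{1,1}(0,T;Z)$, which is absolutely continuous into $Z$.)

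\textbf{Compactness of time averages.} For $0\le t_1<t_2\le T$, the averages $\int_{t_1}^{t_2}f_j\,dt$ are bounded in $X$ by $C_1(t_2-t_1)^{1-1/p}$, so they lie in a relatively compact subset of $Y$, and hence of $Z$.

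These two facts give relative compactness in $L^p(0,T;Z)$. Concretely, I approximate $f_j$ by its mollification in time (or by piecewise-constant averages on a fine partition) and use a finite-dimensional / Arzelà–Ascoli argument for the approximants. The error is controlled by the translation estimate, uniformly in $j$.

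I expect the main obstacle to be the endpoint $p=1$ and the care needed near the boundary of $(0,T)$ in the translation estimate. Simon's characterization (compactness in $L^p(0,T;B)$ is equivalent to compact averages plus uniform vanishing of translations, for $1\le p<\infty$) handles this cleanly, so I would invoke it rather than reprove it. Note also that reflexivity is not actually needed.
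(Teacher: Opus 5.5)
Your proof is correct. The paper itself gives no argument for this lemma: it is quoted from Tsai's notes and used once, in Lemma~\ref{lem:ul2}, with $X=H^1(\Omega_x)$, $Y=L^2(\Omega_x)$, $Z=H^{-1}(\Omega_x)$, $p=2$, $r=\frac43$. So there is no paper proof to compare against.

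Your argument is the Simon-type proof: Ehrling's inequality reduces the problem to compactness in $L^p(0,T;Z)$, which then follows from uniform smallness of time translations plus compactness of time averages. This differs from the classical Lions proof, which extracts a weakly convergent subsequence in $L^p(0,T;X)$ and therefore needs reflexivity and $1<p<\infty$. Your route needs neither, so your remark that reflexivity is superfluous is right. In fact $p=1$ is not an obstacle at all in your approach.

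Two small expository points:
\begin{itemize}
\item In Ehrling's lemma, what forces $\|w_n\|_Z\to0$ is $n\|w_n\|_Z<\|w_n\|_Y\le C'$, not the boundedness of $\|w_n\|_Z$.
\item No diagonal argument over $\eta$ is needed. A subsequence that is Cauchy in $L^p(0,T;Z)$ already satisfies $\limsup_{j,k}\|f_j-f_k\|_{L^p(0,T;Y)}\le 2\eta C_1$ for every $\eta$ simultaneously.
\end{itemize}

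More usefully, because $r>1$ you do not need Simon's characterization at all.
\begin{itemize}
\item Your estimate $\|f_j(t)-f_j(s)\|_Z\le C_2|t-s|^{1-1/r}$ already gives equicontinuity into $Z$.
\item Write $f_j(t)=\frac1h\int_t^{t+h}f_j\,ds+\frac1h\int_t^{t+h}\bigl(f_j(t)-f_j(s)\bigr)\,ds$, using $t-h$ instead of $t+h$ near $T$. This shows that $\{f_j(t)\}_j$ lies within $C_2h^{1-1/r}$ of a set bounded in $X$ (by $C_1h^{-1/p}$), hence within $C_2h^{1-1/r}$ of a relatively compact subset of $Z$, for every $h$. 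So $\{f_j(t)\}_j$ is totally bounded in $Z$.
\item Arzel\`a--Ascoli then gives relative compactness in $C([0,T];Z)$, hence in $L^p(0,T;Z)$ for every $p\in[1,\infty]$, and your Ehrling step finishes.
\end{itemize}
This also covers $p=\infty$, which the statement's ``$p\ge1$'' may include. The $L^p$ characterization you quote, valid for $1\le p<\infty$, does not cover that case.
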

\begin{lemma}\label{lem:semigroup}
		\textup{(Proposition A.16 in \cite{bedrossianMathematicalAnalysisIncompressible2022})} The following  a priori estimate holds for \(1\leq q\leq p \leq \infty\) and \(0\leq s\leq r <\infty\):
	\begin{align*}
		\norms{e^{\nu t \bigtriangleup}f}_{\dot{W}^{r,p}(\mathbb{R}^n)}\leq C(r,s)(\nu t)^{\frac{n}{2}(\frac{1}{p}-\frac{1}{q})-\frac{r-s}{2}}\norms{f}_{\dot{W}^{s,q}(\mathbb{R}^n)},
	\end{align*}
	as soon as \(t>0\).
	\end{lemma}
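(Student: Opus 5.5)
The statement is the heat semigroup estimate of Lemma~\ref{lem:semigroup}. I would prove it on the Fourier side and by kernel bounds, and reduce to the case $s=0$.

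The first step is to write $e^{\nu t\Delta}f=G_{\nu t}\ast f$, where
\[
G_{\tau}(x)=(4\pi \tau)^{-n/2}e^{-|x|^2/(4\tau)}.
\]
Scaling gives $G_\tau(x)=\tau^{-n/2}G_1(x/\sqrt{\tau})$. With $|D|^r$ the Fourier multiplier $|\xi|^r$, we then have $|D|^r G_\tau=\tau^{-(n+r)/2}(|D|^rG_1)(\cdot/\sqrt\tau)$. For $s=0$, Young's convolution inequality with $1+\frac1p=\frac1k+\frac1q$ gives
\[
\norm{|D|^r e^{\tau\Delta}f}_{L^p}\le \norm{|D|^rG_\tau}_{L^k}\norm{f}_{L^q}.
\]
The scaling computation gives $\norm{|D|^rG_\tau}_{L^k}=\tau^{-\frac r2-\frac n2(1-\frac1k)}\norm{|D|^rG_1}_{L^k}$, and $1-\frac1k=\frac1q-\frac1p$. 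The exponent of $\tau$ is therefore $\frac n2(\frac1p-\frac1q)-\frac r2$, which matches the claim.

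The second step is to check that $|D|^rG_1\in L^k$ for every $k\in[1,\infty]$. The symbol $|\xi|^re^{-|\xi|^2}$ is integrable, so $|D|^rG_1$ is bounded. For integrability I would show the decay $|(|D|^rG_1)(x)|\lesssim (1+|x|)^{-n-r}$. One route is a Littlewood--Paley splitting of the symbol: the high-frequency pieces are Schwartz with rapidly decaying norms, and the low-frequency piece is homogeneous of degree $r$. Alternatively, for even integer $r$ the function is just a derivative of a Gaussian. Either way the decay gives $L^1\cap L^\infty$, hence all $L^k$.

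The third step treats general $0\le s\le r$. Since $|D|^s$ commutes with the semigroup, $|D|^re^{\tau\Delta}f=|D|^{r-s}e^{\tau\Delta}(|D|^sf)$. Applying the $s=0$ case with $r-s$ in place of $r$ and $|D|^sf$ in place of $f$ gives exactly the stated bound with $\norm{f}_{\dot W^{s,q}}$. When $q=1$ or $q=\infty$, or when $s$ is not an integer, I would simply interpret $\dot W^{s,q}$ through $|D|^s$, as is done in the cited source.

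The main obstacle is the decay estimate for the kernel $|D|^{r}G_1$ when $r$ is not an even integer, because $|\xi|^r$ is not smooth at the origin. I would handle it with the dyadic decomposition. The pieces with $N\ge1$ are uniformly controlled Schwartz functions, so they are harmless. Each piece with $N<1$ is a rescaled bump, giving the bound $N^{r}\,N^{n}(1+N|x|)^{-M}$. Summing over dyadic $N<1$ yields the decay $|x|^{-n-r}$, which is integrable at infinity since $r\ge0$. The case $r=0$ is trivial, because then the kernel is just $G_1$.
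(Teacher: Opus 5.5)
Your proof is correct. The paper gives no argument for this lemma; it only cites Bedrossian--Vicol, Proposition A.16. Your route is the standard proof of that result: Young's inequality with $1+\frac1p=\frac1k+\frac1q$, the scaling of $|D|^{r-s}G_\tau$, and the decay bound $|(|D|^{r-s}G_1)(x)|\lesssim(1+|x|)^{-n-(r-s)}$. This gives a constant $\max(\|H\|_{L^1},\|H\|_{L^\infty})$ with $H=|D|^{r-s}G_1$, independent of $p$ and $q$.

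One small slip: you say $|x|^{-n-r}$ is integrable at infinity since $r\ge 0$, but this holds only for $r>0$. Your separate treatment of $r=0$ (that is, $r=s$) covers the remaining case. If one defines $\dot W^{s,q}$ at the endpoints $q\in\{1,\infty\}$ through integer derivatives $\nabla^s$ rather than $|D|^s$, the same argument still works. In that case you write $\partial^{\beta+\gamma}e^{\tau\Delta}f=(\partial^\beta G_\tau)\ast\partial^\gamma f$ and use that the kernel $\partial^\beta G_1$ is Schwartz.
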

	\begin{lemma}
		\textup{(Theorem A.1 in \cite{bedrossianMathematicalAnalysisIncompressible2022})} Let \(X\) be a Banach space, let \(B \subset X\) be a closed bounded subset, and let \(\Phi:B\rightarrow B\) be Lipschitz continuous in the norm topology with Lipschitz constant \(L<1\); \(i.e.\), \(\norms{\Phi(x)-\Phi(y)}\leq L\norms{x-y}\) and \(L<1\). Then \(\Phi\) has a unique fixed point in \(B\).
	\end{lemma}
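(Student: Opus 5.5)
We use the standard Picard iteration. Fix any $x_0\in B$ and define recursively $x_{n+1}:=\Phi(x_n)$. Since $\Phi$ maps $B$ into $B$, every $x_n$ lies in $B$. The contraction property gives, by induction,
\begin{align*}
\norms{x_{n+1}-x_n}\leq L^n\norms{x_1-x_0}\quad\text{for all } n\geq 0.
\end{align*}
For $m>n$, the triangle inequality and summation of the geometric series then give
\begin{align*}
\norms{x_m-x_n}\leq \sum_{k=n}^{m-1}L^k\norms{x_1-x_0}\leq \frac{L^n}{1-L}\norms{x_1-x_0}.
\end{align*}
Since $0\leq L<1$, the right-hand side tends to $0$ as $n\to\infty$. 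Hence $(x_n)$ is Cauchy in $X$.

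Because $X$ is a Banach space, $x_n\to x^\ast$ for some $x^\ast\in X$. Since $B$ is closed and all $x_n\in B$, we also have $x^\ast\in B$. The map $\Phi$ is Lipschitz, hence continuous on $B$, so $\Phi(x_n)\to\Phi(x^\ast)$. But $\Phi(x_n)=x_{n+1}\to x^\ast$, and uniqueness of limits gives $\Phi(x^\ast)=x^\ast$. The boundedness of $B$ is not needed for this argument; it only ensures that the iterates stay in a bounded set, which is automatic anyway.

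For uniqueness, suppose $x,y\in B$ both satisfy $\Phi(x)=x$ and $\Phi(y)=y$. Then $\norms{x-y}=\norms{\Phi(x)-\Phi(y)}\leq L\norms{x-y}$, so $(1-L)\norms{x-y}\leq 0$. Since $L<1$, this forces $x=y$.

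There is no genuine obstacle here. The only point needing care is the use of closedness of $B$ to keep the limit inside the domain of $\Phi$, which is what makes the final step $\Phi(x^\ast)=x^\ast$ legitimate.
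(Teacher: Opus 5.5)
Your Picard-iteration proof is correct and complete. It is the standard proof of the contraction mapping principle, which is all the paper relies on: the paper gives no proof of its own and simply quotes Theorem A.1 of \cite{bedrossianMathematicalAnalysisIncompressible2022}.

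The one tacit point is that choosing $x_0\in B$ requires $B\neq\emptyset$, a hypothesis the statement leaves implicit (for $B=\emptyset$ there is no fixed point). Your remark that the boundedness of $B$ is superfluous is accurate.
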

	\begin{lemma}\label{gronwall} 
		\textup{(Appendix B.2 in \cite{evansPartialDifferentialEquations2010})} Let \(\eta(\cdot)\) be a non-negative absolutely continuous function on \([0,T]\), which satisfies for a.e. \(t\) the differential inequality
	\begin{align*}
		\eta^{'}(t) \leq \phi(t)\eta(t)+\psi(t),
	\end{align*}
	where \(\phi(t)\) and \(\psi(t)\) are non-negative summable functions on \([0,T]\). Then
	\begin{align*}
		\eta(t)\leq e^{\int_{0}^{t}\phi(s)ds}\left[\eta(0)+\int_{0}^{t}\psi(s)ds\right]
	\end{align*}
	for all \(0\leq t \leq T\).
	\end{lemma}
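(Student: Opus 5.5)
The statement is the integral-form Gr\"{o}nwall inequality (Lemma \ref{gronwall}) for a non-negative absolutely continuous $\eta$ with $\eta'\le\phi\eta+\psi$ a.e. The plan is the standard integrating-factor argument. Set $\Phi(t):=\int_0^t\phi(s)\,ds$. Since $\phi\ge 0$ is summable on $[0,T]$, $\Phi$ is absolutely continuous, non-decreasing, and $\Phi'=\phi$ a.e. The function $e^{-\Phi}$ is then absolutely continuous, being the composition of a Lipschitz function on the bounded range of $\Phi$ with an absolutely continuous function.

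The key step is to differentiate $\zeta(t):=e^{-\Phi(t)}\eta(t)$. As a product of two absolutely continuous functions on a compact interval, $\zeta$ is absolutely continuous, and the product rule holds a.e.:
\[
\zeta'(t)=e^{-\Phi(t)}\bigl(\eta'(t)-\phi(t)\eta(t)\bigr)\le e^{-\Phi(t)}\psi(t)\le \psi(t),
\]
where the last inequality uses $\Phi\ge0$ and $\psi\ge0$. Because $\zeta$ is absolutely continuous, the fundamental theorem of calculus for Lebesgue integrals applies and gives $\zeta(t)-\zeta(0)=\int_0^t\zeta'(s)\,ds\le\int_0^t\psi(s)\,ds$. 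Since $\zeta(0)=\eta(0)$, multiplying by $e^{\Phi(t)}$ yields
\[
\eta(t)\le e^{\int_0^t\phi(s)ds}\Bigl[\eta(0)+\int_0^t\psi(s)\,ds\Bigr]
\]
for every $t\in[0,T]$.

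The only delicate point, and the one I would check carefully, is the justification that $\zeta$ is absolutely continuous, so that integrating $\zeta'$ recovers $\zeta(t)-\zeta(0)$. This does not hold for merely a.e.-differentiable functions: Cantor-type functions have derivative zero a.e. yet increase. Absolute continuity is exactly what excludes them. It follows from the product of two absolutely continuous functions being absolutely continuous on a bounded interval, which in turn uses that both are bounded there. No earlier result of the paper is needed.
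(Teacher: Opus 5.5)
Your argument is correct and is essentially the standard integrating-factor proof from Evans, Appendix B.2, which the paper cites rather than reproduces: differentiate $e^{-\int_0^t\phi(s)\,ds}\eta(t)$, bound the derivative by $\psi$ using $\phi,\psi\ge 0$, and integrate. Your check that this product is absolutely continuous is exactly what justifies integrating its a.e.\ derivative. As a side remark, the non-negativity of $\eta$ is never actually used.
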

	\begin{lemma}\label{gronwall2}
		\textup{(Theorem 21 in \cite{dragomirGronwallTypeInequalities2002})} Let \(u(t)\) be a nonnegative function that satisfies the integral inequality
	\begin{align*}
		u(t) \leq c +\int_{t_0}^{t}(a(s)u(s)+b(s)u^{\alpha}(s))ds, c\geq 0, \alpha\geq 0,
	\end{align*}
	where \(a(t)\) and \(b(t)\) are continuous nonnegative functions for \(t\geq t_0\). For \(0\leq \alpha <1\) we have
	\begin{align*}
		u(t)\leq \left\{c^{1-\alpha}\exp\left[(1-\alpha)\int_{t_0}^{t}a(s)ds\right]+(1-\alpha)\int_{t_0}^{t}b(s)\exp\left[(1-\alpha)\int_{s}^{t}a(r)dr\right]ds\right\}^{\frac{1}{1-\alpha}};
	\end{align*}
	for \(\alpha=1\),
	\begin{align*}
		u(t) \leq c\exp\left\{\int_{t_0}^{t}\left[a(s)+b(s)\right]ds\right\}.
	\end{align*}
	\end{lemma}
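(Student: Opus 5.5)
The plan is to reduce the integral inequality to a differential inequality for the right-hand side, and then to linearize it by a power substitution. Set
\begin{align*}
w(t):=c+\int_{t_0}^{t}\big(a(s)u(s)+b(s)u^{\alpha}(s)\big)\,ds .
\end{align*}
Then $u\le w$ by hypothesis. Since $a,b$ are continuous and nonnegative, $w$ is absolutely continuous and nondecreasing, with $w(t_0)=c$, and for a.e. $t$
\begin{align*}
w'(t)=a(t)u(t)+b(t)u^{\alpha}(t)\le a(t)w(t)+b(t)w^{\alpha}(t).
\end{align*}
Here we used $u\le w$ together with the monotonicity of $s\mapsto s^{\alpha}$ on $[0,\infty)$ for $\alpha\ge0$. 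It therefore suffices to bound $w$, since $u\le w$.

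\emph{Case $\alpha=1$.} The inequality reads $w'\le (a+b)w$. Lemma \ref{gronwall}, applied with $\phi=a+b$ and $\psi=0$, gives $w(t)\le c\exp\{\int_{t_0}^t(a+b)\,ds\}$, which is the claim.

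\emph{Case $0\le\alpha<1$.} First assume $c>0$, so that $w\ge c>0$ and $w^{-\alpha}$ is bounded. Put $z:=w^{1-\alpha}$. By the chain rule for absolutely continuous functions composed with a $C^1$ map on $[c,\infty)$,
\begin{align*}
z'=(1-\alpha)w^{-\alpha}w'\le(1-\alpha)\big(a\,w^{1-\alpha}+b\big)=(1-\alpha)a\,z+(1-\alpha)b .
\end{align*}
This is a linear differential inequality for $z$. Multiplying by the integrating factor $\exp[-(1-\alpha)\int_{t_0}^t a]$ and integrating from $t_0$ to $t$ (the variation-of-constants form, which is sharper than the crude form in Lemma \ref{gronwall}) gives
\begin{align*}
z(t)\le c^{1-\alpha}e^{(1-\alpha)\int_{t_0}^t a}+(1-\alpha)\int_{t_0}^t b(s)\,e^{(1-\alpha)\int_s^t a(r)\,dr}\,ds .
\end{align*}
Raising both sides to the power $1/(1-\alpha)$ and using $u\le w=z^{1/(1-\alpha)}$ yields the stated bound.

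The only delicate point is the degenerate case $c=0$, where $w$ may vanish and $w^{-\alpha}$ is undefined. There I would note that the hypothesis also holds with $c$ replaced by $c+\eta$ for any $\eta>0$. Applying the result just proved with $c+\eta>0$ and letting $\eta\to0^+$, the right-hand side converges to the stated expression with $c=0$, by continuity in $c$.
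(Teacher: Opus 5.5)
The paper gives no proof of this lemma; it only cites Dragomir's monograph, so the only thing to check is whether your argument is correct, and it is. You bound $u$ by the right-hand side $w$, derive the Bernoulli-type inequality $w'\le aw+bw^{\alpha}$, and linearize it via $z=w^{1-\alpha}$ and an integrating factor. You then remove the restriction $c>0$ by replacing $c$ with $c+\eta$ and letting $\eta\to0^+$. This is the standard proof, and each step is sound.

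One wording fix: $w$ is absolutely continuous because $au+bu^{\alpha}$ is locally integrable. That integrability is implicit in the hypothesis; it does not follow from the continuity of $a$ and $b$.

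In fact your argument never uses continuity of $a$ and $b$, only their nonnegativity and local integrability. This slightly more general form is what Lemma \ref{lem:mf} actually needs. There the coefficient $b(s)\sim\|u^{\varepsilon,\delta}(s)\|_{L^{\kappa+3}_x}$ is controlled only in $L^1_t$.
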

	
	\begin{lemma}\label{A.6}
		\textup{( Corollary 4.5.5 in \cite{bogachevMeasureTheory2007})} Let \(\mu\) be a measure with values in \([0,+\infty]\) and let functions \(f_n, f \in \mathcal{L}^1(\mu)\) be such that \(f_n(x)\rightarrow f(x)\) a.e. Then convergence of \(\{f_n\}\) to \(f\) in \(L^1(\mu)\) is equivalent to the following:
	\begin{align*}
		\lim_{\mu(E)\rightarrow0} \sup_{n}\int_{E} |f_n|d\mu =0
	\end{align*}
	and, for every \(\varepsilon>0\), there exists a measurable set \(X_{\varepsilon}\) such that \(\mu(X_{\varepsilon})<\infty\) and
	\begin{align*}
		\sup_{n}\int_{X/X_{\varepsilon}}|f_n|d\mu<\varepsilon.
	\end{align*}
	\end{lemma}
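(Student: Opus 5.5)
This is the Vitali convergence theorem (Lemma \ref{A.6} is the $L^1$ characterization), and the task is to prove the stated equivalence. I will prove the two directions separately, using only countable additivity and absolute continuity of the integral of a single $L^1$ function.

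\textbf{Necessity.} Assume $f_n\to f$ in $L^1(\mu)$. Fix $\varepsilon>0$ and choose $N$ with $\|f_n-f\|_{L^1}<\varepsilon/2$ for $n>N$. The finitely many functions $f,f_1,\dots,f_N$ are integrable, so their integrals are absolutely continuous. Hence there is $\eta>0$ such that $\mu(E)<\eta$ gives $\int_E|f|<\varepsilon/2$ and $\int_E|f_n|<\varepsilon$ for $n\le N$. For $n>N$ we use $\int_E|f_n|\le\int_E|f|+\|f_n-f\|_{L^1}<\varepsilon$, which proves the uniform absolute continuity. For the tightness condition, the sets $\{|g|>1/k\}$ have finite measure whenever $g\in L^1$, and the tails $\int_{\{|g|\le 1/k\}}|g|$ tend to $0$ by dominated convergence. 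Take $X_\varepsilon$ to be the union of such sets for $f,f_1,\dots,f_N$ with $k$ large. This set has finite measure, and the same splitting as before handles $n>N$.

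\textbf{Sufficiency.} Fix $\varepsilon>0$ and take $X_\varepsilon$ as given. By Fatou's lemma, $\int_{X\setminus X_\varepsilon}|f|\le\liminf_n\int_{X\setminus X_\varepsilon}|f_n|\le\varepsilon$. The same Fatou argument gives $\int_E|f|\le\sup_n\int_E|f_n|$ for every measurable $E$, so $f$ also satisfies the uniform absolute continuity condition. On the finite-measure set $X_\varepsilon$, Egorov's theorem gives $F\subset X_\varepsilon$ with $\mu(X_\varepsilon\setminus F)<\eta$ and $f_n\to f$ uniformly on $F$. Choose $\eta$ so that every set of measure less than $\eta$ carries integral less than $\varepsilon$ for all $f_n$ and for $f$. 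Splitting $\|f_n-f\|_{L^1}$ over $F$, $X_\varepsilon\setminus F$ and $X\setminus X_\varepsilon$ gives
\[
\|f_n-f\|_{L^1}\le \mu(X_\varepsilon)\sup_F|f_n-f| + 2\varepsilon + 2\varepsilon .
\]
The first term tends to $0$ as $n\to\infty$, and $\varepsilon$ is arbitrary, so $f_n\to f$ in $L^1(\mu)$.

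The main obstacle is the sufficiency direction on an infinite measure space. Egorov's theorem requires finite measure, and this is exactly why the set $X_\varepsilon$ appears. One must also make sure that the uniform absolute continuity passes to the limit $f$, which the Fatou step handles. The remaining steps are routine.
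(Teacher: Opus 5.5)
Your proof is correct and complete. The paper gives no proof of this lemma; it only cites Corollary 4.5.5 of Bogachev, so there is no in-paper argument to match. What you wrote is the standard self-contained proof of the Vitali convergence theorem on a general, possibly non-$\sigma$-finite, measure space.

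A common alternative is to restrict $\mu$ to $X_\varepsilon$ and invoke the finite-measure Lebesgue--Vitali theorem, which pairs uniform integrability with convergence in measure. Your Egorov splitting over $F$, $X_\varepsilon\setminus F$ and $X\setminus X_\varepsilon$ is more direct. It never needs $L^1$-boundedness of $\{f_n\}$ on $X_\varepsilon$. Uniform absolute continuity alone does not supply that boundedness when $\mu$ has atoms (for instance $\mu=\delta_0$ and $f_n\equiv n$). In your route the a.e.\ convergence enters only through Egorov.

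Two cosmetic points:
\begin{itemize}
\item The Fatou step that transfers uniform absolute continuity to $f$ is redundant. $f\in\mathcal{L}^1(\mu)$ is a hypothesis, and a single integrable function already has an absolutely continuous integral.
\item $\eta$ should be fixed before Egorov's theorem is applied to produce $F$, not after.
\end{itemize}
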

	
	\begin{lemma}\label{singular}
		\textup{(Theorem 6.2 in \cite{tsaiLecturesNavierStokesEquations2018})} Let \(\Omega \subset \mathbb{R}^3\) be any domain and let \(0 < T \leq \infty\). Let \((v, p)\) be a suitable weak solution of (NS) in \(\Omega_T = \Omega \times (0, T)\) with force \(f \in L^q(\Omega \times (0, T))\), \(q > \frac52\). Let the set of singular points of \(v\) be
	\[S = \{z_0 \in \Omega \times (0, T] : v \text{ is singular at } z_0\}.\]
	Then \(P^1(S) = 0\).
	\end{lemma}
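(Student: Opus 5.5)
The statement is the cited partial-regularity result for the Navier--Stokes system with a force: for a suitable weak solution $(v,p)$ of (NS) in $\Omega_T$ with force $f\in L^q(\Omega_T)$, $q>\frac52$, the singular set satisfies $P^1(S)=0$. I would follow the Caffarelli--Kohn--Nirenberg scheme, in the simplified form of Lin and Ladyzhenskaya--Seregin, keeping track of the force at each step.

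\textbf{Step 1 ($\varepsilon$-regularity).} The first step is an $\varepsilon$-regularity criterion. There exist absolute constants $\varepsilon_*,R_*$ such that if
\begin{align*}
\frac{1}{r^2}\iint_{Q(z_0,r)}\left(|v|^3+|p|^{\frac32}\right)+r^{\beta}\|f\|_{L^q(Q(z_0,r))}<\varepsilon_* \quad\text{for some } r<R_*,
\end{align*}
then $v$ is bounded, indeed H\"older continuous, near $z_0$. Here $\beta=3-\frac5q>0$ is the positive scaling exponent of the force; this is exactly where $q>\frac52$ is used. I would prove the criterion by the standard decay iteration. Comparing with the Stokes system, or arguing by blow-up and contradiction against the linear Stokes problem, gives
\begin{align*}
\theta^{-2}\iint_{Q_\theta}|v-(v)_\theta|^3\le \tfrac12\,(\text{same quantity at scale }1)+C\,r^{3\beta},
\end{align*}
with the pressure handled through a harmonic/Riesz decomposition on balls. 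Iterating this and applying Campanato's criterion yields H\"older continuity. The force contributes only the geometric tail $r^{\beta}$, which is summable across scales.

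\textbf{Step 2 (gradient criterion).} Next I would upgrade to the statement that
\begin{align*}
\limsup_{r\to0}\frac1r\iint_{Q(z_0,r)}|\nabla v|^2<\varepsilon_1
\end{align*}
implies regularity. Set $A(r)=\sup_t r^{-1}\int_{B_r}|v|^2$, $E(r)=r^{-1}\iint_{Q_r}|\nabla v|^2$, $C(r)=r^{-2}\iint_{Q_r}|v|^3$ and $D(r)=r^{-2}\iint_{Q_r}|p|^{3/2}$. The argument uses three estimates.
\begin{itemize}
\item Interpolation: $C(\rho)\lesssim (r/\rho)^3 A^{3/2}(r)+(r/\rho)^3A^{3/4}(r)E^{3/4}(r)$.
\item Pressure decay: $D(\rho)\lesssim (\rho/r)D(r)+(r/\rho)^2C(r)$.
\item The local energy inequality with a cutoff, which bounds $A(\rho)+E(\rho)\lesssim C^{2/3}(r)+C(r)+C^{1/3}(r)D^{2/3}(r)$ plus a force term of size $r^{\gamma}\|f\|_{L^q}$ with $\gamma>0$.
\end{itemize}
Combining these and iterating, with $E$ small, makes $C+D$ small at some scale, and Step 1 then applies.

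\textbf{Step 3 (covering).} Finally, a Vitali covering shows that $S$ is contained in the set where $\limsup_r r^{-1}\iint_{Q_r}|\nabla v|^2\ge\varepsilon_1$. For each $\delta$, cover it by disjoint parabolic cylinders $Q_{r_i}^*$ with $r_i<\delta$ and $\sum r_i\le \varepsilon_1^{-1}\iint_{\cup Q_{r_i}}|\nabla v|^2$. Since $|\cup Q_{r_i}|\lesssim \delta^4\sum r_i^{-3}r_i^4\cdot\ldots\to0$ and $\nabla v\in L^2$, absolute continuity of the integral gives $P^1(S)=0$.

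\textbf{Main obstacle.} I expect the hardest part to be the force term in Step 1. In the blow-up/compactness argument, the rescaled forces $f_r(x,t)=r^3f(z_0+(rx,r^2t))$ must tend to zero in a space strong enough that the limit is an unforced Stokes flow. A change of variables shows $\|f_r\|_{L^q(Q_1)}=r^{3-5/q}\|f\|_{L^q(Q_r)}$, so this requires precisely $q>5/2$, and the same exponent makes the force term in the energy inequality decay. I would treat it first via H\"older's inequality, $\iint f\cdot v\phi\le \|f\|_{L^q}\|v\|_{L^{q'}}$, choosing $q'<3$ so that $\|v\|_{L^{q'}}$ is controlled by $C(r)$.
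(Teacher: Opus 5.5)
Your outline has the same architecture as the proof the paper relies on. The paper gives no argument of its own: it imports Tsai's Theorem 6.2, which is exactly your Step 3 applied to the gradient criterion (recorded in the paper as Lemma \ref{regular}), and that criterion is in turn derived from an $\varepsilon$-regularity criterion via the $A,C,D,E$ estimates.

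\textbf{The gap is in Step 1, and it concerns precisely the hypothesis $q>\frac52$.} Your computation $\|f_r\|_{L^q(Q_1)}=r^{3-5/q}\|f\|_{L^q(Q_r)}$ is correct. However, $\beta=3-\frac5q>0$ already holds for every $q>\frac53$. Moreover, since $\|f\|_{L^q(Q_r)}\to0$ by absolute continuity, the rescaled forces vanish in the blow-up even at $q=\frac53$. So neither the compactness step nor the ``summable tail'' uses $q>\frac52$, and your Step 1 would give boundedness near $z_0$ for all $q>\frac53$. That is false. Take a divergence-free field with a logarithmic singularity in parabolic distance, zero pressure, and $f$ defined by the equation. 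Then $f\sim|z|_{\mathrm{par}}^{-2}\in L^q$ for every $q<\frac52$, and all scale-invariant quantities (including $r^{\beta}\|f\|_{L^q(Q_r)}$) tend to $0$, yet the field is unbounded.

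What actually fails is the iteration. An inequality of the form
\[
Y(\theta r)\le \tfrac12\,Y(r)+C\,F(r)^3,\qquad Y(r)=r^{-2}\iint_{Q_r}|v-(v)_r|^3,\qquad F(r)=r^{3-\frac5q}\|f\|_{L^q(Q_r)},
\]
only propagates critical smallness to smaller scales. It gives no Campanato bound, which requires $Y(\rho)\lesssim\rho^{3+3\alpha}$ for all small $\rho$ and all centers near $z_0$. The decay lemma must therefore contract by $\theta^{3+3\alpha}$ rather than $\frac12$, roughly
\[
Y(\theta r)\le\theta^{3+3\alpha}Y(r)+C_\theta F(r)^3.
\]
Here $\theta$ is fixed from the H\"older decay of the limiting Stokes flow and $\varepsilon_*$ is chosen afterwards; you also need a companion pressure bound so that smallness persists along the iteration. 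The force tail must then be $O(r^{3+3\alpha})$ in this normalization, which forces $\beta>1$ (taking $\alpha<\beta-1$), i.e.\ $q>\frac52$. Equivalently, in velocity units the force contributes $r^{\beta-1}=r^{2-5/q}$ at scale $r$, and this must decay. That is where $q>\frac52$ genuinely enters, and it also caps the H\"older exponent at $2-\frac5q$.

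\textbf{Two smaller slips.} First, the interpolation bound should read
\[
C(\rho)\lesssim(\rho/r)^3A^{3/2}(r)+(r/\rho)^3A^{3/4}(r)E^{3/4}(r).
\]
The small factor $(\rho/r)^3$ on the first term is what lets Step 2 close; with $(r/\rho)^3$ the iteration cannot contract. Second, the covering estimate should be
\[
\Bigl|\bigcup_iQ^*_{r_i}\Bigr|\lesssim\sum_ir_i^5\le\delta^4\sum_ir_i\le\delta^4\varepsilon_1^{-1}\|\nabla v\|_{L^2}^2\to0.
\]
Combine this with $S\subset\bigcup_iQ^*_{5r_i}$ from Vitali to get $P^1_{10\delta}(S)\lesssim\varepsilon_1^{-1}\iint_{\bigcup_iQ^*_{r_i}}|\nabla v|^2\to0$ by absolute continuity.
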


	\begin{lemma}\label{regular}
		\textup{(Theorem 6.3 in \cite{tsaiLecturesNavierStokesEquations2018})} Let \((v, p)\) be a suitable weak solution of (NS) in \(Q(z_0, r_0)\subset\mathbb{R}^{3+1}\), \(r_0>0\), with force \(f\in L^q(Q(z_0, r_0))\), \(q>\frac52\). There is a small \(\varepsilon_0=\varepsilon_0(q)>0\) such that \(v\) is regular at \(z_0\) if \[\limsup_{r\to 0+}\frac{1}{r}\iint_{Q(z_0,r)}|\nabla v|^2<\varepsilon_0.\]
	\end{lemma}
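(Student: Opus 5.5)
The plan is to follow the Caffarelli--Kohn--Nirenberg / Lin scheme, treating the force as a subcritical perturbation. Write $Q_r=Q(z_0,r)$ and introduce the scale-invariant quantities
\[
A(r)=\sup_{t_0-r^2<t<t_0}\frac1r\int_{B_r}|v|^2,\quad E(r)=\frac1r\iint_{Q_r}|\nabla v|^2,\quad C(r)=\frac1{r^2}\iint_{Q_r}|v|^3,\quad D(r)=\frac1{r^2}\iint_{Q_r}|p|^{3/2},
\]
together with $F(r)=r^{3-5/q}\|f\|_{L^q(Q_r)}$. The hypothesis $q>\frac52$ is used exactly once: it makes the exponent $3-\frac5q$ positive. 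Hence $F(r)\le r^{3-5/q}\|f\|_{L^q(Q(z_0,r_0))}\to0$ as $r\to0$, so the force is automatically small at small scales.

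\textbf{Step 1: an $\varepsilon$-regularity criterion.} There exist $\varepsilon_1>0$ and $\alpha>0$ with the following property. If $C(r)+D(r)+F(r)<\varepsilon_1$ for some $r\le r_0$, then $v$ is Hölder continuous, and in particular bounded, near $z_0$.

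I would prove this by Lin's blow-up and compactness argument. Suppose a sequence $(v_k,p_k,f_k)$ has $C+D+F=\varepsilon_k\to0$ but fails a decay estimate of the form $\theta^{-2}\iint_{Q_\theta}|v-(v)_\theta|^3+\dots\le \tfrac12\varepsilon_k$. Normalize by $\varepsilon_k$ and pass to the limit. The local energy inequality, with the extra force term $\iint f\cdot v\phi$ bounded by Hölder using $F$, gives uniform bounds and strong $L^3$ compactness via Aubin--Lions. The limit solves the linear Stokes system, whose decay estimates contradict the assumed failure. Iterating the resulting one-step decay over dyadic scales, and applying it at every nearby point, gives Campanato control of $v$, hence Hölder continuity and boundedness.

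\textbf{Step 2: small $\limsup E$ forces $C$ and $D$ to become small.} For $\rho<r$ I would establish the standard inequalities:
\begin{itemize}
\item interpolation gives $C(\rho)\lesssim (\rho/r)^3A(r)^{3/2}+(r/\rho)^3A(r)^{3/4}E(r)^{3/4}$;
\item splitting the pressure into a harmonic part plus a Calderón--Zygmund part gives $D(\rho)\lesssim (\rho/r)D(r)+(r/\rho)^2C(r)$;
\item the local energy inequality with a cutoff at scale $r$ gives $A(r/2)+E(r/2)\lesssim C(r)^{2/3}+C(r)+C(r)^{1/3}D(r)^{2/3}+F(r)C(r)^{1/3}$.
\end{itemize}
The pressure split is valid because $f$ does not enter the pressure equation beyond a divergence of an $L^q$ term, which is absorbed into $F$.

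Combining these, with $\limsup E<\varepsilon_0$ and $F(r)\to0$, yields a contraction of the form
\[
A(\theta r)+D(\theta r)^{4/3}\le \tfrac12\bigl(A(r)+D(r)^{4/3}\bigr)+C(\varepsilon_0)+o(1).
\]
Iterating, $A$ and $D$ are eventually small, hence $C$ is small. Choosing $\varepsilon_0$ small relative to $\varepsilon_1$ places us in the regime of Step 1, and regularity at $z_0$ follows.

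\textbf{Main obstacle.} I expect the main obstacle to be Step 1, specifically obtaining uniform compactness of the rescaled sequence when a force is present. I would first check that the force contributes only lower-order terms that vanish in the limit: after rescaling to unit size they are bounded by $F\to0$ divided by $\varepsilon_k$. To make this work, I would include $F$ in the normalization with a power strictly less than $1$, using the gap $3-\frac5q>0$.
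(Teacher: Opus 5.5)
The paper gives no proof of this lemma; it is quoted from Tsai's notes. The Caffarelli--Kohn--Nirenberg/Lin scheme you outline is the standard route to it, but as written your sketch has two genuine gaps.

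\textbf{Where $q>\frac52$ enters (Step 1).} Positivity of $3-\frac5q$ only requires $q>\frac53$, so your account of the hypothesis is wrong. This is not cosmetic: for $q<\frac52$ the lemma is false. Take $z_0=0$, let $v=\nabla\times\big((|x|^2-t)^{(1-\beta)/2}e_3\big)$, set $p=0$, and define $f$ by the equation. Then $(v,p)$ is suitable with $\limsup E=0$, and $f\in L^q$ whenever $0<\beta<\frac5q-2$, yet $v$ is unbounded at $z_0$. So an argument that uses $q$ only through $3-\frac5q>0$ cannot produce boundedness.

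The hypothesis is needed in the Campanato iteration. Hölder continuity requires the scale-invariant oscillation $\rho^{-2}\iint_{Q_\rho}|v-(v)_\rho|^3$ to decay like $\rho^{3+3\alpha}$. So the one-step estimate must gain a factor comparable to $\theta^{3+3\alpha}$, which the gradient bounds of the limiting Stokes system supply (with a constant drift from $(v)_\rho$ whose scale-invariant size must be tracked). The factor $\frac12$ you propose only propagates smallness.

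The force must then enter the iterated quantity so that it vanishes in the blow-up limit \emph{and} decays at that rate. $C$ and $D$ are homogeneous of degree three in $v$, $F$ is of degree one, and $F(\theta r)\le\theta^{3-5/q}F(r)$. This means adding $F^{3\gamma}$ with $\frac{1}{3-5/q}<\gamma<1$, a window that is nonempty exactly when $q>\frac52$. Your normalization $C+D+F$ is the case $\gamma=\frac13$. The force does disappear in the limit, but $F(\rho)$ in general decays only like $\rho^{3-5/q}$, slower than $\rho^3$. Your iteration therefore keeps $C$ and $D$ small at all scales but never bounds $v$. Alternatively, keep $F^3$ and use $C^\alpha$ estimates for the Stokes system with an $L^q$ force, $q>\frac52$, in the limit.

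\textbf{Step 2 does not close with the inequalities you list.} The quantity $C$ unavoidably contains the contribution $A^{3/2}$ of the spatial mean of $v$. Hence two of your terms are superlinear in $A$: the term $C(r)$ in your energy estimate, and the term $(r/\rho)^2C(r)$ in your pressure estimate, which puts $C^{4/3}$, i.e.\ $A^2$, into $D^{4/3}$. These cannot be absorbed into $\frac12A(r)$ when $A$ is large, and nothing bounds $A(r)$ uniformly as $r\to0$.

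Concretely, take $A\equiv M$, $E\equiv\varepsilon_0$, and $C$, $D$ equal to small constant multiples of $M^{9/8}\varepsilon_0^{3/8}$. This profile satisfies all three of your inequalities once $M\gg\varepsilon_0^{-3}$, so no combination of them can force $C$ and $D$ to become small.

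The missing idea is to remove spatial means using $\operatorname{div}v=0$:
\begin{itemize}
\item In the local energy inequality, $\iint|v|^2v\cdot\nabla\phi=\iint\big(|v|^2-\overline{|v|^2}_{B_r}\big)v\cdot\nabla\phi$. This gives $A^{1/2}E^{1/2}C^{1/3}$ in place of $C$.
\item For the pressure, $\partial_i\partial_j(v_iv_j)=\partial_i\partial_j\big((v_i-\bar v_i)(v_j-\bar v_j)\big)$ with $\bar v=\bar v_{B_r}(t)$. This gives $D(\rho)\lesssim\frac\rho rD(r)+\big(\frac r\rho\big)^2\big(A(r)^{3/4}E(r)^{3/4}+F(r)^{3/2}\big)$.
\end{itemize}
With these, after Young's inequality every term is bounded by $\big(c\,\theta^{4/3}+c(\theta)\varepsilon_0^{1/2}\big)\big(A(r)+D(r)^{4/3}\big)+c(\theta)\varepsilon_0$, plus force terms that vanish as $r\to0$. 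Your contraction for $A+D^{4/3}$ then follows.
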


    \begin{lemma}\label{Bernstein}
    \textup{(Bernstein inequalities see Appendix A in \cite{taoNonlinearDispersiveEquations2006})}
Let $d \geq 1$, $s \geq 0$, and $1 \leq p \leq q \leq \infty$. Let $P_{\leq N}$, $P_{\geq N}$, and $P_N$ denote the Littlewood--Paley projection operators as defined above. Then the following inequalities hold for all tempered distributions $f$ for which the right-hand sides are finite:

\begin{enumerate}
    \item[(i)] (High frequency decay) For any $s \geq 0$,
        \[
        \| P_{\geq N} f \|_{L_x^p(\mathbb{R}^d)} \lesssim_{p,s,d} N^{-s} \| \, |\nabla|^s P_{\geq N} f \|_{L_x^p(\mathbb{R}^d)}.
        \]

    \item[(ii)] (Derivative of low frequencies)
        \[
        \| P_{\leq N} |\nabla|^s f \|_{L_x^p(\mathbb{R}^d)} \lesssim_{p,s,d} N^{s} \| P_{\leq N} f \|_{L_x^p(\mathbb{R}^d)}.
        \]

    \item[(iii)] (Characterization of homogeneous Sobolev norms on frequencies $\sim N$)
        \[
        \| P_N |\nabla|^{\pm s} f \|_{L_x^p(\mathbb{R}^d)} \sim_{p,s,d} N^{\pm s} \| P_N f \|_{L_x^p(\mathbb{R}^d)}.
        \]

    \item[(iv)] (Low frequency Sobolev embedding)
        \[
        \| P_{\leq N} f \|_{L_x^q(\mathbb{R}^d)} \lesssim_{p,q,d} N^{\frac{d}{p} - \frac{d}{q}} \| P_{\leq N} f \|_{L_x^p(\mathbb{R}^d)}.
        \]

    \item[(v)] (Single frequency Sobolev embedding)
        \[
        \| P_N f \|_{L_x^q(\mathbb{R}^d)} \lesssim_{p,q,d} N^{\frac{d}{p} - \frac{d}{q}} \| P_N f \|_{L_x^p(\mathbb{R}^d)}.
        \]
\end{enumerate}

The implicit constants in the inequalities $\lesssim$ and $\sim$ depend only on the subscripts.
\end{lemma}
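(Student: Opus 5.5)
The plan is to reduce every inequality to Young's convolution inequality by writing each operator as convolution with a rescaled kernel whose $L^r$ norms scale explicitly in $N$. By scaling, $\widehat{P_Nf}(\xi)=\psi(\xi/N)\hat f(\xi)$ gives $P_Nf=\check\psi_N\ast f$ with $\check\psi_N(x)=N^d\check\psi(Nx)$ and $\check\psi$ Schwartz. Hence
\[
\|\check\psi_N\|_{L^r}=N^{d(1-\frac1r)}\|\check\psi\|_{L^r},
\]
and similarly for $P_{\le N}$ with $\varphi$ in place of $\psi$. In particular $P_N$ and $P_{\le N}$ are bounded on every $L^p$, $1\le p\le\infty$, uniformly in $N$. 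I will also use fattened projections $\tilde P_N$ and $\tilde P_{\le N}$, built from a bump $\tilde\psi$ equal to $1$ on $\operatorname{supp}\psi$ and supported in an annulus $\{|\xi|\sim1\}$ (respectively $\tilde\varphi=1$ on $\operatorname{supp}\varphi$). They satisfy $P_N=\tilde P_NP_N$ and $P_{\le N}=\tilde P_{\le N}P_{\le N}$.

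\textbf{(iv) and (v).} I would write $P_Nf=\tilde P_N(P_Nf)$ and apply Young's inequality with $1+\frac1q=\frac1r+\frac1p$:
\[
\|P_Nf\|_{L^q}\le \|\check{\tilde\psi}_N\|_{L^r}\|P_Nf\|_{L^p}\lesssim N^{d(1-\frac1r)}\|P_Nf\|_{L^p},
\qquad d\Bigl(1-\tfrac1r\Bigr)=\tfrac dp-\tfrac dq .
\]
The same computation with $\tilde\varphi$ proves (iv).

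\textbf{(iii).} The multiplier $|\xi|^{\pm s}\tilde\psi(\xi/N)=N^{\pm s}m(\xi/N)$ has $m(\eta)=|\eta|^{\pm s}\tilde\psi(\eta)$ smooth and compactly supported away from the origin. So $m$ has a Schwartz kernel, and
\[
\|P_N|\nabla|^{\pm s}f\|_{L^p}=\|N^{\pm s}\check m_N\ast P_Nf\|_{L^p}\lesssim N^{\pm s}\|P_Nf\|_{L^p}.
\]
The reverse inequality follows by applying this bound to $g=|\nabla|^{\pm s}f$ with the opposite sign of $s$, since $P_Nf=P_N|\nabla|^{\mp s}g$.

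\textbf{(ii).} Similarly, $P_{\le N}|\nabla|^sf=N^s K_N\ast P_{\le N}f$, where $K$ is the inverse Fourier transform of $|\eta|^s\tilde\varphi(\eta)$ and $K_N(x)=N^dK(Nx)$. It then suffices to show $K\in L^1$.

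\textbf{(i).} For $s>0$ I would decompose $P_{\ge N}f=\sum_{M\ge N/2}P_M P_{\ge N}f$. Each piece is handled by (iii):
\[
\|P_MP_{\ge N}f\|_{L^p}\lesssim M^{-s}\bigl\|P_M|\nabla|^sP_{\ge N}f\bigr\|_{L^p}\lesssim M^{-s}\bigl\||\nabla|^sP_{\ge N}f\bigr\|_{L^p},
\]
using the uniform $L^p$ boundedness of $P_M$. The geometric sum $\sum_{M\gtrsim N}M^{-s}\lesssim_s N^{-s}$ then finishes the proof. For $s=0$ the statement is trivial.

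\textbf{Main obstacle.} The delicate step is $K\in L^1$ in (ii) when $s$ is not an even integer, since $|\eta|^s$ is then not smooth at the origin. Local integrability of $K$ is immediate because $K$ is bounded: its symbol is compactly supported and integrable. For the decay at infinity, I would split $\tilde\varphi=\sum_{M\le1}\psi(\cdot/M)$ plus a smooth bump away from $0$. Each dyadic piece $|\eta|^s\psi(\eta/M)$ has kernel $M^{s}\,M^d\check m(Mx)$ with $m$ Schwartz and fixed, as in (iii), so it has $L^1$ norm $\lesssim M^s$. Summing over $M\le1$ converges for $s>0$, which gives $\|K\|_{L^1}<\infty$. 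The case $s=0$ of (ii) is simply the uniform boundedness of $P_{\le N}$. This dyadic summation is the step I expect to need the most care; everything else is scaling combined with Young's inequality.
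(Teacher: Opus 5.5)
Your proposal is correct: writing each operator as convolution with a rescaled kernel via fattened projections, applying Young's inequality, proving $\mathcal{F}^{-1}(|\eta|^s\tilde\varphi)\in L^1$ by dyadic summation, and deducing (i) from (iii) by summing $\sum_{M\gtrsim N}M^{-s}$ is the standard argument. The paper gives no proof of its own and only cites Appendix A of Tao's book, which obtains these inequalities by the same convolution-plus-Young approach.
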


\begin{lemma}\label{riesz}
    \textup{(Riesz translation, Corollary A.12 in \cite{bedrossianMathematicalAnalysisIncompressible2022})}
   Let \( R_i = \partial_{x_i}(-\Delta)^{-1/2} \) be the \( i \)th Riesz transform, or equivalently the Fourier multiplier with symbol \( m(\xi) = i\xi_i |\xi|^{-1} \). Then \( R_i \) extends to a bounded operator \( L^p \to L^p \)
\[
\|R_i f\|_{L^p} \lesssim \|f\|_{L^p}
\]
for \( 1 < p < \infty \).
\end{lemma}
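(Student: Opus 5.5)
The statement is the Riesz transform bound $\|R_i f\|_{L^p}\lesssim \|f\|_{L^p}$ for $1<p<\infty$. I would prove it by the standard Calder\'on--Zygmund scheme: an $L^2$ bound by Plancherel, a weak-type $(1,1)$ bound by the Calder\'on--Zygmund decomposition, interpolation to $1<p\le 2$, and duality for $2\le p<\infty$.

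\textbf{Step 1 ($L^2$ bound).} Since the symbol $m(\xi)=i\xi_i|\xi|^{-1}$ satisfies $|m|\le 1$, Plancherel gives $\|R_if\|_{L^2}\le\|f\|_{L^2}$ for Schwartz $f$. By density, $R_i$ extends to a contraction on $L^2$.

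\textbf{Step 2 (kernel).} I would identify $R_i$ with the principal value convolution against
\[
K(x)=c_n\,\frac{x_i}{|x|^{n+1}} .
\]
To do this, write $|\xi|^{-1}$ as the Fourier transform of $c|x|^{1-n}$, which is valid since $n\ge 2$; in the case at hand $n=3$. Then $\partial_{x_i}(-\Delta)^{-1/2}$ acts by differentiating this kernel, and the differentiation is interpreted in the principal value sense. The cancellation $\int_{r<|x|<R}K=0$ holds because $K$ is odd. This step needs the most care: matching the distributional derivative with the p.v.\ kernel requires controlling the boundary term on small spheres, where oddness is what makes the limit vanish.

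\textbf{Step 3 (H\"ormander condition and weak $(1,1)$).} The kernel satisfies $|\nabla K(x)|\lesssim|x|^{-n-1}$. This gives
\[
\int_{|x|>2|y|}|K(x-y)-K(x)|\,dx\lesssim 1 .
\]
Now apply the Calder\'on--Zygmund decomposition at height $\lambda$: write $f=g+\sum_Q b_Q$, where each $b_Q$ has mean zero on its cube $Q$. Chebyshev's inequality together with Step 1 handles $g$. For the bad part, discard the dilated cubes $2Q^*$, whose total measure is $\lesssim\|f\|_1/\lambda$. Off these cubes, use the mean-zero property of $b_Q$ and the H\"ormander condition. Together these give $|\{|R_if|>\lambda\}|\lesssim\|f\|_{L^1}/\lambda$.

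\textbf{Step 4 (interpolation and duality).} Marcinkiewicz interpolation between Step 1 and Step 3 gives boundedness on $L^p$ for $1<p\le 2$. The adjoint of $R_i$ is $-R_i$, since the symbol is purely imaginary and odd. So duality extends the bound to $2\le p<\infty$.

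Steps 3 and 4 are routine once Step 2 is in place.
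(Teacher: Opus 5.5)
Your argument is correct. It is the standard self-contained proof: Plancherel, identification of the kernel $K(x)=c_n x_i|x|^{-n-1}$, the Calder\'on--Zygmund decomposition for weak $(1,1)$, Marcinkiewicz interpolation, and duality via $R_i^*=-R_i$. You also handle the delicate point of Step 2 correctly: without the oddness of $x_i/|x|$ the small-sphere boundary term would be $O(1)$, and with it the term is $O(\epsilon)$.

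The paper gives no proof of this lemma. It simply quotes Corollary A.12 of Bedrossian--Vicol, where the bound comes directly from the general Calder\'on--Zygmund/multiplier theory. The key observation there is that $m(\xi)=i\xi_i|\xi|^{-1}$ is smooth on $\mathbb{R}^n\setminus\{0\}$ and homogeneous of degree zero. Hence $|\partial^\alpha m(\xi)|\lesssim_\alpha|\xi|^{-|\alpha|}$; equivalently, the kernel is homogeneous of degree $-n$, smooth off the origin, and has mean zero on spheres. Your Steps 3--4 are exactly the machinery behind that general theorem, specialized to one operator.

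The cited route is shorter and more general. It covers every smooth degree-zero symbol in every dimension. At the symbol level it needs neither principal values nor the Riesz-potential representation that forces your restriction $n\ge 2$ (harmless here, since $n=3$). Your route, by contrast, is elementary and shows explicitly where the cancellation enters.

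One detail you should state in Step 3: carry out the decomposition for $f\in L^1\cap L^2$. Then each $b_Q$ lies in $L^2$, so $R_i b_Q$ is defined by Step 1, and the representation $R_ib_Q(x)=\int K(x-y)b_Q(y)\,dy$ is justified for $x$ outside the enlarged cube. The weak-type bound then extends by density.
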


\begin{lemma}\label{hardy littlewood sobolev}
    \textup{( Theorem 1 of Chapter 5 in \cite{stein1970})} Assume \(0 < \alpha < n\), \(1 \leq p < \frac{n}{\alpha}\) and,
\[
I_{\alpha}f(x) = \int_{\mathbb{R}^n} \frac{f(y)}{|x-y|^{n-\alpha}}  dy.
\]
When \(p > 1\) and \(\frac{1}{q} = \frac{1}{p} - \frac{\alpha}{n}\), there holds
\[
\|I_{\alpha}f\|_{L^q} \leq C\|f\|_{L^p}.
\]
\end{lemma}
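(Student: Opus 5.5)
This is the Hardy--Littlewood--Sobolev inequality (Stein's fractional integration theorem), and I would prove it in the classical way. The plan is to reduce to a weak-type estimate by a pointwise split of the kernel, and then upgrade to strong type by Marcinkiewicz interpolation.

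\textbf{Pointwise bound (Hedberg).} Fix $f\in L^p$ with $\|f\|_{L^p}=1$, which we may assume by homogeneity. Fix $x$ and a radius $R>0$, and write
\[
I_\alpha f(x)=\int_{|x-y|<R}\frac{f(y)}{|x-y|^{n-\alpha}}\,dy+\int_{|x-y|\ge R}\frac{f(y)}{|x-y|^{n-\alpha}}\,dy .
\]
\begin{itemize}
\item For the near part, decompose $\{|x-y|<R\}$ into dyadic annuli. On each annulus the kernel is bounded by $(2^{-k}R)^{\alpha-n}$, and the integral of $|f|$ over the corresponding ball is bounded by a constant times $(2^{-k}R)^{n}Mf(x)$. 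Summing the geometric series gives a bound $C R^{\alpha}Mf(x)$, where $M$ is the Hardy--Littlewood maximal function.
\item For the far part, apply H\"older. This gives $\|f\|_{L^p}\,\big\||y|^{\alpha-n}\chi_{|y|\ge R}\big\|_{L^{p'}}=CR^{\alpha-n/p}$. The $L^{p'}$ norm is finite precisely because $(n-\alpha)p'>n$, which is equivalent to $p<n/\alpha$.
\end{itemize}

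\textbf{Optimizing $R$.} Choosing $R=Mf(x)^{-p/n}$ balances the two terms and gives
\[
|I_\alpha f(x)|\le C\,Mf(x)^{1-\alpha p/n}=C\,Mf(x)^{p/q}.
\]

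\textbf{Conclusion for $p>1$.} Since $M$ is bounded on $L^p$ for $p>1$, we get
\[
\|I_\alpha f\|_{L^q}^q\le C\int (Mf)^p\le C .
\]
Scaling back gives $\|I_\alpha f\|_{L^q}\le C\|f\|_{L^p}$.

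\textbf{Main obstacle.} The only delicate points are the endpoint conditions. The far-field estimate needs $p<n/\alpha$. The final step needs the strong $L^p$ bound for $M$, which requires $p>1$; at $p=1$ one only obtains weak type, which is why the statement excludes $p=1$.
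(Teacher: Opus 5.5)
Your argument is correct. The pointwise Hedberg bound $|I_\alpha f(x)|\le C\|f\|_{L^p}^{1-p/q}\,Mf(x)^{p/q}$, combined with the strong $L^p$ maximal theorem, gives the inequality. You also place the two hypotheses correctly: $p<n/\alpha$ is needed for the far field, and $p>1$ for the boundedness of $M$.

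The paper gives no proof of this lemma; it simply cites Stein. Stein's argument is the one your opening sentence announces, but not the one you actually carry out:
\begin{enumerate}
\item Normalize $\|f\|_{L^p}=1$ and split the kernel at a radius $\mu$ tied to the level $\lambda$.
\item Bound the far part in $L^\infty$ by H\"older, using $\||y|^{\alpha-n}\chi_{|y|>\mu}\|_{L^{p'}}\sim\mu^{-n/q}$, and choose $\mu$ so that this is at most $\lambda$.
\item Control $|\{|f*K_1|>\lambda\}|$ by Chebyshev and Young, using $\|K_1\|_{L^1}\sim\mu^{\alpha}$. This gives weak type $(p,q)$ for every $1\le p<n/\alpha$.
\item Apply the off-diagonal Marcinkiewicz interpolation theorem between two such pairs to get the strong bound for $p>1$.
\end{enumerate}

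Your route trades interpolation applied to $I_\alpha$ for the maximal theorem, which is itself proved by interpolation. In exchange you get a pointwise inequality, from which the weak type at $p=1$ also follows via the weak $(1,1)$ bound for $M$. Stein's route never introduces $M$ at all. Since you never use Marcinkiewicz directly, rewrite the first sentence of your plan to describe what you actually do.
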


\medskip
\noindent {\bf Acknowledgment:}\,
R. Duan was partially supported by the General Research Fund (Project No. 14303321) from RGC of Hong Kong
and also partially supported by the grant from the National Natural Science Foundation of China (Project
No. 12425109). W. Wang was supported by National Key R\&D Program of China (No. 2023YFA1009200) and NSFC under grant 12471219 and 12071054.

\medskip
\noindent\textbf{Data Availability Statement:}
Data sharing is not applicable to this article as no datasets were generated or analysed during the current study.

\noindent\textbf{Conflict of Interest:}
The authors declare that they have no conflict of interest.

\end{document}